\numberwithin{equation}{section}
\newcommand{\e}{\mathrm{e}}
\newcommand{\ep}{\varepsilon}
\newcommand{\cT}{\mathcal{T}}
\newcommand{\bZ}{\mathbb{Z}}
\newcommand{\bR}{\mathbb{R}}
\newcommand{\bC}{\mathbb{C}}
\newcommand{\g}{\mathfrak{g}}
\newcommand{\gaf}{\mathfrak{g}_{\rm aff}}
\newcommand{\mfu}{\mathfrak{u}}
\newcommand{\mfb}{\mathfrak{b}}
\newcommand{\h}{\mathfrak{h}}
\newcommand{\he}{{\rm ht}}
\newcommand{\Fla}{{\mathcal F\ell}_{G}}
\newcommand{\af}{{\rm aff}}
\newcommand{\Lca}{\mathcal{L}}
\newcommand{\eps}{\varepsilon}
\newcommand{\epsri}{\sigma}
\newcommand{\hc}{{\mathfrak{h}}}
\newcommand{\cB}{\mathcal{B}}
\newcommand{\cG}{\mathcal{G}}
\DeclareMathOperator{\coh}{H}
\DeclareMathOperator{\quantum}{QH}
\newtheorem{thm}{Theorem}[section] %\numberwithin{thm}{chapter}
\newtheorem{lemma}[thm]{Lemma}
\newtheorem{prop}[thm]{Proposition}
\newtheorem{cor}[thm]{Corollary}
\theoremstyle{definition}
\newtheorem{remark}[thm]{Remark}
\newtheorem{example}[thm]{Example}
\newtheorem{defn}[thm]{Definition}
\DeclareMathOperator{\Fl}{Fl}
\DeclareMathOperator{\GL}{GL}
\DeclareMathOperator{\SL}{SL}
\DeclareMathOperator{\Span}{Span}
\DeclareMathOperator{\QH}{QH}
\renewcommand{\P}{{\mathbb P}}
\newcommand{\C}{{\mathbb C}}
\newcommand{\Z}{{\mathbb Z}}
\newcommand{\Q}{{\mathbb Q}}
\newcommand{\cO}{{\mathcal O}}
\newcommand{\gw}[2]{\langle #1 \rangle^{\mbox{}}_{#2}}
\newcommand{\ev}{\operatorname{ev}}
\newcommand{\wb}{\overline}
\newcommand{\ignore}[1]{}
\newcommand{\Mb}{\wb{\mathcal M}}
\begin{document}

\setcounter{tocdepth}{1}
\setcounter{topnumber}{1}
\setcounter{bottomnumber}{1}

\title[An affine  quantum cohomology ring for flag manifolds]{An affine quantum cohomology ring for flag manifolds and the periodic Toda lattice}
\author[A.-L.~Mare and L.~C.~Mihalcea]{Augustin-Liviu Mare and Leonardo C.~Mihalcea}
\address{ Department of Mathematics and Statistics, University of Regina,
Regina, SK Canada S4S 0A2}
\email{mareal@math.uregina.ca}
\address{Department of  Mathematics, 460 McBryde Hall, Virginia Tech University, Blacksburg, VA 24061 USA } 
\email{ lmihalce@math.vt.edu}
\date{May 5, 2016}
\subjclass[2010]{Primary 14N35; Secondary 14M15, 17B67, 37K10, 37N20}

\thanks{L.~C.~Mihalcea was supported in part by NSA Young
  Investigator Award H98230-13-1-0208 and a Simons Collaboration Grant}

\begin{abstract} Consider the generalized flag manifold $G/B$ and the corresponding affine flag manifold $\Fla$. In this paper we use curve neighborhoods for Schubert varieties in $\Fla$ to construct certain affine Gromov-Witten invariants of $\Fla$, and to obtain a family of ``affine quantum Chevalley" operators $\Lambda_0, \ldots, \Lambda_n$ indexed by the simple roots in the affine root system of $G$. These operators act on the cohomology ring $\coh^*(\Fla)$ with coefficients in $\Z[q_0, \ldots,q_n]$. By analyzing commutativity and invariance properties of these operators we deduce the existence of two quantum cohomology rings, which satisfy properties conjectured earlier by Guest and Otofuji for $G= \SL_n(\C)$. The first quantum ring is a deformation of the subalgebra 
of $\coh^*(\Fla)$ generated by divisors. The second ring, denoted $\quantum^*_\af(G/B)$, deforms the ordinary quantum cohomology ring $\quantum^*(G/B)$ by adding an affine quantum parameter $q_0$. We prove that $\quantum^*_\af(G/B)$ is a Frobenius algebra, and that the new quantum product determines a flat Dubrovin connection. Further, we develop an analogue of Givental and Kim formalism for this ring and we deduce a presentation of $\quantum^*_\af(G/B)$ by generators and relations. The ideal of relations is generated by the integrals of motion for the periodic Toda lattice associated to the dual of the extended Dynkin diagram of $G$.
\end{abstract}
\maketitle

\tableofcontents
\section{Introduction}

Let $G$ be a simply connected, simple, complex Lie group and $B \subset G$ a Borel subgroup. Let $\cG:=\C^* \ltimes G(\C[t,t^{-1}])$ where $\C^*$ acts by loop rotation, and $\cB \subset \cG$ be the standard Iwahori subgroup of $\cG$ determined by $B$. Associated to this data there is the finite (generalized) flag manifold $G/B$ and the affine flag manifold $\Fla:=\cG/\cB$. The first is a finite dimensional complex projective manifold, while the second is an infinite-dimensional complex projective ind-variety. An influential result of Givental and Kim \cite{givental.kim} (for $G= \SL_{n+1}(\C)$) and Kim \cite{Kim} (for all $G$) states that the ideal of relations in the quantum cohomology ring $\quantum^*(G/B)$ of the generalized flag manifold $G/B$ is generated by the integrals of motion of the Toda lattice associated to the dual root system of $G$. Soon after that, Guest and Otofuji \cite{go,otofuji} (for $G=\SL_{n+1}(\C)$) and Mare \cite{Ma3} (for $G$ of Lie types $A_n, B_n, C_n$) assumed that there exists a (still undefined) quantum cohomology algebra $\quantum^*(\Fla)$ for the affine flag manifold $\Fla$, which satisfies the analogues of certain natural properties enjoyed by quantum cohomology, such as associativity, commutativity, the divisor axiom etc. The list of conjectured properties includes the fact that the quantum multiplication of Schubert divisor classes satisfies a quadratic relation determined by the Hamiltonian of the {\em dual periodic} Toda lattice. With these assumptions they proved that the ideal of relations in $\quantum^*(\Fla)$ is determined by the integrals of motion for the periodic Toda lattice associated to the dual of the extended Dynkin diagram of $G$.

In fact, Guest and Otofuji considered in \cite{go} two other rings related to the cohomology ring $\coh^*(\Fla)$. One is $\coh^*_\af(G/B)$, which is an isomorphic copy of the ordinary cohomology ring $\coh^*(G/B)$ inside $\coh^*(\Fla)$, induced by an evaluation morphism (we will explain this below). The other is the subring $\coh^\#(\Fla) \subset \coh^*(\Fla)$ generated by the Schubert divisor classes. It is well known that $\coh^*(G/B)$ is generated by the divisor classes (over $\Q$), but for affine flag manifolds $\coh^\#(\Fla)$ is a proper subring of  $\coh^*(\Fla)$. It was also conjectured
in \cite{go} that these two rings are closed under the hypothetical quantum product on $\coh^*(\Fla)$, and as a consequence the authors deduced that the integrals of motion of the periodic Toda lattice generate the ideal of relations in these rings.

The main goal of this paper is to rigorously define quantum products on the rings $\coh^*_\af(G/B)$ and $\coh^\#(\Fla)$, for $G$ of all Lie types, which will satisfy the analogues of the properties predicted in \cite{go,otofuji,Ma3}. We will then identify the ideal of relations in the quantum ring $\quantum^*_\af(G/B)$ determined by $\coh^*_\af(G/B)$ with the conserved quantities of the periodic Toda lattice associated to the dual of the extended Dynkin diagram for $G$ (these diagrams correspond to twisted affine Lie algebras). The definition of the quantum product involves the geometry of spaces of rational curves inside Schubert varieties of $\Fla$, as encoded in the ``curve neighborhoods" of (finite-dimensional) Schubert varieties $\Omega \subset \Fla$. The curve neighborhood $\Theta_d(\Omega)$ for a degree $d \in \coh_2(\Fla)$ is the subvariety of $\Fla$ containing the points on all rational curves $C \subset \Fla$ such that $C \cap \Omega \neq \emptyset$ and the degree of $C$ is $[C] = d$. Because $\Omega$ is finite dimensional so is $\Theta_d(\Omega)$, an observation which goes back to Atiyah \cite{atiyah}. Curve neighborhoods appeared in several recent studies of quantum cohomology and quantum K-theory rings of homogeneous spaces \cite{bcmp:qkfinite,buch.mihalcea:nbhds}.

\subsection{Statement of results} In what follows we give a more precise version of our results. We first fix some notations. Let $q= (q_0, \ldots, q_n)$ denote the sequence of quantum parameters indexed by the simple roots $\{ \alpha_0, \ldots, \alpha_n \}$ in the affine root system associated to $G$. Let $\deg q_i =2$ (see Remark \ref{rmk:deggq} for geometry behind this). The cohomology ring $\coh^*(G/B)$ is a graded $\Z$-algebra with a basis given by Schubert classes $\sigma_w \in \coh^{2 \ell(w)}(G/B)$, where $w \in W$ varies in the Weyl group of $G$. Similarly, the cohomology ring $\coh^*(\Fla)$ is a graded $\Z$-algebra with a basis given by affine Schubert classes $\eps_w \in \coh^{2 \ell(w)}(\Fla)$, where $w \in W_\af$ varies in the affine Weyl group of $G$. Here $\ell(w)$ denotes the length function for the Coxeter groups $W$ and $W_\af$. In what follows we consider complex degrees, which are half the topological degrees; e.g. $\deg \sigma_w = \ell(w)$. By $s_i$ we denote the simple reflection corresponding to the simple root $\alpha_i$. Let $\eps_i:= \eps_{s_i}$ and $\sigma_i:= \sigma_{s_i}$. 

Fix $d \in \coh_2(\Fla)$ an effective degree and write $d = d_0 \alpha_0^\vee + \cdots  + d_n \alpha_n^\vee$ where $d_i \in \Z_{\ge 0}$ and $\alpha_i^\vee$ are the simple affine coroots of $G$. Let $\{ \lambda_0, \ldots, \lambda_n \}$ be a set of fundamental weights dual to the coroot basis under the evaluation pairing $\langle \cdot, \cdot \rangle$, i.e. $\langle \lambda_i, \alpha_j^\vee \rangle = \delta_{ij}$ (the Kronecker symbol). By $q^d $ we will mean $q_0^{d_0} \cdots  q_n^{d_n}$; note that $\deg q^d = 2(d_0 + \cdots  + d_n)$. Fix also $s_i, u,w \in W_\af$. Consider the Schubert variety $X(w):= \overline{\cB w \cB/ \cB} \subset \Fla$. This is a projective variety of complex dimension $\ell(w)$. A key definition in this paper is that of the ``Chevalley" Gromov-Witten invariants $\gw{ \eps_{i}, \eps_u, [X(w)] }{d}$ where $[X(w)] \in \coh_{2 \ell(w)}(\Fla)$ is the fundamental class. Let $\Theta_d(w):= \Theta_d(X(w))$ be the curve neighborhood of $X(w)$, defined rigorously in Theorem \ref{T:existence} below. By definition $\gw{ \eps_{i}, \eps_u, [X(w)] }{d} = 0$ unless $1 + \ell(u) = \ell(w) + \deg q^d$. This is a familiar condition from quantum cohomology which is equivalent to the fact that the quantum multiplication is homogenous (or that the subspace of the moduli space of stable maps consisting of maps passing through classes represented by $\eps_i, \eps_u$ and $[X(w)]$ has expected dimension $0$). If $1 + \ell(u) = \ell(w) + \deg q^d$ then we define \[ \gw{ \eps_{i}, \eps_u, [X(w)] }{d} = \langle \lambda_i, d \rangle \int_{\Fla} \eps_u \cap [\Theta_d(w)] \/, \] where $\cap$ is the cap product between cohomology and homology and the integral means taking the degree $0$ homology component. This definition is the natural generalization of the analogous formula for the corresponding Gromov-Witten invariants of $G/B$, recently proved by Buch and Mihalcea in \cite{buch.mihalcea:nbhds}. Note  that the integral is nonzero only when $\dim \Theta_d(X(w)) = \ell(u)$, which is a very strong constraint on the possible degrees $d$ that may appear. We prove in \S \ref{section:rule} that $d= \alpha^\vee$, the dual of a positive affine real root $\alpha$ which satisfies $\ell(s_\alpha) = 2 \he (\alpha^\vee) -1$, where $\he$ denotes the height. { There are finitely many such coroots $\alpha^\vee$, because they} satisfy $\alpha^\vee < c:= \alpha_0^\vee + \theta^\vee$ where $c$ is the imaginary coroot, and $\theta$ is the highest root of the finite root system; see Proposition \ref{letalpha}.

Define the free $\Z[q]$-module $\quantum^*(\Fla):=\coh^*(\Fla) \otimes_\Z \Z[q]$ graded in the obvious way. The definition of the Gromov-Witten invariants above allows us to define the family of $\Z[q]$-linear, degree $1$ operators of graded $\Z[q]$-modules $\Lambda_i: \quantum^*(\Fla) \to \quantum^*(\Fla)$ given by \[ \Lambda_i(\eps_u) = \eps_i \cdot \eps_u + \sum_{d \in \coh_2(\Fla), d \neq 0} \gw{\eps_u, \eps_i,[X(w)]}{d} q^d \eps_w \/, \]
where $\eps_i \cdot \eps_u$ is the ordinary multiplication in $\coh^*(\Fla)$. These operators can be interpreted as affine quantum Chevalley operators on $\coh^*(\Fla)$. In Theorem \ref{thm:afqops} we find an explicit combinatorial formula for these operators, which generalizes to the affine case the quantum Chevalley formula of Peterson, proved by Fulton and Woodward \cite{fulton.woodward}.

Let $\cT \subset \cB$ and $T \subset B$ be maximal tori. Let $\e_1: \cG/\cT \to G/T$ be the map obtained by evaluation of loops at $t=1$.
Since $\cG/\cT$ and $G/T$ are affine bundles over the corresponding flag manifolds $\Fla$ and $G/B$, this induces a map $\e_1^*: \coh^*(G/B) \to \coh^*(\Fla)$. We analyze this map in  \S \ref{s:p} and prove that it is injective and that \[ \e_1^*(\sigma_i) = \eps_i - m_i \eps_0, \quad (1 \le i \le n), \textrm{ where } \theta^\vee = m_1 \alpha_1^\vee + \cdots  + m_n \alpha_n^\vee \/. \] In the topological category this map was studied by Mare \cite{Ma3} { and it played a key role in Peterson's ``quantum=affine" phenomenon \cite{Pe} proved by Lam and Shimozono \cite{lam.shimozono:affine}}. Using this map define $\coh^*_\af(G/B) := \e_1^*(\coh^*(G/B))$, which is a graded subalgebra of $\coh^*(\Fla)$ isomorphic to $\coh^*(G/B)$. Recall also the subalgebra $\coh^\#(\Fla)$ of $\coh^*(\Fla)$ generated by Schubert classes $\eps_0, \ldots, \eps_n$.  The main result of this paper, proved in \S \ref{sec:affqcohrings} below, is the following: 

\begin{thm}\label{T:mainintro} The affine quantum Chevalley operators $\Lambda_i$ satisfy the following properties:
\begin{itemize}

\item[(a)] The operators $\Lambda_i$ commute up to the imaginary coroot, i.e.~for any $w \in W_\af$ and any $0 \le i, j \le n$ we have \[ \Lambda_i \Lambda_j (\eps_w) = \Lambda_j \Lambda_i(\eps_w)  \mod q^c = q_0 q^{\theta^\vee} \/; \] 

\item[(b)] The modified operators $\Lambda_i - m_i \Lambda_0$ commute (without any additional constraint), i.e.~for any $w \in W_\af$ and any $1 \le i, j \le n$ we have \[(\Lambda_i - m_i \Lambda_0)(\Lambda_j - m_j \Lambda_0)(\eps_w) = (\Lambda_j - m_j \Lambda_0)(\Lambda_i - m_i \Lambda_0)(\eps_w)  \/; \] 
 
\item[(c)] Let $0 \le i \le n$. Then the Chevalley operator $\Lambda_i$ preserves the submodule $\quantum^\#(\Fla):= \coh^\#(\Fla) \otimes \Z[q]$. 

\item[(d)] Let $1 \le i \le n$. Then the modified Chevalley operator $\Lambda_i - m_i \Lambda_0$ preserves the submodule $\coh^*_\af (G/B) \otimes \Z[q]$. 
\end{itemize}
\end{thm}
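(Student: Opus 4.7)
The plan is to leverage the explicit combinatorial description of $\Lambda_i$ furnished by Theorem \ref{thm:afqops}: this formula writes $\Lambda_i(\eps_u) = \eps_i \cdot \eps_u + \sum_{\alpha} \langle \lambda_i, \alpha^\vee\rangle\, q^{\alpha^\vee}\, \eps_{u s_\alpha}$, where $\alpha$ ranges over the (finite) set of positive affine real roots with $\ell(s_\alpha) = 2\he(\alpha^\vee)-1$ and $\alpha^\vee < c$, with an appropriate length condition on $u s_\alpha$. Thus the entire theorem reduces to a combinatorial/algebraic analysis on top of this formula, together with the coefficient identities $\langle \lambda_0, c\rangle = 1$ and $\langle \lambda_i, c\rangle = \langle \lambda_i, \theta^\vee\rangle = m_i$ for $1 \le i \le n$.

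I would first dispose of parts (c) and (d), which are statements about preservation of submodules. For (c), the classical part $\eps_i \cdot \eps_u$ lies in $\coh^\#(\Fla)$ automatically when $\eps_u$ does, so the task is to show each quantum term $\eps_{us_\alpha}$ (or, after Chevalley expansion, the cup product $\eps_i \cdot \eps_u$ at the quantum level) lies in $\coh^\#(\Fla)$; this should be a direct check using the combinatorial formula and the length/descent conditions on $us_\alpha$. For (d), the idea is that $\Lambda_i - m_i \Lambda_0$ should correspond, via $\e_1^*$, to quantum multiplication by $\sigma_i$ in $\quantum^*(G/B)$. One first verifies that $(\Lambda_i - m_i \Lambda_0)(\eps_i - m_i \eps_0) = (\eps_i - m_i \eps_0)\cdot(\eps_i - m_i \eps_0) + (\text{corrections})$, where the "pure affine" contributions involving $\alpha_0^\vee$ cancel in the combination $\Lambda_i - m_i \Lambda_0$ precisely because $\langle \lambda_i - m_i \lambda_0, \alpha_0^\vee\rangle = -m_i$ while $\langle \lambda_i - m_i\lambda_0, \alpha_0^\vee + \theta^\vee\rangle = 0$. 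After this cancellation one matches the remaining terms against the Peterson/Fulton--Woodward quantum Chevalley formula on $\quantum^*(G/B)$, with quantum parameters identified appropriately.

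For part (a), I would expand the commutator as
\[
[\Lambda_i, \Lambda_j] = \sum_{d} q^d \bigl([L_i, L_j^{(d)}] + [L_i^{(d)}, L_j]\bigr) + \sum_{d_1, d_2} q^{d_1+d_2} [L_i^{(d_1)}, L_j^{(d_2)}],
\]
where $L_i$ denotes cup product by $\eps_i$ and $L_i^{(d)}$ denotes the degree-$d$ quantum correction (supported on admissible real coroots $d < c$). The classical commutator vanishes since $\coh^*(\Fla)$ is graded-commutative. For single-$q^d$ terms with $d < c$, vanishing follows by reducing to the finite quantum Chevalley relation lifted to $\Fla$: both the structure coefficients and the combinatorial identities needed are inherited from the finite root system governing $\alpha^\vee$. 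For sums $d_1+d_2$ with $d_1, d_2$ admissible, the observation is that either $d_1+d_2 \ge c$ (so the term is absorbed into the ``mod $q^c$'' quotient), or else both $d_1, d_2$ correspond to roots in a common finite sub-system where commutativity is known.

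Finally, part (b) follows by combining (a) with an explicit analysis of the possibly non-zero degree-$c$ contribution. By (a) the commutator $[\Lambda_i, \Lambda_j]$ is supported on $q^d$ with $d \ge c$; writing out this contribution via the formula above one sees that its coefficient scales linearly in the pairings $\langle \lambda_i, c\rangle$ (resp.\ $\langle \lambda_j, c\rangle$) inherited from the imaginary coroot. Using $\langle \lambda_i, c\rangle = m_i$ and $\langle \lambda_0, c\rangle = 1$, every such contribution enters $[\Lambda_i - m_i \Lambda_0, \Lambda_j - m_j \Lambda_0]$ with coefficient $m_i \cdot 1 - m_i - 0 \cdot m_j + m_i m_j \cdot 0$ vanishing identically, yielding exact commutativity. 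The main obstacle I anticipate is the bookkeeping for part (a): carefully handling the admissible set of coroots and showing the bilinear commutator terms $[L_i^{(d_1)}, L_j^{(d_2)}]$ either vanish or are absorbed modulo $q^c$, since this requires controlling which sums of admissible coroots remain below $c$ and how the induced reflections $s_{\alpha_1} s_{\alpha_2}$ interact in $W_\af$.
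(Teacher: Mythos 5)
Your sketch starts from the right place (the explicit formula of Theorem \ref{thm:afqops}), but in each part the step that actually carries the proof is missing or would fail. For (c), the plan to check that ``each quantum term $\eps_{us_\alpha}$ lies in $\coh^\#(\Fla)$'' cannot work: individual Schubert classes are generally \emph{not} in the subring generated by divisors (that is the whole point of $\coh^\#$ being proper), and moreover a general element of $\coh^\#$ is a polynomial in the $\eps_k$, not a Schubert class. The paper's mechanism is to rewrite the quantum correction as $\sum \langle\lambda_i,\alpha^\vee\rangle q^{\alpha^\vee} D_{s_\alpha}(\cdot)$ and prove that the divided difference operators $D_k$ preserve $\coh^\#(\Fla)$ via the Leibniz rule (Proposition \ref{prop:leibniz}), by induction on monomials in the divisors. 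For (d), the asserted cancellation of the ``pure affine'' contributions is false ($\langle\lambda_i-m_i\lambda_0,\alpha_0^\vee\rangle=-m_i\neq 0$ in general, and the $q_0$-terms survive -- they are what makes $\quantum^*_\af(G/B)$ a genuine deformation); the relevant roots $\alpha$ with $\ell(s_\alpha)=2\he(\alpha^\vee)-1$ include genuinely affine ones, so one cannot ``match against Fulton--Woodward.'' What is actually needed is that the affine operators $D_{s_\alpha}$ carry $\mathrm{e}_1^*(\coh^*(G/B))$ into itself, and this rests on the ring homomorphism $\pi:R_\af\to R$ of nil-Coxeter rings (with $\pi(D_0)=\partial_{-\theta}$) and the intertwining identity $D_w\,\mathrm{e}_1^* = \mathrm{e}_1^*\,\pi(D_w)$ of Theorem \ref{thm:crucial} -- a substantial result your proposal does not identify.

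For (a), the decomposition of $[\Lambda_i,\Lambda_j]$ into mixed and bilinear pieces that are claimed to vanish separately is not how the cancellation happens, and the claimed reductions are false: the mixed terms do not follow from ``the finite quantum Chevalley relation lifted to $\Fla$,'' and when $d_1+d_2<c$ the bilinear terms $[L_i^{(d_1)},L_j^{(d_2)}]$ do not vanish on their own. The actual proof pairs quantum Bruhat chains of type (qq) having $\langle\alpha,\beta^\vee\rangle\neq0$ with chains of type (1q)/(q1) of the same weight via the bijection of Proposition \ref{prop:pairbij} (built on Lemmas \ref{lem:be} and \ref{lemma:multiple}), and only after summing the contributions of \emph{both} chains does one obtain an expression symmetric in $i,j$; the terms that cannot be so paired are exactly those with $\alpha^\vee+\beta^\vee=c$, which is why the statement only holds modulo $q^c$ (cf.\ Remark \ref{rmk:ex}). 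For (b), your final coefficient computation is garbled; the correct argument first shows (via Lemma \ref{lemma:multiple}) that the only surviving weight $\kappa\nless c$ is $\kappa=c$ coming from a (qq) chain, and then substitutes $\beta^\vee=c-\alpha^\vee$ together with $\langle\lambda_i,c\rangle=m_i$, $\langle\lambda_0,c\rangle=1$ to exhibit the symmetry of the coefficient of $q^c\eps_v$ in $(\Lambda_i-m_i\Lambda_0)(\Lambda_j-m_j\Lambda_0)$. As written, the proposal does not contain a proof of any of the four parts.
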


As we show in Remark \ref{rmk:ex}, the restriction on commutativity up to $q^c$ from part (a) cannot be removed, even for $G= \SL_2(\C)$. 
In \S \ref{sec:affqcohrings} we use the algebra generated by the { commuting} operators $\Lambda_i - m_i \Lambda_0$, together with the fact that Schubert divisors $\sigma_i$ generate $\coh^*(G/B)$ over $\Q$, to give the definition of a product on $\coh^*_\af (G/B) \otimes \Q[q]$.  Using the injective algebra homomorphism $\e_1^*$ one can transfer this product by $\Q[q]$-linearity and define a product $\star_\af$ on $\quantum^*_\af(G/B):= \coh^*(G/B) \otimes \Q[q]$. 

\begin{cor}\label{cor:main intro} The pair $(\quantum^*_\af(G/B), \star_\af)$ is a graded, commutative, associative $\Q[q]$-algebra with a $\Q[q]$-basis given by Schubert classes $\sigma_w$, where $w$ varies in $W$. Further, the $\Q[q_1, \ldots, q_n]$-algebra $\quantum^*_\af(G/B) / \langle q_0 \rangle$ is naturally isomorphic to the ordinary quantum cohomology algebra $\quantum^*(G/B)$. 
\end{cor}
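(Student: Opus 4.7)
The plan is to package the commuting operators $A_i := \Lambda_i - m_i \Lambda_0$ (for $1 \le i \le n$) into a commutative subalgebra of $\operatorname{End}_{\Q[q]}(M)$, where $M := \coh^*_\af(G/B) \otimes \Q[q]$, and then use evaluation at the identity class $1 = \e_1^*(\sigma_e) \in M$ to transport the algebra structure to $M$. By Theorem~\ref{T:mainintro}(d) each $A_i$ preserves $M$, and by Theorem~\ref{T:mainintro}(b) they pairwise commute. Hence one obtains a graded $\Q[q]$-algebra homomorphism
$$\Phi \colon \Q[q][y_1,\ldots,y_n] \longrightarrow \operatorname{End}_{\Q[q]}(M), \qquad y_i \mapsto A_i,$$
with $\deg y_i = 1$, together with a graded $\Q[q]$-linear evaluation map $\operatorname{ev} \colon \Q[q][y_1,\ldots,y_n] \to M$, $P \mapsto \Phi(P)(1)$.

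The essential step is the surjectivity of $\operatorname{ev}$. Modulo the augmentation ideal $(q_0,\ldots,q_n)$, the operator $A_i$ acts as classical multiplication by $\eps_i - m_i \eps_0 = \e_1^*(\sigma_i)$ inside $\coh^*_\af(G/B)$, since all quantum-correction terms in $\Lambda_i$ and $\Lambda_0$ carry strictly positive $q$-degree. Because $\sigma_1,\ldots,\sigma_n$ generate $\coh^*(G/B)$ as a $\Q$-algebra and $\e_1^*$ is a ring homomorphism, the reduction $\operatorname{ev} \bmod (q)$ surjects onto $M/(q)M = \coh^*_\af(G/B)$. Since $M$ is a graded $\Q[q]$-module bounded below with finite-dimensional graded pieces, graded Nakayama lifts this to surjectivity of $\operatorname{ev}$ itself.

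With $\operatorname{ev}$ surjective, define $\sigma \star_\af \tau := \Phi(P)(\tau)$ for any $P$ with $\operatorname{ev}(P) = \sigma$. Well-definedness is forced by commutativity of the image of $\Phi$: if $\operatorname{ev}(P) = 0$ and $\tau = \operatorname{ev}(Q)$, then $\Phi(P)(\tau) = \Phi(Q)\Phi(P)(1) = 0$. Commutativity and associativity of $\star_\af$ follow from $\Phi(PQ) = \Phi(P)\Phi(Q) = \Phi(Q)\Phi(P)$; $\Q[q]$-bilinearity is built in and homogeneity follows from $\deg A_i = \deg q_j = 1$. Transporting along the graded $\Q[q]$-module isomorphism $\coh^*(G/B) \otimes \Q[q] \xrightarrow{\sim} M$ induced by $\e_1^*$ yields $\star_\af$ on $\quantum^*_\af(G/B)$, and the $\sigma_w$ form a $\Q[q]$-basis since $\e_1^*$ sends them to a basis of $M$.

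For the final assertion, setting $q_0 = 0$ annihilates every quantum correction of $\Lambda_0$: contributions with $d_0 > 0$ are killed by the substitution, while those with $d_0 = 0$ vanish automatically because the Gromov-Witten coefficient $\gw{\eps_0,\eps_u,[X(w)]}{d}$ carries the factor $\langle \lambda_0, d\rangle = d_0$. Similarly, $\Lambda_i|_{q_0 = 0}$ retains only contributions from degrees with $d_0 = 0$, which identify with effective curve classes in $\coh_2(G/B) \subset \coh_2(\Fla)$. Transported via $\e_1^*$, the operator $A_i|_{q_0=0}$ matches ordinary quantum multiplication by $\sigma_i$ on $\quantum^*(G/B)$, by comparing the affine Chevalley formula of Theorem~\ref{thm:afqops} restricted to $d_0 = 0$ with the Peterson/Fulton-Woodward rule. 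Since the $\sigma_i$ generate $\coh^*(G/B)$ over $\Q$, the products on $\quantum^*_\af(G/B)/\langle q_0 \rangle$ and $\quantum^*(G/B)$ then agree. I anticipate the main obstacle to be this last identification, which requires checking that the curve-neighborhoods $\Theta_d(X(w)) \subset \Fla$ for $d_0 = 0$ reduce to the ordinary curve-neighborhoods in $G/B$; the surjectivity of $\operatorname{ev}$, though crucial, is routine given $\e_1^*(\sigma_i) = \eps_i - m_i \eps_0$ and graded Nakayama.
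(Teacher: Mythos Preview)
Your proposal is correct and follows essentially the same route as the paper: the paper defines the commutative subring $\mathcal{Q} \subset \mathrm{End}_{\Q[q]}(\quantum^*_\af(G/B))$ generated by the transported operators $\bar{\Lambda}_i$, proves surjectivity of evaluation at $1$ via graded Nakayama (Theorem~\ref{thm:defafqcoh} and Lemma~\ref{lemma:Nak}), and deduces the isomorphism with $\quantum^*(G/B)$ at $q_0=0$ by matching the specialized affine Chevalley formula (\ref{eq:Lambdafin}) with the Peterson/Fulton--Woodward rule (\S\ref{ss:properties}). Your use of the polynomial ring $\Q[q][y_1,\ldots,y_n]$ in place of $\mathcal{Q}$, and working on the $M=\e_1^*(\coh^*(G/B))\otimes\Q[q]$ side before transporting, are cosmetic differences; your observation that the $\Lambda_0$ quantum corrections vanish modulo $q_0$ either by the $q_0$-factor or by the coefficient $\langle\lambda_0,d\rangle=d_0$ is a clean way to phrase what the paper leaves implicit.
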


A similar product can be defined on the quotient $\quantum^\#(\Fla)/ \langle q^c \rangle $ which makes it a graded, commutative $\Z[q]$-algebra. The multiplication in both rings is determined by the Chevalley operators, thus one can algorithmically calculate any quantum products. An interesting fact is that the structure constants in $\quantum^*_\af(G/B)$ with respect to the Schubert basis are in general {\em not} positive, although the affine Chevalley operator $\Lambda_i$, and thus the quantum products $\eps_i \star_\af$ on $\quantum^\#(\Fla)/ \langle q^c \rangle $, are positive. See \S \ref{ss:table} below for some examples. Note that for $G={\rm SL}_n(\C)$ the
ring $\quantum^*_{\af}(G/B)/\langle q^c\rangle$ was constructed with different methods in 
\cite{Ma5}.

The proof of Theorem \ref{T:mainintro} is based on several different techniques. On one side one needs the precise combinatorial formula for $\Lambda_i$, which requires a study of the geometry and combinatorics of the relevant curve neighborhoods. Once this is done, we perform in \S \ref{combin} a thorough investigation of the  ``Chevalley" roots $\alpha$ which may appear in the formula for $\Lambda_i$. We mentioned above that these are the affine positive real roots $\alpha$ such that $\ell(s_\alpha) = 2 \he (\alpha^\vee)  -  1$, and satisfy $\alpha^\vee < c= \alpha_0^\vee + \theta^\vee$. We use this investigation to establish an involution of the set of certain chains of length $2$ in the affine analogue of the quantum Bruhat graph defined by Brenti, Fomin, and Postnilov \cite{brenti.fomin.post:mixed}. These chains help calculate the quantities $\Lambda_i \Lambda_j(\eps_w)$, and the involution corresponds to proving the identity in (a). The identity in (b) is an easy extra calculation, although we find it surprising that the $q^c$ constraint can be removed; it would be interesting to give an independent explanation of this fact.

The proof of properties (c) and (d), which are equivalent to the closure of the corresponding quantum products, turn out to be equivalent to certain properties of the divided difference operators acting on $\coh^*(\Fla)$ and $\coh^*(G/B)$. Part (c) is a consequence of the Leibniz formula for these operators. Part (d) is much more subtle and requires certain facts which might be of independent interest. Let $R_\af$ be the Coxeter ring generated by the divided difference operators $D_i$ ($0 \le i \le n$) associated to simple roots, acting on $\coh^*(\Fla)$. These operators satisfy $D_i^2 = 0$ and the braid relations; see (\ref{eq:braid}) below and \cite[Ch. 11]{kumar:kacmoody} for details. Similarly let $R$ be the Coxeter ring of Bernstein-Gelfand-Gelfand (BGG) divided difference operators from \cite{BGG} acting on $\coh^*(G/B)$, generated by $\partial_i$ for $1 \le i \le n$. We prove in Theorem \ref{T:ev} that there is a {\em ring} homomorphism $\pi: R_\af \to R$ such that $\pi(D_i) = \partial_i$ for $1 \le i \le n$ and $\pi(D_0 ) = \partial_{- \theta}$ where $\partial_{- \theta}$ is the BGG operator associated to the (negative) root $- \theta$. Since the braid relations are satisfied, one can define an operator $D_w \in R_\af$ for any $w \in W_\af$ by composition $D_w:= D_{i_1} \cdots D_{i_k}$ where $w$ has a reduced word $w = s_{\alpha_{i_1}} \dots s_{\alpha_{i_k}}$. Similarly one defines $\partial_w \in R$ for $w \in W$. The key formula needed to prove (d) is that $D_w$ and $\e_1^*$ commute with each other via $\pi$, i.e. for any $a \in \coh^*(G/B)$, \[ D_w (\e_1^* (a)) = \e_1^*(\pi(D_w)(a)) \in \coh^*(\Fla) \/; \] cf. Theorem \ref{thm:crucial}. In fact, with these notations we show in \S \ref{sec:affqcohrings} that the Chevalley multiplication formula in $\quantum^*_\af(G/B)$ is given by \[ \sigma_i \star_\af \sigma_w = \sigma_i \cdot \sigma_w + \sum_{\alpha} \langle \lambda_i -m_i \lambda_0, \alpha^\vee \rangle q^{\alpha^\vee} \pi(D_{s_\alpha})(\sigma_w), \quad 1 \le i \le n, \quad w \in W \] where the sum is over {\em affine} positive real roots $\alpha$ such that $\ell(s_\alpha) = 2 \he(\alpha^\vee) -1$ and $D_{s_\alpha}$ is the affine BGG operator. This formula is similar to the quantum Chevalley formula from the finite case \cite{fulton.woodward}. 

The second part of the paper is devoted to the study of the Dubrovin and Givental-Kim formalisms for the quantum product $\star_\af$ on $\coh^*(G/B)$, in analogy to the study from \cite{Du} and \cite{givental:egw, Kim} of the ordinary quantum product on $\coh^*(G/B)$; our treatment is inspired from \cite{cox.katz:mirror}. More precisely, let $\langle \cdot , \cdot \rangle$ be the Poincar{\'e} pairing on $\coh^*(G/B)$ extended linearly over $\C[q]$. Then $\quantum^*_\af(G/B)$ is a Frobenius algebra, i.e. it satisfies $\langle a \star_\af
b, c \rangle = \langle a , b \star_\af c \rangle$ for any $a,b,c \in \coh^*(G/B)$. In \S \ref{frodu} we construct the analogue of Dubrovin connection $\nabla^\hbar$ on the trivial bundle $\coh^2(G/B) \times \coh^*(G/B) \to \coh^2(G/B)$, and we prove it is flat (Theorem \ref{thm:flat}); here $\hbar \in \C^*$ is a parameter. Following \cite{givental:egw} we define the Givental connection to be $\nabla:= \hbar \nabla^{- 1/ \hbar}$. Let $\nabla_{\partial/ \partial z_i}$ be the derivation corresponding to the vector field $\partial/\partial z_i$ where $z_i$ is the coordinate on $\coh^2(G/B)$ corresponding to the Schubert class $\sigma_i$. The flatness of the Dubrovin connection $\nabla^\hbar$ together with an argument of Mare \cite{Ma3} imply that the system of {\em quantum differential equations}
\[ \hbar \partial/\partial z_i (s) = \sigma_i \star_{\af} s \Longleftrightarrow \nabla_{\partial/\partial z_i} (s) = 0;  \quad i \in \{ 1, \ldots, n \}  \] has nontrivial solutions $s$ in the ring of formal power series $\C[[q_0, \ldots , q_n]][z_1, \ldots , z_n][\hbar^{-1}]$ where $q_i = e^{z_i}$ for $1 \le i \le n$ and where there is a relation $q^c=q_0 q_1^{m_1} \cdots q_n^{m_n} = 1$. See Remark \ref{rmk:subs} for an interpretation of this relation. Further, for each $w \in W$ one can find solutions $s_w$ which have the leading term $\sigma_w$. These are the main ingredients needed to adapt the Givental-Kim formalism from \cite{givental:egw,Kim} to the affine case, and relate the ring $\quantum^*_\af (G/B)$ to the periodic Toda lattice. The key fact which makes this possible is the quadratic relation in $\quantum^*_\af(G/B)$: \[ \sum_{i, j =1}^n (\alpha_i^\vee | \alpha_j^\vee) \sigma_i \star_\af \sigma_j   = (\theta^\vee | \theta^\vee ) q_0 + \sum_{i=1}^n  ( \alpha_i^\vee | \alpha_i^\vee ) q_i \/, \] where $( \cdot | \cdot )$ is the Killing form on the Lie algebra of $G$ normalized so that $(\theta | \theta) =2$. The quantization of this relation corresponds to the differential operator
 \[ \mathcal{H} := \sum_{i, j =1}^n (\alpha_i^\vee | \alpha_j^\vee) \frac{\hbar \partial}{\partial z_i}\frac{\hbar \partial}{ \partial z_j }- (\theta^\vee | \theta^\vee ) e^{z_0} - \sum_{i=1}^n  ( \alpha_i^\vee | \alpha_i^\vee ) e^{z_i} \/\] acting on the ring of formal power series above. Here $\partial/\partial z_i$ is the partial derivative and $\hbar$ and $e^{z_i}$ act by multiplication; see \S \ref{s:givform} for details. The operator $\mathcal{H}$ is the Hamiltonian of the quantum periodic Toda lattice associated to the {\em dual} of the extended Dynkin diagram for $G$. (Sometimes this is referred to as the quantum periodic Toda lattice for the {\em short} dominant root $\theta^\vee$, and it corresponds to a {\em twisted} affine Lie algebra.) The complete integrability of this system has been established by  Goodman and Wallach \cite{Go-Wa1}  for $G$ of Lie types $A_n, B_n, C_n, D_n$ and $E_6$,
 and by Etingof \cite{etingof} in the simply laced Lie types (in these cases, there is no difference between the dual and the non-dual versions of the periodic quantum Toda lattice). { For the remaining types $F_4$ and $G_2$ the integrability has been proved by Mare \cite{M}.}
 
 %LM040316: Esti OK cu formularea de mai sus ?
 % In a companion to this paper \cite{M}, the authors employ the technique of  Dynkin foldings to prove that the periodic quantum dual Toda lattice is completely integrable in all Lie types.

The relevance of complete integrability comes from the fact that by the Givental-Kim formalism any differential operator $\mathcal{H}_k$ which is polynomial in $e^{z_i},\partial/\partial z_i$ and $\hbar$, and which commutes with $\mathcal{H}$ gives a relation in $\quantum^*_\af(G/B)$. This leads to the second main result of this paper, proved in \S \ref{s:Toda}.
 
\begin{thm}\label{thm:mainintro2} The algebra $\quantum^*_\af(G/B)$ has a presentation of the form $\Q[q][x_1, \ldots, x_n]/ I$ where $I$ is the ideal generated by the integrals of motion of the dual periodic Toda lattice associated to $G$, and where the indeterminates $x_i$ correspond to the Schubert divisors $\sigma_i$. \end{thm}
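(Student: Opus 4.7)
The plan is to adapt the Givental--Kim strategy \cite{givental:egw,Kim}, as presented in \cite{cox.katz:mirror}, to the affine quantum product $\star_\af$. Starting from the flat Dubrovin connection $\nabla^\hbar$ of Theorem \ref{thm:flat}, flatness produces a fundamental system of solutions $\{s_w\}_{w \in W}$ of the quantum differential equations $\hbar \partial/\partial z_i(s) = \sigma_i \star_\af s$ with leading term $\sigma_w$, valued in $\C[[q_0,\ldots,q_n]][z_1,\ldots,z_n][\hbar^{-1}]$ modulo the relation $q^c = q_0 q_1^{m_1} \cdots q_n^{m_n} = 1$; this step reproduces in the affine setting the solutions used in the ordinary case.

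The heart of the proof is the \emph{Givental--Kim transfer principle}: any differential operator $P$ polynomial in $e^{z_i}$, $\hbar\partial/\partial z_i$, and $\hbar$ that annihilates every solution $s_w$ produces a relation in $\quantum^*_\af(G/B)$ obtained by the substitution $e^{z_i}\mapsto q_i$, $\hbar \partial/\partial z_i \mapsto \sigma_i \star_\af$, $\hbar \mapsto 0$. Indeed, if $P\cdot s_w = 0$, reading the leading $\hbar^0$ term recursively shows that the quantum multiplication operator $P(q_i, \sigma_i\star_\af, 0)$ kills every basis element $\sigma_w$, hence is zero. By the displayed quadratic identity in the introduction, the Hamiltonian $\mathcal{H}$ of the quantum dual periodic Toda lattice is one such operator. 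By the complete integrability results of Goodman--Wallach \cite{Go-Wa1}, Etingof \cite{etingof}, and Mare \cite{M}, there exist $n$ algebraically independent quantum integrals of motion $\mathcal{H}_1=\mathcal{H}, \mathcal{H}_2, \ldots, \mathcal{H}_n$ commuting pairwise and in particular with $\mathcal{H}$. A standard argument, inductive in the degree filtration on differential operators, shows that all $\mathcal{H}_k$ also annihilate the $s_w$, and therefore yield relations $R_k(x_1,\ldots,x_n;q) = 0$ in $\quantum^*_\af(G/B)$, where $x_i$ corresponds to $\sigma_i$.

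To conclude that these relations give a \emph{complete} presentation, I would compare ranks over $\Q[q]$. On one hand, $\quantum^*_\af(G/B)$ is free of rank $|W|$ over $\Q[q]$ by Corollary \ref{cor:main intro}. On the other hand, the classical limit at $q = 0$ recovers $\coh^*(G/B) = \Q[x_1,\ldots,x_n]/I_0$, where $I_0$ is generated by the positive-degree $W$-invariants; the specialization of the dual periodic Toda integrals at $q=0$ coincides with the non-periodic Toda integrals, which by Givental--Kim and Kim \cite{givental.kim,Kim} generate precisely $I_0$. A graded Nakayama-type argument then forces the surjection $\Q[q][x_1,\ldots,x_n]/(R_1,\ldots,R_n) \twoheadrightarrow \quantum^*_\af(G/B)$ to be an isomorphism.

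The main obstacle will be rigorously justifying the transfer principle in the affine setting, where two specific complications must be addressed. First, the solution space is defined only modulo $q^c = 1$, which replaces the finite-dimensional parameter space $\C^n$ by a quotient; one has to verify that $\mathcal{H}$ and each $\mathcal{H}_k$ descend consistently via $e^{z_0} = \prod_i e^{-m_i z_i}$, and that the transfer principle commutes with this descent. Second, the classical asymptotic expansion of $s_w$ must interact correctly with the $\hbar$-graded Leibniz rule so that the substitution $\hbar \partial/\partial z_i \mapsto \sigma_i\star_\af$ respects noncommutative ordering of the $\mathcal{H}_k$. These issues are technical but tractable, following the model computations in \cite{cox.katz:mirror} and the type-$A$ precedent \cite{go}.
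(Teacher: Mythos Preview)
Your outline matches the paper's approach closely: flat sections of the Dubrovin/Givental connection, the transfer principle relating quantum differential operators to relations in $\quantum^*_\af(G/B)$, the Hamiltonian $\mathcal{H}$ as the seed operator, the integrals of motion $\mathcal{H}_k$ from complete integrability, and a graded Nakayama argument to conclude. The identification of the $q^c=1$ descent as the main new technical issue is also on target.

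The one place where your sketch is imprecise, and where the paper does something more specific than a ``standard argument, inductive in the degree filtration on differential operators'', is the step showing that each $\mathcal{H}_k$ yields a relation. The paper does \emph{not} prove that $\mathcal{H}_k$ annihilates the flat sections $g_w$ outright. Instead it works with the scalar functions $\langle g_w,1\rangle$ (using the Frobenius property, Theorem~\ref{thm:frobenius}, to pass between $\nabla$ and $\hat{\nabla}$), and shows only that $\mathcal{H}_k\langle g_w,1\rangle_d=0$ for degrees $d$ with $d_0+\cdots+d_n\le m_1+\cdots+m_n$. This suffices because of a degree bound: one must check (as in Corollary~\ref{cor:commutingqH}) that $\deg\mathcal{H}_k\le\deg q^{\theta^\vee}=2(m_1+\cdots+m_n)$, which follows from the table of fundamental degrees. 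The vanishing in bounded degree is then extracted from $[\mathcal{H},\mathcal{H}_k]=0$ via a lemma (Lemma~\ref{lemma:deg}) specific to the operator $\mathcal{H}$: if $\mathcal{H}(g)=0$ and $g_0=0$ then $g_d=0$ for all such small $d$. So the induction is on the degree $d$ of the \emph{power series coefficients}, and it uses the explicit quadratic form of $\mathcal{H}$, not a generic filtration argument on differential operators. You should replace your ``standard argument'' with this chain: Frobenius $\Rightarrow$ $\mathcal{H}\bigl(\mathcal{H}_k\langle g_w,1\rangle\bigr)=0$ $\Rightarrow$ Lemma~\ref{lemma:deg} $\Rightarrow$ Proposition~\ref{qcohrels}.
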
 

 Observe that the ideal of relations is determined by the integrals of motion for the {\it ordinary} (non-quantum) dual periodic Toda lattice:
 recall that they are obtained from the differential operators in the quantum version after taking top degree terms and making certain substitutions.  

This theorem naturally generalizes the results of Givental and of Kim \cite{givental.kim,Kim} for the ordinary quantum cohomology ring $\quantum^*(G/B)$. In fact, there has been recently a flurry of activity relating quantum cohomology and quantum K-theory of the (cotangent bundle of) flag manifolds to integrable systems, and eventually to quantum groups; see e.g. \cite{givental.lee:qk,nekrasov.shatashvili, braverman.maulik.okounkov,maulik.okounkov,grtv:quantum,gorbounov.korff}. A connection between the periodic Toda lattice and certain $1$-point Gromov-Witten invariants for affine flag manifolds appears in the work of Braverman \cite{braverman:instantons}. He studies a particular compactification of the space of rational curves in $\Fla$ with one marked point, called based quasi-map spaces. Using this he constructs an equivariant J-function which is an eigenvector for a differential operator related to the Hamiltonian $\mathcal{H}$. Since in the finite case the J-function is determined by flat sections of the Givental connection, it is natural to expect that an equivariant version of the aforementioned flat sections $g_w$ will be related to Braverman's J-function. %{\color{blue} We plan to study this in a future paper.} 

%LM040316: I removed the paragraph below because the situation seems more subtle than I thought: Braverman works with the loop rotation, while the eq. version that I can define will not take that into account. So i do not want to discuss this equivariant version at this moment.
%In fact, both the operators $\Lambda_i$ and the ring $\quantum^*_\af(G/B)$ generalize naturally to the torus equivariant setting. We will study this generalization in a future paper and we hope to elucidate therein the connection to results from \cite{braverman:instantons}.

{\em Acknowledgements.} This paper benefited from interactions with many people. L.~Mare would like to thank Martin Guest and Takashi Otofuji for useful discussions. L.~C.~Mihalcea  would like to thank Prakash Belkale, Dan Orr, Alexey Sevastyanov, Chris Woodward for stimulating discussions; Shrawan Kumar and Mark Shimozono for patiently answering many questions about the geometry and combinatorics of affine flag manifolds; Pierre-Emmanuel Chaput, Nicolas Perrin and Anders Buch for inspiring collaborations to various projects where the curve neighborhoods played a key role; special thanks are due to Anders Buch for insightful comments and discussions.

\section{Preliminaries}
\label{prel}

The goal of this section is to set up notations and recall some basic facts about Lie algebras and their affine versions, and about the cohomology of the associated flag varieties. Throughout this article $G$ will denote a complex simple, simply connected Lie group. We fix $T \subset B \subset G$ a maximal torus included in a Borel subgroup of $G$. Let $\g$ and $\h$ be the Lie algebras for $G$ and $T$. 
The corresponding sets of roots and coroots are  $\Pi\subset \h^*$ and  $\Pi^\vee:=\{\alpha^\vee \mid \alpha \in \Pi\}\subset \h$. Pick a simple root system along with the corresponding coroots:
$$\Delta=\{\alpha_1,\ldots, \alpha_{n}\}, \quad \Delta^\vee=\{\alpha_1^\vee,\ldots, \alpha_{n}^\vee\}.$$
This determines the partition of $\Pi$ into positive and negative roots: $\Pi = \Pi^+\cup \Pi^-$.

Denote by $( \cdot | \cdot )$ the Killing form of $\g$,  which we normalize in such a way that
$(\alpha | \alpha)=2$, for any long root $\alpha$ (recall that the restriction of $( \cdot | \cdot )$ to $\hc$ is non-degenerate,
see e.g.~\cite[Section 14.2]{fulton.harris:repthry}). Let $\langle \ ,\ \rangle : \h^* \times \h \to \C$ be the evaluation pairing, and let $\{ \omega_1, \ldots, \omega_n \} \subset \h^*$ be the fundamental weights satisfying $\langle \omega_i, \alpha_j^\vee \rangle = \delta_{ij}$. Set $W:= N_G(T)/T$ the Weyl group. This is generated by the simple reflections $s_i:=s_{\alpha_i}$ corresponding to the simple roots $\alpha_i$. For $w \in W$, the length $\ell(w)$ equals the number of simple reflections in any reduced word decomposition of $w$; $w_0$ denotes the longest element of $W$. 

The (finite) {\em flag variety} $G/B$ is a complex projective manifold of dimension $\ell(w_0)$. The group $G$ acts transitively on the left. The {\em Schubert varieties} $X(w):= \overline{B w B/B}$ and $Y(w) = \overline{B^- w B/B}$ are irreducible subvarieties of $G/B$ so that $\dim X(w) = \mathrm{codim}~ Y(w) = \ell(w)$, where $B^- = w_0 B w_0$ is the opposite Borel subgroup. For now we consider the homology $\coh_*(G/B)$ and cohomology $\coh^*(G/B)$ with integral coefficients, but occasionally we will need to work over $\Q$, and we will specify when this is the case. The homology is a free $\Z-$module with a basis given by fundamental classes $[X(w)] \in \coh_{2 \ell(w)}(G/B)$, where $w$ varies in $W$. The Poincar{\'e} pairing $\coh^*(G/B) \otimes \coh_*(G/B) \to \Z$ sending $a \otimes b$ to $\int_{G/B} a \cap b$ is nondegenerate; here the integral denotes the push forward to a point. Let $\sigma_w \in \coh^{2 \ell(w)}(G/B)$ denote the dual class of $[X(w)]$ with respect to this pairing. Since the intersection $X(w) \cap Y(w)$ is transversal and it consists of a single $T$-fixed point $e_w$, $\sigma_w$ is naturally identified with $[Y(w)]$. 

For each {\em integral} weight $\lambda \in \h^*$ we denote by $\C_\lambda$ the $1$-dimensional $T$-module of weight $\lambda$,  defined by $z.u = \lambda(z)u$. Recall that the Borel group $B$ can be written as the product $B=UT$, where $U$ is the unipotent subgroup. Then we regard $\C_\lambda$ as a $B$-module by letting elements of $U$ act trivially. Let $\mathcal{L}_\lambda$ be the $G$-equivariant line bundle over $G/B$ \[ \mathcal{L}_\lambda:= G \times^B \C_{-\lambda} = (G \times \C)/B \] where $B$ acts on $G \times \C$ by $b. (g,u) = (g b^{-1}, \lambda(b)^{-1} u)$. With these definitions we have that $c_1(\mathcal{L}_{\omega_i}) = [Y(s_i)]$; see e.g. \cite[\S 8]{buch.mihalcea:nbhds}. This identifies $\coh^2(G/B)= \oplus_{i=1}^n [Y(s_i)]$ with the (integral) weight lattice $\oplus_{i=1}^n \Z \omega_i$. We can further identify $\coh_2(G/B)$ with the coroot lattice $\oplus_{i=1}^n \Z \alpha_i^\vee$ by letting $[X(s_i)]$ correspond to $\alpha_i^\vee$. Then the restriction of the Poincar{\'e} pairing to $\coh^2(G/B) \otimes \coh_2(G/B)$ is identified to the evaluation pairing $\langle \ ,\ \rangle$.

\subsection{Affine Kac-Moody algebras}\label{akm} Next we establish the main notation for the affine root systems and the coresponding Lie algebras, following the references \cite{Kac, kumar:kacmoody}. 
Let $\gaf$ be the affine (non-twisted) Kac-Moody Lie algebra associated to $\g$ (cf.~e.g. \cite[Ch.~7]{Kac} or \cite[Section 13.1]{kumar:kacmoody}). By definition,
 $$\gaf = \Lca(\g)\oplus \C c \oplus \C d,$$
where $\Lca(\g):=\g \otimes \bC[t, t^{-1}]$ ($t \in \C^*$) is the space of all Laurent polynomials in $\g$ and where 
$c$ is a central element with respect to the Lie bracket in $\gaf$. 
The Cartan subalgebra of $\g_\af$ is $\h_\af:=\h \oplus \bC c \oplus \bC d$ and let $\langle \ , \ \rangle : \h_\af^* \times \h_\af \to \bC$ be the evaluation pairing.  The {\em affine root system} $\Pi_\af$ associated to $\gaf$ consists of
\begin{itemize} 
\item $m\delta + \alpha$, where $m\in \bZ$ and $\alpha \in \Pi$
(these are called the {\it (affine) real roots}).
\item $m\delta$, $m\in \bZ\setminus \{0\}$
(these are the {\it imaginary roots}).
\end{itemize}
Here the embedding $\Pi \subset \Pi_\af$ identifies a root $\alpha \in \Pi$ with the linear function on $\h_\af$  whose restriction
to $\h$ equals to $\alpha$ and  which satisfies $\langle \alpha,c\rangle =\langle \alpha,d\rangle =0$.  The {\em imaginary root} $\delta$ of $\h_\af^*$ is defined by
\[\delta|_{\h \oplus \bC c} =0 \quad {\rm and}  \quad \langle \delta, d\rangle=1 \/.\]
The {\em (affine) simple root system} $\Delta_\af \subset \Pi_\af$ is 
%The following subset of $\Pi_\af$ is called a {\it simple root system}:
$$\Delta_\af :=\{ \alpha_0:=\delta-\theta, \alpha_1, \ldots, \alpha_{n}\}$$ where
$\theta\in \Pi$ is the highest root. As in the finite case, this determines a partition $\Pi_\af = \Pi^+_\af\cup \Pi^-_\af$ into positive and negative roots.
Denote by $\Pi^{re}_\af$ and $\Pi^{re,+}_{\af}$ the set of affine roots which are real, respectively the subset of positive real roots. Then $\Pi_\af^{re,+}$ contains the finite positive roots along with $m\delta+\alpha$, where 
$m>0$ and $\alpha \in\Pi$. For $a,b$ in the root lattice $\bigoplus_{0\le i \le n} \bZ \alpha_i$, we say that $a \le b$ if  
$b -a$ is a linear combination with non-negative coefficients of $\alpha_0,\ldots,\alpha_{n}$.
By $a<b$ we mean $a\le b$ and $a\neq b$. The {\it (affine) simple coroot system} is the subset $\Delta_\af^\vee \subset \h_\af$:
$$\Delta_\af^\vee :=\{\alpha_0^\vee :=c-\theta^\vee, \alpha_1^\vee,\ldots, \alpha_{n}^\vee\}.$$ 
The {\it affine Weyl group} ${W}_\af$ of $\g_\af$ relative to $\h_\af$ is by definition the subgroup of $\GL(\h_\af^*)$ generated by the {\em simple reflections}
$s_0, \ldots, s_{n}$, where $s_i(\lambda):=\lambda -\langle \lambda, \alpha_i^\vee\rangle \alpha_i$ for $\lambda \in \h_\af^*$ and $0\le i \le n$. It turns out that $W_\af$ leaves $\Pi_\af$ invariant. Further, $\alpha \in \Pi_\af$ is a real root if and only if $\alpha =w\alpha_i$
for some $w\in W_\af$ and $0\le i\le n$. Then the resulting reflection $s_\alpha \in W_\af$ is independent of choices of $w$ and $\alpha_i$ and it is the linear automorphism of $\h_\af^*$ 
\[s_\alpha(\lambda) = \lambda - \langle \lambda, \alpha^\vee\rangle \alpha; \quad (\lambda \in \h_\af^*) \/. \] For each $w \in W_\af$ one defines the length $\ell(w)$ to be the minimal length of a reduced word of $w$ in terms of the generating system $s_0, \ldots, s_{n}$.
Recall that if $w\in W_\af$ and $\alpha \in \Pi_\af^{re,+}$, then:
\begin{itemize}
\item $\ell(ws_\alpha) <\ell(w)$ (resp. $\ell(w s_\alpha) > \ell(w)$) if and only if $w\alpha <0$ (resp. $w \alpha >0$).
%\item $\ell(ws_\alpha) >\ell(w)$ if and only if $w\alpha >0.$
\item (Strong Exchange Condition) If $\ell(ws_\alpha)<\ell(w)$ and $w=s_{i_1} \cdots s_{i_k}$ is a (possibly not reduced) expression
then $ws_\alpha = s_{i_1} \cdots \hat{s}_{i_j} \cdots s_{i_k}$ for some $1\le j \le k$.
 \end{itemize}
See e.g. \cite[\S 1.3] {kumar:kacmoody} for details. These facts also follow because $(W_\af, \{s_0,\ldots, s_{n}\})$ is a Coxeter group; see e.g.~\cite{Hu2} or \cite{Bj-Br}.

If $\alpha = w\alpha_i$ then the {\it coroot} $\alpha^\vee :=w\alpha_i^\vee$ is independent of choices of $w$ and $\alpha_i$. 
The set $\{\alpha^\vee\mid \alpha \in \Pi_\af^{re}\}$ consists of the real roots of the Kac-Moody affine Lie algebra
$\g_\af^\vee$ which is associated to the Langlands dual simple Lie algebra $\g^\vee$ of $\g$, and whose root system is
$\Pi^\vee$. The simple root system of $\g_\af^\vee$ is $\Delta_\af^\vee$ and the roots of $\g_\af^\vee$ are:
\begin{itemize}
\item $mc + \alpha^\vee$, where $m\in \bZ$ and $\alpha \in \Pi$
(the {\it real coroots}).
\item $mc$, $m\in \bZ\setminus \{0\}$
(the {\it imaginary coroots}).
\end{itemize}
We still denote by $\le$ the ordering on the coroot lattice determined by the positive coroots.
An alternative description of the coroots can be obtained in terms of the invariant bilinear form 
$( \cdot | \cdot )$ on $\g_\af$ given by
\begin{equation}\label{ur1}(u+ r_1 c + s_1 d |  v+r_2c+s_2d):=r_1s_2+s_1r_2 +{\rm Res}(t^{-1}(u , v)),\end{equation}
for all $u,v\in \Lca(\g)$ and $r_1,r_2, s_1, s_2 \in \bC$. Here $(u,v)$ is
the function $\C^*\to \bC$ given by $(u,v)(t)=(u(t) | v(t))$ and Res stands for residue. This inner product is 
invariant, in the sense that $([a_1, a_2] | a_3)=(a_1| [a_2, a_3])$, for all $a_1,a_2,a_3\in \gaf$. Its restriction to $\g$ 
coincides with the  biinvariant metric we have considered initially. 
The restriction of $( \cdot  | \cdot )$  to $\h_\af \times \h_\af$ is nondegenerate and invariant under $W_\af$, hence it induces a linear isomorphism
$\nu: \h_\af\to \h_\af^*$. We have
\begin{equation}\label{alphv}\alpha^\vee =\frac{2\nu^{-1}(\alpha)}{(\alpha | \alpha)},\end{equation}
for any root $\alpha \in \Pi_\af^{re}$, therefore the duals of finite roots
are exactly the finite coroots. The identity (\ref{alphv}) implies that $s_\alpha =s_{\alpha^\vee}$, hence the affine Weyl groups of $\g_\af$ and $\g_\af^\vee$ coincide.

The restriction of the inner product (\ref{ur1}) to $\h_\af$ is nondegenerate.  
Thus it induces an inner product on $\h_\af^*$, in particular on its subspace ${\h}^*\oplus \bC \delta$.
 Denote by $\h^*_\bR$ the subspace of $\h^*$ spanned by the
roots, i.e., the elements of $\Pi$.  Observe that $\Pi_\af$ is contained in ${\h}_\bR^*\oplus \bR \delta$.

\begin{remark} \label{rth} 
For $x \in {\h}_\bR^*\oplus \bR \delta$, we have
\begin{itemize}
\item $(x | x) \ge 0$
\item $(x | x)=0$ if and only if $x\in \bR\delta$. 
\end{itemize} 
Indeed, one can see from (\ref{ur1})  that if $x = x_0 +r\delta$, with $x_0 \in {\h}^*$ and $r\in \bR$ then
$(x|x) = (x_0 | x_0)$. The claim follows because the inner product $(x_0 | x_0)$ is the
Killing form and its restriction to $\h_\bR^*$ is strictly positive definite \cite[Section 14.2]{fulton.harris:repthry}.
\end{remark}
 
 The following property of the root system $\Pi_\af$ will be useful; see \cite[Proposition 5.1]{Kac}:
  \begin{prop}\label{string} Let $\alpha \in \Pi_\af^{re}$ and $\beta \in \Pi_\af$. Then there is an equality:
  $$\{ \beta + k \alpha \mid k\in \bZ\}\cap (\Pi_\af \cup \{0\}) = \{\beta - p \alpha , \beta -(p-1)\alpha, \ldots, \beta + (q-1)\alpha,
  \beta + q\alpha\},$$
  where $p,q \in \bZ_+$  are such that $p-q = \langle \beta, \alpha^\vee\rangle$. In particular, $\langle \beta, \alpha^\vee \rangle \in \Z$. 
    \end{prop}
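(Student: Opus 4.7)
The plan is to apply standard $\mathfrak{sl}_2$-representation theory to the adjoint action of the triple associated to $\alpha$ on the sum of root spaces along the string $\beta + k\alpha$.

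First I would reduce to the case where $\alpha = \alpha_i$ is a simple root. Since $\alpha \in \Pi_\af^{re}$, one may write $\alpha = w\alpha_i$ for some $w \in W_\af$ and $0 \le i \le n$. Replacing $\beta$ by $w^{-1}\beta$ yields an equivalent statement: $W_\af$ permutes $\Pi_\af \cup \{0\}$, one has $\alpha^\vee = w\alpha_i^\vee$, and the $W_\af$-invariance of the pairing gives $\langle w^{-1}\beta, \alpha_i^\vee \rangle = \langle \beta, \alpha^\vee \rangle$. So in particular the data $(p,q,\langle \beta,\alpha^\vee\rangle)$ transports correctly under $w^{-1}$.

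Assuming then $\alpha = \alpha_i$, let $\mathfrak{s}_i \subset \gaf$ be the $\mathfrak{sl}_2$-subalgebra generated by the Chevalley generators $e_i, f_i$ together with $\alpha_i^\vee$. The space
$$M := \bigoplus_{k \in \Z} (\gaf)_{\beta + k\alpha_i}$$
(with the convention $(\gaf)_0 = \h_\af$) is a submodule of $\gaf$ under the adjoint action of $\mathfrak{s}_i$. By the Serre relations, $\mathrm{ad}\, e_i$ and $\mathrm{ad}\, f_i$ act locally nilpotently on $\gaf$, so $M$ is integrable and decomposes as a direct sum of finite-dimensional irreducible $\mathfrak{s}_i$-modules. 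On $(\gaf)_{\beta + k\alpha_i}$ the element $\alpha_i^\vee$ acts by the scalar $\langle \beta, \alpha_i^\vee \rangle + 2k$, and since the weights of finite-dimensional $\mathfrak{sl}_2$-modules are integers, $\langle \beta, \alpha_i^\vee \rangle \in \Z$.

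Finally, within each finite-dimensional irreducible $\mathfrak{s}_i$-summand the weights form a string symmetric about $0$; translated to $k$-values, each summand contributes a contiguous interval of nonzero $M_k$'s, all centered at the common point $-\tfrac{1}{2}\langle \beta, \alpha_i^\vee \rangle$. The union of such nested, commonly centered intervals is itself a single interval $\{-p, -p+1, \ldots, q-1, q\}$, and equating the extremal $\mathfrak{sl}_2$-weights $\langle \beta, \alpha_i^\vee \rangle - 2p = -\bigl(\langle \beta, \alpha_i^\vee \rangle + 2q\bigr)$ yields $p - q = \langle \beta, \alpha_i^\vee \rangle$. Since $(\gaf)_\gamma \neq 0 \iff \gamma \in \Pi_\af \cup \{0\}$, this recovers the desired description of the string. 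The main technical input, which is the only nontrivial point, is the local nilpotence of $\mathrm{ad}\, e_i$ and $\mathrm{ad}\, f_i$ on $\gaf$; this is standard for Kac-Moody algebras (see e.g. \cite{Kac}), but it is the essential ingredient that upgrades the naive weight decomposition to an integrable $\mathfrak{sl}_2$-module structure and forces the support to be a single contiguous interval.
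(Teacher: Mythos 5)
The paper does not prove this proposition at all: it is quoted directly from Kac's book (Proposition 5.1 of \cite{Kac}), and your argument is essentially the standard proof given there --- reduce to a simple root $\alpha_i$ via the $W_\af$-action, then decompose the string module $\bigoplus_k (\gaf)_{\beta+k\alpha_i}$ under the corresponding $\mathfrak{sl}_2$-triple using local nilpotence of $\mathrm{ad}\,e_i$, $\mathrm{ad}\,f_i$. The argument is correct. The only step you leave implicit is why $p$ and $q$ are finite (i.e.\ why the union of the commonly centered intervals is bounded): since every irreducible summand's $k$-interval contains $\lfloor -\tfrac{1}{2}\langle\beta,\alpha_i^\vee\rangle\rfloor$ or $\lceil -\tfrac{1}{2}\langle\beta,\alpha_i^\vee\rangle\rceil$, and the weight spaces of $\gaf$ there are finite-dimensional, there are only finitely many summands, each finite-dimensional, so the support is indeed a bounded interval.
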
 

\subsection{Affine flag varieties and their cohomology}\label{sec:afm} In this section we recall some basic facts about affine flag varieties and (co)homology. Our main reference is \cite{kumar:kacmoody}, especially Chapters 7, 11 and 13. Let $G(\C[t,t^{-1}])$ be the group of Laurent polynomials loops $\C^* \to G$ (cf.~\cite[Def. 13.2.1]{kumar:kacmoody}) and let $\cG$ be the semidirect product $\C^* \ltimes G(\C[t,t^{-1}])$ where $\C^*$ acts by loop rotation, i.e. $(z \cdot \gamma)(t) = \gamma(tz)$ for $\gamma=\gamma(t) \in G(\C[t,t^{-1}])$. As explained in \cite[13.2.2]{kumar:kacmoody} there is a group homomorphism $\overline{e}: \C^* \ltimes G(\C[t]) \to \C^* \times G$ sending $(z, {g}(t))$ to $(z,{g}(0))$ obtained by evaluation at $t=0$. Define $\cB := \overline{e}^{-1}(\C^* \times B)$ and $\mathcal{U}:=  \overline{e}^{-1}(1 \times U)$ where $U \subset B$ is the unipotent subgroup of $B$. The restriction of the semidirect product defining $\cG$ to $\C^* \ltimes G$ is actually a direct product hence the standard maximal torus $\mathcal{T} \subset \cB$ is isomorphic to $\C^* \times T$. The subgroup $\cB$ is the standard Iwahori subgroup of $\cG$ and there is a semidirect product $\cB = \mathcal{U} \cdot (\C^* \times T)$. With these notations the affine flag variety $\Fla$ associated to the group $G$ is $\Fla:=\cG/\cB$.\begin{footnote}{By \cite[Corollary 13.2.9]{kumar:kacmoody} $\cG$ is closely related to the Kac-Peterson group discussed in \S 7.4 of {\em loc.~cit.}, which itself is a subgroup of the Kac-Moody group associated to $G$. Although all these groups are distinct, their flag varieties coincide; see p.~231 of {\em loc.cit.}}\end{footnote}

The flag variety $\Fla$ has a natural structure of a projective ind-variety, i.e. it has a filtration $\mathcal{X}_1 \subset \mathcal{X}_2 \subset \ldots. \subset \Fla$ where $\mathcal{X}_i$ are finite dimensional projective algebraic varieties and the inclusions $\mathcal{X}_i \subset \mathcal{X}_{i+1}$ are closed embeddings. This filtration is used to endow $\Fla$ with the 
{\it strong topology}. We consider the homology and cohomology relative to this topology, with $\Z$ coefficients.

As in the finite case, the {\em Schubert varieties} $X(w) := \overline{\cB w \cB/\cB}$ are irreducible complex projective varieties of dimension $\ell(w)$. Notice that we used the same notation $X(w)$ as in the finite case. The context should clarify any confusions; as a general rule, if $w \in W_\af$ (as is the situation here) then $X(w) \subset \Fla$. The fundamental classes $[X(w)] \in H_{2 \ell(w)}(\Fla)$ form a $\Z$-basis of $H_*(\Fla)$ as $w$ varies in $W_\af$. Denote by $\{\eps_w \mid w\in W\}$ the dual basis of $\coh^*(\Fla)$ relative to the
 natural ``cap" pairing $\coh^*(\Fla)\otimes H_*(\Fla)\to \Z$ sending $a \otimes b$ to $\int_{\Fla} a \cap b$. Thus
 $\langle \eps_w,[X_v]\rangle =\delta_{vw}$ 
 for all $v, w\in W_\af$, where $\delta_{vw}$ is the Kroenecker delta. 
 We refer to $\{\eps_w\mid w\in W_\af\}$ as the {\it Schubert basis} in $\coh^*(\Fla)$;
 note that $\eps_w\in \coh^{2\ell(w)}(\Fla)$. We will use the notation
$\ep_i:=\ep_{s_i}$, for $0\le i \le n$.
 
Let $\lambda_0,\ldots, \lambda_n$ denote a set of fundamental weights relative to $\Delta_\af$.
Then $\lambda_i\in \h_\af^*$ are determined by  $$\langle \lambda_i,\alpha_j^\vee\rangle =\delta_{ij}
\ {\rm and} \ \langle \lambda_i,d\rangle =0;\quad  0\le i,j\le n \/. $$
As in the finite case, for each integral weight $\lambda \in \h_\af^*$ there is an associated line bundle $\mathcal{L}_\lambda= \cG \times^\cB \C_{-\lambda}$ where the $\mathcal{T}$-module $\C_{-\lambda}$ is extended over $\cB$ by letting $\mathcal{U} \subset \cB$ act trivially. It is proved in \cite[p. 405]{kumar:kacmoody} that $c_1(\mathcal{L}_{\lambda_i}) = \eps_i$, for $0 \le i \le n$. { Since we could not find a reference, we include the proof of the following proposition.}
%LM(removed): The following fact is probably known among experts, but we could not find a reference for it:

\begin{prop}\label{prop:delta trivial} The line bundle $\mathcal{L}_\delta$ associated to the imaginary root is trivial on $\Fla$. \end{prop}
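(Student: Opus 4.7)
The plan is to observe that the imaginary root $\delta$ integrates to a character of the entire group $\cG$, not merely of the Iwahori $\cB$. Once this is established, the line bundle $\mathcal{L}_\delta = \cG \times^\cB \C_{-\delta}$ admits an explicit global trivialization.

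First I would identify the relevant character. The maximal torus $\mathcal{T} = \C^* \times T \subset \cB$ has Lie algebra $\C d \oplus \h$, where $d$ is the derivation generating loop rotation; the central element $c$ does \emph{not} lie in $\mathcal{T}$, since $\cG$ is not a central extension. The conditions $\delta|_\h = 0$ and $\langle \delta, d\rangle = 1$ then force $\delta$ to integrate to the character of $\mathcal{T}$ given by the projection $(z,t)\mapsto z$ onto the loop rotation factor, extended to $\cB$ by letting $\mathcal{U}$ act trivially. This character extends globally: the map
\[
\chi\colon \cG = \C^* \ltimes G(\C[t,t^{-1}]) \longrightarrow \C^*, \qquad (z,\gamma)\longmapsto z,
\]
is a group homomorphism because the first coordinate of a semidirect product is multiplicative, and by construction $\chi|_\cB$ coincides with the character of $\cB$ induced by $\delta$.

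Finally I would exhibit the trivialization. With $\cB$ acting on $\cG \times \C_{-\delta}$ by $b.(g,u) = (gb^{-1}, \delta(b)^{-1} u)$, the map $\mathcal{L}_\delta \to \Fla \times \C$ defined by $[g,u] \mapsto (g\cB, \chi(g)^{-1} u)$ is well-defined on equivalence classes precisely because $\chi|_\cB = \delta$, and it has algebraic inverse $(g\cB, v) \mapsto [g, \chi(g) v]$. Hence $\mathcal{L}_\delta$ is isomorphic to the trivial line bundle on $\Fla$. No substantive obstacle is anticipated; the only point to keep track of is the sign convention in the $\cB$-action on $\C_{-\delta}$, which merely determines whether the trivialization is written in terms of $\chi$ or of $\chi^{-1}$.
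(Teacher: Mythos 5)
Your proof is correct and is essentially the paper's argument in a more explicit form: the paper normalizes each coset representative to have trivial $\C^*$-component and observes that the residual ambiguity $\mathcal{U}\cdot(1\times T)$ acts trivially on the fibre, which amounts exactly to dividing by your character $\chi$. Making the extension of $\delta$ to a character of all of $\cG$ explicit is a clean way to package the same observation.
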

\begin{proof} Take an equivalence class $[g,u] \in \cG \times^\cB \C_{-\delta}$ for $g \in \cG$ and $u \in \C$. { We
%LM:040516: the loop rotation acts non-trivially, so the sentence was incorrect. However, one can multiply by (z,1)^{-1}
%to get representatives in the desired form. 
%LM(removed): Since the loop rotation action by $\C^* \subset \mathcal{T}$ acts trivially on $\Fla=\cG/\cB$ it follows that we 
can choose a representative for the coset of $g \cB$ of the form $g=(1, g') \in \C^* \ltimes G(\C[t,t^{-1}])$.} The restriction of $\delta$ to $\mathcal{U} \cdot (1 \times T)$ is trivial. Hence if $(1,\tilde{g}) \in \cG$ is such that $[(1,\tilde{g}),\tilde{u}] = [(1,g'),u] $ in $\mathcal{L}_\delta$ (for some $\tilde{u} \in \C$) then $(1,\tilde{g}) \cB = (1,g') \cB$ and $\tilde{u}=u$. Therefore the application sending $[(1,g'), u]$ to $((1,g')\cB, u) \in \cG/\cB \times \C$ is well defined and it gives an isomorphism of line bundles between $\mathcal{L}_\delta$ and $\cG/\cB \times \C$. 
\end{proof}
We notice now that $\h_\af^* = \h^* \oplus \C \delta \oplus \C \lambda_0$. Therefore, by identifying $\eps_i$ with $\lambda_i$ and $[X(s_i)]$ with $\alpha_i^\vee$ ($0 \le i \le n$) we can identify the Poincar{\'e} pairing 
$\coh^2(\Fla) \otimes \coh_2(\Fla) \to \Z$ to the restriction of the evaluation pairing $\langle \ , \ \rangle $ to $(\oplus_{i=0}^n \Z \lambda_i) \times (\oplus_{j=0}^n \Z \alpha_j^\vee)$. With these notations, the {\em Chevalley formula} in $\coh^*(\Fla)$ states that if $0\le i \le n$ and $w\in W_\af$ then 
\begin{equation}\label{chaff} \ep_i\cdot \ep_w = \sum_{\alpha} \langle \lambda_i , \alpha^\vee \rangle \ep_{ws_\alpha},\end{equation}
where the sum runs over all positive real roots $\alpha \in \Pi_\af^{{re},+}$ such that $\ell(w s_\alpha)=\ell(w)+1.$ 
See \cite[Theorem 11.1.7 (i) and Corollary 11.3.17, Eq.~(3)]{kumar:kacmoody}.

\section{A morphism between the affine and finite nil-Coxeter rings} 
%divided difference rings}
Let $R_\af$ and $R$ be the {\em nil-Coxeter rings} of divided difference operators associated to the affine Weyl group $W_\af$ respectively the finite Weyl group $W$. A generalization of these rings, called the {\em nil-Hecke rings}, has been studied by Kostant and Kumar \cite{Ko-Ku} in the more general setting of equivariant cohomology of Kac-Moody flag varieties; see also \cite[\S 11.1]{kumar:kacmoody}. The main goal of this section is to construct a ring homomorphism $\pi: R_\af \to R$; see Theorem \ref{T:ev} below. We recall next the relevant definitions.

Denote by $D_i: \coh^k(\Fla) \to \coh^{k-2}(\Fla)$ (where $0 \le i \le n$ and $k \ge 0$) the affine BGG operator acting on the Schubert basis of $\coh^*(\Fla)$ by 
\[ D_i (\varepsilon_v) = \begin{cases} \varepsilon_{vs_i} & \textrm{ if } \ell(vs_i) < \ell(v); \\ 0 & \textrm{otherwise} \/. \end{cases} \]  
Geometrically, these operators arise as ``push-pull" operators in a fibre diagram of $\P^1$-bundles on Kashiwara's ``thick" flag manifold; see \cite{Ka-Sh}. The operators $D_i$ satisfy the nilpotence and braid relations:
\begin{equation}\label{eq:braid} D_i^2 = 0; \quad D_i D_j \cdots = D_j D_i \cdots \quad  (m_{ij} {\rm ~factors}), \quad 0 \le i , j \le n \end{equation} where $m_{ij}$ is the order of $s_i s_j$ in $W_\af$. This implies that for each $w \in W_\af$ with a reduced word $w= s_{i_1} \cdots  s_{i_k}$ there is a well defined operator $D_w:= D_{i_1} \cdots  D_{i_k}$ which is independent of the choice of the word. Then $R_\af$ has a $\Z$-basis given by elements $D_w$ ($w \in W_\af$) \cite[Theorem 11.2.1]{kumar:kacmoody} with the multiplication given by composition. 

If we omit the ``affine" operator $D_0$, replace the affine Weyl group by the finite Weyl group $W$, and the cohomology ring $\coh^*(\Fla)$ by $\coh^*(G/B)$, we obtain the description of the finite nil-Coxeter ring $R$. To distinguish them from the affine case, we denote the finite BGG operators by $\partial_i$ and $\partial_w$ respectively. 
In fact, $\partial_i$ are just special cases of the classical divided difference operators $\partial_\alpha : \coh^*(G/B)\to
\coh^*(G/B)$, $\alpha \in \Pi$ (any finite root), which were defined by Bernstein, I.~M.~Gelfand, and S.~I.~Gelfand in \cite{BGG}.
More precisely, we have $\partial_i=\partial_{\alpha_i}$, $1\le i \le n$.
We note that for any $\alpha\in \Pi$, the operator $\partial_\alpha$ is originally the endomorphism of ${\rm Sym}(\h_\Q^*)$ given by
\begin{equation}\label{delt}\partial_\alpha := \frac{{\rm id}- s_\alpha}{\alpha}.\end{equation}
One then uses the  Borel presentation $\coh^*(G/B;\Q)={\rm Sym}(\h_\Q^*)/\langle {\rm Sym}(\h_\Q^*)^W_+\rangle$,
where $\langle {\rm Sym}(\h_\Q^*)^W_+\rangle$ is the ideal of ${\rm Sym}(\h_\Q^*)$ generated by the non-constant $W$-invariant polynomials. Recall that this presentation arises by identifying each $\omega_i$ with $c_1({\mathcal L}_{\omega_i})$, $1\le i \le n$ (the line bundles ${\mathcal L}_{\omega_i}$
are defined in \S \ref{prel} above). We notice that the operator $\partial_\alpha$ preserves integral cohomology classes. For future use we record the following well-known Leibniz properties satisfied by the BGG operators:

\begin{prop}\label{prop:leibniz} 
(a)  Let $\alpha$ be a root in the finite root system $\Pi$ and $x,y \in \coh^*(G/B)$. Then \[ \partial_{\alpha}(xy) = \partial_{\alpha}(x) y + x \partial_{\alpha}(y) - c_1(\mathcal{L}_\alpha) \partial_{\alpha}(x) \partial_{\alpha}(y) \/. \] 

(b)
 For any $0\le i \le n$ and any $x, y\in \coh^*(\Fla)$ we have 
$$ D_i (xy) = D_i (x) y + x D_i (y) - c_1(\mathcal{L}_{\alpha_i}) D_i (x) D_i (y).$$
\end{prop}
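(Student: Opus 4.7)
\medskip

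\noindent\textbf{Proof plan.}  Both identities follow from the same purely algebraic fact, and the only real content is to transport it to cohomology via the appropriate ``polynomial representation.''

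\emph{The algebraic core.}  Suppose $R$ is a commutative ring, $\sigma$ a ring involution of $R$, and $\alpha \in R$ such that $x - \sigma(x)$ is divisible by $\alpha$ for every $x$ in some $\sigma$-stable subring.  Then the operator $\partial := (\mathrm{id}-\sigma)/\alpha$ satisfies $\sigma(x) = x - \alpha\partial(x)$, and the elementary identity
\[
xy - \sigma(x)\sigma(y) \;=\; (x-\sigma(x))\,y + \sigma(x)\,(y-\sigma(y))
\]
divided by $\alpha$ gives
\[
\partial(xy) \;=\; \partial(x)\,y + \sigma(x)\,\partial(y) \;=\; \partial(x)\,y + x\,\partial(y) - \alpha\,\partial(x)\,\partial(y).
\]
I would verify this once, in this generality.

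\emph{Part (a).}  Apply the core identity in $R = {\rm Sym}(\h_\Q^*)$ with $\sigma = s_\alpha$ and the root $\alpha \in \h_\Q^*$; the divisibility hypothesis holds because $s_\alpha$ fixes the kernel of $\alpha$ pointwise and acts by $-1$ on $\alpha$.  By (\ref{delt}) the resulting operator is precisely the BGG $\partial_\alpha$ acting on ${\rm Sym}(\h_\Q^*)$.  Passing to the Borel presentation $\coh^*(G/B;\Q) = {\rm Sym}(\h_\Q^*)/\langle {\rm Sym}(\h_\Q^*)_+^W\rangle$ and using that $\omega_i$ is identified with $c_1(\mathcal L_{\omega_i})$ (so $\alpha$ becomes $c_1(\mathcal L_\alpha)$) gives the claimed identity in $\coh^*(G/B;\Q)$.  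Since $\partial_\alpha$ preserves integral classes and $\coh^*(G/B;\Z)$ is torsion-free (it has a Schubert basis), the identity also holds over $\Z$.

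\emph{Part (b).}  I would run the identical argument using the polynomial representation of the affine nil-Coxeter / nil-Hecke algebra constructed in \cite{kumar:kacmoody}: the $D_i$ are realized on ${\rm Sym}(\h_\af^*)$ (respectively on the ``twisted group algebra'' $Q_W$ of Kostant--Kumar) as $(\mathrm{id}-s_i)/\alpha_i$, so the core identity gives
\[
D_i(xy) \;=\; D_i(x)\,y + x\,D_i(y) - \alpha_i\,D_i(x)\,D_i(y)
\]
at the polynomial level.  One then descends to $\coh^*(\Fla)$ via the characteristic homomorphism $c: {\rm Sym}(\h_\af^*)\to\coh^*(\Fla)$ sending an integral weight $\lambda$ to $c_1(\mathcal L_\lambda)$; by Kumar's description of $\coh^*(\Fla)$ as a module over $Q_W$ (Chapter~11 of \emph{loc.~cit.}), this homomorphism intertwines the polynomial action of $D_i$ with the one on cohomology, so the Leibniz identity descends with $\alpha_i$ replaced by $c_1(\mathcal L_{\alpha_i})$.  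Integrality again follows because all operators involved preserve the integral Schubert lattice, which is torsion-free.

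\emph{Where the work actually is.}  The algebra of the Leibniz rule is trivial; the only delicate point is the descent step in (b), i.e.\ verifying that the $D_i$ on Schubert cohomology of the infinite-dimensional $\Fla$ really are the images of the operators $(\mathrm{id}-s_i)/\alpha_i$ on the polynomial representation, so that the identity transfers faithfully to a statement about cup products with $c_1(\mathcal L_{\alpha_i})$.  For this I would cite Kumar's compatibility between the nil-Hecke algebra action on $\coh^*(\Fla)$ and the characteristic homomorphism, which is precisely what makes both the defining formula $D_i(\eps_v)=\eps_{vs_i}$ (or $0$) and the polynomial realization $(\mathrm{id}-s_i)/\alpha_i$ describe the same operator.
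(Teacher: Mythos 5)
Part (a) of your proposal is correct and is essentially the paper's argument: from (\ref{delt}) one gets the twisted Leibniz rule $\partial_\alpha(fg)=\partial_\alpha(f)g+s_\alpha(f)\partial_\alpha(g)$ on ${\rm Sym}(\h_\Q^*)$, substitutes $s_\alpha(f)=f-\alpha\,\partial_\alpha(f)$, and passes to the Borel presentation.

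For part (b), however, your descent step has a genuine gap. You propose to prove the identity on ${\rm Sym}(\h_\af^*)$ and then push it to $\coh^*(\Fla)$ along the characteristic homomorphism $\lambda\mapsto c_1(\mathcal{L}_\lambda)$. Unlike in the finite case, this homomorphism is very far from surjective for the affine flag variety: its image in positive degrees lies in the subring generated by the divisor classes $\eps_0,\ldots,\eps_n$, which the paper emphasizes is a \emph{proper} subring $\coh^\#(\Fla)\subsetneq\coh^*(\Fla)$, even over $\Q$. So even granting that the characteristic homomorphism intertwines the two realizations of $D_i$, your argument only establishes the Leibniz identity for $x,y$ in that image, not for arbitrary classes of $\coh^*(\Fla)$ as the proposition requires. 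The correct route is to work in a model that captures the whole ring, namely the Kostant--Kumar description of $\coh^*(\Fla)$ as dual to the nil-Hecke ring: there the twisted Leibniz rule $D_i(xy)=D_i(x)y+s_i(x)D_i(y)$ holds for all $x,y$ (Kumar, Theorem 11.1.7(4) combined with Theorem 11.3.9), and one then substitutes $s_i(x)=x-c_1(\mathcal{L}_{\alpha_i})D_i(x)$. This is exactly what the paper does; your parenthetical mention of the twisted group algebra of Kostant--Kumar points in the right direction, but the argument as written routes through the polynomial ring and its non-surjective characteristic map.
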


\begin{proof} (a) From (\ref{delt}) we deduce easily that for any $f, g \in S(\h_\Q^*)$ we have 
\begin{equation}\label{alpfg}\partial_\alpha(fg) =\partial_\alpha(f)g+s_\alpha(f)\partial_\alpha(g).\end{equation} 
It only remains to observe that $s_\alpha(f) =f- \alpha\partial_\alpha(f)$, and use the aforementioned 
Borel isomorphism $\coh^*(G/B;\Q)={\rm Sym}(\h_\Q^*)/\langle {\rm Sym}(\h_\Q^*)^W_+\rangle$. 

 (b) By \cite[Theorem 11.1.7 (4) and Theorem 11.3.9]{kumar:kacmoody} we have that $D_i(xy) = D_i(x)y+s_i (x)D_i(y)$. But from \cite[Theorem 11.3.9]{kumar:kacmoody} combined with \cite[Eq.~(7), p.~373]{kumar:kacmoody}, we have 
 $s_i(x) = x -c_1(\mathcal{L}_{\alpha_i})D_i(x).$ This finishes the proof.
\end{proof}
 
\begin{remark}\label{rmk:rw} The divided difference operator $\partial_\alpha$ can also be defined by \begin{equation}\label{eq:palpha} \partial_\alpha = r_w^* \partial_i r_{w^{-1}}^* \end{equation} where $w \in W$, $\alpha_i \in \Delta$ are such that $w(\alpha_i) = \alpha$, and $r_w^*: \coh^*(G/B;\Z ) \to \coh^*(G/B;\Z)$ is the degree $0$, $\Z$-algebra automorphism determined by the {\em right} Weyl group action of $w \in W$ on $\coh^*(G/B)$. We will use this alternate definition in \S \ref{frodu} below, and we refer to \cite{knutson:noncomplex,tymoczko:permutation} for the explicit construction and formulas for $r_w$ in the finite setting. %{\color{red} Vezi comentariile tale si raspunsul meu in fisierul .tex.} 
%{\color{blue}"the right Weil group action of  $w \in W$ on  $H^*(G/B)$".
%In mod normal, cand vorbim despre coomologia obisnuita (ne-echivarianta),
%actiunea considerata e intr-adevar "the right-action", cea care e indusa de 
%actiunea la dreapta a lui  W  pe  G/B. {\color{blue} LM: $W$ nu actioneaza la dreapta pe $G/B$, ci pe $K/T$, sau pe $G/T$. De aici vine actiunea.} Ceea ce se numeste 
%"the left action" e relevanta doar in coomologia echivarianta.
%Eu as zice sa nu mai scoatem in evidenta ca e "right action".
%Deasemenea as sugera sa nu mai mentionam lucrarile alea doua, a
%lui Knutson si a lui Tymoczko, care nu mi se par relevante: in primul
%rand trateaza cazul echivariant si in al doilea rand vorbesc de cazul
%finit dimensional.} {\color{blue} LM: Folosim in Lemma 9.1 definitia asta si actiunea $r_w$, in cazul finit dimensional. Mi se pare natural sa pomenim de lucrurile astea in partea de preliminarii, iar Knutson si Tymoczko sunt referinte bune in contextul inelului de coomologie, echivarianta sau nu.} 
Notice also that the same definition extends in the Kac-Moody generality, see e.g.~\cite[p. 387]{kumar:kacmoody}. \end{remark} 
 
The main result of this section is:

\begin{thm}\label{T:ev} There is a well-defined ring homomorphism $\pi: R_\af \to R$ sending $D_i$ to $\partial_i$ if $i \neq 0$ and $D_0$ to $\partial_{-\theta}$, where $\theta$ is the highest root of the finite root system $\Pi$.\end{thm}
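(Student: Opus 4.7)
By \cite[Theorem 11.2.1]{kumar:kacmoody}, $R_\af$ admits a presentation as the nil-Coxeter algebra on generators $D_0, \ldots, D_n$ subject to $D_i^2 = 0$ and the braid relations $\underbrace{D_i D_j \cdots}_{m_{ij}^{\rm aff}} = \underbrace{D_j D_i \cdots}_{m_{ij}^{\rm aff}}$ whenever the order $m_{ij}^{\rm aff}$ of $s_i s_j$ in $W_\af$ is finite. To define $\pi$ I would extend the assignment on generators to the free $\Z$-algebra and verify that the images $\pi(D_i) \in R$ satisfy the corresponding relations. Nilpotence is straightforward: for $i \neq 0$ it is the classical identity $\partial_i^2 = 0$; for $i = 0$, since $\partial_{-\theta} = -\partial_\theta$, the identity $\partial_\alpha^2 = 0$ for an arbitrary root $\alpha \in \Pi$ follows from a direct computation using $\partial_\alpha = (\mathrm{id} - s_\alpha)/\alpha$: if $g := \partial_\alpha(f)$ then $s_\alpha(g) = g$ (because $s_\alpha(\alpha) = -\alpha$), whence $\partial_\alpha(g) = 0$. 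The braid relations among $D_i, D_j$ with $i,j \neq 0$ pose no difficulty, as $m_{ij}^{\rm aff} = m_{ij}^W$ (the finite simple reflections generate $W$ as a Coxeter subgroup of $W_\af$ with the same pairwise orders), and the corresponding braid identity for $\partial_i, \partial_j$ is the classical BGG relation \cite{BGG}.

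The main step is the braid relation involving $D_0$ and $D_j$ for each $j \neq 0$ with $m := m_{0j}^{\rm aff}$ finite: I must verify $\underbrace{\partial_{-\theta} \partial_j \cdots}_m = \underbrace{\partial_j \partial_{-\theta} \cdots}_m$ in $R$. My plan is to exploit the observation that $\{-\theta, \alpha_j\}$ forms a simple system for the rank-two root subsystem $\Pi'_j \subset \Pi$ it spans---both Cartan pairings are non-positive since $\theta$ is dominant. Inspection of the extended Dynkin diagram of $G$ (equivalently the affine Dynkin diagram of $\g_{\rm aff}$) shows that the order of $s_\theta s_j$ in $W$, which governs the Cartan type of $\Pi'_j$, coincides with $m_{0j}^{\rm aff}$. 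I would then exhibit $w \in W$ and simple roots $\alpha_{i_0}, \alpha_{j_0} \in \Delta$ such that $w(\alpha_{i_0}) = -\theta$ and $w(\alpha_{j_0}) = \alpha_j$; the existence of such $w$ reflects the fact that $(-\theta, \alpha_j)$ is $W$-conjugate to a pair of simple roots with matching Cartan integers and root lengths. By formula (\ref{eq:palpha}), one has $\partial_{-\theta} = r_w^* \partial_{i_0} r_{w^{-1}}^*$ and $\partial_j = r_w^* \partial_{j_0} r_{w^{-1}}^*$; conjugating the classical braid relation among $\partial_{i_0}, \partial_{j_0}$ by the $\Z$-algebra automorphism $r_w^*$ then yields the desired identity.

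The main obstacle is the combinatorial verification of the conjugator $w$ across all Lie types, together with the matching of dihedral orders $m_{0j}^{\rm aff} = m_{i_0 j_0}^W$. Should this case analysis prove cumbersome, a uniform alternative I would pursue is to compute directly in the twisted group algebra $\mathrm{Frac}(\mathrm{Sym}(\h_\Q^*)) \rtimes W$: expanding each $\partial_\gamma$ as $(\mathrm{id} - s_\gamma)/\gamma$ and reducing the braid identity for BGG operators attached to any simple system $\{\alpha, \beta\}$ of a rank-two subsystem to the dihedral Coxeter relation $(s_\alpha s_\beta)^m = 1$ in $W$. Either route completes the verification and establishes the existence of $\pi$.
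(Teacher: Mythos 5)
Your overall architecture coincides with the paper's: first establish that $R_\af$ is presented by the generators $D_i$ subject to $D_i^2=0$ and the braid relations, then verify that $\partial_1,\dots,\partial_n,\partial_{-\theta}$ satisfy the same relations. One small caveat at the first step: Kumar's Theorem 11.2.1 gives the $\Z$-basis $\{D_w\}$ of $R_\af$, not the presentation itself; deducing the presentation from the basis requires the Exchange Condition argument that the paper isolates as Lemma \ref{sgph}. Your verification of nilpotence and of the braid relations among $D_i,D_j$ with $i,j\neq 0$ is correct.

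Your primary route for the mixed braid relation between $D_0$ and $D_j$ has a genuine gap. You want $w\in W$ and simple roots $\alpha_{i_0},\alpha_{j_0}\in\Delta$ with $w(\alpha_{i_0})=-\theta$ and $w(\alpha_{j_0})=\alpha_j$, so that conjugation by $r_w^*$ reduces everything to a classical braid relation between $\partial_{i_0}$ and $\partial_{j_0}$. Such a $w$ need not exist. Take $G$ of type $G_2$ and let $\alpha_j$ be the long simple root adjacent to the affine node, so that $m_{0j}=3$ and $\{-\theta,\alpha_j\}$ is a simple system for the $A_2$ subsystem of long roots. Every $w\in W$ preserves root lengths, but the unique pair of simple roots of $G_2$ consists of one long and one short root, so no $w$ can carry a pair of simple roots onto the two long roots $\{-\theta,\alpha_j\}$. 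In general a pair of roots forming a simple system of the rank-two subsystem it generates need not be $W$-conjugate to a pair of simple roots, so route (a) cannot be completed uniformly across types.

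Your fallback is the correct fix and is essentially the paper's argument: the braid relation for $\partial_{-\theta}$ and $\partial_j$ is a statement internal to the rank-two root system for which $\{-\theta,\alpha_j\}$ is a simple system. Setting $V=\mathrm{Span}_\Q\{-\theta^\vee,\alpha_j^\vee\}$, both operators restrict to the BGG operators of that rank-two system on $\mathrm{Sym}(V^*)$ (where the braid relation is the classical dihedral computation) and vanish on $\mathrm{Sym}((V^\perp)^*)$; the Leibniz identity (\ref{alpfg}) then extends the relation to $\mathrm{Sym}(\h^*)=\mathrm{Sym}(V^*)\otimes\mathrm{Sym}((V^\perp)^*)$. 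Combined with your (correct) observation that the Cartan integers $\langle -\theta,\alpha_j^\vee\rangle$, $\langle\alpha_j,-\theta^\vee\rangle$ match those of $\alpha_0,\alpha_j$, so that $m_{0j}$ equals the dihedral order determined by the subsystem (this is the content of (\ref{mij})), this closes the proof. If you present the fallback as the main argument, rather than as a contingency, the proposal is sound.
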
 

Before proving the theorem, we remark that the homomorphism $\pi$ appeared in Peterson's lecture notes \cite{Pe}, but we could not find a proof for its properties therein. The strategy of proof uses two facts: that the nil-Coxeter ring $R_\af$ has a presentation with generators $D_i$ and relations (\ref{eq:braid}), and that these relations are preserved under the push-forward by $\pi$.  

Define $A_\af$ the ring with generators $A_i$, for $0 \le i \le n$ and relations (\ref{eq:braid}) where we replace $D_i$ by $A_i$. There is a ring homomorphism $a: A_\af \to R_\af$ sending $A_i$ to $D_i$. 
\begin{lemma}\label{sgph}
The ring homomorphism $a:A_{\af}\to R_{\af}$ is an isomorphism.
\end{lemma}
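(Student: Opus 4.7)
The plan is to produce a $\Z$-basis $\{A_w : w \in W_\af\}$ of $A_\af$ such that $a(A_w)=D_w$. Since the map $a$ is visibly surjective (the $A_i$ generate $A_\af$) and Kumar's Theorem 11.2.1 says that $\{D_w : w \in W_\af\}$ is a $\Z$-basis of $R_\af$, exhibiting such a basis will force $a$ to be a bijection on bases, hence a ring isomorphism. Thus the whole task is purely combinatorial: to define $A_w \in A_\af$ unambiguously, show the $A_w$ span, and show they are $\Z$-linearly independent.

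First I would define $A_w$ as follows: pick any reduced word $w=s_{i_1}\cdots s_{i_k}$ for $w \in W_\af$ and set $A_w:=A_{i_1}\cdots A_{i_k}$. By Matsumoto/Tits' theorem for Coxeter groups (applicable here because $(W_\af, \{s_0,\ldots,s_n\})$ is a Coxeter system), any two reduced words for $w$ are related by a sequence of braid moves; since the braid relations are imposed in the presentation of $A_\af$, the element $A_w$ is independent of the chosen reduced word.

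Next I would show the $A_w$ span $A_\af$. Any element of $A_\af$ is a $\Z$-linear combination of monomials $A_{i_1}\cdots A_{i_k}$. If the word $s_{i_1}\cdots s_{i_k}$ is reduced in $W_\af$, the monomial equals $A_w$ for $w=s_{i_1}\cdots s_{i_k}$. If it is not reduced, the Strong Exchange Condition recalled in Section \ref{prel} (equivalently, the Deletion Condition) together with Tits' theorem allows one to apply braid moves alone to transform the word into one containing a factor $s_j s_j$; performing the same braid moves inside $A_\af$ rewrites the monomial as a product containing $A_j^2=0$, so the monomial vanishes. Consequently, only reduced monomials survive, and these are precisely the $A_w$.

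Finally, linear independence is free: if $\sum_w c_w A_w = 0$ in $A_\af$, applying $a$ gives $\sum_w c_w D_w = 0$ in $R_\af$, so all $c_w=0$ by the Kumar basis result. Therefore $\{A_w\}$ is a $\Z$-basis, $a$ takes this basis bijectively to $\{D_w\}$, and $a$ is the desired ring isomorphism.

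The main obstacle is the spanning step: one needs to check that the reduction of a non-reduced monomial to zero can be carried out using only the defining relations of $A_\af$ (nilpotence $A_i^2=0$ and braid relations), without secretly invoking the Coxeter relations $(s_is_j)^{m_{ij}}=1$. This is exactly the content of Tits' ``word problem'' theorem, and once invoked, the rest of the argument is formal.
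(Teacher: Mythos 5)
Your proposal is correct and follows essentially the same route as the paper: define $A_w$ via a reduced word (well-defined by Matsumoto/Tits since the braid relations hold in $A_\af$), show non-reduced monomials vanish using the Exchange Condition together with $A_j^2=0$, and get linear independence for free by pushing forward to the basis $\{D_w\}$ of $R_\af$. The only cosmetic difference is that the paper organizes the spanning step as an induction on word length rather than quoting Tits' word-problem theorem outright, but the underlying mechanism (braid moves to expose a repeated factor $s_js_j$) is identical.
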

This lemma appears to be known among experts, although we could not find a reference for it. We are grateful to S.~Kumar who suggested to us the approach used in the proof.
\begin{proof} Let $w\in W_\af$ with a reduced expression $w=s_{i_1}\cdots s_{i_k}$. There is a well defined element $A_w:=A_{i_1}\cdots A_{i_k}$ in $A_\af$. 
To finish the proof it suffices to show that $\{A_w \mid w\in W_\af\}$ is a $\Z$-basis of $A_{\af}$. 
First, this set is linearly independent, since $a(A_w)=D_w$ and $\{D_w\mid w\in W_\af\}$ is a basis of $R_{\af}$. 
To show that $A_w$ span the $\Z$-module $A_\af$, it suffices to show that for $i_1,\ldots, i_k\in \{0,1,\ldots, n\}$ we have
$$
A_{i_1}\cdots A_{i_k} 
=\begin{cases}
A_{s_{i_1}\cdots s_{i_k}} , \ {\rm if} \ s_{i_1} \cdots s_{i_k} \ {\rm is \ a \ reduced \ expression}\\
0, \ {\rm otherwise.}
\end{cases} $$
We prove this claim by induction on $k$. The case $k=1$ is clear, so let $k \ge 2$ and assume that $u:=s_{i_1} \cdots s_{i_k}$ satisfies $\ell(u) <k$. We have $u=u's_{i_k}$, where $u':= s_{i_1} \cdots s_{i_{k-1}}$.
If the word $s_{i_1} \cdots s_{i_{k-1}}$ is not reduced, then by the induction hypothesis,
$ A_{i_1}\cdots A_{i_k}=A_{i_1}\cdots A_{i_{k-1}} A_{i_k}=0$.
If $\ell(u') = k-1$ then
 $\ell(u's_{i_k})<\ell(u')$ and by the Exchange Condition (cf.~e.g.~\cite[p. $14$]{Hu2}), there exists a reduced expression for $u'$ which ends with $s_{i_k}$, that is
$u'=s_{j_1} \cdots s_{j_{k-2}}s_{i_k}$.
 We then have
$ A_{i_1}\cdots A_{i_k} = A_{i_1}\cdots A_{i_{k-1}} A_{i_k}
= A_{j_1}\cdots A_{j_{k-2}} A_{i_k}A_{i_k}=0$
since $A_{i_k}^2=0$.
\end{proof}

\begin{proof}[Proof of Theorem \ref{T:ev}] By Lemma \ref{sgph} it suffices to show that the relations (\ref{eq:braid}) are preserved under $\pi$. Since the map $W \to W_\af$ sending $s_i$ to $s_i$ ($i \neq 0$) is an injective group homomorphism, it suffices to check that
\begin{equation}\label{chbr} \partial_{-\theta}^2 = 0; \quad \partial_i \partial_{-\theta} \cdots  = \partial_{-\theta} \partial_i\cdots  \quad (m_{0i} \textrm{ factors }),\quad  1 \le i \le n\/, \end{equation} where $m_{0i}$ is the order of $s_0 s_i$ in the {\em affine} Weyl group $W_\af$.
 It is known that
\begin{equation}\label{mij}m_{0i}=\begin{cases}
2, \ {\rm if} \ \langle \alpha_0, \alpha_i^\vee\rangle \langle \alpha_i,\alpha_0^\vee\rangle= 0\\  
 3, \ {\rm if} \ \langle \alpha_0, \alpha_i^\vee\rangle \langle \alpha_i,\alpha_0^\vee\rangle= 1\\
4,   \ {\rm if} \ \langle \alpha_0, \alpha_i^\vee\rangle \langle \alpha_i,\alpha_0^\vee\rangle= 2\\
6,  \ {\rm if} \ \langle \alpha_0, \alpha_i^\vee\rangle \langle \alpha_i,\alpha_0^\vee\rangle= 3,
\end{cases}
\end{equation}
see for instance \cite[Proposition 3.13, p.~41]{Kac}. 
Observe that for any $1\le i \le n$ we have 
$$\langle \alpha_i ,\alpha_0^\vee \rangle=\langle \alpha_i ,-\theta^\vee \rangle \ {\rm and} \ 
  \langle \alpha_0,\alpha_i^\vee\rangle= \langle -\theta,\alpha_i^\vee\rangle.$$
  Consider the root system generated by $-\theta$ and $\alpha_i$. A case by case analysis of the extended Dynkin diagrams
  (see e.g.~\cite[Table Aff1, p.~44]{Kac}) shows that  if $\g$ is not of type $C$ the elements of the subsystem are
  $-\theta$, $\alpha_i$, and $-\theta+\alpha_i$ along with their negatives;
   if $\g$ is of type $C$, the subsystem consists of  $-\theta$, $\alpha_i$, $-\theta+\alpha_i$, and $-\theta+2\alpha_i$ along with their negatives.
   These roots are a root system in ${\rm Span}_\bR\{-\theta,\alpha_i\}$. 
   A system of simple roots is $\{-\theta, \alpha_i\}$. Thus, if $V:={\rm Span}_\Q\{\alpha_i^\vee,-\theta^\vee\}$, then
   the operators $\partial_{-\theta},\partial_i:{\rm Sym}(\h_\Q^*)\to {\rm Sym}(\h_\Q^*)$ leave ${\rm Sym}(V^*)$ invariant and we have
   $$\partial_{-\theta}^2|_{{\rm Sym}(V^*)} = 0; \quad \partial_i \partial_{-\theta} \cdots  |_{{\rm Sym}(V^*)}= \partial_{-\theta} \partial_i\cdots  |_{{\rm Sym}(V^*)}\quad (m_{0i} \textrm{ factors}),\quad  1 \le i \le n \/; $$ here ${\rm Sym}(V^*)$ denotes the symmetric algebra of $V^*$. 
   On the other hand, if $V^\perp$ denotes the orthogonal complement of $V$ in
   $\h_\Q$, it follows from (\ref{delt}) that both $\partial_{-\theta}$ and $\partial_i$ restricted to
 ${\rm Sym}( (V^\perp)^*)$  are identically 0. We take into account that 
 ${\rm Sym}(\h^*)={\rm Sym}(V^*)\otimes {\rm Sym}((V^\perp)^*)$ and use the Leibniz property (\ref{alpfg}) to deduce that
 for $f\in {\rm Sym}(V^*)$ and $g\in {\rm Sym}((V^\perp)^*)$ we have 
 $$\partial_i(fg)=\partial_i(f) g \quad {\rm and} \quad \partial_{-\theta}(fg) = \partial_{-\theta}(f) g.$$
 This proves equations (\ref{chbr}).
\end{proof}

\section{The ring homomorphism $\mathrm{e}_1^*:\coh^*(G/B) \to \coh^*(\Fla)$}\label{s:p}
It is well known that the finite flag variety $G/B$ is homotopically equivalent to $K/T_\bR$ where $K$ is a maximal compact subgroup of $G$ and $T_\bR$ its maximal (real) torus. Unpublished results of Quillen (see e.g.~\cite{pressley:decamp,mitchell:quillenthm}) show that the affine flag variety $\Fla$ is homotopically equivalent to $LK/T_\bR$, where $LK$ is the group of (unbased) continuous loops $f:S^1 \to K$. Therefore there exists a continuous map $\e_1: LK/T_\bR \to K/T_\bR$ obtained by evaluating a loop $f(t)$ to $t=1$. This induces a ring homomorphism \[ \mathrm{e}_1^*:\coh^*(K/T_\bR) = \coh^*(G/B) \to \coh^*(LK/T_\bR) = \coh^*(\Fla) \/. \] Mare proved in \cite{Ma3} that \begin{equation}\label{eq:php} \mathrm{e}_1^*(\sigma_i) = \eps_i - m_i \eps_0, \quad 1 \le i \le n, \end{equation} where the integers $m_i$ are the coefficients $\theta^\vee = m_1 \alpha_1^\vee + \cdots  + m_{n} \alpha_n^\vee$ of the dual of the highest root $\theta \in \Pi$ in terms of the simple coroots. Since the ring $\coh^*(G/B)$ is generated (over $\Q$) by monomials in the Schubert divisors $\sigma_i$, the identity (\ref{eq:php}) determines the morphism $\mathrm{e}_1^*$. The main goal of this section is to construct the morphism $\mathrm{e}_1^*$ in the {\em algebraic} category, and to study its properties. In particular we will reprove the identity (\ref{eq:php}). The main new result is Theorem \ref{thm:crucial}, which states that $\mathrm{e}_1^*$ commutes with divided difference operators, i.e. for any $a \in \coh^*(G/B)$ and $w \in W_\af$ there is an identity \[ D_w \mathrm{e}_1^*(a) = \mathrm{e}_1^*(\pi(D_w) a) \] where $\pi:R_\af \to R$ was defined in Theorem \ref{T:ev}. This identity is the key step in proving that the new quantum product we will define later is closed. 

%\subsection{The algebraic construction of $\mathrm{e}_1^*$ and $p_*$} 

Consider the composition of morphisms:
\[ \e_1: \xymatrix{(\C^* \ltimes G(\C[t,t^{-1}]))/(\C^* \times T)\ar[rr]^{\quad =} && G(\C[t,t^{-1}])/T \ar[rr]^{\quad t=1}& & G/T} \]
where the first morphism ``cancels" the loop action by $\C^*$ and the second is determined by the natural evaluation map $G(\C[t,t^{-1}]) \to G$ at $t=1$.  We abuse notation and denote the composition by $\e_1$, as in the topological case. Note that the ``algebraic" morphism $\e_1$ does not extend to one $\Fla=\cG/\cB \to G/B$ because the evaluation map does not send the standard Iwahori subgroup of $G(\C[t,t^{-1}])$ into the Borel group $B \subset G$. However, as explained in \cite[p. 400]{kumar:kacmoody} there is a fibre bundle $(\C^* \ltimes G(\C[t,t^{-1}]))/(\C^* \times T) \to \cG/\cB$ in the strong topology with fibre the unipotent group $\mathcal{U} \simeq \cB/\mathcal{T}$ which is contractible. This induces a ring isomorphism $\coh^*(\cG/\cB) \simeq \coh^*((\C^* \ltimes G(\C[t,t^{-1}]))/(\C^* \times T))$ obtained by pulling back from $\coh^*(\cG/\cB)$ and an isomorphism between the corresponding homology groups. Same discussion applies and it gives a ring isomorphism $\coh^*(G/B) \simeq \coh^*(G/T)$ and a group isomorphism $\coh_*(G/B) = \coh_*(G/T)$. Therefore there are well-defined ring, respectively group homomorphisms
$ \mathrm{e}_1^*: \coh^*(G/B) \to \coh^*(\cG/\cB)$ and $(\e_1)_*: \coh_*(\cG/\cB) \to \coh_*(G/B)$.

\begin{prop}\label{prop:inj} The morphism $\e_1^*: \coh^*(G/B) \to \coh^*(\Fla)$ is injective. \end{prop}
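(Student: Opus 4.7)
The plan is to construct an algebraic section $s$ of $\e_1$ from constant loops and conclude that the induced retraction on cohomology, $s^* \circ \e_1^* = \id$, forces $\e_1^*$ to be injective.

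First I would define the section. For $g \in G$ let $\bar g \in G(\C[t,t^{-1}])$ denote the constant loop $\bar g(t) = g$. Because $\C^*$ acts by loop rotation and constant loops are rotation-invariant, the assignment $g \mapsto (1, \bar g)$ is an algebraic group homomorphism $G \hookrightarrow \cG = \C^* \ltimes G(\C[t,t^{-1}])$; it sends $T$ into $\cT = \C^* \times T$ (constants in $T$ remain in $T$). Hence it descends to an algebraic map
\[ s : G/T \longrightarrow \cG/\cT, \qquad gT \longmapsto [(1, \bar g)] \cdot \cT. \]
Well-definedness is an explicit check using the semidirect product: a right multiplication by $(z, t) \in \cT$ gives $(1, \bar g) \cdot (z, t) = (z, \bar g \cdot (1 \cdot t)) = (z, \bar g t)$, since $1 \cdot t = t$ for the constant loop $t$; so the coset $[(1,\bar g)]\cT$ depends only on $gT$.

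Second, I would verify $\e_1 \circ s = \id_{G/T}$. Under the identification $\cG/\cT \cong G(\C[t,t^{-1}])/T$ obtained by choosing the representative with $\C^*$-coordinate equal to $1$, the point $s(gT)$ becomes the class of the constant loop $\bar g$, and evaluation at $t=1$ returns $\bar g(1) T = gT$.

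Third, I would transfer the conclusion to the statement. From $\e_1 \circ s = \id$ we obtain $s^* \circ \e_1^* = \id$ on $\coh^*(G/T)$, so $\e_1^*: \coh^*(G/T) \to \coh^*(\cG/\cT)$ admits a left inverse and is injective. The contractible-fiber bundles $\mathcal{U} \simeq \cB/\cT \to \cG/\cT \to \cG/\cB = \Fla$ and $U \simeq B/T \to G/T \to G/B$, used already in the preamble, produce the ring isomorphisms $\coh^*(\Fla) \simeq \coh^*(\cG/\cT)$ and $\coh^*(G/B) \simeq \coh^*(G/T)$ intertwining the two incarnations of $\e_1^*$; hence the map $\e_1^*: \coh^*(G/B) \to \coh^*(\Fla)$ of the statement is injective as well.

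There is no real obstacle here. The only delicate point worth spelling out is that $s$ is continuous for the strong ind-variety topology on $\cG/\cT$, so that $s^*$ is defined on singular cohomology; this is immediate because the constant-loop inclusion $G \hookrightarrow G(\C[t,t^{-1}])$ lands in a single finite-dimensional stratum of $\cG$ as a closed algebraic embedding, and the passage to the quotient by $T$ is compatible on both sides. An alternative proof route (via the explicit image $\e_1^*(\sigma_i) = \eps_i - m_i \eps_0$ together with the fact that the $\sigma_i$ generate $\coh^*(G/B;\Q)$) would mix algebra with Schubert combinatorics and risk circularity with the sequel; the geometric retraction is cleaner and works over $\Z$.
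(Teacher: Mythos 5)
Your proof is correct and is essentially the paper's argument in a different packaging: the paper trivializes the evaluation fibration via the isomorphism $G(\C[t,t^{-1}]) \simeq G \times G(\C[t,t^{-1}])/G$, $\tilde g \mapsto (\tilde g(1), \tilde g G)$, and invokes injectivity of the pullback along a product projection, which is exactly the split injectivity you obtain directly from the constant-loop section $s$ with $\e_1 \circ s = \id$. Both proofs rest on the same geometric fact and on the same affine-bundle identifications $\coh^*(\cG/\cB) \simeq \coh^*(\cG/\cT)$ and $\coh^*(G/B) \simeq \coh^*(G/T)$, so no further comment is needed.
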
 \begin{proof} 
%Let $L_{alg}G:=G(\C[t,t^{-1}])$ be the space of Laurent polynomial loops into $G$. Denote by $\Omega_{alg} G:= G(\C[t,t^{-1}])/G$ the subspace of based loops. 
There is an (algebraic) isomorphism $G(\C[t,t^{-1}]) \simeq G \times G(\C[t,t^{-1}])/G$ sending a loop $\tilde{g} = \tilde{g}(t)$ to $(\tilde{g}(1), \tilde{g} G)$. This induces an isomorphism $G(\C[t,t^{-1}])/T \to G/T  \times G(\C[t,t^{-1}])/G$ and $\e_1$ is given by composing this with the first projection. But the trivial fibration $
G/T  \times G(\C[t,t^{-1}])/G \to G/T$ gives an injective map $\coh^*(G/T) \to \coh^*(G/T  \times G(\C[t,t^{-1}])/G)$. This and the considerations before the proposition prove the claim. \end{proof}

Let $\omega \in \h^*$ be an integral (finite) weight and consider the embedding $\iota: \h^* \subset \h^*_{\af} = \h^* \oplus \C \delta \oplus \C \lambda_0$ (cf.~ { \S \ref{akm}.}) Denote by $\tilde{\omega}:=\iota(\omega) \in \h^*_\af$. 

\begin{prop}\label{prop:pullback} Let $\omega \in \h^*$ be an integral (finite) weight. Then $\mathrm{e}_1^* \mathcal{L}_\omega \simeq \mathcal{L}_{\tilde{\omega}}$ as line bundles on $(\C^* \ltimes G(\C[t,t^{-1}]))/(\C^* \times T)$. In particular, $\mathrm{e}_1^*(c_1(\mathcal{L}_\omega)) = c_1(\mathcal{L}_{\tilde{\omega}})$ in $\coh^2(\Fla)$. \end{prop}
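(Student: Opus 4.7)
The plan is to identify both sides as associated line bundles $\cG \times^{\mathcal{T}} \C_{-\chi}$ on $\cG/\mathcal{T}$ for appropriate characters $\chi$ of $\mathcal{T} = \C^* \times T$, and then verify that the two characters coincide. The comparison will use a lift of $\e_1$ to an equivariant map of total spaces. Specifically, define $\phi \colon \cG \to G$ by $\phi(z, g(t)) = g(1)$. Although $\phi$ is not a group homomorphism (the semidirect product structure coming from loop rotation spoils this), it is $\mathcal{T}$-equivariant with respect to the surjective group homomorphism $p \colon \mathcal{T} \to T$, $(z,t) \mapsto t$; that is, $\phi(x \cdot \tau) = \phi(x) \cdot p(\tau)$ for all $x \in \cG$ and $\tau \in \mathcal{T}$. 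This is all that is required in order to pull back associated line bundles: a routine check shows that $\e_1^{*}(G \times^{T} \C_{-\omega}) \simeq \cG \times^{\mathcal{T}} \C_{-\omega \circ p}$ as line bundles on $\cG/\mathcal{T}$.

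Next I would recognize $\mathcal{L}_{\tilde\omega}$ on $\cG/\mathcal{T}$ as $\cG \times^{\mathcal{T}} \C_{-\tilde\omega|_{\mathcal{T}}}$, so the proposition reduces to showing the equality of characters $\omega \circ p = \tilde\omega|_{\mathcal{T}}$ on $\mathcal{T}$. Since $\mathrm{Lie}(\mathcal{T}) = \C d \oplus \h$, the first summand being the loop-rotation direction, and since by definition of $\iota$ one has $\langle \tilde\omega, d \rangle = 0$ and $\tilde\omega|_{\h} = \omega$, the differential of $\tilde\omega|_{\mathcal{T}}$ sends $sd + H$ to $\omega(H)$. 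The differential of $\omega \circ p$ is visibly the same. Since $\mathcal{T}$ is connected, the two characters agree, so the two associated line bundles are isomorphic.

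The Chern class identity $\mathrm{e}_1^{*}(c_1(\mathcal{L}_\omega)) = c_1(\mathcal{L}_{\tilde\omega})$ then follows by functoriality of $c_1$, combined with the cohomology isomorphism $\coh^{*}(\cG/\mathcal{T}) \simeq \coh^{*}(\Fla)$ recalled at the beginning of the section. The main subtlety in executing this plan is carrying out the pullback of line bundles across the non-homomorphism $\phi$ rigorously, keeping careful track of the fact that the $\mathcal{T}$-action on $\cG$ and the $T$-action on $G$ are linked only via the projection $p$; once this formalism is set up correctly, matching the two characters is immediate from the definition of $\iota \colon \h^{*} \hookrightarrow \h_{\af}^{*}$.
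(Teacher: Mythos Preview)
Your argument is correct and follows the same underlying idea as the paper --- identify the pullback as an associated bundle and compare characters --- but the technical packaging differs. The paper first observes that $\e_1$ factors through $\e_1' \colon G(\C[t,t^{-1}])/T \to G/T$, and then uses that the evaluation map $G(\C[t,t^{-1}]) \to G$ is a genuine \emph{group homomorphism}. This makes $(\e_1')^*\mathcal{L}_\omega$ a $G(\C[t,t^{-1}])$-equivariant line bundle on a homogeneous space, hence determined by the character of its fibre at the identity coset, which is visibly $\iota(\omega)$. By stripping off the loop-rotation $\C^*$ first, the paper sidesteps exactly the ``non-homomorphism'' issue you flag. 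Your approach keeps the $\C^*$ factor and works only with right-$\mathcal{T}$-equivariance of $\phi$, which is enough to pull back associated bundles but forces you to build the isomorphism $\cG \times^{\mathcal{T}} \C_{-\omega\circ p} \simeq \e_1^*\mathcal{L}_\omega$ by hand. Both routes are short; the paper's buys a cleaner equivariance statement, while yours is more self-contained in that it never leaves $\cG/\mathcal{T}$.
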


\begin{proof} Let $\e_1': G(\C[t,t^{-1}])/T \to G/T$ be the morphism obtained by evaluation at $t=1$. It suffices to prove the statement when $\e_1$ is replaced by $\e_1'$. { Since $\mathcal{L}_\omega$ is a $G$-equivariant bundle and $\e_1'$ is equivariant with respect to the evaluation map $\e_1: G(\C[t,t^{-1}]) \to G$, it follows that $\e_1^*\mathcal{L}_\omega$ is $G(\C[t,t^{-1}])$-equivariant. Thus it is determined by the character of its fibre at the identity coset. It is easy to check that this character is $\iota(\omega)$.}\end{proof}

If the expansion of $\omega$ in terms of the finite fundamental weights is $\omega = a_1 \omega_1 + \cdots  + a_n \omega_n$ then one calculates that $\tilde{\omega} = a_1 \lambda_1 + \cdots  + a_n \lambda_n - \langle \omega, \theta^\vee \rangle \lambda_0$. In particular, if $\omega = \omega_i$ is a fundamental weight in $\h^*$ then $\tilde{\omega_i} = \lambda_i - m_i 
\lambda_0$ and Proposition \ref{prop:pullback} gives an algebraic proof of the identity (\ref{eq:php}).

{\em From now on in this section we consider homology and cohomology with rational coefficients.}
The following is the main result for this section. 
\begin{thm}\label{thm:crucial} For any $a\in \coh^*(G/B)$ and any $w\in W_\af$ there an identity 
$$D_w(\mathrm{e}_1^*(a)) = \mathrm{e}_1^*(\pi(D_w)(a))$$ where $\pi:R_\af \to R$ is defined in Theorem \ref{T:ev}.
\end{thm}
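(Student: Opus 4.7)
The plan is to reduce the statement to the case of simple reflections $w = s_i$ and then to induct on the cohomological degree of $a$, using the Leibniz rules from Proposition~\ref{prop:leibniz}.

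\textbf{Step 1 (Reduction to simple reflections).} Since $\pi\colon R_\af\to R$ is a ring homomorphism (Theorem~\ref{T:ev}) and $D_w$ is built as a composition along any reduced word, a straightforward induction on $\ell(w)$ reduces the problem to proving
\[
D_i\bigl(\e_1^*(a)\bigr)=\e_1^*\bigl(\pi(D_i)(a)\bigr)\quad\text{for all }a\in\coh^*(G/B;\Q),\ 0\le i\le n.
\]
Set $\beta_i:=\alpha_i$ for $1\le i\le n$ and $\beta_0:=-\theta$, so that $\pi(D_i)=\partial_{\beta_i}$.

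\textbf{Step 2 (Induction on degree via Leibniz).} Since $\coh^*(G/B;\Q)$ is generated as a ring by the divisor classes $\sigma_1,\ldots,\sigma_n$ (Borel presentation), it suffices to verify the identity when $a$ is a monomial in these divisors, and we proceed by induction on degree. Using that $\e_1^*$ is a ring homomorphism together with the Leibniz rules of Proposition~\ref{prop:leibniz}, the inductive step amounts to the Chern-class identity
\begin{equation*}
c_1(\mathcal{L}_{\alpha_i})=\e_1^*\!\bigl(c_1(\mathcal{L}_{\beta_i})\bigr)\quad\text{in }\coh^2(\Fla;\Q). \tag{$\star$}
\end{equation*}
Indeed, if the claim is known for $x$ and $y$, then expanding $D_i(\e_1^*(x)\e_1^*(y))$ and $\e_1^*(\partial_{\beta_i}(xy))$ and comparing, the ``ordinary-product'' and ``outer-product'' terms match by induction, while the correction terms match exactly under $(\star)$.

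\textbf{Step 3 (Verifying $(\star)$).} For $i\ne 0$, Proposition~\ref{prop:pullback} gives $\e_1^*\mathcal{L}_{\alpha_i}=\mathcal{L}_{\iota(\alpha_i)}=\mathcal{L}_{\alpha_i}$ on $\Fla$, so $(\star)$ is immediate. For $i=0$, Proposition~\ref{prop:pullback} yields $\e_1^*\mathcal{L}_{-\theta}=\mathcal{L}_{-\theta}$ (viewed in $\h_\af^*$). Since $\alpha_0=\delta-\theta$, we have $\mathcal{L}_{\alpha_0}\cong\mathcal{L}_\delta\otimes\mathcal{L}_{-\theta}$, and Proposition~\ref{prop:delta trivial} asserts that $\mathcal{L}_\delta$ is trivial on $\Fla$; hence $c_1(\mathcal{L}_{\alpha_0})=c_1(\mathcal{L}_{-\theta})$, proving $(\star)$.

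\textbf{Step 4 (Base case: divisors).} Using $\e_1^*(\sigma_j)=\eps_j-m_j\eps_0$ from (\ref{eq:php}), one has for $i\ne 0$ that $D_i(\eps_j-m_j\eps_0)=\delta_{ij}=\e_1^*(\partial_i\sigma_j)$, since $D_i(\eps_\ell)=\delta_{i\ell}$ on simple-reflection classes. For $i=0$, $D_0(\eps_j-m_j\eps_0)=-m_j$, while the polynomial formula $\partial_{-\theta}(\omega_j)=(s_\theta\omega_j-\omega_j)/\theta=-m_j$ gives $\e_1^*(\partial_{-\theta}\sigma_j)=-m_j$. The degree-zero case is trivial since both operators kill constants.

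The main obstacle is the case $i=0$: it is precisely the triviality of $\mathcal{L}_\delta$ on $\Fla$ that forces the affine BGG operator $D_0$ to intertwine with the finite BGG operator attached to the negative highest root $-\theta$. This is the geometric reason why the ring homomorphism $\pi$ of Theorem~\ref{T:ev} is the correct one, and it is the nontrivial input that makes both the base case and the Chern class identity $(\star)$ work for $i=0$.
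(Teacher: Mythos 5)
Your proposal is correct and follows essentially the same route as the paper's proof: reduction to simple reflections via the ring-homomorphism property of $\pi$, induction on the degree of a monomial in Schubert divisors using the Leibniz rules, the Chern class identity $c_1(\mathcal{L}_{\alpha_0})=\e_1^*(c_1(\mathcal{L}_{-\theta}))$ coming from Proposition \ref{prop:pullback} together with the triviality of $\mathcal{L}_\delta$, and the base-case computation $\partial_{\beta_i}(\sigma_j)=\delta_{ij}-m_j\delta_{i0}$. The only difference is organizational (the paper phrases it as a double induction), not mathematical.
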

\begin{proof} The Schubert classes $\sigma_i$ generate the ring $\coh^*(G/B)$ (over $\Q$), therefore we may assume that $a= \sigma_{i_1} \cdots \sigma_{i_k}$ for $1\le i_1,\ldots, i_k\le n$. 
%$x=c_1({\omega}_{i_1})\cdots c_1({\omega}_{i_k})$,
%for $1\le i_1\ldots, i_k\le n$. 
The proof is by double induction, first on length of $w$, then on  $k$. We take first $w = s_i$, for $0 \le i \le n$. For any $1\le j \le n$ we have  \[ D_i (\mathrm{e}_1^* (\sigma_j))= D_i (\eps_j-m_j\eps_0)=
\delta_{ij}-m_j\delta_{i0} = \mathrm{e}_1^*(\pi(D_i) \sigma_j)\] where we used the identity $\partial_\alpha(c_1(\mathcal{L}_\omega)) = \langle \omega, \alpha^\vee \rangle$ $(\alpha \in \Pi)$ for $\partial_i(\sigma_j)$ and $\partial_{-\theta}(\sigma_j)$.~This identity follows immediately from (\ref{delt}). Let now $a$ be a monomial in $\coh^*(G/B)$ of degree $\ge 2$, and write $a = a_1 a_2$ where the degree of both $a_1$ and $a_2$ is strictly less than $\deg a$. By the Leibniz rule from Proposition \ref{prop:leibniz}: \begin{equation}\label{a1a2}D_i(\mathrm{e}_1^*(a_1 a_2)) = D_i(\mathrm{e}_1^*(a_1)) \mathrm{e}_1^*(a_2) + \mathrm{e}_1^*(a_1) D_i(\mathrm{e}_1^*(a_2)) - c_1(\mathcal{L}_{\alpha_i}) D_i(\mathrm{e}_1^*(a_1)) D_i(\mathrm{e}_1^*(a_2)) \/. \end{equation}
By induction hypothesis $D_i(\mathrm{e}_1^*(a_s)) = \mathrm{e}_1^*(\pi(D_i) a_s)$ for $s=1,2$. Further, $c_1(\mathcal{L}_{\alpha_i}) = \mathrm{e}_1^*(c_1(\mathcal{L}_{\alpha_i}))$ if $i \neq 0$, and $c_1(\mathcal{L}_{\alpha_0}) = \mathrm{e}_1^*(c_1(\mathcal{L}_{-\theta}))$ by Proposition \ref{prop:pullback} and using that $c_1(\mathcal{L}_\delta) = 0$ by Proposition \ref{prop:delta trivial}. Since $\mathrm{e}_1^*$ is a ring homomorphism, and invoking the Leibniz rule for $\partial_i$ respectively $\partial_{-\theta}$, these identities show that the right hand side of (\ref{a1a2}) equals $\mathrm{e}_1^*(\pi(D_i) (a_1 a_2))$. This finishes the case when $\ell (w) =1$, and assume now that $\ell(w) >1$. Write $w= w' s_i$, with $\ell(w') < \ell(w)$. Then $D_w = D_{w'} D_i$ and for $a \in \coh^*(G/B)$, \[D_w (\mathrm{e}_1^*a) = D_{w'} D_i(\mathrm{e}_1^*a) = D_{w'}(\mathrm{e}_1^*(\pi(D_i)a)) = \mathrm{e}_1^*(\pi(D_{w'} )\pi(D_i) a)  = \mathrm{e}_1^* (\pi(D_w) a) \/, \] where we used the induction hypothesis and the fact that $\pi$ is a ring homomorphism. This finishes the proof.
\end{proof} 

\begin{remark} Using the projection formula and Proposition \ref{prop:pullback} we obtain that \[ \int_{G/B} \sigma_j \cap (\e_1)_*[X(s_i)] = \int_{\Fla} \mathrm{e}_1^*(\sigma_j) \cap [X(s_i)] = \langle \lambda_j - m_j \lambda_0, \alpha_i^\vee \rangle \/, \] therefore we have the following identities in $\coh_2(G/B)$:
\begin{equation}\label{eq:pfp} (\e_1)_*[X(s_i)] = \begin{cases} [X(s_i)] & \textrm{ if } i > 0; \\ - m_1 [X(s_1)] - \cdots  - m_n [X(s_n)] & \textrm{ if } i=0 \/. \end{cases} \end{equation} 
Further, since the (affine or finite) Poincar{\'e} pairing is nondegenerate, one can define an action of the divided difference operators $D_u$ ($u \in W_\af$) and $\partial_v$ ($v \in W$) on {\em homology}, by duality: \[ \int_{\Fla} a \cap D_u(b) := \int_{\Fla} D_u(a) \cap b; \quad a \in \coh^*(\Fla), b \in \coh_*(\Fla) \/, \] and similarly for $\partial_v$. 
Then Theorem \ref{thm:crucial} and the projection formula implies that for any $w \in W_\af$ and any $b \in \coh_*(\Fla)$, $(\e_1)_*(D_w(b)) = \pi(D_w) (\e_1)_*(b)$.
\end{remark}

\section{Curve neighborhoods of affine Schubert varieties}

The goal of this section is to define curve neighborhoods of Schubert varieties in the affine flag manifolds. This notion sits at the heart of the definition of the affine quantum Chevalley operators from the next section. The definition extends the one  from \cite{bcmp:qkfinite,buch.mihalcea:nbhds} where curve neighborhods played a central role in the study of quantum cohomology and quantum K-theory of finite flag manifolds.

Recall that if $w \in W_\af$ then $X(w)$ denotes the Schubert variety $\overline{\cB w \cB/ \cB}\subset \Fla$ and that $X(w)$ is a complex projective algebraic variety of dimension $\dim X(w) = \ell(w)$. Denote also by $X(w)^o$ the Schubert cell $\cB w \cB/\cB$ which is the $\cB$-orbit of the $\mathcal{T}$-fixed point $e_w:= w\cB$. Recall that $\cB$ acts transitively on $X(w)^o$ and that $X(w)^o \cong\C^{\ell(w)}$ (\cite[Proposition 7.4.16]{kumar:kacmoody}). The Schubert variety is the union of its Schubert cells: $X(w) = \bigcup_{v \le w} X(v)^o$. The ind-variety structure of $\Fla$ is given by the filtration $\mathcal{X}_1 \subset \mathcal{X}_2 \subset \ldots. \subset \Fla$ where $\mathcal{X}_n = \bigcup_{v \in W_\af, \ell(v) \le n} X(v)$.

Let $d \in \coh_2(\Fla)$ be an effective degree. A {\em rational curve} of degree $d$ in $\Fla$ is a morphism of (ind-)varieties $f: C \to \Fla$ where $C$ is an algebraic curve of arithmetic genus $0$ (i.e. a tree of $\P^1$'s). In particular, the image of $f$ must be included in some stratum $\mathcal{X}_n$. We will often abuse notation and we will denote by $C$ the (scheme-theoretic) image of $f$. We can find $w \in W_\af$ sufficiently large such that $d \in \coh_2(X(w))$.~Recall from \cite[Theorem 1]{fulton.pandharipande:notes} that there exists a projective scheme $\Mb_{0,2}(X(w), d)$ which parametrizes $2$-point, genus $0$ stable rational curves in $X(w)$ of degree $d$. Denote the evaluation maps at the two points by $\ev_1,\ev_2:\Mb_{0,2}(X(w),d) \to X(w)$.

\begin{defn} Let $u,w$ be in the affine Weyl group such that $u \le w$ in the Bruhat ordering (thus $X(u) \subset X(w)$). Fix an effective degree $d \in \coh_2(X(w))$. The {\em $w$-curve neighborhood} $\Theta_d^w(u)$ of $X(u)$ is defined by \[ \Theta_d^w(u) := \ev_2( \ev_1^{-1} X(u)) \subset X(w) \/ \] endowed with the reduced scheme structure. \end{defn} Because the evaluation maps are proper, this is a closed { subscheme} of $X(w)$, and it consists of the closure of locus of points $x \in X(w)$ such that there exists a rational curve $f:\P^1 \to X(w)$ with $x \in f(\P^1)$ and $f(\P^1) \cap X(u) \neq \emptyset$.

\begin{thm}\label{T:existence} Let $u \in W_\af$ and $d \in \coh_2(\Fla)$ an effective degree. There exists a unique projective variety $\Theta_d(u) \subset \Fla$, called the {\em curve neighborhood} of $X(u)$, satisfying the properties:
\begin{itemize} \item $\Theta_d^v(u) \subset \Theta_d(u)$ for any $v \in W_\af$ such that $v \ge u$; \item there exists $w \in W_\af$ depending on $u$ and $d$ with $\Theta_d(u) = \Theta_d^w(u)$. \end{itemize}\end{thm}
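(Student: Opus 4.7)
The strategy is to establish two properties of the $w$-curve neighborhoods $\Theta_d^w(u)$: monotonicity in $w$, and a uniform boundedness principle that allows one to choose a single ``large enough'' $w$ working for all $v \ge u$.

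\emph{Monotonicity.} First, whenever $u \le v \le v'$ in $W_\af$, the closed embedding $X(v) \subset X(v')$ induces a closed embedding of Kontsevich moduli spaces $\Mb_{0,2}(X(v), d) \hookrightarrow \Mb_{0,2}(X(v'), d)$ that intertwines the two evaluation maps. Pulling $X(u)$ back under $\ev_1$ and pushing forward under $\ev_2$ then gives the inclusion $\Theta_d^v(u) \subset \Theta_d^{v'}(u)$. Consequently the family $\{\Theta_d^v(u)\}_{v \ge u}$ is monotone increasing.

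\emph{Boundedness.} Next I argue that there exists $w_\star = w_\star(u,d) \in W_\af$ such that every rational curve $f\colon C \to \Fla$ of degree $d$ whose image meets $X(u)$ is contained in $X(w_\star)$. Any such $f(C)$ is connected and projective, hence lies in some stratum $\mathcal{X}_n$ of the ind-variety filtration. The task is to bound $n$ uniformly in terms of $u$ and $d$. This is Atiyah's observation \cite{atiyah} in the present setting; one can prove it via a Bruhat/moment-graph chain argument. Namely, pick a $\mathcal{T}$-fixed point $e_{v_0} \in X(u) \cap f(C)$ (so $v_0 \le u$) and any other $\mathcal{T}$-fixed point $e_{v_1} \in f(C)$; a $\mathcal{T}$-equivariant degeneration of $f$ (possible after passing to a $\mathcal{T}$-stable curve) exhibits $v_1$ as the endpoint of a chain in the moment graph starting at $v_0$, whose edge labels are positive affine coroots summing to at most $d$ in $\coh_2(\Fla)$. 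Since each positive real coroot dominates some simple coroot, the number of edges is bounded by $2\he(d)$, whence $\ell(v_1) \le \ell(u) + 2\he(d)$. Thus $f(C) \subset \bigcup_{v : \ell(v) \le \ell(u) + 2\he(d)} X(v)$, and we take $w_\star$ to be any element of $W_\af$ dominating this finite union.

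\emph{Construction and uniqueness.} Define $\Theta_d(u) := \Theta_d^{w_\star}(u)$, a closed subvariety of $\Fla$. For any $v \ge u$, pick $v' \in W_\af$ with $v' \ge v$ and $v' \ge w_\star$; monotonicity gives $\Theta_d^v(u) \subset \Theta_d^{v'}(u)$. On the other hand, every stable map contributing to $\Theta_d^{v'}(u)$ already factors through $X(w_\star)$ by the boundedness step, so $\Theta_d^{v'}(u) = \Theta_d^{w_\star}(u) = \Theta_d(u)$. This proves the first bulleted property, while the second holds by construction. Uniqueness follows because the two conditions force $\Theta_d(u) = \Theta_d^{w_\star}(u)$ for any valid choice of $w_\star$; if $w_\star'$ is another such, both $w$-neighborhoods coincide by a third application of monotonicity.

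\emph{Main obstacle.} The crux is the boundedness step: rigorously establishing that degree-$d$ rational curves meeting the fixed finite-dimensional $X(u)$ form a bounded family in the ind-variety $\Fla$. Either one invokes Atiyah's argument directly, or one carries out the moment-graph chain bound above, which requires some care to ensure the chain of $\mathcal{T}$-fixed points really can be produced from an arbitrary (not necessarily $\mathcal{T}$-invariant) rational curve, typically by a limiting argument under the $\C^*$-action contained in $\mathcal{T}$. Once this is in hand, the scheme-theoretic aspects (reduced structure, compatibility with the closed embeddings of moduli spaces) are routine.
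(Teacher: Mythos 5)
Your overall route coincides with the paper's: monotonicity of $\Theta_d^w(u)$ in $w$, reduction to $\mathcal{T}$-stable curves, a moment-graph bound on which Schubert cells can be reached by a degree-$d$ chain starting in $X(u)$, and stabilization. The one step you explicitly leave open -- producing the $\mathcal{T}$-stable chain from an arbitrary rational curve -- is indeed the crux, and your sketch of it has a concrete gap beyond just ``needing care.'' As written, you pick $\mathcal{T}$-fixed points $e_{v_0}\in X(u)\cap f(C)$ and $e_{v_1}\in f(C)$; an arbitrary curve need not contain any $\mathcal{T}$-fixed point, and the fixed points only appear on the degenerated curve, not on $f(C)$. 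More seriously, a naive equivariant limit (or Borel fixed point applied to $\ev_1^{-1}(X(u))\cap\ev_2^{-1}(X(v_1))$) only yields a $\mathcal{T}$-stable curve meeting $X(u)$ and the \emph{closed} variety $X(v_1)$; the resulting chain may terminate at a fixed point $e_z$ with $z<v_1$ deep in the boundary, which gives no upper bound on $\ell(v_1)$ and hence no boundedness. The paper closes exactly this hole in Corollary \ref{cor:Tstable}: given a curve meeting $X(u)$ and the cell $X(v_1)^o$, one first translates it by an element of $\cB$ (using transitivity of $\cB$ on the cell and $\cB$-stability of $X(u)$) so that the translate passes through the fixed point $e_{v_1}$ itself; only then does one apply the Borel fixed point theorem to the projective, $\mathcal{T}$-stable Gromov--Witten variety $\ev_1^{-1}(X(u))\cap\ev_2^{-1}(\{e_{v_1}\})$ inside $\Mb_{0,2}(X(w),d)$ to extract a $\mathcal{T}$-stable curve pinned at $e_{v_1}$. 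With that curve in hand the moment-graph chain genuinely ends at $v_1$ and your length estimate applies.

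Two smaller remarks. First, your bound $\ell(v_1)\le\ell(u)+2\he(d)$ relies on the inequality $\ell(s_\alpha)\le 2\he(\alpha^\vee)-1$, which the paper only proves afterwards (Proposition \ref{letalpha}); there is no circularity, but the paper avoids needing any explicit bound by observing directly that there are only finitely many moment-graph paths of total degree $\le d$ issuing from the finitely many $\mathcal{T}$-fixed points of $X(u)$, hence finitely many reachable $v_1$, hence stabilization. Second, your monotonicity and uniqueness steps are fine and match the paper's (the latter is dismissed there as ``clear'').
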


{\em A priori}, this theorem can be deduced from the work of Atiyah \cite{atiyah}, who proved that the locus of points on rational curves of a fixed degree $d$, intersecting a fixed finite-dimensional subscheme of $\Fla$ is finite dimensional. However, we will give a different proof of this statement by analyzing chains of $\mathcal{T}$-stable of rational curves through $X(u)$, in the same spirit as in \cite{fulton.woodward}. We need the following general result:

\begin{lemma}\label{lemma:Tcurve} Let $Z$ be an irreducible complex projective variety and $H$ a complex algebraic torus acting on $Z$. Let $\Omega_1,\Omega_2 \subset Z$ be $H$-stable subschemes and assume that there exists a rational curve $C$ of degree $d= [C] \in \coh_2(Z)$ intersecting $\Omega_1$ and $\Omega_2$. Then there exists an $H$-stable rational curve of degree $d$ intersecting $\Omega_1$ and $\Omega_2$.\end{lemma}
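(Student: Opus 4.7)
\medskip

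The plan is to deduce this from Borel's fixed point theorem applied to the Kontsevich moduli space of stable maps. Since $Z$ is a complex projective variety and $d \in \coh_2(Z)$ is an effective class, the moduli space $\Mb_{0,2}(Z,d)$ of $2$-pointed genus $0$ stable maps to $Z$ of degree $d$ is a projective scheme, with evaluation morphisms $\ev_1, \ev_2 : \Mb_{0,2}(Z,d) \to Z$. The $H$-action on $Z$ induces an $H$-action on $\Mb_{0,2}(Z,d)$ by post-composition, i.e.\ $h \cdot [f, p_1, p_2] = [h \circ f, p_1, p_2]$; this action preserves degrees because $H$ acts by automorphisms of $Z$, and it is compatible with the evaluation maps in the sense that $\ev_i(h \cdot [f,p_1,p_2]) = h \cdot \ev_i([f,p_1,p_2])$.

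Next I would form the closed subscheme
\[
M := \ev_1^{-1}(\Omega_1) \cap \ev_2^{-1}(\Omega_2) \subset \Mb_{0,2}(Z,d).
\]
The hypothesis that there is a rational curve $C$ of degree $d$ meeting both $\Omega_1$ and $\Omega_2$ produces (after choosing a parametrization $\P^1 \to C \hookrightarrow Z$ and marked points mapping into $\Omega_1$ and $\Omega_2$, then stabilizing if necessary) a point of $M$, so $M$ is nonempty. Since $\Omega_1$ and $\Omega_2$ are $H$-stable and the evaluation maps are $H$-equivariant, $M$ is $H$-stable. Being a closed subscheme of the projective scheme $\Mb_{0,2}(Z,d)$, it is itself projective.

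Now I would pick an irreducible component $M_0$ of $M_{\mathrm{red}}$. Because $H$ is connected, it permutes the finitely many irreducible components trivially on the set of top-dimensional ones, so after passing to a component we may assume $M_0$ is an irreducible, projective, $H$-stable variety. By Borel's fixed point theorem, $M_0$ contains an $H$-fixed point $[f : C' \to Z, p_1, p_2]$. Such a fixed point corresponds to a stable map which is $H$-equivariant up to reparametrization: there is a homomorphism $H \to \Aut(C')$ making $f$ equivariant. Hence the image $f(C') \subset Z$ is $H$-stable, of degree $d$, a tree of $\P^1$'s (i.e.\ a rational curve in the sense of this paper), and meets $\Omega_1$ and $\Omega_2$ at $f(p_1)$ and $f(p_2)$ respectively.

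The main obstacle, and the only real content beyond set-up, is the passage from an $H$-fixed point of the moduli stack to an honestly $H$-stable curve in $Z$; this requires knowing that an $H$-fixed stable map admits an $H$-action on its source making the map equivariant, which is a standard consequence of the fact that the automorphisms of a stable map are finite (so the obstruction to lifting the homomorphism $H \to \Aut[f]$ to $H \to \Aut(f)$ vanishes for a connected group $H$). Everything else is a straightforward application of properness of the evaluation maps together with Borel's theorem.
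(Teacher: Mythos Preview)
Your approach is essentially identical to the paper's: form the Gromov--Witten variety $\ev_1^{-1}(\Omega_1)\cap\ev_2^{-1}(\Omega_2)$ inside $\Mb_{0,2}(Z,d)$, note it is nonempty, projective and $H$-stable, and apply Borel's fixed point theorem. One small remark: your final ``main obstacle'' is slightly more than you need --- for an $H$-fixed point $[f,p_1,p_2]$ one has, for each $h\in H$, an automorphism $\phi_h$ of the source with $h\circ f=f\circ\phi_h$, so the image $f(C')$ is $H$-stable without ever needing $h\mapsto\phi_h$ to be a group homomorphism.
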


\begin{proof} Consider the projective scheme $\Mb:=\Mb_{0,2}(Z, d)$ parametrizing $2$-point, genus $0$ stable maps of degree $d$ to $Z$. Define the Gromov-Witten variety $GW_d(\Omega_1,\Omega_2) := \ev_1^{-1}(\Omega_1) \cap \ev_2^{-1}(\Omega_2)$, which is a closed subscheme of $\Mb$, nonempty by hypothesis. The $H$ action on $Z$ induces an action on $\Mb$, and because $\Omega_1,\Omega_2$ are $H$-stable, it also induces an action on $GW_d(\Omega_1,\Omega_2)$. Borel fixed point theorem (see e.g. \cite[21.2]{humphreys}) implies that there exists an $H$-fixed point on $GW_d(\Omega_1,\Omega_2)$, and this corresponds to a rational curve having the claimed properties. \end{proof}

Recall that the moment graph of $\Fla$ is the graph with vertices given by $w \in W_\af$ and edges the irreducible $\mathcal{T}$-stable curves in $\Fla$. By \cite[Proposition 12.1.7]{kumar:kacmoody} there exists an edge between $u$ and $v$ iff $v=u s_\alpha$ where $\alpha$ is an affine real root. 

\begin{cor}\label{cor:Tstable} Let $d \in \coh_2(\Fla)$ be an effective degree and $u,v,w \in W_\af$ such that $u \le v \le w$. Assume that the intersection $\Theta_d^w(u) \cap X(v)^o$ is nonempty.~Then there exists a $\mathcal{T}$-stable rational curve of degree $d$ in $X(w)$ joining $e_v$ with a $\mathcal{T}$-fixed point $e_{z}$ where $z \le u$. \end{cor}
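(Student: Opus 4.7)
\smallskip

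\textbf{Plan of proof.} The strategy is to apply Lemma~\ref{lemma:Tcurve} with $\Omega_1=X(u)$ and $\Omega_2=\{e_v\}$, so the main task is to exhibit some (not necessarily $\mathcal{T}$-stable) rational curve of degree $d$ in $X(w)$ passing through $e_v$ and meeting $X(u)$. The hypothesis provides a stable map $f:(C,p_1,p_2)\to X(w)$ of degree $d$ with $f(p_1)\in X(u)$ and $f(p_2)=x\in X(v)^o$, so I must move the point $x$ to $e_v$ while keeping the other conditions intact.

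For this I will use a Bialynicki-Birula type contraction. The Schubert cell $X(v)^o\cong \mathbb{A}^{\ell(v)}$ is $\mathcal{T}$-stable with unique $\mathcal{T}$-fixed point $e_v$, and its tangent weights at $e_v$ are the affine real roots $\{\alpha\in\Pi_\af^{re,+}:v^{-1}\alpha<0\}$, all lying in an open half-space of the weight lattice. Hence there exists a cocharacter $\lambda:\mathbb{C}^*\to\mathcal{T}$ contracting every $y\in X(v)^o$ to $e_v$ as $t\to 0$. The map $t\mapsto\lambda(t)\cdot f$ defines a morphism $\mathbb{C}^*\to\Mb_{0,2}(X(w),d)$, and since the moduli space is proper, it extends uniquely to $\mathbb{A}^1$. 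Its value $f_0$ at $t=0$ is a stable map of degree $d$ with $f_0(p_2)=e_v$, and $f_0(p_1)\in X(u)$ because $X(u)$ is closed and $\mathcal{T}$-stable. This shows that $\ev_1^{-1}(X(u))\cap\ev_2^{-1}(\{e_v\})$ is nonempty, so Lemma~\ref{lemma:Tcurve} produces a $\mathcal{T}$-stable stable map $g:(C_0,q_1,q_2)\to X(w)$ of degree $d$ with $g(q_2)=e_v$ and $g(q_1)\in X(u)$.

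Finally I will locate the fixed point $e_z$. The image $g(C_0)$ is a connected, $\mathcal{T}$-stable tree of irreducible rational curves in $X(w)$, each an edge of the moment graph of $\Fla$ joining two $\mathcal{T}$-fixed points. Let $C'\cong\mathbb{P}^1$ be the component containing $g(q_1)$. The intersection $C'\cap X(u)$ is a nonempty, closed, $\mathcal{T}$-stable subset of $C'$, hence is either all of $C'$ or consists of at most two of its $\mathcal{T}$-fixed endpoints; in either case it contains a $\mathcal{T}$-fixed point. Since $X(u)=\bigsqcup_{z'\le u}X(z')^o$ and $e_{z'}$ is the unique $\mathcal{T}$-fixed point of $X(z')^o$, this fixed point has the form $e_z$ with $z\le u$. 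The connected tree $g(C_0)$ contains both $e_v$ and $e_z$, providing the required $\mathcal{T}$-stable rational curve of degree $d$.

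The only delicate step is the existence of the contracting cocharacter $\lambda$, which rests on the one-sidedness of the tangent weights of $X(v)^o$ at $e_v$. This is standard for affine Schubert cells (see the weight computations in Kumar's book), but it is the point at which the infinite-dimensional affine geometry could potentially complicate the argument; everything else is a routine application of properness of the moduli space and the classical description of $\mathcal{T}$-stable curves via the moment graph.
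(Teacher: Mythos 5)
Your proof is correct, and its overall architecture --- produce a rational curve of degree $d$ through $e_v$ that still meets $X(u)$, invoke Lemma~\ref{lemma:Tcurve}, then read off a fixed point $e_z$ from the moment-graph description of irreducible $\mathcal{T}$-stable curves --- is the same as the paper's. The one place you genuinely diverge is the first step. The paper uses that $\cB$ acts transitively on the cell $X(v)^o$ and that $X(u)$ and $X(w)$ are $\cB$-stable: a single translate $b.C$ already passes through $e_v$ and still meets $X(u)$, with no limiting process needed. You instead run a Bialynicki--Birula contraction: choose a cocharacter $\lambda$ of $\mathcal{T}$ pairing positively with the (finitely many) positive affine real roots $\alpha$ with $v^{-1}\alpha<0$ occurring as weights of $X(v)^o$ at $e_v$ (a regular dominant affine coweight does this for all positive real roots simultaneously), act on the stable map, and take the limit $t\to 0$ in the projective moduli space $\Mb_{0,2}(X(w),d)$; the limit is a genus-zero, degree-$d$ stable map with $\ev_2 = e_v$ and $\ev_1\in X(u)$ because $X(u)$ is closed and $\mathcal{T}$-stable. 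This is valid, but it costs you the existence of the contracting cocharacter and the properness of the moduli space, both of which the one-line group-translation argument avoids; on the other hand, properness is already implicit in Lemma~\ref{lemma:Tcurve} (the Borel fixed point theorem is applied there to the projective Gromov--Witten variety), so you are not importing anything fundamentally new, and your degeneration argument would survive in situations where the relevant cell is not a single orbit of a group preserving $X(u)$. Your handling of the final step --- $C'\cap X(u)$ is a nonempty, closed, $\mathcal{T}$-stable subset of an irreducible $\mathcal{T}$-curve, hence contains one of its two fixed points, necessarily of the form $e_z$ with $z\le u$ --- is the paper's argument, spelled out slightly more carefully.
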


\begin{proof} By hypothesis there exists a rational curve $C$ of degree $d$ in $X(w)$ which intersects $X(u)$ and the cell $X(v)^o$. Since $\mathcal{B}$ acts transitively on $X(v)^o$, and because $X(u)$ is $\cB$-stable, we can find $ b \in \cB$ such that the translate $b.C$ contains $e_v$ and intersects $X(u)$. This shows that the Gromov-Witten variety $GW_d(X(u), \{e_v\}) \subset \Mb_{0,2}(X(w),d)$ is non-empty. Then we invoke Lemma \ref{lemma:Tcurve} to obtain a $\mathcal{T}$-stable rational curve of degree $d$ joining $X(u)$ to $e_v$. But any irreducible $\mathcal{T}$-stable curve in $\Fla$ is isomorphic to $\P^1$ and it joins a $\mathcal{T}$-fixed point $e_b$ ($b \in W_\af$) to $e_{b s_\alpha}$ where $\alpha$ is an affine real root. Therefore any $\mathcal{T}$-stable curve intersecting $X(u)$ contains a $\mathcal{T}$-fixed point $e_{z}$ in $X(u)$, and this finishes the proof. \end{proof}

\begin{proof}[Proof of Theorem \ref{T:existence}] Uniqueness is clear, so we will prove the existence of $\Theta_d(u)$. Note that for any $w \in W_\af$, the curve neighborhood $\Theta_d^w(u)$ is either empty, or $\cB$-stable; in the latter case it must be a finite union of ($\cB$-stable) Schubert varieties. Let now $w$ vary over all elements $w \in W_\af$ such that $\ell(w) \le n$, for $n$ fixed. Corollary \ref{cor:Tstable} implies that if there exists a rational curve $C$ of degree $d$, included in some $X(w)$, and intersecting the Schubert variety $X(u)$ and the Schubert cell $X(v)^o$, then there is a path of degree $d$ in the moment graph of $\Fla$ joining a $\mathcal{T}$-fixed point in $X(u)$ to $e_v$. As $n$ increases, since $d$ is a fixed finite degree, there are finitely many such paths which contain a fixed point from $X(u)$, and there are finitely many such $\mathcal{T}$-fixed points in $X(u)$. This implies that the set of those $v \in W_\af$ such that $e_v$ belongs to $\bigcup_{\ell(w) \le n} \Theta_d^w(u)$ stabilizes, which in turn implies that the variety $\bigcup_{\ell(w) \le n} \Theta_d^w(u)$ stabilizes for $n \ge n_0$, for some $n_0$. Then take $n \ge n_0$ and $w \in W_\af$ such that $w \ge v$ for any $v \in W_\af$ with $\ell(v) \le n$. The variety $\Theta_d(u):= \bigcup_{\ell(w) \le n} \Theta_d^w(u)$ satisfies all the requirements in the theorem.\end{proof}

We noticed in the proof that the curve neighborhood $\Theta_d(u)$ is a finite union of Schubert varieties.
The next corollary gives more precise information about this union. To state it, we recall the definition of the {\em Hecke product} $u \cdot v$ of two elements $u, v \in W_\af$. If $v= s_i$ is a simple reflection then
\[ u \cdot s_i = \begin{cases} us_i & \textrm{ if } \ell(us_i) > \ell(u) \\ u & \textrm{ otherwise } \/. \end{cases} \] In general, take a reduced expression $v=s_{i_1} \cdots  s_{i_k}$ and define $u \cdot v := (\ldots((u\cdot s_{i_1}) \cdot s_{i_2})\cdot \ldots \cdot s_{i_k})$. This endows $W_\af$ with a structure of an associative monoid; we refer to \cite[\S 3]{buch.mihalcea:nbhds}
for further details.

\begin{cor}\label{cor:zd} Let $z_d^1,\ldots,z_d^p$ be the Weyl group elements such that $\Theta_d(1)= X(z_d^1) \cup \ldots \cup X(z_d^p)$. Then:

(a) $z_d^i$ are the maximal elements in the Bruhat ordering so that there exists a path of degree $\le d$ in the moment graph of $\Fla$ containing $1$ and $z_d^i$.

(b)  Fix $1 \le i \le p$. Then there exist affine positive real roots $\alpha_{i_1}, \ldots, \alpha_{i_r}$ (not necessarily simple, and depending on $i$) such that $\sum_{j=1}^r \alpha_{i_j}^\vee = d$ and $z_d^i = s_{\alpha_{i_1}} \cdot \ldots\cdot  s_{\alpha_{i_r}}$, where $\cdot$ denotes the Hecke product.

(c) $\Theta_d(u) = X(u \cdot z_d^1) \cup \ldots \cup X(u \cdot z_d^p)$. 
\end{cor}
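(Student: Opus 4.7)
The proof of this corollary proceeds by combining Corollary \ref{cor:Tstable} with a combinatorial analysis of the moment graph of $\Fla$ and the Hecke monoid structure on $W_\af$.

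For part (a), recall that $\Theta_d(1)$ is closed and $\cB$-stable, hence a union of Schubert varieties; define $z_d^1,\ldots,z_d^p$ to be its Bruhat-maximal elements. A point $e_v$ lies in $\Theta_d(1)$ iff there is a rational curve of degree $d$ passing through $e_1$ and $e_v$, and by Lemma \ref{lemma:Tcurve} we may take this curve to be $\cT$-stable. Its image is then a connected union of $\cT$-fixed points and $\cT$-stable copies of $\bP^1$, each of the latter corresponding to an edge of the moment graph weighted by the coroot $\alpha^\vee$ of its associated reflection $s_\alpha$. Extracting the simple subpath from $1$ to $v$ in this configuration yields a moment-graph path of degree at most $d$. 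Conversely, any simple path of degree $d' \le d$ can be realized as the path-part of a $\cT$-stable stable map of degree $d$ after attaching extra moment-graph edges to make up the difference $d - d'$. Taking Bruhat-maximal $v$ yields the $z_d^i$.

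For part (b), fix $i$ and pick a $\cT$-stable stable map of degree $d$ whose image contains both $e_1$ and $e_{z_d^i}$; its underlying moment-graph configuration $\Gamma$ has total weighted edge-degree $d$. List the edges of $\Gamma$ in an order that first traces out the simple subpath from $1$ to $z_d^i$, with each step chosen to increase length (possible since $z_d^i$ is Bruhat-maximal), and then records the remaining edges, giving affine positive real roots $\alpha_{i_1},\ldots,\alpha_{i_r}$ with $\sum_j \alpha_{i_j}^\vee = d$. Set $y := s_{\alpha_{i_1}} \cdot \ldots \cdot s_{\alpha_{i_r}}$. Along the first portion the Hecke product coincides with the ordinary product, already equal to $z_d^i$; subsequent Hecke factors can only weakly enlarge the intermediate element in Bruhat order, so $y \ge z_d^i$. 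Conversely, $y$ is reachable from $1$ by a moment-graph path of degree at most $\sum_j \alpha_{i_j}^\vee = d$, since length-decreasing Hecke steps contribute no new edge; by part (a) and the Bruhat-maximality of the $z_d^j$ we have $y \le z_d^j$ for some $j$, and the two inequalities together force $y = z_d^i$.

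Part (c) is then deduced from (a) and (b) by induction on $\ell(u)$. The inductive step uses the geometric Hecke product identity $\overline{\cB u \cB \cdot \cB w \cB / \cB} = X(u \cdot w)$: given a tree $\Gamma$ realizing $z_d^i \in \Theta_d(1)$, concatenating it at its root with $X(u)$ via this identity shows $X(u \cdot z_d^i) \subseteq \Theta_d(u)$; the reverse inclusion at the Bruhat-maximal level follows symmetrically from a moment-graph argument starting at $X(u)$. The main obstacle is part (b): it is not \emph{a priori} clear that an arbitrary traversal of $\Gamma$ yields the Hecke product exactly $z_d^i$ (and not something strictly larger), because side branches can interleave with the main path. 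This is resolved by the carefully chosen ordering above combined with the Bruhat-maximality of $z_d^i$ and the bound that any Hecke product of reflections is reachable in the moment graph by a path of at most the corresponding coroot-sum degree.
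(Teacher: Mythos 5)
Your argument is correct and follows essentially the same route as the paper, which simply defers to the finite-dimensional treatment in Buch--Mihalcea \S 5: reduce to $\mathcal{T}$-stable configurations via Lemma \ref{lemma:Tcurve} and Corollary \ref{cor:Tstable}, identify the maximal reachable elements with Hecke products of reflections whose coroots sum to exactly $d$ (the stable map being taken non-reduced if necessary so the total is $d$, not merely $\le d$), and translate by $u$ for part (c). One small repair: the simple path in the tree from $e_1$ to $e_{z_d^i}$ is determined by the configuration, so you cannot ``choose'' its steps to be length-increasing and the parenthetical appeal to Bruhat-maximality does not justify this; but the point is harmless, since the inequality $v s_\beta \le v \cdot s_\beta$ (which you already invoke for the remaining factors) yields $y \ge z_d^i$ directly without assuming the first portion of the traversal is increasing.
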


\begin{proof} The proof is similar to that of results proved in the finite dimensional case in \cite[\S 5]{buch.mihalcea:nbhds} so we will be brief. Part (a) follows from the construction of the curve neighborhood in the proof of Theorem \ref{T:existence}. Part (b) follows from properties of the Hecke product (see e.g. \cite[Proposition 3.1]{buch.mihalcea:nbhds}) and from the fact that there must be a $\mathcal{T}$-stable chain of curves (possibly non-reduced) from $1$ to each of $z_d^i$ of degree equal to precisely $d$.  Part (c) is the same as in the case when $X=G/B$, using Corollary \ref{cor:Tstable}; see \cite[Theorem 5.1]{buch.mihalcea:nbhds}.\end{proof}

\begin{remark} Unlike in the finite-dimensional case when $X=G/B$, the curve neighborhood of a Schubert variety may no longer be irreducible. Indeed, take $\Fla$ to be the flag variety for the affine Kac-Moody group of type $A_1^1$ and take $d = \alpha_0^\vee + \alpha_1^\vee$ the degree corresponding to the imaginary (co)root. Then Corollary \ref{cor:zd} implies that $\Theta_d(1) = X(s_0 s_1) \cup X(s_1 s_0)$, which is obviously reducible
(see Figure 1). However, we will prove in Lemma \ref{lemma:eq} below that the curve neighborhoods relevant to the quantum Chevalley product remain in fact irreducible. { A description for the curve neighborhoods of Schubert varieties in the affine Lie type $A_1^1$ has been obtained in \cite{mihalcea.norton:A11}.} \end{remark}
\begin{figure}[h]
%\psfrag{x}{$x_1$} \psfrag{y}{$x_2$} \psfrag{x+y}{$x_1+x_2$}
%\psfrag{2y}{$2x_2$} \psfrag{x+2y}{$x_1+2x_2$}
\begin{center}
\epsfig{figure=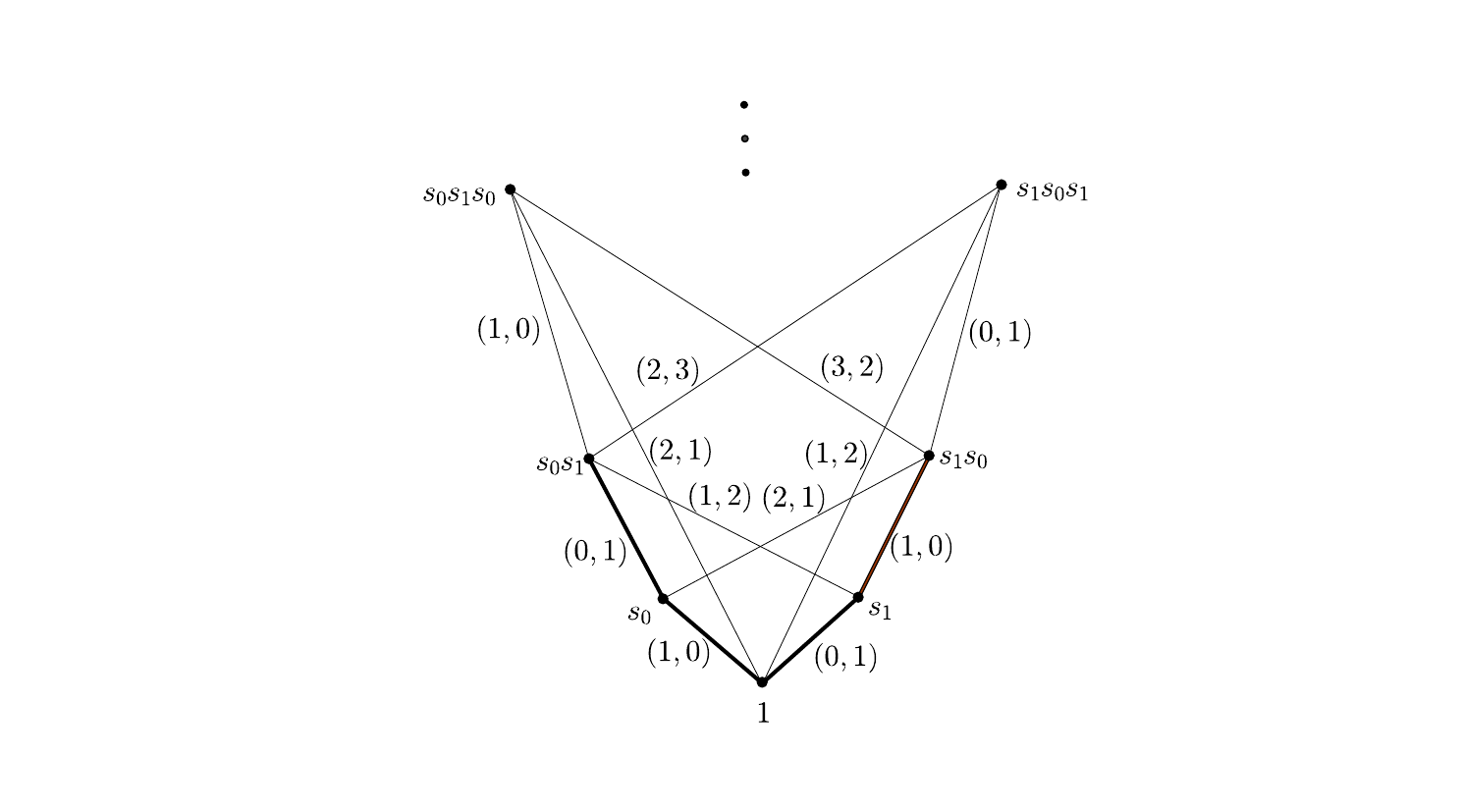,width=6.7in}
\end{center}
\begin{center}
\parbox{5in}{
\caption[]{The moment graph for the affine flag variety of type $A_1^1$. The weight $(k,\ell)$ symbolizes
the coroot $k\alpha_0^\vee + \ell \alpha_1^\vee$.  The two paths
of weight $\alpha_0^\vee+\alpha_1^\vee$ starting at $1$ are $1\to s_0\to s_0s_1$ and $1\to s_1\to s_1s_0$. }\label{figure} }
\end{center}
\end{figure}

\section{The affine quantum Chevalley operators}\label{section:rule} The goal of this section is to use the notion of curve neighborhoods from the previous section to define the affine quantum Chevalley operators. These are the main new objects in this paper, and their study will lead in \S \ref{sec:affqcohrings} below to definitions of certain affine quantum cohomology rings. The main technical result is Theorem \ref{thm:afqops}, which gives an explicit combinatorial formula for the action of these operators.

Recall that $\coh^*(\Fla)$ is a free graded $\Z$-module, with a basis given by Schubert classes $\eps_w$ ($w \in W_\af$) such that $\deg \eps_w = \ell(w)$. %(This is $1/2$ of the usual grading in cohomology.)
Denote by $q=(q_0, q_1, \ldots, q_n)$ the sequence of {\em quantum parameters}, which are indexed by the basis of $\coh_2(\Fla)$. This is analogous to the finite case, where the quantum parameters are indexed by the basis of the homology group $\coh_2$. We set $\deg q_i = 2$ for $0 \le i \le n$. For $d= d_0 [X(s_0)] + \cdots  + d_n [X(s_n)] \in \coh_2(\Fla)$ an {\em effective degree} (i.e satisfying $d_i \in \Z_{+}$), we denote by $q^d: =q_0^{d_0} \cdots  q_n^{d_n}$. Under the identification $\coh_2(\Fla) \simeq \oplus_{i=0}^n \Z \alpha_i^\vee$ a degree can be regarded as an element of the coroot lattice, so it has a well defined {\em height} defined by $\he(d) = d_0 + \cdots + d_n$. Then $\deg q^d = 2 \he(d)$. Consider $\quantum^*(\Fla):=\coh^*(\Fla) \otimes_\Z \Z[q]$ the free $\Z[q]$-module graded in the obvious way.

\begin{defn}\label{def:chevops} Let $0 \le i \le n$. Define the $\Z[q]$-linear, degree $1$, endomorphism of graded $\Z[q]$-modules $\Lambda_i: \quantum^*(\Fla) \to \quantum^*(\Fla)$ by 
\[ \Lambda_i(\eps_u) = \eps_i \cdot \eps_u + \sum_{d \in \coh_2(\Fla), d \neq 0} \gw{\eps_u, \eps_i,[X(w)]}{d} q^d \eps_w \/, \]
where $\eps_i \cdot \eps_w$ is the ordinary Chevalley multiplication in $\coh^*(\Fla)$ from (\ref{chaff}) above and $\gw{\eps_u, \eps_i,[X(w)]}{d}$ is the {\em affine Gromov-Witten invariant} defined by 
\begin{equation}\label{eq:defafGW} \gw{\eps_u, \eps_i,[X(w)]}{d} = \langle \lambda_i, d \rangle \cdot \int_{\Fla} \eps_u \cap [\Theta_d(w)] \/; \end{equation} here $\lambda_i$ is the affine fundamental weight from \S \ref{sec:afm}. Notice that the requirement that $\Lambda_i$ has degree $1$ implies that $\ell(u) + 1 = \deg q^d + \ell(w)$.
\end{defn}

\begin{remark}\label{rmk:deggq} To motivate that $\deg q_i=2$, recall that for the finite flag variety $G/B$ this degree arises as $\int_{G/B} c_1(T_{G/B}) \cap [X(s_i)]$ for $i \neq 0$, where $T_{G/B}$ is the tangent bundle of $G/B$. Since $\Fla$ is infinite dimensional, an argument is needed to show that the corresponding tangent bundle, or at least an analogue of its first Chern class, exists. As observed by Guest and Otofuji \cite{go} and Mare \cite{Ma3}, this was done in differential geometric setting by Freed \cite{freed:loop}. More recently Kashiwara provided an algebro-geometric approach in \cite[Appendix]{kumar:positivity}, where he calculated the canonical bundle of the ``thick" version of the flag manifold $\Fla$. \end{remark}

\begin{remark} The definition of the affine Gromov-Witten invariants is the natural generalization of a formula for the ordinary Gromov-Witten invariants on $G/B$ of the form $\gw{\sigma_u, \sigma_{s_i}, [X(w)]}{d}$ (for $i \neq 0$ and $w \in W$) which involves curve neighborhoods of Schubert varieties; see \cite[\S 7]{buch.mihalcea:nbhds}. In the finite case this formula is obtained from the divisor axiom in Gromov-Witten theory and a push-forward formula $[(\ev_1)_*(\ev_2^*[X(w)]) \in \coh_*(G/B)$ involving the evaluation maps $\ev_i: \Mb_{0,3}(G/B,d) \to G/B$; see \cite{bcmp:qkfinite} for a proof of this push-forward formula in cohomology and K-theory. However, the affine flag variety $\Fla$ is infinite dimensional, and an analogous moduli space has not been constructed. It would be interesting to construct the affine invariants above using moduli spaces. \end{remark}

Corollary \ref{cor:zd} implies that the quantity $\int_{\Fla} \eps_u \cap [\Theta_d(w)]$ is non-zero only if $u = w \cdot z_d^i$ for some $i$ and in this case it equals $1$. The latter condition turns out to be closely related to the upper bound $\ell(s_\alpha) \le 2 \he(\alpha^\vee) -1$, where $\alpha$ is a positive real affine root. This bound is well-known for finite root systems - see
for instance \cite[Lemma 4.3]{brenti.fomin.post:mixed}, \cite[Lemma 3.2]{Ma-1} or \cite[Theorem 6.1]{buch.mihalcea:nbhds}. To prove it in the affine case we need the following result: 

\begin{lemma}\label{lemma:instep} Let $\alpha$ be a positive, real, non-simple root. Then there exists a simple root $\alpha_i \in \Delta_\af$ such that $\langle \alpha_i, \alpha^{\vee}\rangle >0$. For any such $\alpha_i$ we have that $\ell(s_{s_i(\alpha)}) = \ell(s_\alpha) - 2$, $\he(s_i(\alpha)) = \he(\alpha) - \langle \alpha, \alpha_i^\vee \rangle$ and $\he(s_i(\alpha^\vee)) = \he(\alpha^\vee) - \langle \alpha_i, \alpha^\vee \rangle$. \end{lemma}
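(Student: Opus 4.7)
I would first establish the existence of $\alpha_i$, then read off the height identities by direct computation, and finally handle the length identity via the factorization $s_{s_i(\alpha)} = s_i s_\alpha s_i$. For existence, since $\alpha \in \Pi_\af^{re}$, Remark \ref{rth} gives $(\alpha|\alpha) > 0$, and the identity $\alpha^\vee = 2\nu^{-1}(\alpha)/(\alpha|\alpha)$ yields $\langle \alpha_j, \alpha^\vee\rangle = 2(\alpha_j|\alpha)/(\alpha|\alpha)$. Suppose toward contradiction that $\langle \alpha_j, \alpha^\vee\rangle \le 0$ for all $0 \le j \le n$. Expanding $\alpha = \sum c_j \alpha_j$ with $c_j \in \bZ_{\ge 0}$, not all zero, would force $(\alpha|\alpha) = \sum c_j (\alpha_j|\alpha) \le 0$, a contradiction; Proposition \ref{string} then upgrades $\langle \alpha_i, \alpha^\vee\rangle > 0$ to $\ge 1$. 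The two height identities are immediate from $s_i(\alpha) = \alpha - \langle \alpha, \alpha_i^\vee\rangle \alpha_i$ and the dual formula $s_i(\alpha^\vee) = \alpha^\vee - \langle \alpha_i, \alpha^\vee\rangle \alpha_i^\vee$, using $\he(\alpha_i) = \he(\alpha_i^\vee) = 1$.

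For the length identity I would apply the criterion $\ell(w s_\beta) < \ell(w) \Leftrightarrow w(\beta) < 0$ (recalled in \S\ref{akm}) twice, to the factorization $s_{s_i(\alpha)} = s_i s_\alpha s_i$. The first sub-step is $\ell(s_\alpha s_i) = \ell(s_\alpha) - 1$, for which I need $s_\alpha(\alpha_i) < 0$. The crucial observation is that $\langle \alpha_i, \alpha^\vee\rangle > 0$ forces $\alpha_i$ to lie in the support of $\alpha$: if the coefficient $c_i$ of $\alpha_i$ in $\alpha$ vanished, then $(\alpha_i|\alpha) = \sum_{j \ne i} c_j (\alpha_i|\alpha_j) \le 0$ by non-positivity of the off-diagonal affine Cartan entries, contradicting $\langle \alpha_i, \alpha^\vee\rangle > 0$. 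Hence $c_i \ge 1$, and every coefficient of $s_\alpha(\alpha_i) = \alpha_i - \langle \alpha_i, \alpha^\vee\rangle \sum_j c_j \alpha_j$ is $\le 0$, with at least one strict inequality because $\alpha$ is non-simple (either some $c_j > 0$ for $j \ne i$, or $c_i \ge 2$). The second sub-step is $\ell(s_i \cdot s_\alpha s_i) = \ell(s_\alpha s_i) - 1$, for which I need $(s_\alpha s_i)^{-1}(\alpha_i) = s_i s_\alpha(\alpha_i) < 0$. A direct calculation gives $s_i s_\alpha(\alpha_i) = -\alpha_i - \langle \alpha_i, \alpha^\vee\rangle s_i(\alpha)$, and since $\alpha \ne \alpha_i$ the standard Coxeter fact that $s_i$ permutes $\Pi_\af^{re,+} \setminus \{\alpha_i\}$ gives $s_i(\alpha) > 0$, so the expression is visibly the negative of a positive combination of positive roots.

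\textbf{Main obstacle.} The only delicate point is the strict (as opposed to merely weak) negativity of $s_\alpha(\alpha_i)$ in the first sub-step, which is why the support argument showing $c_i \ge 1$ is required. Once that is in hand, the remainder is a routine application of the reflection formulas, the non-positivity of off-diagonal Cartan matrix entries (which holds in the affine Kac-Moody setting just as in the finite case), and the exchange condition recalled in \S\ref{akm}.
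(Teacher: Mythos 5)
Your proof is correct, and the heart of the length identity --- writing $s_{s_i(\alpha)} = s_i s_\alpha s_i$, computing $s_\alpha(\alpha_i)$ and $s_i s_\alpha(\alpha_i)$, and checking both are negative roots before applying the descent criterion --- is exactly the paper's argument. You differ in two smaller places, both defensibly. For existence, the paper picks a right descent of $s_\alpha$ (available because $\alpha$ is non-simple, so $\ell(s_\alpha) > 1$) and reads $\langle \alpha_i, \alpha^\vee \rangle > 0$ off from $s_\alpha(\alpha_i) < 0$; you instead argue from positivity of the Killing form on real roots via Remark \ref{rth} and (\ref{alphv}). Both routes work. More substantively, the lemma asserts the length drop for \emph{any} $\alpha_i$ with $\langle \alpha_i, \alpha^\vee \rangle > 0$, not just the one produced by the existence step; the paper's proof is terse on this converse direction, whereas your support argument ($c_i \ge 1$, hence all simple-root coefficients of $s_\alpha(\alpha_i)$ are $\le 0$ with at least one strictly negative) supplies precisely the missing justification. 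Your rewriting of $s_i s_\alpha(\alpha_i)$ as $-\alpha_i - \langle \alpha_i, \alpha^\vee \rangle s_i(\alpha)$ also makes its negativity more transparent than the paper's expanded formula.
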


\begin{proof} Because $\alpha$ is not simple, there exists a simple reflection $s_i \in W_\af$ such that $\ell(s_\alpha s_i) < \ell(s_\alpha)$. Then $s_\alpha(\alpha_i) = \alpha_i - \langle \alpha_i, \alpha^\vee \rangle \alpha$ is a  negative root, thus both $\langle \alpha_i, \alpha^\vee \rangle >0$ and $\langle \alpha, \alpha_i^\vee \rangle >0$. Further, $s_{i}s_{\alpha}(\alpha_i)=(\langle\alpha,\alpha_i^{\vee}\rangle \langle \alpha_i, \alpha^{\vee}\rangle -1)\alpha_i-
\langle \alpha_i,\alpha^{\vee}\rangle \alpha$ is also a negative root. This proves the identity on lengths and the one on heights is obvious.
\end{proof}

\begin{prop}\label{letalpha} Let $\alpha\in \Pi^{re,+}_\af$ be an affine positive real root. Then there is an inequality  
$\ell(s_\alpha) \le 2 \min \{ \he({\alpha^\vee}), \he (\alpha) \} - 1$. 
If the equality $\ell(s_\alpha) = 2 \he (\alpha^\vee) - 1$ holds then $\alpha^\vee < c= \alpha_0^\vee +\theta^\vee$ where $c$ is the imaginary coroot defined in \S \ref{akm}. \end{prop}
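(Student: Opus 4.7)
For the first inequality, I proceed by induction on $\ell(s_\alpha)$. The base case of simple $\alpha$ is immediate: $\ell(s_\alpha) = 1 = 2\cdot 1 - 1$, which matches both $\he(\alpha^\vee)$ and $\he(\alpha)$. For the inductive step assume $\alpha$ is non-simple and apply Lemma \ref{lemma:instep} to obtain a simple root $\alpha_i\in\Delta_\af$ with $\langle\alpha_i,\alpha^\vee\rangle\ge 1$. The root $\beta:=s_i(\alpha)\in\Pi_\af^{re,+}$ then satisfies $\ell(s_\beta)=\ell(s_\alpha)-2$, and its heights drop by positive integers: $\he(\beta^\vee)=\he(\alpha^\vee)-\langle\alpha_i,\alpha^\vee\rangle$ and $\he(\beta)=\he(\alpha)-\langle\alpha,\alpha_i^\vee\rangle$. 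Applying the inductive hypothesis to $\beta$ gives $\ell(s_\alpha)\le 2\he(\beta^\vee)+1\le 2\he(\alpha^\vee)-1$, and identically for $\he(\alpha)$.

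For the assertion $\alpha^\vee < c$ in the equality case, I induct again on $\ell(s_\alpha)$. The base case $\alpha=\alpha_j$ is checked directly from the expansion $c=\alpha_0^\vee+m_1\alpha_1^\vee+\cdots+m_n\alpha_n^\vee$, using that all marks $m_i$ are positive and that the $\alpha_0^\vee$-coefficient of $c$ equals $1$. In the inductive step, with $\alpha$ non-simple satisfying $\ell(s_\alpha)=2\he(\alpha^\vee)-1$, tracing the chain of inequalities from part~(1) shows that equality is preserved precisely when $\langle\alpha_i,\alpha^\vee\rangle=1$. Hence $\beta^\vee=\alpha^\vee-\alpha_i^\vee$ and $\ell(s_\beta)=2\he(\beta^\vee)-1$, so by induction $\beta^\vee<c$.

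To propagate the inequality from $\beta^\vee$ to $\alpha^\vee$, I exploit the centrality of $c$ in $\gaf$, which implies $s_i(c)=c$. Combining with $\langle\alpha_i,c\rangle=0$ and $\langle\alpha_i,\beta^\vee\rangle=\langle\alpha_i,\alpha^\vee\rangle-\langle\alpha_i,\alpha_i^\vee\rangle=-1$, one computes
\[
c-\alpha^\vee \;=\; s_i(c-\beta^\vee) \;=\; (c-\beta^\vee)-\langle\alpha_i,c-\beta^\vee\rangle\,\alpha_i^\vee \;=\; (c-\beta^\vee)-\alpha_i^\vee.
\]
Since $\beta^\vee<c$, the coefficients of $c-\alpha^\vee$ at $\alpha_j^\vee$ for $j\ne i$ remain non-negative, and the coefficient at $\alpha_i^\vee$ equals $(c-\beta^\vee)_i - 1$. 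Thus the remaining task is to show that the $\alpha_i^\vee$-coefficient of $c-\beta^\vee$ is at least $1$, equivalently that $(\alpha^\vee)_i$ does not exceed the corresponding mark in $c$.

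I expect this last step to be the main obstacle. I would resolve it by parametrizing $\alpha=m\delta+\gamma$ with $\gamma\in\Pi$ and $m\ge 0$, so that $\alpha^\vee=kc+\gamma^\vee$ with $k=\tfrac{2m}{(\gamma|\gamma)}\in\Z_{\ge 0}$ the ``level''. A direct inspection shows that $\alpha^\vee\le c$ holds if and only if either $k=0$ with $\gamma^\vee\le\theta^\vee$ in the finite coroot order, or $k=1$ with $\gamma\in\Pi^-$. The equality $\ell(s_\alpha)=2\he(\alpha^\vee)-1$ translates, via the semidirect decomposition $W_\af=W\ltimes Q^\vee$ and the standard length formula for affine Weyl group elements of the form $t_{m\gamma^\vee}s_\gamma$, into an explicit identity that one compares to $2\he(\alpha^\vee)-1=2k(1+\he(\theta^\vee))+2\he(\gamma^\vee)-1$. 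This comparison rules out the ranges $k\ge 2$ and the violating finite subcases at $k\in\{0,1\}$, yielding both the classification of equality roots and the required coefficient bound, and so closes the induction.
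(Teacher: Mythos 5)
Your first part is correct and is essentially the paper's argument: apply Lemma \ref{lemma:instep} to drop to $s_i(\alpha)$ and induct (the paper inducts on $\he(\alpha^\vee)$ rather than $\ell(s_\alpha)$, which is immaterial). Your reduction in the second part is also sound as far as it goes: in the equality case the chain of inequalities forces $\langle\alpha_i,\alpha^\vee\rangle=1$, so $\beta^\vee=\alpha^\vee-\alpha_i^\vee$ again satisfies the equality and $\beta^\vee<c$ by induction, and the identity $c-\alpha^\vee=(c-\beta^\vee)-\alpha_i^\vee$ is correctly computed. But the step you yourself flag as ``the main obstacle'' --- that the $\alpha_i^\vee$-coefficient of $c-\beta^\vee$ is at least $1$ --- is a genuine gap, and the sketch you offer to close it does not close it. The phrases ``a direct inspection shows'' and ``this comparison rules out'' conceal exactly the content of the proposition. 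Concretely, at level $k=0$ your plan requires showing that a finite positive root $\gamma$ with $\gamma^\vee\not\le\theta^\vee$ never achieves $\ell(s_\gamma)=2\he(\gamma^\vee)-1$; such roots exist precisely in the non-simply-laced types (e.g.\ in $B_2$ with $\alpha_1=e_1$ short, the root $\gamma=e_2$ has $\gamma^\vee=\alpha_1^\vee+2\alpha_2^\vee>\theta^\vee$, $\he(\gamma^\vee)=3$ but $\ell(s_\gamma)=3<5$), and this is the finite-type instance of the very statement being proved --- cf.\ the remark after Corollary \ref{cor:ADE} that $\tilde{\Pi}_\af^{re,+}\cap\Pi^+\neq\Pi^+$ outside type ADE. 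Moreover, carrying out the length-formula computation for $t_{m\gamma^\vee}s_\gamma$ in all types would make the induction you set up superfluous, since the resulting classification would prove the proposition directly.

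The paper closes this step by a different, contrapositive argument that avoids propagating $\beta^\vee<c$ upward. Assuming $\ell(s_\alpha)=2\he(\alpha^\vee)-1$ and $\alpha^\vee\not<c$, one writes $\alpha^\vee=c+\gamma^\vee$ with $\gamma^\vee$ a positive real coroot and descends: if $\gamma$ is simple one computes $\langle\alpha_i,\alpha^\vee\rangle=2$ (using $\langle\alpha_i,c\rangle=0$), so Lemma \ref{lemma:instep} gives $\ell(s_\alpha)\le 2\he(\alpha^\vee)-3$, a contradiction; if $\he(\gamma^\vee)>1$ one picks $\alpha_i$ with $\langle\alpha_i,\gamma^\vee\rangle>0$, checks $s_i(\alpha^\vee)=c+s_i(\gamma^\vee)>c$, and applies the induction hypothesis (strict inequality for coroots exceeding $c$) to $s_i(\alpha)$. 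If you want to salvage your bottom-up induction instead, you would need to supply an independent proof of the finite-type classification of roots achieving $\ell(s_\gamma)=2\he(\gamma^\vee)-1$ (as in \cite[Lemma 4.3]{brenti.fomin.post:mixed} or \cite[Theorem 6.1]{buch.mihalcea:nbhds}) together with an honest case analysis at levels $k\ge 1$; as written, the proof is incomplete.
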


%\begin{equation}\label{alv}\alpha^\vee < c= \alpha_0^\vee +\theta^\vee \quad \textrm{ and } \quad \alpha < \delta = \alpha_0 +\theta \end{equation} where $c$ and $\delta$ are respectively the imaginary coroot and the imaginary root defined in \S \ref{akm}. 
%
%Moreover, if $\alpha$ is short, i.e. $(\alpha | \alpha) < (\theta | \theta)$, then $\alpha$ must be in the finite root system $\Pi$.   
%%m_1 \alpha_1^\vee +\cdots  + m_n\alpha_n^\vee.
%\end{prop}

\begin{proof} We use induction on $\he(\alpha^\vee)$. If $\ell(s_{\alpha})=1$, then $\alpha$ and $\alpha^{\vee}$ are simple, so $\he (\alpha^{\vee})= \he (\alpha) =1$. If $\he(\alpha) >1$ then { with the $\alpha_i$ from} Lemma \ref{lemma:instep} and by induction hypothesis \[ \ell (s_{\alpha})=\ell(s_{i}s_{\alpha}s_{i})+2\leq
2 \min \{ \he(s_i(\alpha)), \he( s_{i}(\alpha)^{\vee}) \} - 1+2 \le 2 \min \{ \he(\alpha), \he(\alpha^{\vee}) \} - 1.\] This proves the first part of the lemma. Assume now that $\ell(s_\alpha) = 2 \he(\alpha^\vee) -   1$. 
%We will prove first that $\alpha^\vee < c$. LM042116: am renuntat la a doua parte din statement.
Recall that $(\Pi^{re,+}_\af)^\vee= \Pi^{\vee,+} \cup \{ m c+\beta^\vee:  m>0, \beta^\vee \in\Pi^\vee \}$. If $\alpha \in \Pi^{\vee,+}$ does not satisfy the inequality $\alpha^\vee < c$ then $\alpha^\vee = c + \gamma^\vee$, where $\gamma^\vee \in \Pi_\af^{\vee,+}$ is a positive real coroot. If $\gamma = \alpha_i$ for $i \neq 0$ then $\langle \alpha_i,\alpha^\vee\rangle =\langle \alpha_i, c+\alpha_i^\vee \rangle = \langle \alpha_i,\alpha_i^\vee \rangle = 2$; if $\gamma = \alpha_0$ then $\langle \alpha_0,\alpha^\vee\rangle =\langle \alpha_0, 2c- \theta^\vee \rangle = \langle \theta, \theta^\vee \rangle = 2$. Thus by Lemma \ref{lemma:instep} \[ \ell(s_\alpha) = \ell(s_i s_\alpha s_i) + 2 \le  2 \he(s_i(\alpha^\vee)) + 1 = 2 \he(\alpha^\vee) - 3 \/. \]
Let now $\he(\gamma^\vee) >1$. Because $\gamma^\vee$ is real and positive, Lemma \ref{lemma:instep} shows that there exists a simple root $\alpha_i$ such that $\langle \alpha_i, \gamma^\vee \rangle > 0$. But then $\langle \alpha_i, \alpha^\vee \rangle = \langle \alpha_i, c + \gamma^\vee \rangle = \langle \alpha_i, \gamma^\vee \rangle >0$. Together with the fact that $s_i(\gamma^\vee)>0$ (because $\gamma$ is not simple) this implies that \[ s_i(\alpha^\vee) =  \alpha^\vee - \langle \alpha_i, \alpha^\vee \rangle \alpha_i^\vee = c+ \gamma^\vee - \langle \alpha_i, \gamma^\vee \rangle \alpha_i^\vee  = c + s_i (\gamma^\vee) > c \/. \] Then $s_i(\alpha)$ is a positive real root with $\he (s_i(\alpha^\vee)) <\he (\alpha^\vee)$ and $s_i(\alpha)^\vee >c$. By the induction hypothesis and Lemma \ref{lemma:instep} 
\[ \ell (s_{\alpha})=\ell(s_{i}s_{\alpha}s_{i})+2 <
2\he(s_{i}(\alpha)^{\vee})+1=2\he(\alpha^{\vee})- 2 \langle \alpha_i, \alpha^{\vee}\rangle +1  \le
2\he(\alpha^{\vee})-1 \/. \]
Thus $\alpha^\vee <c$ and the proof is finished. \end{proof}

The following result is the key to the explicit calculation of the action of $\Lambda_i$, and it generalizes to $\Fla$ a similar result from \cite{buch.mihalcea:nbhds} in the case $G/B$.

\begin{lemma}\label{lemma:eq} Let $d \in \coh_2(\Fla)$ be an effective, non-zero degree. Assume that $\ell(u) + 1 = \deg q^d + \ell(w)$ and that $u = w \cdot z_d^i$ in the Bruhat ordering for some $i$. Then the following hold:
\begin{enumerate}

\item $d = \alpha^\vee$ for some real affine coroot and $d <  c= \alpha_0^\vee + \theta^\vee$;

\item $\ell(s_\alpha) = \deg q^{\alpha^\vee}-1$;

\item the curve neighborhoods $\Theta_d(X(id))$ and $\Theta_d(X(w))$ are given by \[ \Theta_d(X(id))= X(s_\alpha); \quad \Theta_d(X(w)) = X(w s_\alpha) \/. \]
\end{enumerate} \end{lemma}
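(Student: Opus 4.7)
The plan is to exploit the structure theorem for curve neighborhoods (Corollary \ref{cor:zd}) together with the length bound of Proposition \ref{letalpha}, reducing everything to combinatorics of Hecke products of reflections. First I would unpack the hypothesis $u = w \cdot z_d^i$: by Corollary \ref{cor:zd}(b), there is a Hecke decomposition $z_d^i = s_{\alpha_{i_1}} \cdot \ldots \cdot s_{\alpha_{i_r}}$ with $\sum_j \alpha_{i_j}^\vee = d$. Subadditivity of length for Hecke products, combined with $\ell(s_{\alpha_{i_j}}) \le 2\he(\alpha_{i_j}^\vee) - 1$ from Proposition \ref{letalpha}, yields
\[
\ell(u) \le \ell(w) + \ell(z_d^i) \le \ell(w) + \sum_j \ell(s_{\alpha_{i_j}}) \le \ell(w) + 2\he(d) - r.
\]
The dimension hypothesis $\ell(u) = \ell(w) + 2\he(d) - 1$ forces $r = 1$ and saturates every inequality. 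Writing $\alpha := \alpha_{i_1}$, I obtain $d = \alpha^\vee$, $z_d^i = s_\alpha$, and $\ell(s_\alpha) = 2\he(\alpha^\vee) - 1 = \deg q^{\alpha^\vee} - 1$, proving (2); the bound $\alpha^\vee < c$ in (1) is then the second conclusion of Proposition \ref{letalpha}. Saturation also forces the Hecke product $w \cdot s_\alpha$ to be reduced, so $u = ws_\alpha$ with $\ell(ws_\alpha) = \ell(w) + \ell(s_\alpha)$.

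For (3), once $\Theta_d(X(id)) = X(s_\alpha)$ is established the formula $\Theta_d(X(w)) = X(ws_\alpha)$ follows from Corollary \ref{cor:zd}(c) together with the Hecke-product identity just derived. The inclusion $X(s_\alpha) \subseteq \Theta_d(X(id))$ is immediate, since the moment-graph edge $1 \to s_\alpha$ has degree $\alpha^\vee = d$, exhibiting $s_\alpha$ as one of the $z_d^j$. For the converse, applying the same length estimate to any competing maximal $z_d^j$ with Hecke decomposition of length $k$ and coroot sum at most $\alpha^\vee$ gives $\ell(z_d^j) \le 2\he(\alpha^\vee) - k$, strictly less than $\ell(s_\alpha)$ as soon as $k \ge 2$.

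The main obstacle is to rule out a competing $z_d^j$ that is Bruhat-incomparable with $s_\alpha$ --- equivalently, to prove that $\Theta_d(X(id))$ is irreducible in this extremal regime (as foreshadowed in the remark preceding the lemma). My plan is induction on $\he(\alpha^\vee)$. The base case $\he(\alpha^\vee) = 1$ is trivial, $\alpha$ being simple. For the inductive step I use Lemma \ref{lemma:instep} to pick a simple root $\alpha_i$ with $\langle \alpha_i, \alpha^\vee \rangle > 0$; the saturated equality $\ell(s_\alpha) = \ell(s_i s_\alpha s_i) + 2 = 2\he(\alpha^\vee) - 1$ forces $\langle \alpha_i, \alpha^\vee \rangle = 1$, so $s_i(\alpha)$ again saturates the extremal length bound, at strictly smaller height. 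A moment-graph chain of total degree at most $\alpha^\vee$ from $1$ that begins with the edge $1 \to s_i$ continues from $s_i$ with degree at most $s_i(\alpha^\vee)$; translating by $s_i$ and invoking the inductive hypothesis places the endpoint Bruhat-below $s_i s_{s_i(\alpha)} = s_\alpha s_i < s_\alpha$. Chains whose first edge is different are treated symmetrically by varying the choice of $\alpha_i$, closing the induction.
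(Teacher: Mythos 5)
Your first paragraph reproduces the paper's proof essentially verbatim: Hecke-factor $z_d^i$ via Corollary \ref{cor:zd}(b), play the inequality $\ell(u)\le \ell(w)+\sum_j \ell(s_{\alpha_{i_j}})\le \ell(w)+2\he(d)-r$ against the hypothesis $\ell(u)=\ell(w)+2\he(d)-1$ to force $r=1$ with equality throughout, and read off $d=\alpha^\vee$, $\ell(s_\alpha)=2\he(\alpha^\vee)-1$, $\alpha^\vee<c$ (from Proposition \ref{letalpha}) and $u=ws_\alpha$. That part is correct and is exactly the paper's argument. You are also right that part (3) contains a genuine irreducibility assertion ($p=1$ in Corollary \ref{cor:zd}): the length count shows that any other maximal element $z_d^j$ has a Hecke factorization with $k\ge 2$ factors and hence strictly smaller length, but this by itself does not preclude $z_d^j$ being Bruhat-incomparable with $s_\alpha$. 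The paper dispatches part (3) with a one-line appeal to Corollary \ref{cor:zd}, so your instinct to supply an explicit argument here is reasonable.

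The induction you sketch for that last point, however, has a gap in its case analysis. A moment-graph path from $1$ of total degree $\le\alpha^\vee$ may begin with the edge $1\to s_\beta$ for \emph{any} positive real root $\beta$ with $\beta^\vee\le\alpha^\vee$, not only with $1\to s_i$ for a simple $\alpha_i$ satisfying $\langle\alpha_i,\alpha^\vee\rangle=1$. Your closing claim that chains with a different first edge ``are treated symmetrically by varying the choice of $\alpha_i$'' therefore leaves uncovered (i) chains whose first edge corresponds to a non-simple reflection $s_\beta$ with $\beta^\vee<\alpha^\vee$, and (ii) chains starting with $1\to s_j$ for a simple root $\alpha_j$ with $\langle\alpha_j,\alpha^\vee\rangle\le 0$, for which the remaining budget $\alpha^\vee-\alpha_j^\vee$ is not $s_j(\alpha^\vee)$ and the inductive hypothesis (formulated only for roots saturating the extremal length bound) does not apply. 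Until every possible first edge is handled, an incomparable component of $\Theta_d(1)$ is not excluded, so part (3) is not yet proved by your argument. A smaller imprecision: left translation by $s_i$ preserves the degree of each moment-graph edge, since the edge joining $v$ and $vs_\beta$ has degree $\beta^\vee$ independently of $v$; the remaining budget $\alpha^\vee-\alpha_i^\vee$ coincides with $s_i(\alpha^\vee)$ only because $\langle\alpha_i,\alpha^\vee\rangle=1$, and the translated chain's degree is not obtained by applying $s_i$ to the original one.
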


\begin{proof} By Corollary \ref{cor:zd} we can find affine, positive real roots such that $z_d^i = s_{\alpha_{i_1}} \cdot \ldots \cdot s_{\alpha_{i_r}}$ and $\sum_{j=1}^r \alpha_{i_j}^\vee = d$. Since $\ell(s_\alpha) \le 2 \he(\alpha^\vee) -1$ by Proposition \ref{letalpha}, and because $\deg q^d = 2 \he(d)$, \[ \begin{split} \ell(u) = \ell(w \cdot z_d^i) \le \ell(w) + \ell(s_{\alpha_{i_1}} \cdot \ldots \cdot s_{\alpha_{i_r}}) \le \ell(w) + \sum_{j=1}^r (2 \he(\alpha_{i_j}^\vee) - 1)\\ = \ell(w) + 2 \he(d) - r = \ell(w) + \deg q^d - r = \ell(u) + 1 - r 
\/. \end{split} \]
This implies that $r=1$ and that we have equality throughout. Thus $z_d^i = s_\alpha$ and $d=\alpha^\vee$ with $\ell(s_\alpha) = 2 \he(\alpha^\vee) -1$. Moreover, the Hecke product $w \cdot s_\alpha$ coincides with the usual product $w s_\alpha$ in $W_\af$. The equality of curve neighborhoods follows from Corollary \ref{cor:zd}. This finishes the proof.\end{proof}

Lemma \ref{lemma:eq} implies immediately the following formula for $\Lambda_i$:

\begin{thm}\label{thm:afqops} Let $0 \le i \le n$. Then the affine quantum Chevalley operator $\Lambda_i$ is given by:\[ \Lambda_i(\eps_u) = \eps_i \cdot \eps_u + \sum \langle \lambda_i, \alpha^\vee \rangle q^{\alpha^\vee} \eps_{u s_\alpha} \] where $\eps_i \cdot \eps_u$ is given by the Chevalley formula from (\ref{chaff}), and the sum is over affine, positive, real roots $\alpha$ satisfying $\ell(us_\alpha) = \ell(u) +1 - \deg q^{\alpha^\vee}$.~Further, any such $\alpha$ must satisfy $\ell(s_\alpha) = 2 \he(\alpha^\vee) -1$, therefore in particular $\alpha^\vee < c = \alpha_0^\vee + \theta^\vee$. 
\end{thm}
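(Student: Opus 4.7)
The plan is to unpack Definition~\ref{def:chevops} and show that, once the homogeneity of $\Lambda_i$ is taken into account, the contributions are controlled entirely by Corollary~\ref{cor:zd} and Lemma~\ref{lemma:eq}; the stated formula will then be a reindexing of the definition.

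First I would write out the definition as
\[
\Lambda_i(\eps_u) \;=\; \eps_i\cdot\eps_u \;+\; \sum_{\substack{w\in W_\af,\ d\neq 0\\ \ell(u)+1=\deg q^d+\ell(w)}} \langle \lambda_i,d\rangle \Bigl(\int_{\Fla}\eps_u\cap [\Theta_d(w)]\Bigr)\, q^d\,\eps_w,
\]
where the restriction on $(w,d)$ follows because $\Lambda_i$ is a degree-one endomorphism of the graded module $\quantum^*(\Fla)$. By Corollary~\ref{cor:zd}(c) the reduced scheme $\Theta_d(w)$ decomposes as $X(w\cdot z_d^1)\cup\cdots\cup X(w\cdot z_d^p)$, so $[\Theta_d(w)]$ splits in $\coh_*(\Fla)$ as a formal sum of Schubert classes. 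Since $\eps_v$ is dual to $[X(v)]$ under the cap pairing, the integral equals $1$ if $u=w\cdot z_d^j$ for some $j$ (the grading of the cap product then forcing $\ell(u)=\ell(w\cdot z_d^j)$) and vanishes otherwise.

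Next I would feed each surviving pair $(w,d)$ into Lemma~\ref{lemma:eq}. The joint hypotheses $\ell(u)+1=\deg q^d+\ell(w)$ and $u=w\cdot z_d^j$ are precisely those of that lemma, which then forces $d=\alpha^\vee$ for a positive affine real root $\alpha$ with $\ell(s_\alpha)=2\he(\alpha^\vee)-1=\deg q^{\alpha^\vee}-1$, forces the Hecke product $w\cdot z_d^j$ to coincide with the ordinary product $ws_\alpha$, and (via Proposition~\ref{letalpha}) yields $\alpha^\vee<c$. Setting $w=us_\alpha$ reindexes the outer sum by such roots $\alpha$; the degree equation $\ell(u)+1=\deg q^d+\ell(w)$ translates into $\ell(us_\alpha)=\ell(u)+1-\deg q^{\alpha^\vee}$, which is exactly the summation condition in the theorem.

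Substituting back, each such $\alpha$ contributes $\langle\lambda_i,\alpha^\vee\rangle\,q^{\alpha^\vee}\,\eps_{us_\alpha}$; combined with the classical Chevalley expansion of $\eps_i\cdot\eps_u$ from (\ref{chaff}), this yields the displayed formula. The ``further'' assertion that any such $\alpha$ automatically satisfies $\ell(s_\alpha)=2\he(\alpha^\vee)-1$ can also be read off directly: the triangle inequality in the Coxeter group $W_\af$ gives $\ell(s_\alpha)\ge |\ell(u)-\ell(us_\alpha)|=\deg q^{\alpha^\vee}-1=2\he(\alpha^\vee)-1$, while the reverse inequality is Proposition~\ref{letalpha}; the containment $\alpha^\vee<c$ then follows from the second part of that proposition.

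I do not expect a substantive obstacle: the real work sits in Proposition~\ref{letalpha} and Lemma~\ref{lemma:eq}, and what remains here is essentially a bookkeeping reformulation of Definition~\ref{def:chevops}. The one mild subtlety is the interpretation of $[\Theta_d(w)]$ when $\Theta_d(w)$ is not equidimensional, but the grading of the cap product together with Lemma~\ref{lemma:eq} automatically selects the length-$\ell(u)$ components, and these are precisely the ones indexed by the roots $\alpha$ in the formula.
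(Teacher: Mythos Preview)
Your proposal is correct and follows the same approach as the paper. The paper's own proof is the single sentence ``Lemma~\ref{lemma:eq} implies immediately the following formula for $\Lambda_i$,'' and your write-up is precisely the detailed unpacking of that implication: expand Definition~\ref{def:chevops}, use Corollary~\ref{cor:zd} to read off the integral, invoke Lemma~\ref{lemma:eq} to pin down $d=\alpha^\vee$ and $w=us_\alpha$, and reindex. Your additional direct argument for $\ell(s_\alpha)=2\he(\alpha^\vee)-1$ via the triangle inequality and Proposition~\ref{letalpha} is also the intended one, and your closing remark about equidimensionality is apt---Lemma~\ref{lemma:eq} in fact shows $\Theta_d(w)=X(ws_\alpha)$ is irreducible in the cases that survive, so no genuine ambiguity arises.
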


The condition on $\alpha$ and the definition of the divided difference operator $D_{s_\alpha}$ on $\coh^*(\Fla)$ implies that the expression for $\Lambda_i$ can be rewritten as:
\begin{equation}\label{eq:qafBGG} \Lambda_i(\eps_u) = \eps_i \cdot \eps_u  + \sum \langle \lambda_i, \alpha^\vee \rangle q^{\alpha^\vee} D_{s_\alpha}(\eps_{u}) \end{equation} where the sum is over affine, positive, real roots $\alpha$ with $\ell(s_\alpha) = 2 \he(\alpha^\vee) - 1$. This generalizes the quantum Chevalley operators from \cite{Pe,Ma0} and it will be used in the next section to prove that various quantum products we will define are closed.

\section{Two affine quantum cohomology rings}\label{sec:affqcohrings} {\bf From now on all the cohomology rings will be taken with coefficients over $\Q$.} 
Denote by $\mathrm{H}^\#(\Fla)$ the graded subring of $\coh^*(\Fla)$ generated by the Schubert divisors $\eps_i$ for $0 \le i \le n$. Unlike in the finite case, the Schubert divisors {\em do not} generate the cohomology ring (even over $\Q$), therefore $\mathrm{H}^\#(\Fla)$ is a strict subring of $\coh^*(\Fla)$. Define the graded $\Q[q]$-modules \[ \quantum^\#(\Fla) := \mathrm{H}^\#(\Fla) \otimes_\Q \Q[q]; \quad \quantum^*_\af (G/B):= \coh^*(G/B) \otimes_\Q \Q[q] \/; \] the first is a graded submodule of $\coh^*(\Fla) \otimes_\Q \Q[q]$, and the grading on the second is determined by $\deg \sigma_w = \ell(w)$ (for $w \in W$) and $\deg q_i = 2$. The goal of this section is to define the main new rings in this paper: \begin{itemize} \item a ring structure on the $\Q[q]$-module $\quantum^*_\af(G/B)$; \item a ring structure on the $\Q[q]/\langle q^c \rangle$-module  $\quantum^\#(\Fla)/\langle q^c \rangle$ where $q^c = q_0 q^{\theta^\vee}$ is the product of the quantum parameters determined by the imaginary coroot. \end{itemize} Recall from \S \ref{s:p} the formula $\mathrm{e}_1^*(\sigma_i) = \eps_i - m_i \eps_0$ ($1 \le i \le n$) where the integers $m_i$ are defined by $\theta^\vee = \sum_{i=1}^n m_i \alpha_i^\vee$. Our first result is:
\begin{thm}\label{thm:invChev} (a) Let $0 \le i \le n$. Then the Chevalley operator $\Lambda_i$ preserves the submodule $\quantum^\#(\Fla)$. 

(b) Let $1 \le i \le n$. Then the modified Chevalley operator $\Lambda_i - m_i \Lambda_0$ preserves the submodule $\mathrm{e}_1^*(\coh^*(G/B)) \otimes_\Q \Q[q]$. \end{thm}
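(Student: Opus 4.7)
\medskip

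The plan is to exploit the combinatorial/BGG expression (\ref{eq:qafBGG}) for $\Lambda_i$ that was already extracted in Theorem~\ref{thm:afqops}. Writing, for any $a\in \coh^*(\Fla)$,
\[
\Lambda_i(a) \;=\; \eps_i\cdot a \;+\; \sum_{\alpha}\langle \lambda_i,\alpha^\vee\rangle\, q^{\alpha^\vee} D_{s_\alpha}(a),
\]
where the sum runs over positive real affine roots with $\ell(s_\alpha)=2\he(\alpha^\vee)-1$, reduces the closure of $\Lambda_i$ on either submodule to (i) an obvious statement about ordinary cup product, and (ii) a statement about how the affine BGG operators $D_{s_\alpha}$ interact with the subring $\mathrm H^\#(\Fla)$, respectively with $\mathrm e_1^*(\coh^*(G/B))$. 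Each $D_{s_\alpha}$ factors through a reduced word as a composition $D_{i_1}\cdots D_{i_k}$, so it is enough to control the simple operators $D_j$, $0\le j\le n$.

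For part~(a), first I would verify that $c_1(\mathcal{L}_{\alpha_j})\in \mathrm H^\#(\Fla)$ for each simple affine root $\alpha_j$. For $j\neq 0$ this is immediate from $\alpha_j=\sum_k a_{jk}\lambda_k$ (the finite row of the affine Cartan matrix), while for $j=0$ one uses $\alpha_0=\delta-\theta$ together with Proposition~\ref{prop:delta trivial} ($c_1(\mathcal{L}_\delta)=0$) to expand $c_1(\mathcal{L}_{\alpha_0})$ as an integer combination of the $\eps_k$'s. Next, using the Leibniz rule
\[
D_j(xy)=D_j(x)y+x D_j(y)-c_1(\mathcal{L}_{\alpha_j})D_j(x)D_j(y),
\]
from Proposition~\ref{prop:leibniz}(b) together with the base case $D_j(\eps_k)=\delta_{jk}\in\Q\subset \mathrm H^\#(\Fla)$, an induction on the degree of a monomial in the $\eps_0,\ldots,\eps_n$ shows that each $D_j$, hence each $D_{s_\alpha}$, preserves $\mathrm H^\#(\Fla)$. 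Combined with the obvious fact that $\eps_i\cdot \mathrm H^\#(\Fla)\subset \mathrm H^\#(\Fla)$, this gives (a).

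For part~(b), I would combine two tools already established: the identity $\eps_i-m_i\eps_0=\mathrm e_1^*(\sigma_i)$ from (\ref{eq:php}) and the intertwining formula $D_w\circ \mathrm e_1^*=\mathrm e_1^*\circ \pi(D_w)$ from Theorem~\ref{thm:crucial}. Taking $a=\mathrm e_1^*(b)$ with $b\in \coh^*(G/B)$ and applying the displayed formula for $\Lambda_i-m_i\Lambda_0$ yields
\[
(\Lambda_i-m_i\Lambda_0)(\mathrm e_1^*(b))=\mathrm e_1^*(\sigma_i\cdot b)+\sum_\alpha \langle \lambda_i-m_i\lambda_0,\alpha^\vee\rangle\, q^{\alpha^\vee}\,\mathrm e_1^*\!\bigl(\pi(D_{s_\alpha})(b)\bigr),
\]
each term of which visibly lies in $\mathrm e_1^*(\coh^*(G/B))\otimes\Q[q]$. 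This produces at once the Chevalley multiplication formula announced in the introduction for $\sigma_i\star_\af\sigma_w$.

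I expect the main obstacle to be, in practice, the bookkeeping in part~(a): the induction is clean only because the ``error term'' $c_1(\mathcal{L}_{\alpha_j})$ in the Leibniz rule stays inside $\mathrm H^\#(\Fla)$, and this in turn depends crucially on the vanishing $c_1(\mathcal{L}_\delta)=0$, which is what makes the $j=0$ case work. Part~(b) is then essentially a formal corollary of the nontrivial Theorem~\ref{thm:crucial}, whose proof already carried out the real work.
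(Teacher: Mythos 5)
Your proof is correct and follows essentially the same route as the paper's: part (a) reduces via the expression (\ref{eq:qafBGG}) to showing each $D_k$ preserves $\mathrm{H}^\#(\Fla)$, proved by the Leibniz rule and induction on monomials in the $\eps_j$ (your explicit check that $c_1(\mathcal{L}_{\alpha_k})\in \mathrm{H}^\#(\Fla)$, using $c_1(\mathcal{L}_\delta)=0$, is a detail the paper leaves implicit but does rely on); part (b) is exactly the paper's computation combining Theorem \ref{thm:crucial} with $\mathrm{e}_1^*(\sigma_i)=\eps_i-m_i\eps_0$ and the fact that $\mathrm{e}_1^*$ is a ring homomorphism.
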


\begin{proof} To prove (a), we use the expression (\ref{eq:qafBGG}) for $\Lambda_i$. It suffices to show that the divided difference operators $D_k$ preserve the subring $\mathrm{H}^\#(\Fla)$, for all $0 \le k \le n$. This reduces to checking whether $D_k(\eps_{i_1} \cdots  \eps_{i_j}) \in \mathrm{H}^\#(\Fla)$. This follows from induction on the number of terms in the monomial $\eps_{i_1} \cdots  \eps_{i_j}$: if $j=1$ then $D_k(\eps_i) = \langle \lambda_i, \alpha_k^\vee \rangle$, and if $j>1$ one uses the Leibniz formula from Proposition \ref{prop:leibniz} and the induction hypothesis. We now turn to the proof of part (b). Let $w \in W$. The identities $D_{s_\alpha}(\mathrm{e}_1^*(\sigma_w)) = \mathrm{e}_1^*(\pi(D_{s_\alpha})(u))$ (Theorem \ref{thm:crucial}) and $\mathrm{e}_1^*(\sigma_i) = \eps_i - m_i \eps_0$ (Proposition \ref{prop:pullback}) imply that \begin{equation}\label{eq:Lambdabar} (\Lambda_i - m_i \Lambda_0)(\mathrm{e}_1^*(\sigma_w))  = \mathrm{e}_1^*(\sigma_i) \cdot \mathrm{e}_1^*(\sigma_w) + \sum \langle \lambda_i - m_i \lambda_0, \alpha^\vee \rangle q^{\alpha^\vee} \mathrm{e}_1^*(\pi(D_{s_\alpha})(\sigma_w)) \/, \end{equation} where the sum is as in (\ref{eq:qafBGG}). Since $\mathrm{e}_1^*:\coh^*(G/B) \to \coh^*(\Fla)$ is a ring homomorphism, this finishes the proof. \end{proof}

To construct the quantum products advertised above, we need the following commutativity properties of the operators $\Lambda_i$ and $\Lambda_i - m_i \Lambda_0$. The proof will be given in the next section.

\begin{thm}\label{thm:comm} (a) The operators $\Lambda_i$ commute up to the imaginary coroot, i.e.~for any $w \in W_\af$ and any $0 \le i, j \le n$ we have \[ \Lambda_i \Lambda_j (\eps_w) = \Lambda_j \Lambda_i(\eps_w)  \mod q^c = q_0 q^{\theta^\vee} \/. \] 

(b) The operators $\Lambda_i - m_i \Lambda_0$ commute (without any additional constraint), i.e.~for any $w \in W_\af$ and any $1 \le i, j \le n$ we have \[(\Lambda_i - m_i \Lambda_0)(\Lambda_j - m_j \Lambda_0)(\eps_w) = (\Lambda_j - m_j \Lambda_0)(\Lambda_i - m_i \Lambda_0)(\eps_w)  \/. \] 
\end{thm}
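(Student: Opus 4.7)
The strategy is to expand both sides of each identity via the explicit formula in Theorem \ref{thm:afqops} and compare term by term. Each application of $\Lambda_k$ decomposes $\eps_u$ into a sum of ``Chevalley steps'' indexed by affine positive real roots: classical steps (coming from $\eps_k\cdot\eps_u$ via \eqref{chaff}, with $q$-factor $1$) and quantum steps (with $q$-factor $q^{\alpha^\vee}$, for $\alpha$ satisfying $\ell(s_\alpha)=2\he(\alpha^\vee)-1$ and $\alpha^\vee<c$ by Proposition \ref{letalpha}). Consequently $\Lambda_i\Lambda_j(\eps_w)$ is a sum over length-2 chains $(\alpha,\beta)$ of typical contribution
\[
\langle \lambda_j,\alpha^\vee\rangle\langle \lambda_i,\beta^\vee\rangle\,q^{\alpha^\vee+\beta^\vee}\,\eps_{ws_\alpha s_\beta},
\]
subject to length compatibility among $w$, $ws_\alpha$ and $ws_\alpha s_\beta$. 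These are exactly the length-2 chains in the affine analogue of the quantum Bruhat graph of \cite{brenti.fomin.post:mixed}.

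For part (a), the plan is to construct an involution on length-2 chains $(\alpha,\beta)\mapsto(\alpha',\beta')$ preserving $w$, the endpoint $ws_\alpha s_\beta$, and the total degree $\alpha^\vee+\beta^\vee$, and matching the coefficients with the roles of $\lambda_i$ and $\lambda_j$ swapped. The construction will proceed by a case analysis on the types of the two steps (classical/classical, classical/quantum, quantum/classical, quantum/quantum), using the combinatorial structure of the Chevalley coroots developed in \S \ref{combin}. The involution will be shown to be well defined for all chains except those of total degree exactly $c=\alpha_0^\vee+\theta^\vee$; the unpaired chains produce the $q^c$ obstruction, whose unavoidability is already confirmed by Remark \ref{rmk:ex}.

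For part (b), I expand
\[
[\Lambda_i-m_i\Lambda_0,\,\Lambda_j-m_j\Lambda_0] = [\Lambda_i,\Lambda_j]-m_j[\Lambda_i,\Lambda_0]+m_i[\Lambda_j,\Lambda_0].
\]
Part (a) implies each of the three commutators on the right is divisible by $q^c$, so only the $q^c$-coefficient remains. The key observation is that any chain contributing to $q^c$ must consist of two quantum steps, since a single quantum step has degree strictly less than $c$ and classical steps contribute degree $0$; so such a chain has $\beta^\vee=c-\alpha^\vee$ with both $\alpha^\vee,\beta^\vee<c$. Because $\langle\lambda_i-m_i\lambda_0,c\rangle=0$ for $1\le i\le n$---a direct consequence of $\langle\lambda_i,\theta^\vee\rangle=m_i$ together with $\langle\lambda_0,\alpha_0^\vee\rangle=1$ and $\langle\lambda_0,\theta^\vee\rangle=0$---the substitution $\lambda_i\mapsto\lambda_i-m_i\lambda_0$ turns $\langle\lambda_i,\beta^\vee\rangle$ into $-\langle\lambda_i-m_i\lambda_0,\alpha^\vee\rangle$. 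A short bookkeeping then shows that the resulting antisymmetric bilinear form in the modified weights is identically zero.

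The principal obstacle is the combinatorial construction of the involution in part (a); the mixed cases (a classical step followed by a quantum one, or vice versa) are the most delicate, since one must track how simple reflections interact with non-simple reflections $s_\alpha$ satisfying $\ell(s_\alpha)=2\he(\alpha^\vee)-1$ and ensure that the intermediate Weyl group elements can always be recovered after the swap. Once this involution is in hand, part (b) reduces to the short linear-algebra verification outlined above, built entirely on the explicit values of the fundamental weights against the imaginary coroot $c$.
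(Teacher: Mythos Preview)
Your approach is essentially the paper's: expand via Theorem~\ref{thm:afqops}, organize the terms as length-2 chains in an affine quantum Bruhat graph, and verify that the coefficient of each $q^\kappa \eps_v$ is symmetric in $i,j$. One caution on your framing: the pairing is not always a literal involution within a fixed step-type, and organizing the case analysis purely by the pattern (cc, cq, qc, qq) can obscure the key phenomenon. In the paper's Case~2.2, a (qq) chain with $\langle\alpha,\beta^\vee\rangle\neq 0$ and $\alpha^\vee+\beta^\vee=\gamma^\vee\in(\tilde{\Pi}_\af^{re,+})^\vee$ must be matched with a \emph{mixed} chain (either (q1) or (1q)) through $s_\gamma$, and the resulting symmetry is not a swap of the two steps but a nontrivial identity in $\langle\lambda_i,\alpha^\vee\rangle,\langle\lambda_j,\beta^\vee\rangle$. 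The bijection in Proposition~\ref{prop:pairbij} is what makes this work. So your case split should really be indexed by the pair $(\kappa,v)$, not by step types alone.

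For part (b), your computation is correct and in fact slightly cleaner than the paper's. Your sentence ``only the $q^c$-coefficient remains'' is not a consequence of the \emph{statement} of (a) (divisibility by $q^c$ allows higher $q$-degrees), but it does follow from your own stronger claim that the involution handles every $\kappa\neq c$; Lemma~\ref{lemma:multiple} is what forces any (qq) chain with $\kappa\nless c$ and noncommuting reflections to have $\kappa=c$, while the commuting ones are handled by the swap. Once that is noted, your observation $\langle\lambda_i-m_i\lambda_0,\beta^\vee\rangle=-\langle\lambda_i-m_i\lambda_0,\alpha^\vee\rangle$ when $\alpha^\vee+\beta^\vee=c$ makes each individual chain's contribution $-\langle\lambda_j-m_j\lambda_0,\alpha^\vee\rangle\langle\lambda_i-m_i\lambda_0,\alpha^\vee\rangle$, visibly symmetric, and you avoid the paper's separate treatment of the commuting and noncommuting subcases.
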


\begin{remark}\label{rmk:ex} Without the condition on$\mod q^c$ the commutativity in (a) fails already for $G= \SL_2(\C)$. Recall that in this case $W_\af$ is the infinite dihedral group with generators $s_0 $ and $s_1$ and $c = \alpha_0^\vee + \alpha_1^\vee$. Then $q^c = q_0 q_1$ and $\Lambda_0 \Lambda_1(\eps_{s_0 s_1} )- \Lambda_1 \Lambda_0(\eps_{s_0 s_1}) = q^c$.
\end{remark}

\subsection{A quantum product on $\quantum^*_\af(G/B)$} Since the ring homomorphism $\mathrm{e}_1^*:\coh^*(G/B) \to \coh^*(\Fla)$ is injective by Proposition \ref{prop:inj}, one can use the expression (\ref{eq:Lambdabar}) to define a $\Q[q]$-linear endomorphism on $\quantum^*_\af(G/B)$, denoted $\bar{\Lambda}_i$, by \begin{equation}\label{eq:Lambdafin} \bar{\Lambda}_i(\sigma_w) = \sigma_i \cdot \sigma_w + \sum \langle \lambda_i - m_i \lambda_0, \alpha^\vee \rangle q^{\alpha^\vee} \pi(D_{s_\alpha})(\sigma_w); \quad 1 \le i \le n; \quad w \in W \/; \end{equation} where the sum is over {\em affine} positive real roots $\alpha$ such that $\ell(s_\alpha) = 2 \he(\alpha^\vee) - 1$. Recall that $\pi(D_{s_\alpha})$ is the operator acting on $\coh^*(G/B)$ defined in Theorem \ref{T:ev} above and that $\pi(D_{s_\alpha}) (\coh^{k}(G/B)) \subset \coh^{k - 2 \ell(s_\alpha)}(G/B)$ for $k \ge 0$.~This implies that $\bar{\Lambda}_i$ has degree $+ 1$
on $\quantum^*_\af(G/B)$. Finally, the commutativity of the operators $\Lambda_i - m_i \Lambda_0$ from Theorem \ref{thm:comm} implies that the operators $\bar{\Lambda}_i$ commute as well.

Denote by $\mathcal{Q}$ the (commutative) subring of the endomorphism ring $\mathrm{End}_{\Q[q]}(\quantum^*_\af(G/B))$ generated by the operators $\bar{\Lambda}_1, \ldots, \bar{\Lambda}_n$. Then $\mathcal{Q}$ is also a $\Q[q]$-module and there is a well defined morphism of $\Q[q]$-modules
\[ \varphi: \mathcal{Q} \to \quantum^*_\af(G/B); \quad \varphi(\Phi) = \Phi(1) \/. \]

%In particular, notice that this operator mixes finite and affine data.

\begin{thm}\label{thm:defafqcoh} (a) The kernel of the morphism $\varphi$ is an ideal in $\mathcal{Q}$.

(b) The morphism $\varphi$ is surjective.
\end{thm}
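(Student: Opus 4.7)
The plan for part (a) is to exploit the commutativity of $\mathcal{Q}$. Since $\mathcal{Q}$ is by construction generated by the operators $\bar{\Lambda}_1, \ldots, \bar{\Lambda}_n$, and these pairwise commute by Theorem~\ref{thm:comm}(b), the ring $\mathcal{Q}$ is commutative. Given $\Phi \in \ker \varphi$ and any $\Psi \in \mathcal{Q}$, I would then compute
\[
\varphi(\Phi \Psi) \;=\; (\Phi \Psi)(1) \;=\; (\Psi \Phi)(1) \;=\; \Psi(\Phi(1)) \;=\; \Psi(0) \;=\; 0,
\]
so $\Phi \Psi \in \ker \varphi$, proving that $\ker \varphi$ is an ideal.

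For part (b), my plan is to argue by induction on the total degree $N$ in the graded $\Q[q]$-module $\quantum^*_\af(G/B) = \bigoplus_N \quantum^*_\af(G/B)_N$, using that $\varphi$ is a degree-preserving $\Q[q]$-linear map. Two ingredients are essential. First, since $\pi(D_{s_\alpha})$ is a homogeneous operator on $\coh^*(G/B)$ of strictly negative degree for any $\alpha \in \Pi_\af^{re,+}$, formula (\ref{eq:Lambdafin}) shows that each $\bar{\Lambda}_i$ reduces to classical multiplication by $\sigma_i$ modulo the ideal $\langle q_0, \ldots, q_n \rangle \subset \Q[q]$. Second, the classical Borel/BGG theorem asserts that $\coh^*(G/B;\Q)$ is generated as a $\Q$-algebra by the Schubert divisors $\sigma_1, \ldots, \sigma_n$.

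To each homogeneous polynomial $P \in \Q[x_1, \ldots, x_n]$ of degree $N$ I would associate $\Phi_P := P(\bar{\Lambda}_1, \ldots, \bar{\Lambda}_n) \in \mathcal{Q}$, unambiguously defined by the commutativity from Theorem~\ref{thm:comm}(b). Iterating (\ref{eq:Lambdafin}), together with the observation that $\pi(D_{s_\alpha})(1) = 0$ for every positive real affine root $\alpha$, gives
\[
\varphi(\Phi_P) \;=\; P(\sigma_1, \ldots, \sigma_n) + R_P,
\]
where $R_P \in \quantum^*_\af(G/B)_N$ is a $\Q$-linear combination of monomials $q^d \sigma_v$ with $d \neq 0$. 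The base case $N=0$ is immediate from $\varphi(\mathrm{id}_{\mathcal{Q}}) = 1$. For the inductive step, I would fix $\alpha \in \quantum^*_\af(G/B)_N$ and decompose $\alpha = \alpha_0 + \alpha_+$, where $\alpha_0 = \sum_{\ell(w) = N} c_w \sigma_w$ is the $q$-free part. By the Borel generation result I can write $\alpha_0 = P(\sigma_1, \ldots, \sigma_n)$ for some homogeneous $P$ of degree $N$, so that $\alpha - \varphi(\Phi_P) = \alpha_+ - R_P$ is a combination of monomials $q^d \sigma_v$ with $d \neq 0$. This can be factored as $\sum_{i=0}^n q_i \xi_i$ with each $\xi_i \in \quantum^*_\af(G/B)_{N-2}$. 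The inductive hypothesis produces $\Psi_i \in \mathcal{Q}$ with $\varphi(\Psi_i) = \xi_i$, and $\Q[q]$-linearity of $\varphi$ yields $q_i \xi_i = \varphi(q_i \Psi_i)$. Hence
\[
\alpha \;=\; \varphi\!\left( \Phi_P + \sum_{i=0}^n q_i \Psi_i \right)
\]
lies in the image, completing the induction.

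I expect the main work to be purely bookkeeping --- carefully matching degrees across the decomposition $\alpha = \alpha_0 + \alpha_+$ and verifying that every term of $R_P$ is indeed $q$-divisible of total degree $N$. No deeper technical obstacle is anticipated, since the three key algebraic inputs (commutativity of $\mathcal{Q}$ from Theorem~\ref{thm:comm}(b), the explicit formula (\ref{eq:Lambdafin}), and the classical fact that Schubert divisors generate $\coh^*(G/B;\Q)$) are already established earlier in the paper.
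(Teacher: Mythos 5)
Your proposal is correct and takes essentially the same route as the paper: part (a) is the same one-line use of the commutativity of $\mathcal{Q}$, and your degree induction in part (b) is precisely the proof of the graded Nakayama lemma (Lemma \ref{lemma:Nak}) that the paper invokes, unwound explicitly for the $\Q[q]$-module $\quantum^*_\af(G/B)$ with the elements $\bar{\Lambda}_{i_1}\cdots\bar{\Lambda}_{i_s}(1)$ as the generating set.
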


Before proving the theorem we recall a graded version of Nakayama Lemma; see e.g.~\cite[Ex.~4.6]{eisenbud:view} or \cite[Lemma 4.1]{mihalcea:giambelli}:
 \begin{lemma}\label{lemma:Nak}
Let $R$ be a commutative ring graded by nonnegative integers and let $I$ be an ideal in $R$ which consists 
of elements of strictly positive degree. Let $M$ be an $R$-module graded by nonnegative integers and let $a_1,a_2 ,\ldots \in M$ be a set of homogeneous elements (possibly infinite) whose images generate $M/IM$ as an $R/I$-module. Then $a_1,a_2,\ldots$ generate
$M$ as an $R$-module.
\end{lemma}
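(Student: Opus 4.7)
\textbf{Proof proposal for Lemma \ref{lemma:Nak}.} The plan is to let $N := \sum_i R a_i \subseteq M$ be the graded $R$-submodule generated by the given homogeneous elements, and prove $N = M$ by showing that every homogeneous $m \in M$ lies in $N$. Because all the $a_i$ are homogeneous, $N$ is automatically a graded submodule of $M$, so this reduction to homogeneous $m$ is legitimate. I proceed by induction on $\deg m$.

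For the base case $\deg m = 0$, the hypothesis that the images $\bar a_i$ generate $M/IM$ over $R/I$ yields a (finite) expression $m \equiv \sum_i r_i a_i \pmod{IM}$ with $r_i \in R$; by extracting the degree-$0$ component of this relation and discarding terms with $\deg r_i + \deg a_i \neq 0$, I may assume both sides are homogeneous of degree $0$. Thus $m - \sum_i r_i a_i \in (IM)_0$. Here is where both grading hypotheses are used: since $I$ is contained in strictly positive degrees and $M$ is concentrated in nonnegative degrees, the product $IM$ has zero component in degree $0$. Therefore $m = \sum_i r_i a_i \in N$.

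For the inductive step, fix $d > 0$ and suppose every homogeneous element of $M$ of degree $< d$ lies in $N$. Let $m \in M_d$; as above, after taking degree-$d$ components one obtains $m = \sum_i r_i a_i + n$ for some $n \in (IM)_d$. Write $n = \sum_j b_j m_j$ as a finite sum with $b_j \in I$ and $m_j \in M$ both homogeneous and $\deg b_j + \deg m_j = d$. Since each $b_j$ has degree at least $1$, each $m_j$ has degree at most $d - 1$, so the inductive hypothesis gives $m_j \in N$, and because $N$ is an $R$-submodule we conclude $n \in N$ and hence $m \in N$. This closes the induction and proves $M = N$.

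The only delicate point is the base case: it is precisely where one must combine $I \subseteq R_{>0}$ with $M = \bigoplus_{k\geq 0} M_k$ to rule out any degree-zero contribution from $IM$. Everything else is a straightforward degree-tracking argument, so I do not anticipate any further obstacles.
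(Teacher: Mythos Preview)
Your proof is correct and follows essentially the same approach as the paper's own argument: induction on the degree of a homogeneous element, using that $(IM)_0 = 0$ for the base case and decomposing the residual $n \in IM$ into terms $b_j m_j$ with $\deg m_j < d$ for the inductive step. Your write-up is in fact slightly more careful than the paper's in explicitly extracting homogeneous components and in naming the submodule $N$.
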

\begin{proof} Let $a$ be a nonzero homogeneous element of $M$. We use induction
on the degree of $a$. If $\deg a = 0$ the hypothesis implies that
\begin{equation}\label{m} a = r_{i_1} a_{i_1} + \cdots  + r_{i_k} a_{i_k} \mod{IM}  \end{equation}
where $r_{i_j}$ are elements in $R$. Since $I$ contains only elements
of positive degree, it follows that the equality holds in $M$ as
well. Let now $\deg a > 0$. Writing $a$ as in (\ref{m}), implies
that $a - \sum_{s=1}^k r_{s} a_{i_s} = \sum_{j=1}^p r_j' a'_{i_j}$ for some (finitely
many) $r_j' \in I$ and $a'_{i_j} \in M$. Again, since $I$ contains only
elements of positive degree, $\deg a'_{i_j} < \deg a$ for each $j$.
The induction hypothesis implies that each $a'_{i_j}$ is an
$R$-linear combination of $a_i$'s, which finishes the proof.
\end{proof}

\begin{proof}[Proof of Theorem \ref{thm:defafqcoh}] Part (a) follows immediately from the fact that $\mathcal{Q}$ is commutative. We now turn to the proof of (b). Since the $\Q$-algebra $\coh^*(G/B)$ is finitely generated and free it follows that there exists a finite set $\mathcal{I}$ of elements of the form $\sigma({\bf i}) := \sigma_{i_1} \cdots  \sigma_{i_s} \in \coh^*(G/B)$  such that $\coh^*(G/B)$ is generated as a $\Q$-module by $\sigma({\bf i})$ for ${\bf i} \in \mathcal{I}$. For each ${\bf i}=(i_1, \ldots, i_s) \in \mathcal{I}$ define the elements $\sigma^{(q)}({\bf i}) = \bar{\Lambda}_{i_1} \bar{\Lambda}_{i_2} \cdots  \bar{\Lambda}_{i_s}(1) \in \quantum^*_\af(G/B)$. By the graded Nakayama Lemma above the elements $\sigma^{(q)}({\bf i})$ (${\bf i} \in \mathcal{I}$) generate the $\Q[q]$-module $\quantum^*_\af(G/B)$. But these elements are also in the image $\varphi(\mathcal{Q})$, thus $\varphi$ must be surjective. \end{proof}

Theorem \ref{thm:defafqcoh} implies that we can define a product structure $\star_\af$ on $\quantum^*_\af(G/B)$ by:\begin{equation}\label{def:staraf} \sigma_u \star_\af \sigma_v := \bar{\Lambda}_u \bar{\Lambda}_v (1)= \bar{\Lambda}_u(\sigma_v) \end{equation} where $\bar{\Lambda}_u, \bar{\Lambda}_v$ are any elements in the preimages of $\sigma_u$ and $\sigma_v$ respectively through $\varphi$. Then one extends this product by $\Q[q]$-linearity. For example, $\varphi(\bar{\Lambda}_i) = \sigma_i$ 
%$\varphi^{-1}(\sigma_i) = \{ \bar{\Lambda}_i \}$ LM042216: is varphi an isomorphism ?
and \begin{equation}\label{eq:divmult} \sigma_i \star_\af \sigma_j = \sigma_i \cdot \sigma_j + \delta_{ij} q_i + m_i m_j q_0 \textrm{ where } \theta^\vee = \sum_{i=1}^n m_i \alpha_i^\vee \/. \end{equation} In this language, the expression (\ref{eq:Lambdafin}) gives an {\em affine quantum Chevalley formula} for $\sigma_i \star_\af \sigma_w$ in $\quantum^*_\af(G/B)$. For further examples we refer to \S \ref{ss:table} below, where we work out the multiplication in $\quantum^*_\af(\SL_3(\C)/B)$. 

\subsection{Properties of the ring $(\quantum^*_\af(G/B), \star_\af)$.}\label{ss:properties} By construction it follows that $(\quantum^*_\af(G/B), \star_\af)$ is a graded, commutative, $\Q[q]$-algebra with a $\Q[q]$-basis given by classes $\sigma_w$, for $w \in W$. It is generated by classes $\sigma_i$, where $1 \le i \le n$. We will prove in \S \ref{s:Toda} below that the relations among the generators can be described using the integrals of motion for the dual periodic Toda lattice. While proving these facts we will also develop the Frobenius/Dubrovin formalism for $\quantum^*_\af(G/B)$: we will prove in \S \ref{frodu} below that it has a Frobenius structure, and that the quantum multiplication $\star_\af$ determines a flat Dubrovin connection on the trivial bundle $\coh^*(G/B) \times \coh^2(G/B) \to \coh^2(G/B)$. 

Finally, the ring $\quantum^*_\af(G/B)$ is closely related to the ordinary quantum cohomology algebra $\quantum^*(G/B)$ for $G/B$. Recall that the latter is the graded $\Q[q_1, \ldots, q_n]$-module $\coh^*(G/B) \otimes \Q[q_1, \ldots, q_n]$ with the basis given by the usual Schubert classes $\sigma_w$ ($w \in W$) and the usual grading. The product, denoted by $\star$, is given by \[ \sigma_u \star \sigma_v = \sum_{d \in \coh_2(G/B), w \in W} \gw{\sigma_u, \sigma_v, [X(w)]}{d} q^d \sigma_w \] where $d = d_1[X(s_1)] + \cdots  + d_n [X(s_n)]$, $q^d = q_1^{d_1} \cdots  q_n^{d_n}$, and $\gw{\sigma_u, \sigma_v, [X(w)]}{d}$ are the (ordinary) Gromov-Witten invariants which count rational curves of degree $d$ in $G/B$
intersecting general translates of Schubert varieties representing the cycles $\sigma_u, \sigma_v$ and $[X(w)]$. In particular, the structure constants of $\quantum^*(G/B)$ are non-negative integers. Then the ring $\quantum^*_\af(G/B)$ is a deformation of the ordinary quantum cohomology ring in the sense that there is an isomorphism of graded $\Q[q]/\langle q_0 \rangle$-algebras \[ \quantum^*_\af (G/B)/\langle q_0 \rangle \simeq \quantum^*(G/B) \/ \] preserving Schubert classes. This follows because the classes $\sigma_i$ ($1 \le i \le n$) generate both algebras (over the appropriate coefficient rings), and because the specialization $q_0=0$ of the affine quantum Chevalley formula (\ref{eq:Lambdafin}) coincides with the ordinary quantum Chevalley formula conjectured by Peterson and proved by Fulton and Woodward \cite{fulton.woodward}. However, unlike for $\quantum^*(G/B)$, the structure constants for $\quantum^*_\af(G/B)$ are {\em not} positive in general. For instance, one can calculate that for $G= \SL_3(\C)$ the coefficient of $q_0$ in $\sigma_1 \star_\af \epsri_{s_1s_2}$ is
$$\langle \lambda_1 - \lambda_0, \alpha_0^\vee \rangle \pi(D_0)(\sigma_{s_1 s_2})= -\partial_{-\theta}(\epsri_{s_1s_2})= \partial_{\theta}(\epsri_{s_1s_2}) = \epsri_{2}-\epsri_{1} \/.$$ The last equality follows because $\sigma_{s_1 s_2} = \sigma_2 \cdot \sigma_2$ by the ordinary Chevalley formula and from the Leibniz formula (Proposition \ref{prop:leibniz}).

\subsection{A quantum product on $\quantum^\#(\Fla)/ \langle q^c \rangle$} We will abuse notation and denote again by $\star_\af$ the quantum product on $\quantum^\#(\Fla)/\langle q^c \rangle $. Its definition is similar to that from the previous section, so we will be brief. Define $\mathcal{Q}^\#$ to be the subring of $\mathrm{End}_{\Q[q]/\langle q^c \rangle} ( \quantum^\#(\Fla)/\langle q^c \rangle)$ generated by $\Lambda_0, \ldots, \Lambda_n$. By Theorem \ref{thm:comm} (b) this is a commutative ring and a $\Q[q]/\langle q^c \rangle $-module. One can define a $\Q[q]/\langle q^c \rangle$-module homomorphism $\varphi^\#: \mathcal{Q}^\# \to \quantum^\#(\Fla)/\langle q^c \rangle$ by $\varphi^\#(\Phi) = \Phi(1)$. Since $\mathcal{Q}^\#$ is commutative, the kernel of $\varphi^\#$ is an ideal in $ \mathcal{Q}^\#$. By hypothesis the $\Q$-module $\mathrm{H}^\#(\Fla)$ has a countable set of generators given by the images modulo $q_0, \ldots, q_n$ of the monomials $\Lambda_{i_1} \ldots \Lambda_{i_k}(1)$. By the Nakayama-type result  from Lemma \ref{lemma:Nak} this implies that the set of these monomials generate $\quantum^\#(\Fla)/\langle q^c \rangle$ as a $\Q[q]/\langle q^c \rangle$-module, which implies that $\varphi^\#$ is surjective. Then for any $a, b \in \coh^\#(\Fla)$ one can define a product:\[ a \star_\af b := {\Lambda}_a {\Lambda}_b (1)= {\Lambda}_a(b) \] where ${\Lambda}_a, {\Lambda}_b$ are elements in the preimages of $a$ and $b$ respectively through $\varphi^\#$. Then one extends this product by $\Q[q]/\langle q^c \rangle$-linearity. { This endows $\quantum^\#(\Fla)/\langle q^c \rangle$ with an associative, commutative quantum product. When $q_0 = 0$ one recovers the ordinary quantum cohomology ring $\quantum^\star(G/B)$, thus $\quantum^\#(\Fla)/\langle q^c \rangle$ is another deformation of $\quantum^\star(G/B)$.}

\section{The commutativity of the Chevalley operators: proof of Theorem \ref{thm:comm} }\label{combin}

%\subsection{Commutativity of the operators $\Lambda_i$} 

The goal of this section is to prove part (a) of theorem \ref{thm:comm}:
for any $w \in W_\af$ and any $0 \le i, j \le n$ we have \begin{equation}\label{eq:commLambda} \Lambda_i \Lambda_j (\eps_w) = \Lambda_j \Lambda_i(\eps_w)  \mod q^c = q_0 q^{\theta^\vee} \/. \end{equation}

\subsection{Combinatorial preliminaries on roots appearing in Chevalley operators} Denote by $\tilde{\Pi}_\af^{{re},+}$ the set of all positive real affine roots $\alpha$ with the 
property that $\ell(s_\alpha)= 2\he ({\alpha^\vee})-1$. According to Proposition \ref{letalpha} any such root satisfies $\alpha^\vee < c = \alpha_0^\vee + \theta^\vee$, and by Theorem \ref{thm:afqops} these are precisely the roots relevant for the Chevalley operators. The following lemma shows that roots $\alpha \in \tilde{\Pi}_\af^{{re},+}$ can be constructed inductively:

\begin{lemma}\label{lemma:induction} Let $\alpha \in \tilde{\Pi}_\af^{{re},+} $ be a non-simple root. Then there exists a simple root $\alpha_i \in \Delta_\af$ such that $s_i(\alpha) \in \tilde{\Pi}_\af^{{re},+}$, $\ell(s_i s_\alpha s_i) = \ell(s_\alpha) - 2$, and $s_i(\alpha)^\vee = \alpha^\vee - \alpha_i^\vee$. \end{lemma}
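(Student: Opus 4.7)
The plan is to find a simple root $\alpha_i$ with the \emph{sharp} pairing $\langle \alpha_i,\alpha^\vee\rangle = 1$ and then verify each of the three conclusions from Lemma~\ref{lemma:instep} and Proposition~\ref{letalpha}. The first step is to invoke Lemma~\ref{lemma:instep}, which (since $\alpha$ is non-simple) guarantees the existence of at least one $\alpha_i\in\Delta_\af$ with $\langle\alpha_i,\alpha^\vee\rangle>0$. For any such $\alpha_i$, Lemma~\ref{lemma:instep} also gives
\[
\ell(s_is_\alpha s_i)=\ell(s_\alpha)-2,\qquad \he(s_i(\alpha)^\vee)=\he(\alpha^\vee)-\langle\alpha_i,\alpha^\vee\rangle,
\]
and since $\alpha\neq\alpha_i$ (as $\alpha$ is non-simple), $s_i(\alpha)$ is still a positive real root.

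The key refinement is to rule out the possibility that every simple root $\alpha_i$ with $\langle\alpha_i,\alpha^\vee\rangle>0$ satisfies $\langle\alpha_i,\alpha^\vee\rangle\ge 2$. Suppose for contradiction that this happened. Using $\alpha\in\tilde\Pi_\af^{{re},+}$, meaning $\ell(s_\alpha)=2\he(\alpha^\vee)-1$, we would have
\[
\ell(s_{s_i(\alpha)})=\ell(s_is_\alpha s_i)=2\he(\alpha^\vee)-3,
\]
while
\[
\he(s_i(\alpha)^\vee)\le \he(\alpha^\vee)-2,
\]
so $\ell(s_{s_i(\alpha)})\ge 2\he(s_i(\alpha)^\vee)+1$. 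This contradicts the upper bound $\ell(s_\beta)\le 2\he(\beta^\vee)-1$ from Proposition~\ref{letalpha} applied to the positive real root $\beta=s_i(\alpha)$. Hence some $\alpha_i\in\Delta_\af$ satisfies $\langle\alpha_i,\alpha^\vee\rangle=1$ exactly.

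With such $\alpha_i$ fixed, the identity $s_i(\alpha^\vee)=\alpha^\vee-\langle\alpha_i,\alpha^\vee\rangle\alpha_i^\vee=\alpha^\vee-\alpha_i^\vee$ is immediate, and $\ell(s_is_\alpha s_i)=\ell(s_\alpha)-2$ comes directly from Lemma~\ref{lemma:instep}. It remains to check that $s_i(\alpha)$ lies in $\tilde\Pi_\af^{{re},+}$. Positivity and realness were observed above, and computing
\[
\ell(s_{s_i(\alpha)})=\ell(s_\alpha)-2=2\he(\alpha^\vee)-3=2\bigl(\he(\alpha^\vee)-1\bigr)-1=2\he(s_i(\alpha)^\vee)-1
\]
shows that $s_i(\alpha)$ satisfies the length-height equality defining $\tilde\Pi_\af^{{re},+}$.

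The main obstacle, and really the only nontrivial point, is the existence of a simple root realizing the pairing value exactly $1$; everything else is a straightforward bookkeeping consequence of Lemma~\ref{lemma:instep} and the length bound of Proposition~\ref{letalpha}. The argument above handles this by a direct contradiction with that bound, so no case analysis on Lie types should be needed.
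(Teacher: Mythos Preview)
Your proof is correct and follows essentially the same approach as the paper: both invoke Lemma~\ref{lemma:instep} to find a simple root $\alpha_i$ with $\langle\alpha_i,\alpha^\vee\rangle>0$, then sandwich $\ell(s_is_\alpha s_i)=2\he(\alpha^\vee)-3$ against the bound of Proposition~\ref{letalpha} to force $\langle\alpha_i,\alpha^\vee\rangle=1$. The only cosmetic difference is that the paper shows directly that \emph{any} such $\alpha_i$ has pairing~$1$, whereas you phrase it as a contradiction on ``every such $\alpha_i$ has pairing $\ge 2$''; since your contradiction already applies to a single fixed $\alpha_i$, the quantifier ``every'' is unnecessary and you have in fact proved the stronger statement.
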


\begin{proof} Take the simple reflection $s_i$ guaranteed by Lemma \ref{lemma:instep} above. Then \[ 2 \he(\alpha^\vee) -3 = \ell(s_\alpha) - 2 =  \ell(s_i s_\alpha s_i) \le 2 \he (s_i(\alpha)^\vee) -1 = 2 \he(\alpha^\vee) - 2 \langle \alpha_i, \alpha^\vee \rangle -1 \/, \] where the inequality follows from Proposition \ref{letalpha}. But $\langle \alpha_i, \alpha^\vee \rangle > 0$ again by Lemma \ref{lemma:instep} and this implies that $\langle \alpha_i, \alpha^\vee \rangle = 1$ and we have equality throughout.
\end{proof}

Proposition \ref{posir} and Lemmas \ref{lemma:positive} and \ref{lem:be} below are affine versions of \cite[Proposition~3.1, Lemma 3.2 and Lemma 3.3]{Ma0}.

\begin{prop}\label{posir} A positive real  root $\alpha$ is in $\tilde{\Pi}_\af^{{re},+}$ if and only if it is simple, or else there exist indices $i_1, \ldots, i_k \in  \{0,\ldots , n\}$ ($k \ge 2$), where all indices different from $0$ are possibly repeated, such that:\begin{itemize}
\item each root ${ \alpha_{i_j}:=}s_{i_j}\cdots   s_{i_2}(\alpha_{i_1})$ is in $\tilde{\Pi}_\af^{{re},+}$ and it satisfies $\langle \alpha_{i_{j}}, s_{i_{j-1}}\cdots  s_{i_2}(\alpha_{i_1})^\vee\rangle  = -1$ for all $2 \le j\le k$;
\item $\alpha = s_{i_k} \cdots  s_{i_2}(\alpha_{i_1})$ and the expression
$s_{\alpha} = s_{i_k} \cdots  s_{i_2}s_{i_1}s_{i_2}\cdots   s_{i_k}$
is reduced. \end{itemize} In particular, $\alpha^\vee= \alpha_{i_1}^\vee + \cdots   + \alpha_{i_k}^\vee$. \end{prop}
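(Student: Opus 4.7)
The plan is to argue by induction on $\he(\alpha^\vee)$, using Lemma \ref{lemma:induction} as the inductive engine.

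For the ``only if'' direction, suppose $\alpha\in\tilde{\Pi}_\af^{{re},+}$ is not simple. By Lemma \ref{lemma:induction}, there is a simple root $\alpha_i$ with $s_i(\alpha)\in \tilde{\Pi}_\af^{{re},+}$, with $s_i(\alpha)^\vee = \alpha^\vee - \alpha_i^\vee$, and with $\langle \alpha_i,\alpha^\vee\rangle = 1$. Since $\he(s_i(\alpha)^\vee)=\he(\alpha^\vee)-1$, the induction hypothesis produces indices $i_1,\ldots,i_{k-1}$ that realize $s_i(\alpha)$ in the claimed way. Setting $i_k := i$, I would then verify the four required properties: (i) $\alpha = s_{i_k}\bigl(s_i(\alpha)\bigr) = s_{i_k}\cdots s_{i_2}(\alpha_{i_1})$; (ii) $\langle \alpha_{i_k}, s_{i_{k-1}}\cdots s_{i_2}(\alpha_{i_1})^\vee\rangle = \langle \alpha_i, s_i(\alpha)^\vee\rangle = \langle \alpha_i,\alpha^\vee\rangle - \langle \alpha_i,\alpha_i^\vee\rangle = 1-2 = -1$; (iii) the coroot identity $\alpha^\vee = s_i(\alpha)^\vee+\alpha_i^\vee = \alpha_{i_1}^\vee+\cdots+\alpha_{i_k}^\vee$; and (iv) the reducedness of $s_{i_k}\cdots s_{i_2}s_{i_1}s_{i_2}\cdots s_{i_k}$, which follows because by induction the word for $s_{s_i(\alpha)}$ has length $2(k-1)-1$, while $\ell(s_\alpha) = \ell(s_i s_\alpha s_i)+2 = 2k-1$ by Lemma \ref{lemma:induction}. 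The fact that the indices different from $0$ may be repeated whereas $0$ appears at most once follows from Proposition \ref{letalpha}: since $\alpha^\vee<c=\alpha_0^\vee+\theta^\vee$, the coefficient of $\alpha_0^\vee$ in $\alpha^\vee = \sum_j \alpha_{i_j}^\vee$ is at most $1$.

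For the ``if'' direction, suppose we are given a sequence $i_1,\ldots,i_k$ with the stated properties. The coroot equality gives $\he(\alpha^\vee) = k$. Since the displayed expression $s_\alpha = s_{i_k}\cdots s_{i_2}s_{i_1}s_{i_2}\cdots s_{i_k}$ is reduced of length $2k-1$, we get $\ell(s_\alpha)=2k-1 = 2\he(\alpha^\vee)-1$, so $\alpha \in \tilde{\Pi}_\af^{{re},+}$.

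The main obstacle I anticipate is the bookkeeping in the inductive step, specifically verifying that the intermediate roots $s_{i_j}\cdots s_{i_2}(\alpha_{i_1})$ each lie in $\tilde{\Pi}_\af^{{re},+}$ (this is guaranteed by applying the induction hypothesis to $s_i(\alpha)$) and that the pairing condition $\langle \alpha_{i_j}, s_{i_{j-1}}\cdots s_{i_2}(\alpha_{i_1})^\vee\rangle = -1$ at the newly added step $j=k$ comes out correctly. The key computation is precisely the one above using $\langle \alpha_i,\alpha^\vee\rangle = 1$ from Lemma \ref{lemma:induction}; this single constraint is what forces the $-1$ (rather than some other negative integer). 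Once this is in place the reducedness of the word and the length formula $\ell(s_\alpha)=2\he(\alpha^\vee)-1$ close the induction cleanly.
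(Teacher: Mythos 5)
Your forward (``only if'') direction is correct and is essentially the paper's argument: repeated application of Lemma \ref{lemma:induction}, with the pairing computation $\langle \alpha_i, s_i(\alpha)^\vee\rangle = 1-2=-1$ and the length count $\ell(s_\alpha)=\ell(s_is_\alpha s_i)+2$ closing the induction on $\he(\alpha^\vee)$; your observation that $\alpha^\vee<c$ forces the index $0$ to occur at most once is also the right justification for that clause.

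The gap is in the converse. You take the reducedness of $s_{i_k}\cdots s_{i_2}s_{i_1}s_{i_2}\cdots s_{i_k}$ (and, reading the first bullet literally, even the membership of $\alpha=s_{i_k}\cdots s_{i_2}(\alpha_{i_1})$ in $\tilde{\Pi}_\af^{{re},+}$) as hypotheses, which makes your ``if'' direction essentially vacuous: the conclusion is already contained in what you assume. The substantive content of the converse --- and the form in which the proposition is invoked later, e.g.~in Corollary \ref{cor:ADE} and Lemma \ref{lem:be}, where one only verifies $\beta\in\tilde{\Pi}_\af^{{re},+}$ and $\langle\alpha_i,\beta^\vee\rangle=-1$ before concluding $s_i(\beta)\in\tilde{\Pi}_\af^{{re},+}$ --- is that the pairing conditions \emph{alone} imply both the membership of every intermediate root and the reducedness of the word. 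The missing step is the single inductive increment: if $\beta\in\tilde{\Pi}_\af^{{re},+}$ and $\langle\alpha_i,\beta^\vee\rangle=-1$, then $s_\beta(\alpha_i)=\alpha_i+\beta>0$ gives $\ell(s_is_\beta)=\ell(s_\beta)+1$, and $s_is_\beta(\alpha_i)=(\langle\beta,\alpha_i^\vee\rangle\langle\alpha_i,\beta^\vee\rangle-1)\alpha_i-\langle\alpha_i,\beta^\vee\rangle\beta$ is again a positive root, so $\ell(s_is_\beta s_i)=\ell(s_\beta)+2$; combined with $s_i(\beta)^\vee=\beta^\vee+\alpha_i^\vee$ this yields $\ell(s_{s_i(\beta)})=2\he(s_i(\beta)^\vee)-1$, i.e.~$s_i(\beta)\in\tilde{\Pi}_\af^{{re},+}$, and the concatenated word is reduced. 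Note also that the coroot identity $\alpha^\vee=\alpha_{i_1}^\vee+\cdots+\alpha_{i_k}^\vee$, which you invoke to get $\he(\alpha^\vee)=k$, is itself a consequence of the pairing conditions rather than a given, and should be derived in the same induction.
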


\begin{proof} Applying repeatedly Lemma \ref{lemma:induction} starting with $\alpha \in \tilde{\Pi}_\af^{{re},+}$ yields the set of indices $i_1,\ldots, i_k$ and the roots with the claimed properties. To prove the converse, we use induction on $j \ge 2$. Let $\beta := s_{i_{j-1}}\cdots   s_{i_2}(\alpha_{i_1})$ as in the hypothesis and denote $i:=i_j$ We will show that $s_i s_\beta s_i$ is reduced and that $s_i(\beta)$ satisfies the required properties. First, $s_i(\beta^\vee) = \beta^\vee + \alpha_i^\vee$ because $\langle \alpha_i, \beta^\vee \rangle = - 1$. In particular, $\ell(s_i s_\beta) = \ell(s_\beta) +1$. By induction hypothesis, we know that $\beta \in \tilde{\Pi}_\af^{{re},+}$. We have that $s_i s_\beta(\alpha_i) = (\langle \beta, \alpha_i^\vee \rangle \langle \alpha_i, \beta^\vee \rangle - 1 ) \alpha_i - \langle \alpha_i, \beta^\vee \rangle \beta$ is a positive root, which shows that $\ell (s_i s_\beta s_i) = \ell(s_\beta) +2$. This finishes the proof.\end{proof}

\begin{cor}\label{cor:ADE} If $G$ is simply laced then the set $ \tilde{\Pi}_\af^{{re},+}$ coincides with the set of positive real roots $\alpha \in \Pi^{re, +}_\af$ such that $\alpha^\vee < c$. \end{cor}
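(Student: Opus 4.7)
The containment $\tilde{\Pi}_\af^{re,+} \subseteq \{\alpha \in \Pi_\af^{re,+} : \alpha^\vee < c\}$ is already in Proposition \ref{letalpha}, so the plan is to prove the reverse inclusion for simply laced $G$ by induction on $\he(\alpha^\vee)$. The base case $\he(\alpha^\vee) = 1$ is trivial, since then $\alpha$ is simple and $\ell(s_\alpha) = 1$. For the inductive step, let $\alpha$ be non-simple with $\alpha^\vee < c$. Lemma \ref{lemma:instep} yields a simple root $\alpha_i \in \Delta_\af$ with $\langle \alpha_i, \alpha^\vee \rangle > 0$, and the simply laced hypothesis forces this integer to equal $1$. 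Indeed, in untwisted affine types $A_n^{(1)}$, $D_n^{(1)}$, $E_n^{(1)}$ all real roots have squared norm $2$ with respect to $(\cdot | \cdot)$, so $\langle \alpha_i, \alpha^\vee \rangle = (\alpha_i | \alpha) = ((\alpha_i)_{\rm fin} | \alpha_{\rm fin}) \in \{-2,-1,0,1,2\}$, the extremal values $\pm 2$ occurring only when $\alpha_{\rm fin} = \pm (\alpha_i)_{\rm fin}$. A short case check rules these out under our hypotheses: $\alpha_{\rm fin} = (\alpha_i)_{\rm fin}$ would force either $\alpha$ simple or $\alpha^\vee \geq c$, while $\alpha_{\rm fin} = -(\alpha_i)_{\rm fin}$ would give pairing $-2$, contradicting positivity.

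With $\langle \alpha_i, \alpha^\vee \rangle = 1$ established, $s_i(\alpha)^\vee = \alpha^\vee - \alpha_i^\vee$ is a positive coroot of height $\he(\alpha^\vee) - 1$, and it remains less than $c$ because $c - s_i(\alpha)^\vee = (c - \alpha^\vee) + \alpha_i^\vee$ is a nonnegative combination of simple coroots, while equality with $c$ is precluded by realness of $s_i(\alpha)^\vee$ versus imaginarity of $c$. Applying the inductive hypothesis to $s_i(\alpha)$ gives $\ell(s_i s_\alpha s_i) = \ell(s_{s_i(\alpha)}) = 2\he(\alpha^\vee) - 3$. Two standard sign computations then finish the length formula: since $s_\alpha(\alpha_i) = \alpha_i - \alpha = -s_i(\alpha) < 0$ one has $\ell(s_i s_\alpha) = \ell(s_\alpha) - 1$, and since $(s_i s_\alpha)(\alpha_i) = s_i(\alpha_i) - s_i(\alpha) = -\alpha < 0$ one has $\ell(s_i s_\alpha s_i) = \ell(s_\alpha) - 2$. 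Combining, $\ell(s_\alpha) = 2\he(\alpha^\vee) - 1$, so $\alpha \in \tilde{\Pi}_\af^{re,+}$.

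The main obstacle I anticipate is the brief case analysis needed to rule out $\langle \alpha_i, \alpha^\vee \rangle = 2$, since this is the only place where the simply laced hypothesis is genuinely needed; it replaces the stronger input $\alpha \in \tilde{\Pi}_\af^{re,+}$ that was available in the analogous Lemma \ref{lemma:induction}. Everything else is formal manipulation with Coxeter length using the sign of $s_\alpha(\alpha_i)$ and $(s_i s_\alpha)(\alpha_i)$. An alternative route would be to invert Proposition \ref{posir} directly by constructing the sequence $i_1, \ldots, i_k$ inductively and checking the intermediate coroots remain below $c$, but this amounts to essentially the same verification.
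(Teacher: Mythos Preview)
Your proof is correct and follows essentially the same induction-on-height strategy as the paper: find a simple $\alpha_i$ with $\langle \alpha_i,\alpha^\vee\rangle=1$, apply the inductive hypothesis to $s_i(\alpha)$, and recover $\ell(s_\alpha)=2\he(\alpha^\vee)-1$. The only differences are cosmetic: the paper locates such an $\alpha_i$ by a direct case split (finite $\alpha$ versus $\alpha^\vee=c-\tilde\alpha^\vee$ with $\tilde\alpha$ non-dominant), whereas you take the $\alpha_i$ supplied by Lemma~\ref{lemma:instep} and argue via Cauchy--Schwarz on the finite parts that the pairing cannot be $2$; and the paper closes by invoking Proposition~\ref{posir} while you compute the two length drops directly from the signs of $s_\alpha(\alpha_i)$ and $s_is_\alpha(\alpha_i)$.
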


\begin{proof} We use induction on $\he (\alpha) \ge 1$. The case when $\he(\alpha) =1$ is clear, and let $\he(\alpha) > 1$. We claim that there exists a simple root $\alpha_i$ such that $\langle \alpha_i,\alpha^\vee \rangle = 1$. If $\alpha$ is a root in the finite root system, we can take any $\alpha_i$ such that $\alpha - \alpha_i$ is a root. Since $G$ is simply laced, the claim follows in this case from the table on \cite[p.~45]{Hu1}. If $\alpha$ is not in the finite root system, and because $\alpha^\vee < c$, we have $\alpha^\vee = c - \tilde{\alpha}^\vee$ where $\tilde{\alpha} \in \Pi^+$. Notice that $\alpha \neq \alpha_0$ and $\alpha^\vee <c$ implies that $\tilde{\alpha}^\vee < \theta^\vee$.
Thus there exists a finite simple root $\alpha_i$ such that  $\langle \alpha_i,\tilde{\alpha}^\vee \rangle = - 1$ (otherwise $\tilde{\alpha}$ would be a dominant root, hence equal to the highest root,
see e.g.~\cite[p.~371]{Go-Wa1}). This implies that $\langle \alpha_i,\alpha^\vee \rangle = - \langle \alpha_i, \tilde{\alpha}^\vee \rangle = 1$ and the claim is proved. Let $\beta:= s_i (\alpha)$. This is a positive root because $\alpha$ is not simple. Further, $\he (\beta) < \he (\alpha)$ and by induction hypothesis we have that $\beta \in  \tilde{\Pi}_\af^{{re},+}$. We calculate that $\langle \alpha_i, \beta^\vee \rangle = \langle \alpha_i, \alpha^\vee - \alpha_i^\vee \rangle = -1$, so by Proposition \ref{posir} the root $\alpha = s_i(\beta) $ is in $ \tilde{\Pi}_\af^{{re},+}$, which concludes the proof.\end{proof}

\begin{remark} The Corollary above is false if $G$ is not simply laced. In fact, the sets $\tilde{\Pi}_\af^{{re},+} \cap \Pi^+ \neq \Pi^+$ for each $G$ not simply laced. A list with all roots in $\tilde{\Pi}_\af^{{re},+} \cap \Pi^+$ can be found in \cite{buch.mihalcea:nbhds}.\end{remark}

\begin{lemma}\label{lemma:positive} (a) Let $\alpha, \beta$ be two positive real roots such that $\ell(s_\alpha s_\beta) = \ell(s_\alpha) + \ell(s_\beta)$. Then $\langle \alpha,\beta^\vee\rangle\le 0$.

(b) Assume in addition that $s_\alpha s_\beta \neq s_\beta s_\alpha$ and that $\alpha^\vee +\beta^\vee$ is not a multiple of the imaginary coroot $c= \alpha_0^\vee + \theta^\vee$. Then at least one of $\langle \alpha, \beta^\vee\rangle $ and $\langle \beta, \alpha^\vee\rangle$  is  equal to $-1$.

\end{lemma}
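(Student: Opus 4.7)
The plan is to reformulate part (a) via inversion sets, establish (a) by exhibiting an explicit common inversion, and then derive (b) from (a) together with a Cauchy–Schwarz argument on the $W_\af$-invariant form.

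For part (a), I would use the standard Coxeter-theoretic identity $\ell(uv) = \ell(u) + \ell(v) - 2|N(u^{-1}) \cap N(v)|$, where $N(w) := \{\gamma \in \Pi_\af^{re,+} : w\gamma \in \Pi_\af^-\}$. Applied to $u = s_\alpha$, $v = s_\beta$ (reflections are involutions), the hypothesis becomes $N(s_\alpha) \cap N(s_\beta) = \emptyset$. I would then argue by contradiction: suppose $\langle \alpha, \beta^\vee \rangle \geq 1$. Since $\langle \alpha, \beta^\vee \rangle$ and $\langle \beta, \alpha^\vee \rangle$ have the same sign as $(\alpha | \beta)$ (both being positive scalar multiples of it), we also have $\langle \beta, \alpha^\vee \rangle \geq 1$. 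Proposition \ref{string} applied to the real root $\alpha$ and root $\beta$ gives $p - q = \langle \beta, \alpha^\vee \rangle \geq 1$, so $p \geq 1$ and hence $\beta - \alpha \in \Pi_\af \cup \{0\}$. The case $\alpha = \beta$ immediately contradicts the hypothesis. In the remaining case, after swapping the roles of $\alpha$ and $\beta$ if necessary (the hypothesis being symmetric via $\ell(s_\alpha s_\beta) = \ell(s_\beta s_\alpha)$), I may assume $\beta - \alpha \in \Pi_\af^{re,+}$. Then
\[
s_\beta(\alpha) = \alpha - \langle \alpha, \beta^\vee \rangle \beta = (\alpha - \beta) - (\langle \alpha, \beta^\vee \rangle - 1)\beta
\]
is a sum of the negative root $\alpha - \beta$ and a nonpositive multiple of $\beta$, hence a negative root. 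Thus $\alpha \in N(s_\beta)$, and since $\alpha \in N(s_\alpha)$ trivially, $\alpha$ is a common element, contradicting disjointness.

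For part (b), part (a) yields $\langle \alpha, \beta^\vee \rangle, \langle \beta, \alpha^\vee \rangle \leq 0$. A direct check ($s_\alpha s_\beta = s_\beta s_\alpha$ forces $s_\alpha(\beta) = \pm\beta$, leading to $(\alpha|\beta) = 0$ for distinct roots) shows that non-commutativity of the reflections rules out both pairings being zero, so both are $\leq -1$. Suppose for contradiction both are $\leq -2$; then their product is $\geq 4$. Using
\[
\langle\alpha,\beta^\vee\rangle\langle\beta,\alpha^\vee\rangle = \frac{4(\alpha|\beta)^2}{(\alpha|\alpha)(\beta|\beta)}
\]
together with Cauchy–Schwarz for the positive semi-definite form on $\h_\bR^* \oplus \bR\delta$ from Remark \ref{rth}, this product is at most $4$. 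So equality holds and Cauchy–Schwarz saturates, forcing $\alpha$ and $\beta$ to be linearly dependent modulo the radical $\bR\delta$. Writing $\alpha = m_\alpha \delta + \wb\alpha$ and $\beta = m_\beta \delta + \wb\beta$ with $\wb\alpha, \wb\beta \in \Pi$, this means $\wb\alpha = \pm\wb\beta$ (since the finite root system of $G$ is reduced). If $\wb\alpha = \wb\beta$, then using $\beta^\vee = \tfrac{2m_\beta}{(\wb\beta|\wb\beta)} c + \wb\beta^\vee$ together with $\langle \delta, c\rangle = \langle \wb\alpha, c\rangle = \langle \delta, \wb\beta^\vee \rangle = 0$ gives $\langle \alpha, \beta^\vee \rangle = \langle \wb\alpha, \wb\beta^\vee \rangle = 2$, contradicting $\leq -2$. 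If $\wb\alpha = -\wb\beta$, the analogous computation gives $\alpha^\vee + \beta^\vee = \tfrac{2(m_\alpha + m_\beta)}{(\wb\alpha|\wb\alpha)} c$, a multiple of the imaginary coroot $c$, contradicting the hypothesis.

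The main obstacle will be carefully verifying the inversion-set characterization of length additivity in the affine setting (which reduces to the involution identity $-s_\beta N(s_\beta) = N(s_\beta)$ and the general Coxeter formula from \cite{kumar:kacmoody}, Ch.~1) and correctly identifying the Cauchy–Schwarz equality case with $\wb\alpha = \pm \wb\beta$, which relies on the semi-positivity described in Remark \ref{rth} and the reducedness of $\Pi$.
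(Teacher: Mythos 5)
Your proof is correct. Part (b) is essentially the paper's own argument: by (a) and non-commutativity both pairings are negative integers, assuming both are $\le -2$ saturates the Cauchy--Schwarz inequality for the semi-positive form of Remark \ref{rth}, and the equality case forces $\alpha^\vee+\beta^\vee$ into $\bR c$. (The paper reaches the endgame slightly more directly: equality forces $-(\alpha\,|\,\beta)=(\alpha\,|\,\alpha)=(\beta\,|\,\beta)$, hence $(\alpha+\beta\,|\,\alpha+\beta)=0$, hence $\alpha+\beta\in\bR\delta$ by Remark \ref{rth}, with no need for your case split $\wb{\alpha}=\pm\wb{\beta}$; but your version is equally valid.) Part (a) differs in mechanics. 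The paper quotes the standard fact that $\ell(s_\alpha s_\beta)=\ell(s_\alpha)+\ell(s_\beta)$ forces $s_\beta(\alpha)>0$ and $s_\alpha(\beta)>0$, and then, assuming the pairings positive, reads off $\alpha\ge\beta$ and $\beta\ge\alpha$ from positivity of $\alpha-\langle\alpha,\beta^\vee\rangle\beta$ and $\beta-\langle\beta,\alpha^\vee\rangle\alpha$ in the root order, so $\alpha=\beta$ and $s_\alpha s_\beta=1$, a contradiction. You instead encode length additivity as disjointness of inversion sets and invoke the root-string Proposition \ref{string} to produce the root $\beta-\alpha$, then verify $s_\beta(\alpha)<0$ by hand; both work, the paper's being a bit shorter since it never needs the root string. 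Two cosmetic points in your write-up: the general identity should read $\ell(uv)=\ell(u)+\ell(v)-2|N(u)\cap N(v^{-1})|$ (your placement of the inverses is off in general, though for the involutions $u=s_\alpha$, $v=s_\beta$ the two versions agree, so your application is unaffected); and $\beta-\alpha$ need not be real --- it could be a positive multiple of $\delta$ --- so the normalization should place it in $\Pi_\af^{+}$ rather than $\Pi_\af^{re,+}$. Since your sign computation only uses that $\alpha-\beta$ lies in the negative cone, nothing breaks.
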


\begin{proof} Since $s_\alpha s_\beta$ and $s_\beta s_\alpha$ are inverses to each other, $\ell(s_\alpha s_\beta) = \ell(s_\beta s_\alpha)$. The hypothesis implies that the root $s_\alpha(\beta) = \beta - \langle \beta,\alpha^\vee\rangle \alpha  >0$ and $s_\beta(\alpha) = \alpha - \langle \alpha, \beta^\vee \rangle \beta > 0$; cf. \cite[p.~116]{Hu2}. If $\langle \alpha,\beta^\vee\rangle > 0$ then $\langle \beta,\alpha^\vee\rangle > 0$ also, because $\alpha, \beta$ are not multiples of the imaginary root $\delta$, and by Remark \ref{rth} and equation (\ref{alphv}). This implies that $\alpha \ge \beta$ and $\beta \ge \alpha$, therefore $\alpha = \beta$, which implies that $\ell (s_\alpha s_\beta) = 0$, a contradiction. This proves part (a). 

For part (b), we first notice that the hyothesis implies that $\langle \alpha, \beta^\vee\rangle <0$  and $\langle \beta, \alpha^\vee\rangle <0 $. Assume that the claim in the  lemma is not true, i.e.
$\langle \alpha, \beta^\vee\rangle =\frac{2(\alpha | \beta)}{(\beta | \beta)}\le -2 \ {\rm and} \ 
\langle \beta, \alpha^\vee\rangle =\frac{2(\alpha | \beta)}{(\alpha | \alpha)}\le -2$.
Thus $-(\alpha | \beta)\ge (\alpha | \alpha)$ and $-(\alpha | \beta)\ge (\beta | \beta)$, which implies that
$(\alpha | \beta)^2 \ge (\alpha | \alpha)\cdot (\beta | \beta)$. On the other side, by Cauchy-Schwartz 
inequality $(\alpha | \beta)^2 \le (\alpha | \alpha)\cdot(\beta | \beta)$, and thus we have equality. This also forces equalities $- (\alpha |  \beta) = (\alpha | \alpha) = (\beta |  \beta)$, therefore
$(\alpha + \beta |  \alpha + \beta) = 0$. By Remark \ref{rth} this implies that $\alpha + \beta = m \delta$ for some $m \in \mathbb{R}$. In fact, since $\alpha, \beta$ and $\delta$ are in the root lattice, and $\delta$ is part of an integral basis for this lattice, it follows that $m \in \Z$. Then $\nu^{-1}(\alpha + {\beta}) = mc$ hence
$$\alpha^\vee +{\beta}^\vee =  \frac{2m}{(\alpha | \alpha)}c \/.$$
But by hypothesis $\alpha^\vee +\beta^\vee$ is not a multiple of $c$, and this finishes the proof.
\end{proof}

\begin{lemma}\label{lem:be} Let $\alpha, \beta \in \tilde{\Pi}_\af^{{re},+} $ such that $\ell(s_\alpha s_\beta) = \ell(s_\alpha) + \ell(s_\beta)$, $s_\alpha s_\beta \neq s_\beta s_\alpha$ and in addition $\alpha^\vee + \beta^\vee < c$. Then $\alpha^\vee + \beta^\vee =\gamma^\vee$ where $\gamma$ is a root in $\tilde{\Pi}_\af^{{re},+} $.
\end{lemma}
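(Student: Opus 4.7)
The plan is to construct $\gamma$ as the reflection of one of $\alpha,\beta$ in the other, and then to pin down $\ell(s_\gamma)$ by combining Proposition~\ref{letalpha} with the length-versus-reflection criterion recalled in Section~\ref{akm}. First I would note that the hypothesis $\alpha^\vee+\beta^\vee<c$ rules out $\alpha^\vee+\beta^\vee$ being a (positive) multiple of $c$, so the hypotheses of Lemma~\ref{lemma:positive}(b) are satisfied and yield that at least one of $\langle\alpha,\beta^\vee\rangle$, $\langle\beta,\alpha^\vee\rangle$ equals $-1$. Since the hypotheses of the lemma are symmetric in $\alpha,\beta$, I may assume without loss of generality that $\langle\alpha,\beta^\vee\rangle=-1$ and set $\gamma:=s_\alpha(\beta)=\beta-\langle\beta,\alpha^\vee\rangle\alpha$. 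Then
\[
\gamma^\vee=s_\alpha(\beta^\vee)=\beta^\vee-\langle\alpha,\beta^\vee\rangle\alpha^\vee=\alpha^\vee+\beta^\vee,
\]
which is the required coroot identity. Moreover, by Lemma~\ref{lemma:positive}(a) we have $\langle\beta,\alpha^\vee\rangle\le 0$, and equality would force $\gamma=\beta$, contradicting $\gamma^\vee\ne\beta^\vee$; hence $r:=-\langle\beta,\alpha^\vee\rangle\ge 1$ and $\gamma=\beta+r\alpha$ is a positive real root.

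It remains to prove $\ell(s_\gamma)=2\he(\gamma^\vee)-1$, which by the identities $\ell(s_\alpha)=2\he(\alpha^\vee)-1$ and $\ell(s_\beta)=2\he(\beta^\vee)-1$ amounts to $\ell(s_\gamma)=\ell(s_\alpha)+\ell(s_\beta)+1$. The upper bound $\ell(s_\gamma)\le 2\he(\gamma^\vee)-1$ is immediate from Proposition~\ref{letalpha}. For the matching lower bound I would use the conjugation identity $s_\gamma=s_{s_\alpha(\beta)}=s_\alpha s_\beta s_\alpha$ and compute
\[
(s_\alpha s_\beta)(\alpha)=s_\alpha\bigl(\alpha-\langle\alpha,\beta^\vee\rangle\beta\bigr)=s_\alpha(\alpha+\beta)=-\alpha+\gamma=\beta+(r-1)\alpha,
\]
which is positive since $r\ge 1$ and $\alpha,\beta>0$. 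By the length-versus-reflection criterion recalled in Section~\ref{akm} (applied to $w=s_\alpha s_\beta$ and the positive real root $\alpha$), this positivity forces $\ell(s_\alpha s_\beta\cdot s_\alpha)>\ell(s_\alpha s_\beta)=\ell(s_\alpha)+\ell(s_\beta)$; since multiplication by a reflection always changes the parity of the length (the sign character of $W_\af$ sends each reflection to $-1$), this strict inequality upgrades to $\ell(s_\gamma)\ge \ell(s_\alpha)+\ell(s_\beta)+1$. Combined with the upper bound, equality holds, so $\gamma\in\tilde{\Pi}_\af^{re,+}$.

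The only delicate point is the elementary verification that $(s_\alpha s_\beta)(\alpha)$ is a positive root: this is where the precise value $\langle\alpha,\beta^\vee\rangle=-1$ coming from Lemma~\ref{lemma:positive}(b) and the nonpositivity $\langle\beta,\alpha^\vee\rangle\le 0$ coming from Lemma~\ref{lemma:positive}(a) are both essential, and where the role of the constraint $\alpha^\vee+\beta^\vee<c$ from the hypothesis becomes apparent. Everything else reduces to invoking Proposition~\ref{letalpha} and standard Coxeter-theoretic facts.
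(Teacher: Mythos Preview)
Your proof is correct and takes a genuinely different, more direct route than the paper's. The paper proves this lemma by induction on $\ell(s_\beta)$: it invokes Lemma~\ref{lemma:induction} to peel off a simple reflection $s_i$ from $s_\beta$, then enters a case analysis on the value of $\langle\alpha_i,\alpha^\vee\rangle\in\{-1,0\}$, verifying in each case that the pair $(s_i(\alpha),\beta_1)$ again satisfies the hypotheses, and finally recovers $\gamma$ from the inductive output $\gamma'$ via Proposition~\ref{posir}. By contrast, you bypass the induction entirely: once $\langle\alpha,\beta^\vee\rangle=-1$ is in hand from Lemma~\ref{lemma:positive}(b), you show directly that $(s_\alpha s_\beta)(\alpha)=\beta+(r-1)\alpha>0$, which forces $\ell(s_\gamma)=\ell(s_\alpha s_\beta s_\alpha)>\ell(s_\alpha s_\beta)$, and then squeeze against the upper bound of Proposition~\ref{letalpha}. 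The parity step (multiplication by a reflection changes length by an odd amount) is standard Coxeter theory and is exactly what is needed to pass from the strict inequality to $\ell(s_\gamma)\ge\ell(s_\alpha)+\ell(s_\beta)+1$.

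What your approach buys is brevity and transparency: the heart of the argument is a single root computation plus the length criterion, with no recursion and no case split. It is worth noting that the paper itself uses precisely your computation---that $s_\alpha s_\beta(\alpha)$ is positive and hence $\ell(s_{s_\alpha(\beta)})=2\he(s_\alpha(\beta)^\vee)-1$---in the proof of the subsequent Lemma~\ref{lemma:multiple}, where it is deployed to derive a contradiction. So the authors had this argument in hand; your observation is that it already suffices to prove Lemma~\ref{lem:be} outright, making the inductive machinery unnecessary. The paper's inductive proof does have the side effect of exercising Proposition~\ref{posir} and Lemma~\ref{lemma:induction} more fully, but for the statement at hand your argument is strictly simpler.
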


\begin{proof} By Lemma \ref{lemma:positive} we can assume that
 $\langle \beta, \alpha^{\vee}\rangle =-1$. We need to show that $\gamma:=s_{\beta}(\alpha)\in
\tilde{\Pi}_\af^{{re},+} $ (since
$s_{\beta}(\alpha^{\vee}) = \alpha^{\vee}
+\beta^{\vee}$, this proves the claim). We will use induction on $\ell(s_{\beta})$. If
$\beta$ is simple, the result follows from Proposition
\ref{posir}. Consider now the case when 
$\beta\in \tilde{\Pi}_\af^{{re},+} $ is non-simple. Take the index $i$ guaranteed by Lemma \ref{lemma:induction}. Then $s_\beta = s_i s_{{\beta}_1} s_i$ where $\ell(s_{{\beta}_1}) = \ell(s_\beta) - 2$, and $\langle \alpha_i, \beta^\vee \rangle = 1$. Combined with the fact that $\ell(s_\alpha s_\beta) = \ell(s_\alpha) + \ell(s_\beta)$ this implies that $\ell(s_\alpha s_i) > \ell(s_\alpha)$, thus $s_\alpha(\alpha_i) = \alpha_i - \langle \alpha_i, \alpha^\vee \rangle \alpha$ is a positive root. Then $\langle \alpha_i, \alpha^{\vee}\rangle\leq 0$ and $\alpha_i \neq \alpha$. 

%LM04062016: I slightly modified the proof. The main point is that if $alpha= m \delta + \tilde{alpha}$ then $\alpha^vee = m c + \tilde{alpha}^vee

%LM: I left the original proof commented in the text.  

We claim that the quantity $\langle \alpha_i,\alpha^{\vee}\rangle \in \{-1,0\}$. Assume $\langle \alpha_i,\alpha^{\vee}\rangle\le -2$ and let $\alpha = m \delta + \tilde{\alpha}$ where $\tilde{\alpha}$ is a finite root, possibly negative. By the description of positive real roots and coroots from \S \ref{akm}, it follows that $\alpha_i = a \delta + \tilde{\alpha}_i$ 
%$\beta = m \delta + \tilde{\beta}$, { $\beta^\vee = m c + \tilde{\beta}^\vee$} 
and { $\alpha^\vee = m c + \tilde{\alpha}^\vee$} where $a, m\in \{ 0 ,1 \}$ (because $0 < \alpha^\vee < c$ and $\alpha_i < \beta < \delta$) and $\tilde{\alpha}_i$ is in the finite root system, possibly negative. { In fact, $\tilde{\alpha_i}$ is either simple, or it equals $- \theta$, the negative of the highest root.} Notice that $
\langle \alpha_i,\alpha^{\vee}\rangle = \langle \tilde{\alpha}_i,\tilde{\alpha}^{\vee}\rangle$. We claim that $\tilde{\alpha}_i \neq - \tilde{\alpha}$. This follows from analysis of the four possibilities for $a,m \in \{ 0 ,1 \}$. The only situations when the equality occurs are when $\alpha^\vee + \alpha_i^\vee = c$, but this is impossible since $\alpha^\vee + \alpha_i^\vee \le \alpha^\vee + \beta^\vee < c$. We deduce that $(\alpha_i |  \alpha_i) = (\tilde{\alpha}_i |  \tilde{\alpha}_i) > (\tilde{\alpha} |  \tilde{\alpha}) = (\alpha | \alpha)$, where the inequality follows from Table on \cite[p.~45]{Hu1}. On the other side, the hypothesis that $\langle \alpha_i, \beta^{\vee}\rangle =1$ implies that $(\beta | \beta) \ge (\alpha_i | \alpha_i)$, and the condition that $\langle \beta, \alpha^{\vee}\rangle =-1$ implies that $(\alpha | \alpha) \ge (\beta | \beta)$. Therefore $(\alpha | \alpha) \ge (\alpha_i | \alpha_i)$, which is a contradiction, and finishes the proof of the claim.

In what follows we will prove that the roots $\beta_1 := s_i(\beta)$ and $s_i(\alpha)$ satisfy the induction hypothesis. First, notice that by construction $\ell(s_{\beta_1}) < \ell(s_\beta)$, and that ${\beta_1} \in \tilde{\Pi}_\af^{{re},+}$. It is also clear that $s_{{\beta_1}} s_{s_i(\alpha)} \neq s_{s_i(\alpha)} s_{{\beta_1}}$, so it remains to check that $s_i(\alpha) \in \tilde{\Pi}_\af^{{re},+}$, $\langle {\beta_1}, s_i(\alpha)^\vee \rangle = -1$ and that
$\ell(s_{{\beta_1}} s_{s_i(\alpha)}) = \ell(s_{{\beta_1}}) + \ell( s_{s_i(\alpha)})$. When this is done, it implies that $\gamma':= s_{{\beta_1}}(s_i(\alpha)) \in \tilde{\Pi}_\af^{{re},+}$, and to finish the proof we will show that $\gamma=s_i(\gamma')$ is also in $\tilde{\Pi}_\af^{{re},+}$. We distinguish the following two situations:

\noindent {\it Case 1.} $\langle \alpha_i, \alpha^{\vee}\rangle=0$. This implies
$s_i(\alpha)=\alpha$, and because $s_{{\beta_1}} s_\alpha$ has a decomposition given by a reduced subword of $s_\beta s_\alpha$,  
we obtain that $\ell(s_{{\beta_1}} s_\alpha) = \ell(s_{{\beta_1}})+ \ell( s_\alpha)$. Further,
 $$-1=\langle \beta, \alpha^{\vee}\rangle =\langle s_i({\beta_1}), \alpha^{\vee}\rangle =
\langle {\beta_1}, s_i(\alpha)^{\vee}\rangle =\langle {\beta_1}, \alpha^{\vee}\rangle .$$
From the induction hypothesis,
$\gamma':=s_{{\beta_1}}(\alpha)$ is in
$\tilde{\Pi}_\af^{{re},+}$. Furthermore
$$\langle \alpha_i, (\gamma')^\vee \rangle = \langle \alpha_i, s_is_{\beta}s_i(\alpha^{\vee})\rangle=
-\langle \alpha_i, s_{\beta}(\alpha^{\vee})\rangle =-\langle \alpha_i, \alpha^{\vee}
+\beta^{\vee}\rangle =-1.$$ Then by Proposition
\ref{posir}, the root
$s_i(\gamma')$ is in $\tilde{\Pi}_\af^{{re},+}$, and the proof in this case is done.

\noindent {\it Case 2.} $\langle \alpha_i, \alpha^{\vee}\rangle=-1$. In this case the root $s_i(\alpha)$ is in $\tilde{\Pi}_\af^{{re},+}$, by Proposition \ref{posir}. Further, 
$$-1=\langle \beta, \alpha^{\vee}\rangle =\langle s_i({\beta_1}), \alpha^{\vee}\rangle =
\langle {\beta_1}, s_i(\alpha)^{\vee}\rangle.$$ 
%By Lemma
%\ref{posir}, the root $s_i(\alpha)$ is in $\tilde{\Pi}_\af^{{re},+}$.
A simple calculation shows that $s_i s_\alpha (\alpha_i) = - (\langle \alpha, \alpha_i^\vee \rangle + 1 ) \alpha_i + \alpha$ and that 
$s_{{\beta_1}}s_is_{\alpha}(\alpha_i) =
\alpha-(1+\langle \alpha, \alpha_i^{\vee}\rangle )\alpha_i-(1+\langle \alpha, \beta^{\vee})\rangle {\beta_1}$, and both of these are positive roots. Then
$$\ell(s_{{\beta_1}}s_{s_i(\alpha)})=\ell(s_{{\beta_1}}s_is_{\alpha}s_i) =
\ell(s_{{\beta_1}}s_is_{\alpha})+1=\ell(s_{{\beta_1}})+l(s_is_{\alpha}s_i)=
\ell(s_{{\beta_1}})+\ell(s_{s_i(\alpha)})\/.$$ From the induction
hypothesis we deduce that
$\gamma':= s_{{\beta_1}}s_i(\alpha)$ belongs to $\tilde{\Pi}_\af^{{re},+}$. But $\langle \alpha_i, (\gamma')^{\vee}\rangle =-\langle \alpha_i, \alpha^{\vee}+\beta^{\vee}\rangle = 0$, therefore 
$\gamma= s_i(\gamma')= \gamma'$ is again in $\tilde{\Pi}_\af^{{re},+}$.
\end{proof}

\begin{lemma}\label{lemma:multiple} Let $\alpha, \beta \in \tilde{\Pi}_\af^{re,+}$ such that $s_\alpha s_\beta\neq s_\beta s_\alpha$, $\ell(s_\alpha s_\beta) = \ell(s_\alpha) + \ell(s_\beta)$ and $\alpha^\vee + \beta^\vee \nless c = \alpha_0^\vee + \theta^\vee$. Then $\alpha^\vee + \beta^\vee = c$. \end{lemma}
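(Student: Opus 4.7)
Since $\alpha^\vee,\beta^\vee<c$ by Proposition \ref{letalpha}, we have $\alpha^\vee+\beta^\vee<2c$; so if $\alpha^\vee+\beta^\vee$ is a positive integer multiple of $c$ it must equal $c$ and we are done. Assume therefore that $\alpha^\vee+\beta^\vee$ is not a multiple of $c$. The hypotheses of the lemma are symmetric in $\alpha$ and $\beta$ (using $\ell(s_\alpha s_\beta)=\ell(s_\beta s_\alpha)$), so by Lemma \ref{lemma:positive}(b), after possibly swapping these two roles we may assume $\langle\beta,\alpha^\vee\rangle=-1$.

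A useful first observation is that under $\langle\beta,\alpha^\vee\rangle=-1$ the equality $\alpha^\vee+\beta^\vee=c$ is already impossible: indeed it would give $s_\beta(\alpha^\vee)=\alpha^\vee-\langle\beta,\alpha^\vee\rangle\beta^\vee=\alpha^\vee+\beta^\vee=c$, and applying $s_\beta$ once more while invoking the $W_\af$-invariance of $c$ (which follows from $\langle\alpha_i,c\rangle=0$ for every simple affine root $\alpha_i$) would force $\alpha^\vee=c$, contradicting $\alpha^\vee<c$. Now put $\gamma:=s_\beta(\alpha)$. By Lemma \ref{lemma:positive}(a) and the non-commutativity $s_\alpha s_\beta\neq s_\beta s_\alpha$, the integer $\langle\alpha,\beta^\vee\rangle$ is strictly negative, so $\gamma=\alpha+k\beta$ for some $k\ge 1$ is a positive real affine root, whose coroot is $\gamma^\vee=s_\beta(\alpha^\vee)=\alpha^\vee+\beta^\vee$.

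The plan is to show $\gamma\in\tilde{\Pi}_\af^{re,+}$: by Proposition \ref{letalpha} this forces $\gamma^\vee<c$, i.e.\ $\alpha^\vee+\beta^\vee<c$, which directly contradicts the standing hypothesis $\alpha^\vee+\beta^\vee\nless c$ and closes the argument. Proving $\gamma\in\tilde{\Pi}_\af^{re,+}$ proceeds by induction on $\ell(s_\beta)$, paralleling Lemma \ref{lem:be}. The base case, where $\beta=\alpha_j$ is simple, follows immediately from Proposition \ref{posir} by extending any chain realizing $\alpha\in\tilde{\Pi}_\af^{re,+}$ with the index $j$, the required condition $\langle\alpha_j,\alpha^\vee\rangle=-1$ being exactly our assumption. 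For $\beta$ non-simple, Lemma \ref{lemma:induction} produces a simple root $\alpha_i$ with $\beta_1:=s_i\beta\in\tilde{\Pi}_\af^{re,+}$, $\ell(s_{\beta_1})=\ell(s_\beta)-2$ and $\beta_1^\vee=\beta^\vee-\alpha_i^\vee$, and the induction splits into the cases $\langle\alpha_i,\alpha^\vee\rangle\in\{0,-1,\le -2\}$.

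The hard part is the exceptional case $\langle\alpha_i,\alpha^\vee\rangle\le -2$: Lemma \ref{lem:be} rules it out by deducing $\alpha^\vee+\alpha_i^\vee=c$ and then contradicting $\alpha^\vee+\alpha_i^\vee\le\alpha^\vee+\beta^\vee<c$, an argument no longer available to us. The new ingredient is that, by Paragraph 2 above, $\alpha^\vee+\beta^\vee=c$ is also impossible, so the exceptional configuration must yield $\alpha^\vee+\beta^\vee=c+\beta_1^\vee$ with $\beta_1^\vee\ne 0$; analyzing the reduced pair $(\alpha,\beta_1)$ --- invoking Lemma \ref{lem:be} when $\alpha^\vee+\beta_1^\vee<c$ and the inductive hypothesis of Lemma \ref{lemma:multiple} itself when $\alpha^\vee+\beta_1^\vee\nless c$ --- eventually forces $\beta_1^\vee=\alpha_i^\vee$, whence $\beta=s_i(\alpha_i)=-\alpha_i<0$, a contradiction. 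The cases $\langle\alpha_i,\alpha^\vee\rangle\in\{0,-1\}$ are handled exactly as in Lemma \ref{lem:be}, with the induction descending to a smaller pair, completing the proof.
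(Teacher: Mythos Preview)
Your overall architecture --- reduce to $\alpha^\vee+\beta^\vee$ not a multiple of $c$, normalize to $\langle\beta,\alpha^\vee\rangle=-1$, set $\gamma=s_\beta(\alpha)$, and aim to show $\gamma\in\tilde{\Pi}_\af^{re,+}$ so that Proposition~\ref{letalpha} forces $\gamma^\vee<c$ --- is sound, and your observation that $\langle\beta,\alpha^\vee\rangle=-1$ already excludes $\alpha^\vee+\beta^\vee=c$ is correct and useful. The difficulty is that you are effectively trying to reprove Lemma~\ref{lem:be} without its hypothesis $\alpha^\vee+\beta^\vee<c$, and your handling of the exceptional case $\langle\alpha_i,\alpha^\vee\rangle\le -2$ breaks down.

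Concretely: in that case you propose to descend to the pair $(\alpha,\beta_1)$ and invoke either Lemma~\ref{lem:be} or the inductive hypothesis of Lemma~\ref{lemma:multiple}. But both of these require the length additivity $\ell(s_\alpha s_{\beta_1})=\ell(s_\alpha)+\ell(s_{\beta_1})$, and this \emph{fails}. Indeed, using $\langle\beta,\alpha^\vee\rangle=-1$, $\langle\alpha_i,\beta^\vee\rangle=1$ (from Lemma~\ref{lemma:induction}), and $\langle\beta,\alpha_i^\vee\rangle\ge 1$, one computes
\[
\langle\beta_1,\alpha^\vee\rangle=\langle s_i\beta,\alpha^\vee\rangle=\langle\beta,\alpha^\vee\rangle-\langle\alpha_i,\alpha^\vee\rangle\langle\beta,\alpha_i^\vee\rangle\ge -1+2\cdot 1=1>0,
\]
so by Lemma~\ref{lemma:positive}(a) the additivity $\ell(s_\alpha s_{\beta_1})=\ell(s_\alpha)+\ell(s_{\beta_1})$ cannot hold. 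Hence neither Lemma~\ref{lem:be} nor the inductive form of Lemma~\ref{lemma:multiple} applies to $(\alpha,\beta_1)$, and the argument that ``eventually forces $\beta_1^\vee=\alpha_i^\vee$'' is not justified. Since the exceptional case does occur (e.g.\ $\alpha=\theta$, $\alpha_i=\alpha_0$ in simply laced types), this is a genuine gap.

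The paper bypasses the entire induction. After normalizing to $\langle\alpha,\beta^\vee\rangle=-1$ (the symmetric choice) and setting $\gamma=s_\alpha(\beta)$, it simply checks that $s_\alpha s_\beta(\alpha)=\beta-(\langle\beta,\alpha^\vee\rangle+1)\alpha>0$, hence
\[
\ell(s_\gamma)=\ell(s_\alpha s_\beta s_\alpha)>\ell(s_\alpha s_\beta)=2\he(\gamma^\vee)-2.
\]
Proposition~\ref{letalpha} then forces $\ell(s_\gamma)=2\he(\gamma^\vee)-1$ and $\gamma^\vee<c$, the desired contradiction. This one-line length estimate replaces your entire inductive machinery; I would recommend adopting it.
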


\begin{proof} Assume first that $\alpha^\vee + \beta^\vee$ is not a multiple of $c$. The hypothesis implies that $\langle \alpha, \beta^\vee \rangle \neq 0$ therefore one of $\langle \alpha, \beta^\vee\rangle$ and $\langle \beta,\alpha^\vee\rangle$ must equal $-1$ by Lemma \ref{lemma:positive}(b). Since the statement is symmetric in $\alpha$ and $\beta$ (because $\ell(s_\alpha s_\beta) = \ell(s_\beta s_\alpha)$), we only need to consider the case when $\langle \alpha, \beta^\vee\rangle=-1$. Then $s_\alpha(\beta^\vee) = \beta^\vee -\langle \alpha, \beta^\vee\rangle \alpha^\vee =\alpha^\vee + \beta^\vee$ is a positive coroot, therefore $s_\alpha(\beta)$ is a positive root. 
On the other hand,
$$s_\alpha s_\beta (\alpha) = s_\alpha (\alpha - \langle \alpha, \beta^\vee\rangle \beta) 
=s_\alpha (\alpha +\beta) =
-\alpha +\beta -\langle \beta, \alpha^\vee\rangle \alpha
=\beta -(\langle \beta,\alpha^\vee\rangle +1)\alpha,$$
which is  a positive root because $\langle \beta,\alpha^\vee\rangle \le -1$.
This implies
$$\ell(s_{s_\alpha(\beta)}) =\ell(s_\alpha s_\beta s_\alpha) > \ell(s_\alpha s_\beta)=
2{\rm ht} (\alpha^\vee +\beta^\vee) -2= 2{\rm ht}(s_\alpha(\beta)^\vee) -2.$$
Proposition \ref{letalpha} implies that
$\ell(s_{s_\alpha(\beta)}) = 2{\rm ht}(s_\alpha(\beta)^\vee) -1$
and $s_\alpha(\beta)^\vee < c$. Since $s_\alpha(\beta)^\vee = \alpha^\vee + \beta^\vee$, this is a contradiction, therefore $\alpha^\vee + \beta^\vee$ must be a multiple of $c$. Invoking again Proposition \ref{letalpha} we obtain that $\alpha^\vee, \beta^\vee <c$, thus the only possibility is $\alpha^\vee + \beta^\vee = c$.  
\end{proof}

\begin{lemma}\label{lemma:extra1} Let $\gamma$ be a non-simple root in $\tilde{\Pi}_\af^{{re},+}$ and $\eta$ a real positive root in $\Pi^{re,+}_\af$ such that $\ell(s_\gamma s_\eta) = \ell(s_\gamma) -  1$. Then $\langle \eta,\gamma^\vee \rangle = 1 $.\end{lemma}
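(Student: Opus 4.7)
The plan is to realize $\eta$ as an element of the inversion set of the reflection $s_\gamma$ and to exploit the natural involution on that set together with the classical ``sum of inversions'' identity.

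The hypothesis $\ell(s_\gamma s_\eta) = \ell(s_\gamma) - 1$ is equivalent, by the length criterion recalled in Section \ref{akm}, to $s_\gamma(\eta) < 0$; that is, $\eta$ lies in the inversion set $\mathrm{Inv}(s_\gamma) := \{\beta \in \Pi_\af^{re,+} : s_\gamma(\beta) < 0\}$. From $s_\gamma(\eta) = \eta - \langle \eta, \gamma^\vee\rangle \gamma$ together with $\eta, \gamma > 0$ one reads off $\langle \eta, \gamma^\vee\rangle \geq 1$. Moreover $\eta \neq \gamma$, since non-simplicity of $\gamma$ gives $\ell(s_\gamma) \geq 3$, whereas $s_\gamma s_\gamma = 1$ has length $0$.

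Next I would establish the combinatorics of $\mathrm{Inv}(s_\gamma)$. The map $\beta \mapsto -s_\gamma(\beta)$ is an involution of $\mathrm{Inv}(s_\gamma)$ whose only fixed point is $\gamma$: a fixed point forces $2\beta = \langle \beta, \gamma^\vee\rangle\gamma$, and two distinct positive real roots cannot be positive rational multiples of one another (as a short check in the affine root lattice shows, using that real roots are $W_\af$-conjugate to simple roots). Every non-fixed orbit $\{\beta, -s_\gamma(\beta)\}$ sums to $\langle \beta, \gamma^\vee\rangle \gamma$. Writing $m := \he(\gamma^\vee)$, the hypothesis $\gamma \in \tilde\Pi_\af^{re,+}$ gives $|\mathrm{Inv}(s_\gamma)| = \ell(s_\gamma) = 2m - 1$, so there are precisely $m - 1$ non-fixed orbits.

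The decisive input is the standard Coxeter-theoretic identity
\[
\sum_{\beta \in \mathrm{Inv}(w)} \beta \;=\; \rho_\af - w^{-1}(\rho_\af), \qquad w \in W_\af,
\]
where $\rho_\af \in \h_\af^*$ is any element satisfying $\langle \rho_\af, \alpha_i^\vee\rangle = 1$ for $0 \le i \le n$ (for instance $\rho_\af := \lambda_0 + \cdots + \lambda_n$). The identity is proved by induction on $\ell(w)$ via the decomposition $\mathrm{Inv}(vs_i) = \{\alpha_i\} \sqcup s_i(\mathrm{Inv}(v))$ whenever $\ell(vs_i) > \ell(v)$, and applies verbatim in the Kac-Moody setting. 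Specializing to $w = s_\gamma$ yields $\sum_\beta \beta = \langle \rho_\af, \gamma^\vee\rangle \gamma = m\gamma$. Combining with the orbit decomposition gives
\[
m\gamma \;=\; \gamma + \sum_{\mathrm{pairs}} \langle \beta, \gamma^\vee\rangle\, \gamma,
\]
so the $m - 1$ positive integers $\langle \beta, \gamma^\vee\rangle$ indexed by non-fixed orbits sum to $m - 1$ and each must equal $1$. Since $\eta$ lies in one of these pairs, we conclude $\langle \eta, \gamma^\vee\rangle = 1$.

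The only point needing even minor care is the sum-of-inversions identity in the affine setting, which is completely standard once $\rho_\af$ is fixed. The substance of the argument lies in the extremality built into the definition of $\tilde\Pi_\af^{re,+}$: the equality $\ell(s_\gamma) = 2\he(\gamma^\vee) - 1$ is precisely what matches the orbit count to the height count, forcing every non-fixed pairing to be exactly $1$.
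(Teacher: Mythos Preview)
Your proof is correct and takes a genuinely different route from the paper's.

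The paper proceeds locally: it invokes the palindromic reduced expression $s_\gamma = s_{i_k}\cdots s_{i_2}s_{i_1}s_{i_2}\cdots s_{i_k}$ supplied by Proposition~\ref{posir}, applies the Strong Exchange Condition to identify $\eta$ explicitly as $\pm s_{i_k}\cdots s_{i_{j\pm 1}}(\alpha_{i_j})$ for some index $j$, and then computes $\langle \eta, \gamma^\vee\rangle$ directly using the pairing conditions $\langle \alpha_{i_j}, s_{i_{j-1}}\cdots s_{i_2}(\alpha_{i_1})^\vee\rangle = -1$ built into that proposition. Your argument is global: pairing off the inversion set under $\beta \mapsto -s_\gamma(\beta)$ and matching the count $\ell(s_\gamma)=2\he(\gamma^\vee)-1$ against the height identity $\sum_{\beta\in\mathrm{Inv}(s_\gamma)}\beta = \he(\gamma^\vee)\,\gamma$ forces every pair-value $\langle\beta,\gamma^\vee\rangle$ to be exactly $1$. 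This is cleaner and more conceptual for the lemma at hand, and it uses only the \emph{defining} equality $\ell(s_\gamma)=2\he(\gamma^\vee)-1$ rather than the finer structural content of Proposition~\ref{posir}. On the other hand, the paper's explicit identification of $\eta$ via Strong Exchange is reused in the surjectivity half of Proposition~\ref{prop:pairbij}, so its approach here is partly an investment toward that next result; your method does not immediately supply that explicit data.
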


\begin{proof} 
%The hypothesis implies that $s_\gamma(\eta) = \eta - \langle \eta, \gamma^\vee \rangle \gamma $ is a negative root, which forces $ \langle \eta, \gamma^\vee \rangle > 0$. 
Consider the reduced decomposition $s_\gamma = s_{i_k} \cdots  s_{i_1} \cdots  s_{i_k}$ guaranteed by Proposition \ref{posir}. Since $\ell(s_\gamma s_\eta) < \ell (s_\gamma)$, the Strong Exchange Condition \cite[p.~117]{Hu2} implies that there is a unique index $i_j$ which is removed from the expression of $s_\gamma$ such that $s_\gamma  s_\eta = s_{i_k} \cdots  \widehat{s_{i_j}} \cdots  s_{i_k}$. Assume that the removed index is in the second half of the decomposition of $s_\gamma$, i.e. $s_\gamma s_\eta = s_{i_k} \cdots  s_{i_1} \cdots \widehat{s_{i_j}} \cdots  s_{i_k}$. Then $s_\eta = s_{i_k} \cdots  s_{i_1} \cdots  s_{i_{j-1}} s_{i_j} s_{i_{j-1}}\cdots  s_{i_1} \cdots  s_{i_k}$, therefore $\eta = s_{i_k} \cdots  s_{i_1} \cdots  s_{i_{j-1}}(\alpha_{i_j})$, which is a positive root because $s_{i_k} \cdots  s_{i_1} \cdots  s_{i_j}$ is reduced. Finally, \[ \begin{split} \langle \eta, \gamma^\vee \rangle = \langle s_{i_k} \cdots  s_{i_1} \cdots  s_{i_{j-1}}(\alpha_{i_j}), s_{i_k} \cdots  s_{i_2} (\alpha_{i_1}^\vee) \rangle = \langle s_{i_1} \cdots  s_{i_{j-1}}(\alpha_{i_j}), \alpha_{i_1}^\vee \rangle \\  = \langle \alpha_{i_j}, s_{i_{j-1}} \cdots  s_{i_2} s_{i_1}(\alpha_{i_1}^\vee) \rangle = - \langle \alpha_{i_j}, s_{i_{j-1}} \cdots  s_{i_2} (\alpha_{i_1}^\vee) \rangle = 1 \/. \end{split}\] A similar calculation works when $s_\gamma s_\eta = s_{i_k} \cdots \widehat{s_{i_j}} \cdots  s_{i_1} \cdots  s_{i_k}$.
\end{proof}

\begin{prop}\label{prop:pairbij} There is a bijection $\Psi$ between the sets \[ A:= \{ (\alpha, \beta) \in \tilde{\Pi}_\af^{{re},+} \times \tilde{\Pi}_\af^{{re},+}: \langle \alpha, \beta^\vee \rangle \neq 0, ~\ell(s_\alpha s_\beta) = \ell(s_\alpha) + \ell(s_\beta), \alpha^\vee + \beta^\vee < c \} \] and \[ B:=\{ (\gamma, \eta) \in \tilde{\Pi}_\af^{{re},+} \times \Pi_{\af}^{re,+}: \ell(s_\gamma s_\eta) = \ell(s_\gamma) - 1 >0 \} \] sending $(\alpha, \beta)$  to $(\gamma, \eta)$ such that $\gamma^\vee = \alpha^\vee + \beta^\vee$ and $s_\gamma s_\eta = s_\alpha s_\beta$. \end{prop}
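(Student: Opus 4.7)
The plan is to construct $\Psi$ explicitly, verify it is injective, and then exhibit an inverse $\Phi \colon B \to A$ by a combinatorial analysis of the palindromic reduced expression for $s_\gamma$.

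\emph{Construction of $\Psi$.} Given $(\alpha,\beta) \in A$, Lemma~\ref{lemma:positive}(b) guarantees at least one of $\langle \alpha,\beta^\vee\rangle$, $\langle \beta,\alpha^\vee\rangle$ equals $-1$; after swapping the labels if necessary I assume $\langle \beta,\alpha^\vee\rangle = -1$. Lemma~\ref{lem:be} produces $\gamma := s_\beta(\alpha) \in \tilde{\Pi}_\af^{{re},+}$ with $\gamma^\vee = \alpha^\vee + \beta^\vee$, which is non-simple since $\he(\gamma^\vee)\ge 2$; the identity $s_\gamma = s_\beta s_\alpha s_\beta$ rearranges to $s_\alpha s_\beta = s_\beta s_\gamma$, so $s_\gamma s_\alpha s_\beta = s_\gamma s_\beta s_\gamma = s_{s_\gamma(\beta)}$ is a reflection, and I set $\eta$ to be the positive real root with $s_\eta = s_\gamma s_\alpha s_\beta$. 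Positivity of $\eta$ and the length identity
\[
\ell(s_\gamma s_\eta) = \ell(s_\alpha s_\beta) = \ell(s_\alpha) + \ell(s_\beta) = 2\he(\gamma^\vee) - 2 = \ell(s_\gamma) - 1 > 0
\]
are then immediate, so $\Psi(\alpha,\beta) := (\gamma,\eta) \in B$.

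\emph{Injectivity.} If $\Psi(\alpha_1,\beta_1) = \Psi(\alpha_2,\beta_2) = (\gamma,\eta)$, then the product $s_\gamma s_\eta = s_{\alpha_i} s_{\beta_i}$ together with $\alpha_i^\vee + \beta_i^\vee = \gamma^\vee$ forces $(\alpha_1,\beta_1) = (\alpha_2,\beta_2)$: a length-$(2k{-}2)$ Weyl-group element admits, at most, one factorization $s_\alpha s_\beta$ with $\alpha,\beta \in \tilde{\Pi}_\af^{{re},+}$, coroots summing to $\gamma^\vee$, and $\ell(s_\alpha s_\beta) = \ell(s_\alpha) + \ell(s_\beta)$, because each such factorization yields a reduced expression obtained by concatenating the two palindromic reduced expressions guaranteed by Proposition~\ref{posir}, and the split is determined by the constraint on coroots.

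\emph{Construction of $\Phi$ (surjectivity).} Given $(\gamma,\eta) \in B$, the element $\gamma$ is non-simple (since $\ell(s_\gamma) \ge 2$), so Lemma~\ref{lemma:extra1} yields $\langle \eta,\gamma^\vee\rangle = 1$. I now apply the Strong Exchange Condition to the palindromic reduced expression $s_\gamma = s_{i_k}\cdots s_{i_2} s_{i_1} s_{i_2}\cdots s_{i_k}$ provided by Proposition~\ref{posir}, with $\gamma^\vee = \sum_{j=1}^{k}\alpha_{i_j}^\vee$: this singles out a unique position $p \in \{1,\dots,2k{-}1\}$ whose deletion yields a reduced expression for $s_\gamma s_\eta$. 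The central position $p = k$ is excluded because removing $s_{i_1}$ collapses the word by telescoping to the identity, contradicting $\ell(s_\gamma s_\eta) = 2k{-}2 > 0$. By the palindromic symmetry of the reduced word I may assume $p < k$. I then show that the remaining $2k{-}2$ letters may be rewritten, using braid and commutation relations, as a concatenation $s_\alpha s_\beta$ of two reduced palindromic subexpressions of the form appearing in Proposition~\ref{posir}, whose chains of indices form a partition of $\{i_1,\dots,i_k\}$ (with $i_1$ landing in exactly one of the two pieces). Thus $\alpha,\beta \in \tilde{\Pi}_\af^{{re},+}$ with $\alpha^\vee + \beta^\vee = \gamma^\vee < c$ (the latter by Proposition~\ref{letalpha}), and the factorization is reduced, giving $\ell(s_\alpha s_\beta) = \ell(s_\alpha) + \ell(s_\beta)$. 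Non-vanishing of $\langle \alpha,\beta^\vee\rangle$ follows from Lemma~\ref{lemma:positive}(a): were it zero, $s_\alpha$ and $s_\beta$ would commute, forcing $\ell(s_\alpha s_\beta) < \ell(s_\alpha) + \ell(s_\beta)$. Therefore $(\alpha,\beta) \in A$ is a preimage of $(\gamma,\eta)$.

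\emph{Main obstacle.} The technical crux is the claim that, after deleting the single letter at position $p \ne k$ from the palindromic reduced expression of $s_\gamma$, the remaining $2k{-}2$ letters rearrange (via braid and commutation moves) into a concatenation of two palindromic sub-expressions satisfying the chain condition $\langle \alpha_{i_j}, s_{i_{j-1}}\cdots s_{i_2}(\alpha_{i_1})^\vee\rangle = -1$ of Proposition~\ref{posir}. Establishing this requires a careful case analysis of how the exchanged position interacts with the palindromic chain, and is where the delicate combinatorics of the Strong Exchange Condition, braid relations, and Proposition~\ref{posir} come together.
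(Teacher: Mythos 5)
Your overall strategy (explicit construction via Lemmas \ref{lemma:positive} and \ref{lem:be}, then surjectivity via the Strong Exchange Condition applied to the palindromic word of Proposition \ref{posir}) is the same as the paper's, but there are genuine gaps. First, the ``after swapping the labels if necessary'' step in the construction of $\Psi$ is not legitimate: $A$ consists of \emph{ordered} pairs and is invariant under the swap $(\alpha,\beta)\mapsto(\beta,\alpha)$, while the required identity $s_\gamma s_\eta = s_\alpha s_\beta$ is not symmetric (indeed $s_\alpha s_\beta \neq s_\beta s_\alpha$ on $A$). If you apply your formula to the swapped pair you produce $\eta$ with $s_\gamma s_\eta = s_\beta s_\alpha$, which is not the reflection demanded by the proposition; and if you instead regard the swap as an identification, your map becomes symmetric and cannot be injective. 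The two cases $\langle\alpha,\beta^\vee\rangle=-1$ and $\langle\alpha,\beta^\vee\rangle<-1$ must be treated separately with \emph{different} formulas for $(\gamma,\eta)$ — compare (\ref{eq:gammae}). Second, your injectivity argument is only an assertion: reduced words are far from unique, so ``the split is determined by the constraint on coroots'' does not follow from concatenating palindromic expressions. The paper instead recovers $(\alpha,\beta)$ from $(\gamma,\eta)$ by computing $\langle\gamma,\eta^\vee\rangle$ (which equals $-\langle\beta,\alpha^\vee\rangle$ or $-\langle\alpha,\beta^\vee\rangle$ according to the case) together with $\langle\eta,\gamma^\vee\rangle=1$ from Lemma \ref{lemma:extra1}.

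Third, and most seriously, the step you yourself flag as the ``technical crux'' of surjectivity — that the $2k-2$ letters remaining after the exchange can be rearranged by braid and commutation moves into a concatenation of two palindromic chain-words of the type in Proposition \ref{posir} — is left unproven, and it is not needed. The paper's route avoids it entirely: depending on whether the deleted letter lies in the second or the first half of the palindrome (these are genuinely different cases with different outcomes, so ``by palindromic symmetry I may assume $p<k$'' is also not valid), one sets $(\alpha,\beta)=(\eta,\, s_\eta(\gamma))$ or $(\alpha,\beta)=(-s_\gamma s_\eta(\gamma),\, -s_\gamma(\eta))$, checks positivity of these roots directly from the reduced subwords, computes $\alpha^\vee+\beta^\vee=\gamma^\vee$, and then deduces $\alpha,\beta\in\tilde{\Pi}_\af^{{re},+}$ and length-additivity from the inequality of Proposition \ref{letalpha}, since $\ell(s_\alpha)+\ell(s_\beta)\ge\ell(s_\alpha s_\beta)=2\he(\alpha^\vee)+2\he(\beta^\vee)-2$ forces equality everywhere. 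You should replace the word-rearrangement claim by this direct verification.
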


\begin{proof} We first prove that $\Psi$ is well defined. From Lemmas \ref{lemma:positive} and \ref{lem:be} we obtain that the pair $(\gamma, \eta)$ is the following:
\begin{equation}\label{eq:gammae} (\gamma, \eta) = \begin{cases} (s_\alpha(\beta), \alpha) = \bigl(\beta - \langle \beta, \alpha^\vee \rangle \alpha , \alpha\bigr) & \textrm{if } \langle \alpha, \beta^\vee \rangle = -  1; \\ (s_\beta(\alpha),s_\beta s_\alpha(\beta))=  \bigl(\alpha - \langle \alpha, \beta^\vee \rangle \beta, \alpha - (\langle \alpha, \beta^\vee \rangle +1) \beta\bigr) & \textrm{if } \langle \alpha, \beta^\vee \rangle <  -  1\/. \end{cases} \end{equation}
(In fact one can easily check the the formula for $\eta$ in the branch for $\langle \alpha, \beta^\vee \rangle < -  1$ works also for $\langle \alpha, \beta^\vee \rangle =  - 1$.) Further, we have that $\ell(s_\gamma s_\eta) = \ell(s_\alpha s_\beta) = 2 \he (\alpha^\vee + \beta^\vee) -  2 = 2 \he (\gamma^\vee ) -  2 = \ell (s_\gamma)  - 1$. This implies that $(\gamma, \eta) \in B$. Let now $(\alpha_1,\beta_1) \in A $ such that $\Psi((\alpha, \beta)) = \Psi((\alpha_1, \beta_1))$. One calculates that \[ \langle \gamma, \eta^\vee \rangle = \begin{cases}  - \langle \beta, \alpha^\vee\rangle & \textrm{if } \langle \alpha, \beta^\vee\rangle = - 1; \\
- \langle \alpha, \beta^\vee\rangle & \textrm{if } \langle \alpha, \beta^\vee\rangle < - 1 \/. \end{cases}\]
Recall also that $\langle \eta, \gamma^\vee \rangle =1 $ by Lemma \ref{lemma:extra1}. These formulas imply immediately that $(\alpha, \beta) = (\alpha_1, \beta_1)$ if $\langle \alpha, \beta^\vee\rangle = \langle \alpha_1, \beta_1^\vee\rangle$.
If $\langle \alpha, \beta^\vee\rangle \neq \langle \alpha_1, \beta_1^\vee\rangle$ we can assume that $\langle \alpha, \beta^\vee \rangle = \langle \beta_1, \alpha_1^\vee \rangle = - 1$. This implies that $\langle \eta, \gamma^\vee \rangle = \langle \gamma, \eta^\vee \rangle =1$, which forces $\langle \alpha, \beta^\vee\rangle = \langle \alpha_1, \beta_1^\vee\rangle$, a contradiction. We conclude that $\Psi$ is injective. {To prove surjectivity,  take $(\gamma,\eta)\in B$.
 Consider the
reduced decomposition $s_{\gamma}=s_{i_k} \cdots 
s_{i_2}s_{i_1}s_{i_2} \cdots  s_{i_k}$ given by Proposition
\ref{posir} and recall that $\langle \eta, \gamma^\vee \rangle = 1$ by  Lemma \ref{lemma:extra1}. 
By the Strong Exchange Condition we distinguish the following two
cases:

{\it Case 1.} We have
\begin{equation} \label{sik}
s_{\gamma}s_{\eta}=s_{i_k}\cdots  s_{i_2}s_{i_1}s_{i_2} \cdots  \hat{s}_{i_j} \cdots  s_{i_k}
\end{equation}
for some $j$ between
$2$ and $k$. This implies
 $s_{\gamma}= s_{i_k} \cdots 
s_{i_{j+1}}s_{i_j}s_{i_{j+1}} \cdots  s_{i_k}$, thus $\gamma=s_{i_k} \cdots 
s_{i_{j+1}}(\alpha_{i_j})$, the latter being a positive root since
the expression in (\ref{sik}) is reduced. Set $\alpha=\eta$ and
$\beta=s_{\eta}(\gamma)$. Notice that $s_{\eta}(\gamma)=s_{i_k} \cdots  \hat{s}_{i_j} \cdots  s_{i_2}(\alpha_{i_1})$
is a positive root, because the right-hand side in (\ref{sik}) 
is reduced. Then $\beta^{\vee}=
s_{\eta}(\gamma^{\vee})=\gamma^{\vee}-\eta^{\vee}$, which
implies that $\gamma^{\vee}=\alpha^{\vee}+\beta^{\vee}$. We obviously
have $s_{\gamma}s_{\eta}=s_{\alpha}s_{\beta}$, hence
$$\ell(s_{\alpha}s_{\beta})=2{\rm ht}(\gamma^{\vee})-2= 2{\rm ht}(\alpha^{\vee}) -1 +
2{\rm ht}(\beta^{\vee})-1.$$ From Proposition \ref{letalpha} we deduce that
$\alpha$  and $\beta$ are both in $\tilde{\Pi}_\af^{{re},+}$ and
$\ell(s_{\alpha}s_{\beta}) =\ell(s_{\alpha})+\ell(s_{\beta})$. If we had
$s_{\alpha}s_{\beta} = s_{\beta}s_{\alpha}$, then
$s_{ \gamma}s_{\eta}=s_{\eta}s_{\gamma}$, which is impossible, since
$\langle\eta, \gamma^{\vee}\rangle >0$.

{\it Case 2.} 
We have
\begin{equation} \label{sikk}
s_{\gamma}s_{\eta}=s_{i_k}\cdots  \hat{s}_{i_j} \cdots  s_{i_2}s_{i_1}s_{i_2} \cdots   s_{i_k}
\end{equation}
for some $j$ between
$2$ and $k$. We set
$\beta=-s_{\gamma}(\eta)$, and $\alpha=-s_{\gamma}s_{\eta}
(\gamma)$.  Since $\ell(s_\gamma s_\eta) < \ell(s_\gamma)$ it follows that $\beta >0$. The identity (\ref{sikk}) and the expression for $s_\gamma$ imply that \[ s_\gamma s_\eta s_\gamma = s_{i_k} \cdots  s_{i_j} \cdots  s_{i_k} \/, \] therefore $\ell(s_\gamma s_\eta s_\gamma) \le 2 (k-j) + 1 < 2 (k-1) = \ell(s_\gamma s_\eta)$. Thus $\alpha > 0$. We have that $\beta^{\vee}=
-\eta^{\vee}+\langle \gamma, \eta^{\vee}\rangle \gamma^{\vee}$ and
$\alpha^{\vee}=\eta^{\vee}-(\langle \gamma, \eta^{\vee}\rangle-1)\gamma^{\vee}$,
which implies that $\alpha^{\vee}+\beta^{\vee}=\gamma^{\vee}$. We
can also easily check that $s_{\alpha}s_{\beta}=s_{\gamma} s_{\eta}$.
Same arguments as in the previous case show that $\alpha$ and $\beta$
are both in $\tilde{\Pi}_\af^{{re},+}$,  that
$\ell(s_{\alpha}s_{\beta})=\ell(s_{\alpha})+\ell(s_{\beta})$, and that
$s_{\alpha}s_{\beta} \neq s_{\beta}s_{\alpha}$. This finishes the proof.} \end{proof}

For later use, we also record the following result:

\begin{lemma}\label{lemma:nonzero}  Let $\alpha, \beta \in \tilde{\Pi}_\af^{{re},+}$ and $\gamma \in {\Pi}_\af^{{re},+}$ such that $\ell(s_\alpha s_\beta) = \ell(s_\alpha) + \ell(s_\beta)$ and $\gamma^\vee = \alpha^\vee + \beta^\vee $. Then $\gamma \in \tilde{\Pi}_\af^{{re},+}$ if and only if $\langle \alpha, \beta^\vee \rangle \neq 0$. \end{lemma}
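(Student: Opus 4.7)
The proof splits into two directions; the reverse $\Leftarrow$ will follow from Lemmas \ref{lemma:multiple} and \ref{lem:be}, while the forward direction $\Rightarrow$ requires a more delicate height comparison.

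For ($\Leftarrow$) I assume $\langle \alpha, \beta^\vee\rangle \neq 0$. Then $(\alpha|\beta) \neq 0$, and in particular $s_\alpha s_\beta \neq s_\beta s_\alpha$. Since $\gamma \in \Pi_\af^{re,+}$ is a real root, $\gamma^\vee$ is a real coroot and cannot equal the imaginary coroot $c$. Applying Lemma \ref{lemma:multiple} therefore forces $\alpha^\vee + \beta^\vee < c$, after which Lemma \ref{lem:be} produces $\tilde\gamma \in \tilde{\Pi}_\af^{re,+}$ with $\tilde\gamma^\vee = \alpha^\vee + \beta^\vee = \gamma^\vee$. Because the assignment $\eta \mapsto \eta^\vee$ is injective on positive real roots (via $\eta^\vee = 2\nu^{-1}(\eta)/(\eta|\eta)$ and the reduced crystallographic structure of $\Pi_\af$), I conclude $\gamma = \tilde\gamma \in \tilde{\Pi}_\af^{re,+}$.

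For the converse, assume $\gamma \in \tilde{\Pi}_\af^{re,+}$ and suppose for contradiction that $(\alpha|\beta) = 0$. Applying $\nu$ to $\gamma^\vee = \alpha^\vee + \beta^\vee$ and using orthogonality, a direct computation yields $\gamma = r_1 \alpha + r_2 \beta$ with $r_1 = (\gamma|\gamma)/(\alpha|\alpha)$, $r_2 = (\gamma|\gamma)/(\beta|\beta)$, and $r_1 + r_2 = 1$. The crystallographic restriction on ratios of squared real-root lengths forces $r_1 = r_2 = 1/2$, so $(\alpha|\alpha) = (\beta|\beta) = 2(\gamma|\gamma)$: the roots $\alpha, \beta$ are long of equal length, $\gamma = (\alpha+\beta)/2$ is short, and the ambient affine type must be non-simply-laced with length ratio $2$.

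The contradiction will come from establishing $\he(\gamma) < \he(\gamma^\vee)$, where $\he(\gamma) = (\he(\alpha) + \he(\beta))/2$ and $\he(\gamma^\vee) = \he(\alpha^\vee) + \he(\beta^\vee)$. The main technical observation is that any long root $\eta \in \tilde{\Pi}_\af^{re,+}$ satisfies $\he(\eta) \leq 2\he(\eta^\vee) - 1$: tracing the sequence $\eta_j = s_{i_j}\cdots s_{i_2}(\alpha_{i_1})$ from Proposition \ref{posir}, each step increases $\he(\eta_j^\vee)$ by exactly $1$ (via $\eta_{j+1}^\vee = \eta_j^\vee + \alpha_{i_{j+1}}^\vee$, forced by $\langle \alpha_{i_{j+1}}, \eta_j^\vee\rangle = -1$), while increasing $\he(\eta_j)$ by $-\langle \eta_j, \alpha_{i_{j+1}}^\vee\rangle$, which is at most $2$ in the ratio-$2$ types. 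Summing this bound over $\alpha$ and $\beta$ gives $\he(\alpha) + \he(\beta) \leq 2(\he(\alpha^\vee) + \he(\beta^\vee)) - 2$, whence $\he(\gamma) \leq \he(\gamma^\vee) - 1$. The first part of Proposition \ref{letalpha} then yields $\ell(s_\gamma) \leq 2\he(\gamma) - 1 < 2\he(\gamma^\vee) - 1$, contradicting $\gamma \in \tilde{\Pi}_\af^{re,+}$. The main obstacle I expect is the careful verification of the step-size bound $-\langle \eta_j, \alpha_{i_{j+1}}^\vee\rangle \leq 2$, which relies on the crystallographic relation between the long root $\eta_j$ and $\alpha_{i_{j+1}}$ in the non-simply-laced affine systems under consideration.
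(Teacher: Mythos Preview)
Your argument is correct in both directions. The reverse direction is essentially the paper's proof (invoking Lemmas~\ref{lemma:multiple} and~\ref{lem:be}), with the minor addition that you explicitly justify why $\gamma^\vee$ cannot equal $c$ and why $\tilde\gamma=\gamma$.

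For the forward direction, however, your route is genuinely different from the paper's. The paper argues as follows: set $u:=s_\gamma^{-1}s_\alpha s_\beta$; since $\alpha,\beta,\gamma\in\tilde\Pi_\af^{re,+}$ one computes $\ell(s_\gamma u)=\ell(s_\alpha s_\beta)=2\he(\gamma^\vee)-2=\ell(s_\gamma)-1$, so by the Strong Exchange argument of Lemma~\ref{lemma:extra1} we have $u=s_\eta$ with $\langle\eta,\gamma^\vee\rangle=1$, whence $s_\gamma s_\eta\neq s_\eta s_\gamma$. But $\langle\alpha,\beta^\vee\rangle=0$ makes $s_\alpha s_\beta$ an involution, so $s_\gamma s_\eta=s_\alpha s_\beta=(s_\alpha s_\beta)^{-1}=s_\eta s_\gamma$, a contradiction. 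This is a three-line, type-independent Coxeter argument. Your approach instead performs a root-length analysis: the identity $r_1+r_2=1$ forces the ambient system to have length ratio $2$ with $\gamma=(\alpha+\beta)/2$ short and $\alpha,\beta$ long; you then establish the auxiliary inequality $\he(\eta)\le 2\he(\eta^\vee)-1$ for long $\eta\in\tilde\Pi_\af^{re,+}$ by tracing the chain of Proposition~\ref{posir} (each step raises $\he(\eta^\vee)$ by $1$ and $\he(\eta)$ by $-(\eta_{j-1}|\eta_{j-1})/(\alpha_{i_j}|\alpha_{i_j})\in\{1,2\}$, since all $\eta_j$ share the long length and the affine simple roots, including $\alpha_0$, have squared length in $\{L,L/2\}$). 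This yields $\he(\gamma)\le\he(\gamma^\vee)-1$, contradicting $\gamma\in\tilde\Pi_\af^{re,+}$ via Proposition~\ref{letalpha}. Your argument is longer and requires the case split on length ratios, but it is self-contained (avoiding Lemma~\ref{lemma:extra1}) and yields as a byproduct the height inequality for long roots in $\tilde\Pi_\af^{re,+}$, which may be of independent interest.
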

\begin{proof} If $\langle \alpha, \beta^\vee \rangle \neq 0$ then Lemma \ref{lemma:multiple} implies that $\alpha^\vee + \beta^\vee < c$. Then $\gamma \in \tilde{\Pi}_\af^{{re},+}$ by Lemma \ref{lem:be}. Conversely, assume that $\gamma \in \tilde{\Pi}_\af^{{re},+}$ but $\langle \alpha, \beta^\vee \rangle = 0$. Let $u \in W_\af$ such that $s_\gamma u =s_\alpha s_\beta$. Then \[ \ell(s_\gamma u) =  \ell (s_\alpha s_\beta) = 2 \he(\alpha^\vee + \beta^\vee) - 2 = 2 \he (\gamma^\vee) -  2 = \ell(s_\gamma) - 1 \/.\] As in the proof of Lemma \ref{lemma:extra1}, one uses the Strong Exchange Condition to obtain that $u = s_\eta$ for $\eta \in {\Pi}_\af^{{re},+}$. Same lemma implies that $\langle \eta, \gamma^\vee \rangle =1$, so in particular $s_\gamma$ and $s_\eta$ do not commute. But $s_\gamma s_\eta = s_\alpha s_\beta = (s_\alpha s_\beta)^{-1} = s_\eta s_\gamma$ which is a contradiction. \end{proof}

\subsection{Quantum Bruhat chains and the proof of { Theorem \ref{thm:comm} (a)}}

In this section we introduce the notion of {\em quantum Bruhat chains}, which is our main tool in the proof of the commutativity of the Chevalley operators. The definition of these chains arises naturally from the study of the terms which appear in the quantum Chevalley formula, and it generalizes to the affine case the chains in the {\em quantum Bruhat graph} defined by Brenti, Fomin and Postnikov \cite{brenti.fomin.post:mixed} for ordinary flag varieties.

\begin{defn}  A  {\bf (weighted) quantum Bruhat cover} is a pair $u\to v$  where $u,v$ are in $W_\af$
and one of the following two conditions is satisfied:
\begin{itemize}
\item There exists  $\alpha\in \Pi^{re,+}_\af$ such that $v=us_\alpha$ and
$\ell(v)=\ell(u)+1$. The weight of this cover is 1 and this situation is denoted by
$u\stackrel{\alpha^\vee}{\longrightarrow}v$.
\item There exists  $\alpha\in \Pi^{re,+}_\af$ such that $v=us_\alpha$ and $\ell(v)=\ell(u)+1-2\he({\alpha^\vee})$.
The weight of this cover is $q^{\alpha^\vee}$ and this situation is denoted by
 $u\stackrel{q^{\alpha^\vee}}{\longrightarrow} v$. Notice that in this case $\ell(s_\alpha) = 2 \he (\alpha^\vee) -1$ by Lemma \ref{lemma:eq} thus $\alpha \in \tilde{\Pi}_\af^{re,+}$.
\end{itemize} 
A {\bf (weighted) quantum Bruhat chain} of weight $q^\kappa$ is an oriented sequence $u \to u s_\alpha \to u s_\alpha s_\beta$ of two quantum Bruhat covers such that the product of the two weights equals $q^\kappa$.
\end{defn}
Notice that we only use length 2 chains, although this notion can be obviously extended  to any length.
There are three types of quantum Bruhat chains: 
\begin{itemize}
\item[(1q)] $u\stackrel{\alpha^\vee}{\longrightarrow} us_\alpha\stackrel{q^{\beta^\vee}}{\longrightarrow} us_\alpha s_\beta$,
with weight $q^{\beta^\vee}$.
\item[(q1)]  $u\stackrel{q^{\alpha^\vee}}{\longrightarrow} us_\alpha\stackrel{\beta^\vee}{\longrightarrow} us_\alpha s_\beta$,
with weight $q^{\alpha^\vee}$.
\item[(qq)] $u\stackrel{q^{\alpha^\vee}}{\longrightarrow} us_\alpha\stackrel{q^{\beta^\vee}}{\longrightarrow} us_\alpha s_\beta$,
with weight $q^{\alpha^\vee +\beta^\vee}$.
\end{itemize}
We say that the last chain is of type {\rm (qq)'} if $\langle \alpha, \beta^\vee\rangle =0$; otherwise we say it is of type {\rm (qq)''}.

Given $u, v\in W_\af$ and $\kappa$ as before, we will determine all chains of weight $q^\kappa$ between
$u$ and $v$. Then we will use this information to show that the coefficient of $q^\kappa \eps_v$ in $\Lambda_i \Lambda_j (\eps_u)$ is symmetric in $i$ and $j$. This, together with the fact that $\coh^*(\Fla)$ is an associative ring (hence commutativity holds modulo $q_0, \ldots, q_n$) will complete the proof of part (a) of Theorem \ref{thm:comm}.

In our analysis we will repeatedly use the following Lemma:

\begin{lemma}\label{lem:unu} 
Let  $\alpha, \beta$ be in $\tilde{\Pi}_\af^{{re},+}$   and $u,v$  in $W_\af$ such that  $v=us_\alpha s_\beta$ and \begin{equation}\label{ust}u\stackrel{q^{\alpha^\vee}}{\longrightarrow} us_\alpha\stackrel{q^{\beta^\vee}}{\longrightarrow} us_\alpha s_\beta\end{equation}
is a quantum Bruhat chain. 
Then we have $\ell(s_\alpha s_\beta) =\ell(s_\alpha) + \ell(s_\beta)$.  
 \end{lemma}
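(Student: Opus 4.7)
The plan is a short length-computation using only the definition of the two quantum covers plus subadditivity of the length function in the Coxeter group $W_\af$.

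First I would unpack what the chain (\ref{ust}) says about lengths. By definition of a quantum cover, the first arrow gives $\ell(us_\alpha)=\ell(u)+1-2\he(\alpha^\vee)$, and the remark in the definition (or Lemma \ref{lemma:eq}) forces $\alpha\in\tilde{\Pi}_\af^{re,+}$ with $\ell(s_\alpha)=2\he(\alpha^\vee)-1$. Hence
\[
\ell(u)-\ell(us_\alpha)=2\he(\alpha^\vee)-1=\ell(s_\alpha).
\]
Similarly the second arrow yields $\ell(us_\alpha)-\ell(us_\alpha s_\beta)=\ell(s_\beta)$. Adding these two identities, I obtain
\[
\ell(u)-\ell(us_\alpha s_\beta)=\ell(s_\alpha)+\ell(s_\beta).
\]

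Next, I would use the standard subadditivity of the length function for the Coxeter group $(W_\af,\{s_0,\ldots,s_n\})$: for any $x,y\in W_\af$ one has $\ell(xy)\ge \ell(x)-\ell(y)$ (this follows from $\ell(x)=\ell((xy)y^{-1})\le \ell(xy)+\ell(y^{-1})$ together with $\ell(y^{-1})=\ell(y)$). Applying this with $x=u$ and $y=s_\alpha s_\beta$ gives $\ell(us_\alpha s_\beta)\ge \ell(u)-\ell(s_\alpha s_\beta)$, i.e.
\[
\ell(s_\alpha s_\beta)\ \ge\ \ell(u)-\ell(us_\alpha s_\beta)\ =\ \ell(s_\alpha)+\ell(s_\beta).
\]
The reverse inequality $\ell(s_\alpha s_\beta)\le \ell(s_\alpha)+\ell(s_\beta)$ is the usual triangle inequality for the length function. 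Combining both gives the desired equality $\ell(s_\alpha s_\beta)=\ell(s_\alpha)+\ell(s_\beta)$.

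There is no real obstacle: the whole content is that a double-quantum chain drops the length of $u$ by exactly $\ell(s_\alpha)+\ell(s_\beta)$, and then subadditivity forces $s_\alpha s_\beta$ itself to realize this drop. I would not need any further combinatorics of $\tilde{\Pi}_\af^{re,+}$ (that was needed precisely to identify $2\he(\alpha^\vee)-1$ with $\ell(s_\alpha)$, which is given to us here by the hypothesis that both arrows are $q$-covers).
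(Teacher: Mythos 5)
Your proof is correct and is essentially the paper's own argument: the paper writes the same computation as a single chain of inequalities $\ell(us_\alpha s_\beta)\ge \ell(u)-\ell(s_\alpha s_\beta)\ge \ell(u)-\ell(s_\alpha)-\ell(s_\beta)\ge \ell(u)+2-\deg q^{\alpha^\vee+\beta^\vee}$ and notes that the two ends coincide, forcing equality throughout. Your two-step version (compute the total length drop from the two $q$-covers, then apply subadditivity) is just a repackaging of the same idea.
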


 \begin{proof} The quantum Bruhat chain conditions for (\ref{ust}) give $\ell(us_\alpha s_\beta) = \ell(u) +2 -2\he({\alpha^\vee +\beta^\vee})$.
 On the other hand, we have
 $$\ell(us_\alpha s_\beta) \ge  \ell(u)-\ell(s_\alpha s_\beta) \ge \ell(u)-\ell(s_\alpha) -\ell(s_\beta) \ge
 \ell(u) +2 -\deg(q^{\alpha^\vee+\beta^\vee}),$$
 thus all inequalities here must actually be equalities.
    \end{proof}

To prove commutativity, we will fix $u, v \in W_\af$ and we distinguish three main cases: there exists a quantum Bruhat chain from $u$ to $v$ of weight $q^\kappa$ such that: (1) $\kappa\neq \gamma^\vee$ for any $\gamma \in \tilde{\Pi}_\af^{re,+}$; (2) $\kappa = \gamma^\vee$ is a non-simple coroot, for $\gamma \in \tilde{\Pi}_\af^{re,+}$; (3) $\kappa = \gamma^\vee$ is a simple coroot. 

%{\color{red} Textul original e comentat in fisier. A fost o mica problema aici, ptr. ca trebuie sa justificam ce sa intampla in Cazul 1 cand $\kappa \nless c$.}

%We only claim commutativity modulo $q^c = q_0 q^{\theta^\vee}$, therefore by Lemma \ref{lemma:multiple} we may assume that $\kappa < c$. 

\subsubsection{Case 1: There exists a chain from $u$ to $v$ of weight $q^\kappa$ and $\kappa \neq \gamma^\vee$, for any $\gamma \in \tilde{\Pi}_{\af}^{re,+}$.} { In this case there exists a chain
\[ \xymatrix{ u \ar[rr]^{q^{\alpha^\vee}} && us_\alpha \ar[rr]^{q^{\beta^\vee}} && v = us_\alpha s_\beta} \/, \] where
$\alpha, \beta \in \Pi_\af^{re,+}$ satisfy $\kappa = \alpha^\vee + \beta^\vee$. If $\kappa \nless c$ and $\langle \alpha, \beta^\vee\rangle \neq 0$ then Lemma \ref{lemma:multiple} implies that $\kappa=\alpha^\vee + \beta^\vee = c$. We are only interested in commutativity modulo $q^c$, thus by Proposition \ref{prop:pairbij} and Lemma \ref{lemma:nonzero} we may assume from now on that $\langle \alpha, \beta^\vee \rangle = 0$.}
%This is a quantum Bruhat chain by Lemma \ref{lem:unu}. 
Then there exists another chain of the form \[ \xymatrix{ u \ar[rr]^{q^{\beta^\vee}} && us_\beta \ar[rr]^{q^{\alpha^\vee}} && v = us_\beta s_\alpha} \/, \] and the exchange of $\alpha$ and $\beta$ gives an involution on the set of chains from $u$ to $v$ of weight $q^\kappa$. The coefficient of $q^\kappa \eps_v$ in $\Lambda_i \Lambda_j(\eps_u)$ is \[ \sum_{\alpha^\vee +\beta^\vee = \kappa} \langle \lambda_i, \alpha^\vee \rangle \langle \lambda_j, \beta^\vee \rangle + \sum_{\alpha^\vee +\beta^\vee = \kappa} \langle \lambda_i, \beta^\vee \rangle \langle \lambda_j, \alpha^\vee \rangle \/, \] which is symmetric in $i,j$ and the proof is finished in this case.

%\subsubsection{Case 1: There exists a chain from $u$ to $v$ of weight $q^\kappa$, with $\kappa < c$ and $\kappa \neq \gamma^\vee$, for any $\gamma \in \tilde{\Pi}_{\af}^{re,+}$.} The hypothesis, Proposition \ref{prop:pairbij} and Lemma \ref{lemma:nonzero} implies that there exists a chain
%\[ \xymatrix{ u \ar[rr]^{q^{\alpha^\vee}} && us_\alpha \ar[rr]^{q^{\beta^\vee}} && v = us_\alpha s_\beta} \/, \] where
%$\alpha, \beta \in \Pi_\af^{re,+}$ satisfy $\kappa = \alpha^\vee + \beta^\vee$ and $\langle \alpha, \beta^\vee \rangle = 0$.
%%This is a quantum Bruhat chain by Lemma \ref{lem:unu}. 
%Then there exists another chain of the form \[ \xymatrix{ u \ar[rr]^{q^{\beta^\vee}} && us_\beta \ar[rr]^{q^{\alpha^\vee}} && v = us_\beta s_\alpha} \/, \] and the exchange of $\alpha$ and $\beta$ gives an involution on the set of chains from $u$ to $v$ of weight $q^\kappa$. Then the coefficient of $q^\kappa \eps_v$ in $\Lambda_i \Lambda_j(\eps_u)$ is \[ \sum_{\alpha^\vee +\beta^\vee = \kappa} \langle \lambda_i, \alpha^\vee \rangle \langle \lambda_j, \beta^\vee \rangle + \sum_{\alpha^\vee +\beta^\vee = \kappa} \langle \lambda_i, \beta^\vee \rangle \langle \lambda_j, \alpha^\vee \rangle \/, \] which is symmetric in $i,j$ and the proof is finished in this case.

\subsubsection{Case 2: There exists a chain from $u$ to $v$ of weight $q^\kappa$, where $\kappa = \gamma^\vee$ is a non-simple coroot with $\gamma \in \tilde{\Pi}_\af^{re,+}$.} Denote by $\pi: u \to v$ the chain from $u$ to $v$ given by the hypothesis. 

%Notice first that $\gamma$ is in $\tilde{\Pi}_\af^{{re},+}$: this is clear if the chain $\pi$ is of type (1q) or (q1), and it follows from Lemma \ref{lemma:nonzero} if the chain $\pi$ is of type (qq).

We claim that the Weyl group element $s_\gamma u^{-1}v$ is a root reflection corresponding to an affine real root. This is clear if the chain $\pi$ is of type (q1). If it is of type (1q) then $u^{-1} v s_\gamma$ is a root reflection, but then so is $s_\gamma u^{-1}v = s_\gamma (u^{-1} v s_\gamma) s_\gamma$. Finally, if $\pi$ is of type (qq) this follows from Proposition \ref{prop:pairbij}. Then we can define $\eta, \eta' \in \Pi_\af^{re,+}$ as the unique positive real roots given by \[ s_\eta:= s_\gamma u^{-1}v; \quad s_{\eta'}:= u^{-1} v s_\gamma \/. \] Notice that from definition it follows that $\eta' = \pm s_\gamma(\eta)$, and this leads to $2$ situations, according to whether the plus or minus sign occurs.

\noindent {\em Case 2.1: $\eta' = s_\gamma(\eta)$.} { We first notice that there is no chain of type (qq) between $u$ and $v$. If this were the case, by Proposition \ref{prop:pairbij} there would be one of type (q1) of the form $\xymatrix{ u \ar[r]^{q^{\gamma^\vee}} & us_\gamma \ar[r]^{{\eta''^\vee}} & v = us_\gamma s_{\eta''}}$ with $\ell(s_\gamma s_{\eta''}) = \ell(s_\gamma) -   1$. But then $\eta'' = \eta$ and $s_\gamma(\eta'') < 0$ which is a contradiction.} 
%In this case we claim that there exist exactly $2$ quantum Bruhat chains of weight $q^\kappa$ between $u$ and $v$ given by:
%\begin{equation}\label{eq:2chains} \xymatrix{ u \ar[r]^{q^{\gamma^\vee}} & us_\gamma \ar[r]^{{\eta^\vee}} & v = us_\gamma s_\eta}, \quad \xymatrix{u \ar[r]^{{\eta'^\vee}} & us_{\eta'} \ar[r]^{q^{\gamma^\vee}} & v = us_{\eta'} s_\gamma} \/.\end{equation}
We now claim that there exist exactly $2$ quantum Bruhat chains of weight $q^\kappa$ between $u$ and $v$ given by:
\begin{equation}\label{eq:2chains} \xymatrix{ u \ar[r]^{q^{\gamma^\vee}} & us_\gamma \ar[r]^{{\eta^\vee}} & v = us_\gamma s_\eta}, \quad \xymatrix{u \ar[r]^{{\eta'^\vee}} & us_{\eta'} \ar[r]^{q^{\gamma^\vee}} & v = us_{\eta'} s_\gamma} \/.\end{equation}

To prove the claim note that the root $u(\eta') > 0$ if and only if $us_\gamma(\eta) > 0$. Since one of these two chains exists (by hypothesis and from the definition of a quantum Bruhat chain) the equivalence shows the other exists as well.
Again by definition of a quantum Bruhat chain and the uniqueness of $\gamma$ it follows that there cannot be another chain of type (1q) or (q1). The claim is proved.

%Finally, if there would be a chain of type (qq) between $u$ and $v$ then by Proposition \ref{prop:pairbij} there would be one of type (q1) of the form $\xymatrix{ u \ar[r]^{q^{\gamma^\vee}} & us_\gamma \ar[r]^{{\eta''^\vee}} & v = us_\gamma s_{\eta''}}$ with $\ell(s_\gamma s_{\eta''}) = \ell(s_\gamma) -   1$. But then $\eta'' = \eta$ and $s_\gamma(\eta'') < 0$ which is a contradiction. The claim is proved.

Then the coefficient of $q^\kappa \eps_v$ in $\Lambda_i \Lambda_j(\eps_u)$ is \[ \begin{split} \langle \lambda_j, \gamma^\vee \rangle \langle \lambda_i, \eta^\vee \rangle + \langle \lambda_j, \eta'^\vee \rangle \langle \lambda_i, \gamma^\vee \rangle = \langle \lambda_j, \gamma^\vee \rangle \langle \lambda_i, \eta^\vee \rangle + \langle \lambda_j, s_\gamma(\eta^\vee) \rangle \langle \lambda_i, \gamma^\vee \rangle \\ = \langle \lambda_j, \gamma^\vee \rangle \langle \lambda_i, \eta^\vee \rangle + \langle \lambda_i, \gamma^\vee \rangle \langle \lambda_j, \eta^\vee \rangle -  \langle \gamma, \eta^\vee \rangle \langle \lambda_i, \gamma^\vee \rangle \langle \lambda_j, \gamma^\vee \rangle \/. \end{split}
 \] This is clearly symmetric in $i$ and $j$.

 \noindent{\em Case 2.2: $\eta'= -   s_\gamma(\eta)$.} Notice that in this case $us_\gamma(\eta) > 0$ if and only if $u(\eta') < 0$ therefore exactly one of the two chains from (\ref{eq:2chains}) can exist: if $us_\gamma(\eta) > 0$ then it is the first chain, otherwise it is the second. In both situations Proposition \ref{prop:pairbij} yields a unique chain of the form \[ \xymatrix{ u \ar[rr]^{q^{\alpha^\vee}} && us_\alpha \ar[rr]^{{q^{\beta^\vee}}} && v = us_\alpha s_\beta} \] where $\alpha^\vee + \beta^\vee = \gamma^\vee$, $s_\alpha s_\beta \neq s_\beta s_\alpha$ and $s_\alpha s_\beta = s_\gamma s_\eta= s_{\eta'} s_\gamma$. From the formulas (\ref{eq:gammae}) it follows that \[ \gamma^\vee = \alpha^\vee + \beta^\vee; \quad \eta^\vee = - \beta^\vee - \langle \alpha, \beta^\vee \rangle \gamma^\vee; \quad \eta'^\vee = \langle \gamma, \eta^\vee \rangle \gamma^\vee - \eta^\vee \/.  \] To conclude, in this case there exist exactly two chains between $u$ and $v$, one of type (qq)" and the other of type (1q) or (q1), depending on the sign of the root $u (\eta')$. We will analyze  both cases.
 
Assume that we have the chain \[  \xymatrix{ u \ar[rr]^{q^{\gamma^\vee}} && us_\gamma \ar[rr]^{{{\eta^\vee}}} && v = us_\gamma s_\eta} \/. \] Then the coefficient of $q^\kappa \eps_v$ in $\Lambda_i \Lambda_j(\eps_u)$ is \[ \begin{split} \langle \lambda_j,\gamma^\vee \rangle \langle \lambda_i, \eta^\vee \rangle + \langle \lambda_j, \alpha^\vee \rangle \langle \lambda_i, \beta^\vee \rangle =  - \langle \lambda_j,\gamma^\vee \rangle \langle \lambda_i, \beta^\vee \rangle 
- \langle \alpha, \beta^\vee \rangle \langle \lambda_i, \gamma^\vee \rangle \langle \lambda_j, \gamma^\vee \rangle 
+ \langle \lambda_j, \alpha^\vee \rangle \langle \lambda_i, \beta^\vee \rangle \\ = \langle \lambda_i, \beta^\vee \rangle \langle \lambda_j, \alpha^\vee - \gamma^\vee \rangle -  \langle \alpha, \beta^\vee \rangle \langle \lambda_i, \gamma^\vee \rangle \langle \lambda_j, \gamma^\vee \rangle = - \langle \lambda_i, \beta^\vee \rangle \langle \lambda_j, \beta^\vee \rangle -  \langle \alpha, \beta^\vee \rangle \langle \lambda_i, \gamma^\vee \rangle \langle \lambda_j, \gamma^\vee \rangle \/. \end{split} \]

If we have the chain \[ \xymatrix{ u \ar[rr]^{{\eta'^\vee}} && us_{\eta'} \ar[rr]^{q^{\gamma^\vee}} && v = us_{\eta'} s_\gamma}\] then the coefficient of $q^\kappa \eps_v$ in $\Lambda_i \Lambda_j(\eps_u)$ is \[ \begin{split} & \langle \lambda_j, \eta'^\vee \rangle \langle \lambda_i, \gamma^\vee \rangle +   \langle \lambda_j, \alpha^\vee \rangle \langle \lambda_i, \beta^\vee \rangle = \\  & = - \langle \lambda_j, \eta^\vee \rangle \langle \lambda_i, \gamma^\vee \rangle +\langle \gamma, \eta^\vee \rangle \langle \lambda_j, \gamma^\vee \rangle \langle \lambda_i, \gamma^\vee \rangle + \langle \lambda_j, \alpha^\vee \rangle \langle \lambda_i, \beta^\vee \rangle \\ & =   \langle \lambda_j, \beta^\vee+ \langle \alpha, \beta^\vee \rangle \gamma^\vee \rangle \langle \lambda_i, \gamma^\vee \rangle +  \langle \lambda_j, \alpha^\vee \rangle \langle \lambda_i, \beta^\vee \rangle + \langle \gamma, \eta^\vee \rangle \langle \lambda_j, \gamma^\vee \rangle  \langle \lambda_i, \gamma^\vee \rangle\\ & = \langle \alpha, \beta^\vee \rangle \langle \lambda_i, \gamma^\vee \rangle   \langle \lambda_j, \gamma^\vee \rangle + \langle \lambda_j, \beta^\vee \rangle \langle \lambda_i, \gamma^\vee \rangle + \langle \lambda_j, \alpha^\vee \rangle \langle \lambda_i, \beta^\vee \rangle + \langle \gamma, \eta^\vee \rangle \langle \lambda_j, \gamma^\vee \rangle  \langle \lambda_i, \gamma^\vee \rangle \\  & = \langle \alpha, \beta^\vee \rangle \langle \lambda_i, \gamma^\vee \rangle   \langle \lambda_j, \gamma^\vee \rangle + \langle \lambda_j, \beta^\vee \rangle \langle \lambda_i, \alpha^\vee +\beta^\vee \rangle + \langle \lambda_j, \alpha^\vee \rangle \langle \lambda_i, \beta^\vee \rangle \\ & + \langle \gamma, \eta^\vee \rangle \langle \lambda_j, \gamma^\vee \rangle  \langle \lambda_i, \gamma^\vee \rangle \\ & = \langle \alpha, \beta^\vee \rangle \langle \lambda_i, \gamma^\vee \rangle   \langle \lambda_j, \gamma^\vee \rangle + (\langle \lambda_j, \beta^\vee \rangle \langle \lambda_i, \alpha^\vee \rangle + \langle \lambda_j, \alpha^\vee \rangle \langle \lambda_i, \beta^\vee \rangle) + \langle \lambda_j, \beta^\vee \rangle \langle \lambda_i, \beta^\vee \rangle  \\ & + \langle \gamma, \eta^\vee \rangle \langle \lambda_j, \gamma^\vee \rangle  \langle \lambda_i, \gamma^\vee \rangle \/. \end{split} \] 

In both situations the coefficient is symmetric in $i$ and $j$ and we are done.

\subsubsection{Case 3: There exists a chain from $u$ to $v$ of weight $q^\kappa$, where $\kappa = \gamma^\vee$ is a simple coroot.} This case is similar to Case 2 above, but simpler because there cannot be a chain of type (qq) between $u$ and $v$ (since $\gamma$ is a simple root). So we will be brief. As in Case 2 we can define the affine real positive roots $\eta, \eta' \in \Pi_\af^{re,+}$ by $s_\eta:= s_\gamma u^{-1}v$ and $s_{\eta'}:= u^{-1} v s_\gamma$.  We have again two situations, depending on whether $s_\gamma(\eta)$ equals $\eta'$ or $-\eta'$. The case when $s_\gamma(\eta) = \eta'$ is identical to the case 2.1, except for the fact that we do not need to prove again the non-existence of the chain of type (qq). Assume now that $s_\gamma(\eta) = -\eta'$. Then $s_\gamma(\eta) = \eta - \langle \eta, \gamma^\vee \rangle \gamma $ is a negative root. Since $\gamma$ is a simple root this implies that $\eta = \gamma$ and $\langle \eta, \gamma^\vee \rangle = 2$, thus $\eta'= -\eta$. Therefore $u=v$ and there exists exactly one chain between $u$ and $v$:
\[ \begin{cases} \xymatrix{ u \ar[r]^{q^{\gamma^\vee}} & u s_\gamma \ar[r]^{\gamma^\vee} & v=u} & \textrm{ if } u(\gamma) < 0 \\   \xymatrix{ u \ar[r]^{\gamma^\vee} & u s_\gamma \ar[r]^{q^{\gamma^\vee}} & v=u} & \textrm{ if } u(\gamma) > 0 \/. \end{cases} \] In both situations, the coefficient of $q^\kappa \eps_v$ in $\Lambda_i \Lambda_j(\eps_u)$ is $\langle \lambda_i, \gamma^\vee \rangle \langle \lambda_j, \gamma^\vee \rangle$, which is clearly symmetric in $i$ and $j$.

\subsection{{ Proof of Theorem \ref{thm:comm} (b): commutativity of the operators $\Lambda_i - m_i \Lambda_0$}} %without {\rm mod} $q^c = q_0q^{\theta^\vee}$}

Recall that $\theta^\vee = m_1\alpha_1^\vee +\cdots  + m_n \alpha_n^\vee$.
In this section we prove:

\begin{thm}\label{genstar} For any $1\le i , j \le n$ and any $u\in W_\af$ we have
\begin{equation}\label{fori}(\Lambda_i - m_i \Lambda_0)(\Lambda_j - m_j \Lambda_0) \ep_u = (\Lambda_j - m_j \Lambda_0)(\Lambda_i - m_i \Lambda_0) \ep_u.\end{equation}\end{thm}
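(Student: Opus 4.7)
The starting observation is bilinearity: since $[\Lambda_0,\Lambda_0]=0$,
\[
[\Lambda_i - m_i\Lambda_0,\, \Lambda_j - m_j\Lambda_0] \;=\; [\Lambda_i,\Lambda_j] - m_i[\Lambda_0,\Lambda_j] - m_j[\Lambda_i,\Lambda_0],
\]
so by part (a) the modified commutator is divisible by $q^c$. The first step would be to sharpen this: a closer reading of the proof of (a) shows that each commutator $[\Lambda_k,\Lambda_\ell]$ is in fact supported \emph{exactly} on $q^c$. Indeed, Cases 2 and 3 produce coefficients that are visibly symmetric in $(k,\ell)$, while in Case 1 with $\kappa\neq c$ the combination of Lemmas \ref{lemma:multiple}, \ref{lem:be} and \ref{lemma:nonzero} forces $\langle\alpha,\beta^\vee\rangle=0$ on every contributing pair, so that the exchange $\alpha\leftrightarrow\beta$ symmetrizes the coefficient. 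This reduces the theorem to the vanishing of the $q^c$-coefficient of the modified commutator.

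To compute that coefficient I would use the operator identity (\ref{eq:qafBGG}) to decompose $\Lambda_k = L_k + Q_k$, where $L_k$ is multiplication by $\eps_k$ and $Q_k = \sum_{\alpha\in\tilde{\Pi}_\af^{re,+}} \langle\lambda_k,\alpha^\vee\rangle\,q^{\alpha^\vee} D_{s_\alpha}$. Since $[L_k,L_\ell]=0$ and the mixed commutators $[L_k,Q_\ell]$, $[Q_k,L_\ell]$ carry $q$-degrees of the form $\alpha^\vee<c$, only $[Q_k,Q_\ell]$ contributes at $q^c$. Writing $c_k(\gamma):=\langle\lambda_k - m_k\lambda_0,\gamma^\vee\rangle$ and combining linearly, the $q^c$-coefficient of the modified commutator applied to $\eps_u$ becomes
\[
\sum_{\substack{\alpha,\beta\in\tilde{\Pi}_\af^{re,+} \\ \alpha^\vee+\beta^\vee=c}} \bigl(c_i(\alpha)\,c_j(\beta) - c_j(\alpha)\,c_i(\beta)\bigr)\, D_{s_\alpha} D_{s_\beta}(\eps_u).
\]

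The decisive input is the arithmetic identity $\langle\lambda_k - m_k\lambda_0,\, c\rangle = 0$ for $1\le k\le n$: expanding $c = \alpha_0^\vee + \theta^\vee$ and using $\theta^\vee = \sum_{\ell=1}^n m_\ell\alpha_\ell^\vee$ together with $\langle\lambda_k,\alpha_\ell^\vee\rangle=\delta_{k\ell}$, the pairing evaluates to $0 + m_k - m_k\cdot 1 - m_k\cdot 0 = 0$. Consequently $c_k(\beta) = -c_k(\alpha)$ whenever $\alpha^\vee+\beta^\vee=c$, and each summand collapses as
\[
c_i(\alpha)\,c_j(\beta) - c_j(\alpha)\,c_i(\beta) \;=\; -c_i(\alpha)c_j(\alpha) + c_j(\alpha)c_i(\alpha) \;=\; 0.
\]
The main technical hurdle lies in the first paragraph, namely upgrading part (a) from ``$\equiv 0\bmod q^c$'' to ``exactly supported on $q^c$'', which requires extracting from the proof of (a) that the symmetric-exchange cancellation in Case 1 is exact (not merely valid modulo $q^c$) whenever $\kappa\ne c$. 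Once that refinement is in hand, the vanishing of the modified commutator reflects the clean geometric identity that $\eps_k - m_k\eps_0 = \e_1^*(\sigma_k)$ pairs trivially with the imaginary coroot $c$.
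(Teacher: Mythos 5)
Your proof is correct and takes essentially the same route as the paper's: the reduction to the coefficient of $q^c$ rests on the same root-pair analysis (Lemma \ref{lemma:multiple} together with the exchange symmetry for commuting pairs), and your final term-by-term cancellation via $\langle \lambda_k - m_k\lambda_0, c\rangle = 0$, i.e.\ $c_k(\beta) = -c_k(\alpha)$ whenever $\alpha^\vee + \beta^\vee = c$, is precisely the computation the paper carries out after substituting $\beta^\vee = c - \alpha^\vee$ and using $\langle\lambda_i, c\rangle = m_i$, $\langle\lambda_0, c\rangle = 1$. The only difference is presentational (you package the argument as an operator commutator identity via $\Lambda_k = L_k + Q_k$, while the paper works coefficient-by-coefficient on quantum Bruhat chains), and your citation of Lemmas \ref{lem:be} and \ref{lemma:nonzero} in the first paragraph should really be to Lemmas \ref{lem:unu} and \ref{lemma:multiple}.
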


\begin{proof} As before, we need to show that for any $v\in W_\af$ and any $\kappa$ in the affine coroot lattice, the
coefficient of $q^\kappa \ep_v$ in the left hand side of (\ref{fori}) is symmetric in $i$ and $j$.
By Theorem \ref{thm:comm} (a) proved in the previous section this is true for  $\kappa < c$, so from now on we will assume that $\kappa \nless c=\alpha_0^\vee + \theta^\vee$. Then Proposition \ref{letalpha} and Lemma \ref{lemma:multiple} imply that
there exists a quantum Bruhat chain of type (qq):
\[ \xymatrix{ u \ar[rr]^{q^{\alpha^\vee}} && us_\alpha \ar[rr]^{{q^{\beta^\vee}}} && v = us_\alpha s_\beta} \] 
such that $\kappa = \alpha^\vee + \beta^\vee$, and either $s_\alpha s_\beta = s_\beta s_\alpha$ or $s_\alpha s_\beta \neq s_\beta s_\alpha$ but then $\kappa= \alpha^\vee + \beta^\vee = c$. The assumption on $\kappa$ implies that there cannot be any chains of type (1q) or (q1). If $s_\alpha s_\beta = s_\beta s_\alpha$ then as in Case 1 from the previous section the coefficient of $q^{\alpha^\vee +\beta^\vee}\ep_{us_\alpha s_\beta}$ in the left hand side of
(\ref{fori}) is symmetric in $i$ and $j$. If $s_\alpha s_\beta \neq s_\beta s_\alpha$ then $\alpha^\vee + \beta^\vee = c$ and the coefficient of $q^{\alpha^\vee +\beta^\vee}\ep_{us_\alpha s_\beta}$ in the left hand side of
(\ref{fori}) is
\[ \langle \lambda_j,\alpha^\vee\rangle \langle \lambda_i,\beta^\vee\rangle
-m_i\langle \lambda_j,\alpha^\vee\rangle \langle \lambda_0,\beta^\vee\rangle
-m_j\langle \lambda_0,\alpha^\vee\rangle \langle \lambda_i,\beta^\vee\rangle
+m_i m_j \langle \lambda_0,\alpha^\vee\rangle \langle \lambda_0,\beta^\vee\rangle \/.\]
After ignoring the last term { (since it is symmetric in $i,j$)}, replacing $\beta^\vee$ by $c -\alpha^\vee$, 
and using that $\langle \lambda_i, c\rangle =m_i$ for $i\neq 0$ as well as
$\langle \lambda_0, c\rangle =1$, the expression above is equal to 
\[\begin{split}  -\langle \lambda_i,\alpha^\vee\rangle \langle \lambda_j,\alpha^\vee\rangle + m_i \langle \lambda_j, \alpha^\vee \rangle \langle \lambda_0, \alpha^\vee \rangle +  m_j \langle \lambda_i, \alpha^\vee \rangle \langle \lambda_0, \alpha^\vee \rangle  \\ = \langle m_i \lambda_j + m_j \lambda_i, \alpha^\vee \rangle \langle \lambda_0, \alpha^\vee \rangle - \langle \lambda_i,\alpha^\vee\rangle \langle \lambda_j,\alpha^\vee\rangle \/,\end{split} \]
which is symmetric in $i$ and $j$. \end{proof}
 
\section{The Frobenius property and the Dubrovin formalism}\label{frodu}

The goal of this section is to show that the algebra $\quantum^*_\af(G/B)$ is a Frobenius algebra, and to define an analogue $\nabla^\hbar$ of the Dubrovin connection, parametrized by a complex parameter $\hbar$. Then we will show how the associativity of the product $\star_\af$ implies that $\nabla^\hbar$ is flat. These facts will be used in the next section to define the Givental-Kim formalism for $\quantum^*_\af(G/B)$, which eventually leads to a presentation of this algebra by generators and relations. For the reminder of the paper we will consider the cohomology $\mathrm{H}^*(G/B)$ with complex coefficients, and $\QH^*_\af(G/B)$ as an algebra over $\C[q]:=\C[q_0, \ldots, q_n]$.

We abuse notation and denote by $\langle \cdot, \cdot \rangle$ the Poincar{\'e} pairing defined by \[ \langle a , b \rangle = \int_{G/B} a \cdot b  \] where $a, b \in \coh^*(G/B)$ and the integral means the push-forward to a point. (Equivalently this equals the coefficient of $[pt]$ in $\sigma_u \cdot \sigma_v$.) We extend this pairing by $\C[q]$-linearity to define a pairing on $\quantum^*_\af(G/B)$. 
We prove in Theorem \ref{thm:frobenius} below that relative to this pairing, $\quantum^*_\af(G/B)$ is a Frobenius algebra (see e.g.~\cite[\S 9.2]{Gu} for this notion). We need the following lemma. 
\begin{lemma}\label{lemma:symmp} Let $a,b \in \coh^*(G/B)$ and let $\alpha \in \Pi$ be in the finite root system. Then $\langle \partial_\alpha (a), b \rangle = \langle a, \partial_\alpha(b) \rangle$.  \end{lemma}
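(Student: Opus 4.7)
The plan is to proceed in two steps: first establish the identity for simple roots by a direct Schubert-basis computation, then deduce the general case from the conjugation formula $\partial_\alpha = r_w^{\ast}\partial_i r_{w^{-1}}^{\ast}$ recalled in Remark \ref{rmk:rw}, where $w \in W$ and $\alpha_i \in \Delta$ are chosen so that $w(\alpha_i)=\alpha$.

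For the simple root case $\alpha=\alpha_i$, I would verify self-adjointness directly on the Schubert basis: by the BGG formula $\partial_i \sigma_w = \sigma_{ws_i}$ if $\ell(ws_i)<\ell(w)$ and $0$ otherwise, together with Poincar\'e duality $\int_{G/B} \sigma_u \sigma_v = \delta_{u,\, w_0 v}$, both $\langle \partial_i \sigma_u, \sigma_v\rangle$ and $\langle \sigma_u, \partial_i \sigma_v\rangle$ are nonzero precisely when $v = w_0 u s_i$ and $\ell(us_i)<\ell(u)$, in which case both equal $1$. The equivalence of length conditions follows from $\ell(w_0 v)=N-\ell(v)$, with $N=\ell(w_0)$. (As a backup, one could instead identify $\partial_i$ with the push-pull $\pi_i^{\ast}(\pi_i)_{\ast}$ along the projection $\pi_i \colon G/B \to G/P_i$ and invoke the projection formula.)

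For a general root $\alpha$, set $\phi_v := r_v^{\ast}$. Since $\phi_v$ is a graded $\C$-algebra automorphism of $\coh^{\ast}(G/B)$, it acts on the one-dimensional top cohomology $\coh^{2N}(G/B)$ by some scalar $\epsilon_v \in \C^{\times}$, and therefore $\langle \phi_v(x), \phi_v(y)\rangle = \epsilon_v \langle x, y\rangle$ for all $x,y \in \coh^{\ast}(G/B)$. Applying this with $v=w^{-1}$ and using $\partial_\alpha = \phi_w \partial_i \phi_{w^{-1}}$ together with $\phi_{w^{-1}} \phi_w = \mathrm{id}$, one computes
\[ \epsilon_{w^{-1}}\langle \partial_\alpha(a), b\rangle \;=\; \langle \partial_i \phi_{w^{-1}}(a),\, \phi_{w^{-1}}(b)\rangle \;=\; \langle \phi_{w^{-1}}(a),\, \partial_i \phi_{w^{-1}}(b)\rangle \;=\; \epsilon_{w^{-1}}\langle a, \partial_\alpha(b)\rangle, \]
where the middle equality is the simple-root case applied to $\phi_{w^{-1}}(a)$ and $\phi_{w^{-1}}(b)$. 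Cancelling $\epsilon_{w^{-1}}$ concludes the proof. The only subtlety worth flagging is that one should \emph{not} try to prove that $r_w^{\ast}$ preserves the Poincar\'e pairing on the nose (it may well act by a nontrivial sign on the orientation class); instead the scaling factor $\epsilon_{w^{-1}}$ appears symmetrically on both sides of the desired identity and drops out harmlessly.
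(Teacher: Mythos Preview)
Your proof is correct and follows essentially the same two-step architecture as the paper: handle simple roots first, then reduce the general case via the conjugation formula $\partial_\alpha = r_w^* \partial_i r_{w^{-1}}^*$, with the sign/scalar ambiguity of $r_w^*$ on the top class cancelling symmetrically. The only substantive difference is in the simple-root step: the paper uses the geometric identification $\partial_i = \pi_i^*(\pi_i)_*$ and the projection formula to obtain the symmetric expression $\int_{G/P_i} (\pi_i)_*(a)\cdot(\pi_i)_*(b)$, whereas you carry out a direct Schubert-basis check (which is perfectly valid, and indeed you mention the push-pull argument as a backup). Your treatment of the general case---applying $\phi_{w^{-1}}$ to both entries and tracking the scalar $\epsilon_{w^{-1}}$---is a clean repackaging of the paper's computation, which instead moves $r_w^*$ across the pairing one factor at a time; the two are equivalent.
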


\begin{proof} We first prove the lemma in the case $\alpha = \alpha_i$ is a simple root. Let $P_i$ be the minimal parabolic subgroup corresponding to $\alpha_i$ and let $\pi_i: G/B \to G/P_i$ be the projection. It is well known that $\partial_i (a) = \pi^*_i (\pi_i)_*(a)$ where $(\pi_i)_*: \coh^k (G/B) \to \coh^{k-2}(G/B)$ is the Gysin push-forward. (This formula for $\partial_i$ can be traced back to \cite[\S 5.2]{BGG}; we refer e.g.~  to \cite{fulton.macpherson} or \cite[Appendix]{mihalcea:positivity} for more on Gysin push-forwards.) Then by the projection formula \[ \int_{G/B} \partial_i(a) \cdot b = \int_{G/B} \pi^*_i (\pi_i)_*(a) \cdot b  = \int_{G/P_i} (\pi_i)_*(a) \cdot (\pi_i)_*(b) \/. \] Then the identity in the lemma for $\alpha = \alpha_i$ follows because the last expression is symmetric in $a$ and $b$. For the general case we use the definition of $\partial_\alpha$ from (\ref{eq:palpha}) above. Let $w \in W$ such that $w(\alpha_i) = \alpha$ for some $1 \le i \le n$. Then $\partial_\alpha = r_w^* \partial_i r_{w^{-1}}^*$ where $r_w^*:\coh^*(G/B) \to \coh^*(G/B)$ is the ring automorphism induced by the right action of $W$ on cohomology.
 Because $r_w^* r_{w^{-1}}^* = id$ it follows that $r_w^*[pt] = r^*_{w^{-1}}[pt] = \pm [pt] \in \coh^*(G/B)$.\begin{footnote}{In fact it can be shown that $r_w^*[pt ] = (-1)^{\ell(w)} [pt]$. This follows because $[pt] = \frac{1}{|W|} \prod_{\beta \in \Pi^+} \beta$ where the right hand side is interpreted in the BGG presentation described after (\ref{delt}) above.}\end{footnote} 
This implies that $\langle r_w^*(a_1) , a_2 \rangle = \pm \langle a_1, r_{w^{-1}}^* (a_2) \rangle$, for any $ a_1, a_2 \in \coh^*(G/B)$. Then \[ \begin{split} \langle \partial_\alpha (a), b \rangle = \langle r_w^* \partial_i r_{w^{-1}}^*(a), b \rangle = \pm \langle \partial_i r_{w^{-1}}^*(a), r_{w^{-1}}^*(b) \rangle \\ = \pm \langle r_{w^{-1}}^*(a), \partial_i r_{w^{-1}}^*(b) \rangle = \langle a , r_w^* \partial_i r_{w^{-1}}^*(b) \rangle = \langle a, \partial_\alpha(b) \rangle\/. \end{split} \] The third equality used the previously proved identity $\langle \partial_i (a), b \rangle = \langle a, \partial_i(b) \rangle$.  
\end{proof}

\begin{thm}\label{thm:frobenius} For any $a_1, a_2, a_3 \in \quantum_\af^*(G/B)$ there is an identity $\langle a_1 \star_\af a_2, a_3\rangle  = \langle a_1, a_2 \star_\af a_3 \rangle$. In particular, the algebra $(\quantum^*_\af(G/B), \star_\af)$ is a Frobenius algebra with respect to $\langle \cdot, \cdot \rangle$. \end{thm}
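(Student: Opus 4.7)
The plan is to reduce, by an induction using associativity and commutativity of $\star_\af$, to the special case $a_2 = \sigma_i$ for some $1 \le i \le n$. That case will then follow from the self-adjointness of the operators $\pi(D_{s_\alpha})$ appearing in the affine quantum Chevalley formula, which in turn reduces to a palindromic reduced factorization of $s_\alpha$ supplied by Proposition \ref{posir}.

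First I would observe that, since Schubert divisors generate $\coh^*(G/B)$ as a $\Q$-algebra and (by the graded Nakayama argument behind Theorem \ref{thm:defafqcoh}) the $\star_\af$-monomials $\sigma_{i_1} \star_\af \cdots \star_\af \sigma_{i_k}$ span $\quantum^*_\af(G/B)$ over $\C[q]$, it suffices by $\C[q]$-trilinearity to prove the identity when $a_2$ is such a monomial. Assuming inductively that the Frobenius identity holds for $a_2 = \sigma_i$ and for $a_2 = a_2'$, one obtains it for $a_2 = \sigma_i \star_\af a_2'$ via
\[ \langle a_1 \star_\af (\sigma_i \star_\af a_2'), a_3\rangle = \langle (a_1 \star_\af \sigma_i) \star_\af a_2', a_3\rangle = \langle a_1 \star_\af \sigma_i, a_2' \star_\af a_3 \rangle = \langle a_1, \sigma_i \star_\af (a_2' \star_\af a_3)\rangle, \]
which after one more application of associativity equals $\langle a_1, a_2 \star_\af a_3\rangle$.

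It remains to handle $a_2 = \sigma_i$. By the affine quantum Chevalley formula \eqref{eq:Lambdafin},
\[ \sigma_i \star_\af a = \sigma_i \cdot a + \sum_\alpha \langle \lambda_i - m_i \lambda_0, \alpha^\vee\rangle\, q^{\alpha^\vee}\, \pi(D_{s_\alpha})(a), \]
summed over affine positive real roots $\alpha$ with $\ell(s_\alpha) = 2\he(\alpha^\vee) - 1$. The classical term is symmetric under $\langle \cdot,\cdot\rangle$ because ordinary cohomology is Frobenius, so it suffices to establish
\[ \langle \pi(D_{s_\alpha})(a), b\rangle = \langle a, \pi(D_{s_\alpha})(b)\rangle \]
for every such $\alpha$. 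When $\alpha$ is simple this is Lemma \ref{lemma:symmp} (taking $\alpha = \alpha_i$ for $i\neq 0$ or $\alpha = -\theta$ for $i=0$). When $\alpha$ is non-simple, Proposition \ref{posir} supplies a palindromic reduced word $s_\alpha = s_{i_k}\cdots s_{i_2} s_{i_1} s_{i_2}\cdots s_{i_k}$ in $W_\af$, and applying the ring homomorphism $\pi$ of Theorem \ref{T:ev} gives
\[ \pi(D_{s_\alpha}) = \pi(D_{i_k})\cdots \pi(D_{i_2})\,\pi(D_{i_1})\,\pi(D_{i_2})\cdots \pi(D_{i_k}), \]
a palindromic composition of operators each of which equals either $\partial_j$ for some $1\le j\le n$ or $\partial_{-\theta}$. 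Each such factor is self-adjoint by Lemma \ref{lemma:symmp}; since the adjoint of a composition reverses the order of factors and a palindrome is invariant under reversal, the entire composition is self-adjoint as well.

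The one point that needs care, and is the heart of the argument, is that the palindromic factorization must be a reduced word in $W_\af$ in order for Theorem \ref{T:ev} to translate it into the corresponding composition of $\partial_j$'s and $\partial_{-\theta}$'s; this is precisely the content of Proposition \ref{posir} for roots in $\tilde{\Pi}_\af^{re,+}$, which is where the height constraint $\ell(s_\alpha) = 2\he(\alpha^\vee)-1$ enters. Once that is in hand, self-adjointness of $\partial_{-\theta}$ (invoked whenever the index $0$ appears) is covered by Lemma \ref{lemma:symmp} since $-\theta$ is a finite root, and the proof is complete.
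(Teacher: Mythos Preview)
Your proof is correct and follows essentially the same approach as the paper's: reduce to the case $a_2 = \sigma_i$, apply the affine quantum Chevalley formula, and deduce self-adjointness of $\pi(D_{s_\alpha})$ from a palindromic reduced expression of $s_\alpha$ together with Lemma~\ref{lemma:symmp}. You are in fact slightly more explicit than the paper in two places --- spelling out the induction on $\star_\af$-monomials for the reduction step, and citing Proposition~\ref{posir} for the palindromic reduced word (the paper simply asserts that ``the reduced word of $s_\alpha$ is symmetric'') --- but the underlying argument is the same.
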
 

\begin{proof} Recall that $\quantum^*_\af(G/B)$ has a $\bC[q]$-basis given by Schubert classes, and it is generated as an algebra over $\bC[q]$ by the Schubert divisors $\sigma_1, \ldots, \sigma_n$. Then we can assume that $a_1 = \sigma_u, a_3 = \sigma_v$ and $a_2 = \sigma_i$ for some $u, v \in W$ and $1 \le i \le n$. By the Chevalley formula from (\ref{eq:Lambdafin}) above we calculate that \[ \langle \sigma_u \star_\af \sigma_i, \sigma_v \rangle = \langle \sigma_u \cdot \sigma_i, \sigma_v \rangle + \sum_{\alpha} \langle \lambda_i - m_i \lambda_0, \alpha^\vee \rangle q^{\alpha^\vee} \langle \pi(D_{s_\alpha}) (\sigma_u), \sigma_v \rangle \/, \] where the sum is as in (\ref{eq:Lambdafin}) and $\pi: R_\af \to R$ is the homomorphism between the nil-Coxeter rings defined in Theorem \ref{T:ev}. Notice now that \[ \langle \sigma_u \cdot \sigma_i, \sigma_v \rangle = \int_{G/B} \sigma_u \cdot \sigma_i \cdot \sigma_v = \langle \sigma_u , \sigma_i \cdot \sigma_v \rangle \/.\] 
Thus to prove the theorem it remains to show that $\langle \pi(D_{s_\alpha})(\sigma_u), \sigma_v \rangle = \langle \sigma_u, \pi(D_{s_\alpha})(\sigma_v) \rangle $. Choose a reduced word $s_\alpha = s_{i_1} s_{i_2} \cdots  s_{i_k}$ where the indices $i_j \in \{ 0, 1,\ldots, n\}$.~Theorem \ref{T:ev} implies that $\pi(D_{s_\alpha}) = \pi(D_{i_1}) \cdots  \pi(D_{i_k})$ and each $\pi(D_{i_j})$ equals to either $\partial_{i_j}$ or $\partial_{-\theta}$. Then we can successively apply Lemma \ref{lemma:symmp} to get
\[ \langle \pi(D_{s_\alpha})(\sigma_u), \sigma_v \rangle = \langle \pi(D_{i_1}) \cdots  \pi(D_{i_k})(\sigma_u), \sigma_v \rangle = \langle \sigma_u, \pi(D_{i_k})\cdots \pi(D_{i_1}) (\sigma_v) \rangle  = \langle \sigma_u, \pi(D_{s_\alpha})(\sigma_v) \rangle \/, \] where the last equality follows because the reduced word of $s_\alpha$ is symmetric. \end{proof} 

We now turn to the definition of the Dubrovin connection associated to the quantum cohomology ring $\quantum^*_\af(G/B)$. Recall that a connection on a vector bundle $E \to M$ is an operator $\nabla: \Gamma(E) \otimes \Gamma(TM) \to \Gamma(E)$ where $TM$ denotes the tangent bundle of $M$ and $\Gamma(E)$ denotes the ring of sections of $E$. The operator $\nabla$ must satisfy the following properties:
\begin{itemize} \item $\nabla_X (f \sigma + \tau) = X(f) \sigma + f \nabla_X(\sigma) +\nabla_X (\tau)$ for any $\sigma, \tau \in \Gamma(E)$, $f \in \cO_M$ (the ring of functions on $M$) and $X \in \Gamma(TM)$;

\item $\nabla_{f X + Y}(\sigma) = f \nabla_X(\sigma) + \nabla_Y(\sigma)$, for any $f \in \cO_M$, $\sigma \in \Gamma(E)$ and $X,Y \in \Gamma(TM)$. \end{itemize}

Consider now the complex vector space $M: = \coh^2(G/B)$ with coordinates $z= (z_1, \ldots, z_n)$ corresponding to the basis $\sigma_1, \ldots, \sigma_n$. { This means that $z_i$ is the function $\coh^2(G/B) \to \C$ defined by $z_i(\sigma_j) = \delta_{i,j}$, the Kronecker delta symbol; it can also be identified to the homology class $[X(s_i)] \in \coh_2(G/B) $ of the corresponding Schubert curve.} We regard $M$ as a formal scheme, i.e. $\cO_{M}= \C[[z]]:= \C[[z_1, \ldots, z_n]]$. There is a trivial bundle $E:= M \times \coh^*(G/B)$. A section $s \in \Gamma(E)$ can be written as a {\em finite} sum $\sum a_{w}(z) \sigma_w$, where $a_{w}(z) \in \cO_{M}$. 

\begin{defn}\label{defn:connection} Let $\hbar \in \C^*$ be a complex parameter. Define $\nabla^\hbar: \Gamma(E) \otimes \Gamma(TM) \to \Gamma(E)$ to be the unique connection on $E$ which satisfies \[ \nabla^\hbar_{\partial / \partial z_i} (\sigma_w) = \frac{1}{\hbar}\sigma_i \star_{\af} \sigma_w, \quad (1 \le i \le r) \/. \] In order to regard $\sigma_i \star_{\af} \sigma_w$ in $\Gamma(E)$ we use the substitutions \begin{equation}\label{E:hyp} q_i := e^{z_i}~  (1 \le i \le n); \quad  q_0 := e^{- (m_1 z_1 + \cdots  + m_n z_n)}\/.\end{equation} \end{defn}

\begin{remark}\label{rmk:subs}To motivate the last substitution $q_0 = e^{- (m_1 z_1 + \cdots  + m_n z_n)}$ recall that the quantum
 product $\star_\af$ we defined on $\coh^*(G/B)$ used the affine quantum Chevalley operators which we transported from $\coh^*(\Fla)$ via the injective ring homomorphism $\e_1^*:\coh^*(G/B) \to \coh^*(\Fla)$. Let $z_\af:=(z_0', z_1', \ldots, z_n')$ be the coordinates on $\coh^2(\Fla)$ corresponding to the Schubert basis $\eps_0, \ldots, \eps_n$.
Then we identify $\sigma_i \leftrightarrow \e_1^*(\sigma_i) = \eps_i - m_i \eps_0$, $1\le i \le n$, which leads to identifications
$z_i \leftrightarrow z'_i$. In addition, notice that $\e_1^*(\coh^2(G/B))$ is defined by the equation $z'_0+m_1z'_1 +\cdots  +m_n z'_n=0$ in $\coh^2(\Fla)$, so one can formally define an extra coordinate $z_0$ on $\coh^2(G/B)$ by $z_0 := -(m_1 z_1 + \cdots  + m_n z_n)$. The quantum parameters $q_i'= e^{z_i'}$ ($0 \le i \le n$) and $q_i=e^{z_i}$ ($1 \le i \le n$) are regarded as functions on $\coh^2(\Fla)$, respectively $\coh^2(G/B)$, and they transform with respect to the dual of $\e_1^*$, which is $(\e_1)_*: \coh_2(\Fla) \to \coh_2(G/B)$. By the above identifications (or by applying the identity (\ref{eq:pfp}) above) one obtains that $q_i' \leftrightarrow q_i$ for $1 \le i \le n$ and $q_0' \leftrightarrow q_0:=e^{z_0} = q_1^{-m_1} \cdots q_n^{- m_n}$. 
 \end{remark} 

Of particular importance for us later will be the subgroup of sections of the form \[\Gamma(E)':=\{s:   s = \sum g_{d,w}(z) q^d \sigma_w;\quad  g_{d,w}(z) \in \C[z] \} \] where the (possibly infinite) sum is over $d = (d_0, d_1, \ldots, d_n) \in (\Z_{\ge 0})^{n+1}$ and $w \in W$. Then for any $1 \le i \le n$ and $g(z) \in \C[z]$, we record the following:
\begin{equation}\label{E:goodsections} \nabla^\hbar_{\partial/ \partial z_i} (g(z) q^d \sigma_w) = \Bigl(\frac{\partial g(z)}{\partial z_i}+ (d_i - m_i d_0) g(z)\Bigr) q^d \sigma_w + \frac{1}{\hbar} g(z) q^d \sigma_i \star_{\af} \sigma_w \/. \end{equation}

\begin{thm}\label{thm:flat} The Dubrovin connection $\nabla^\hbar$ is flat for any $\hbar \in \C^*$. \end{thm}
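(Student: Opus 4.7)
The plan is to show the curvature $[\nabla^\hbar_{\partial/\partial z_i}, \nabla^\hbar_{\partial/\partial z_j}]$ vanishes as an endomorphism of $\Gamma(E)$, for all $1 \le i,j \le n$. Since this commutator is $\cO_M$-linear, it suffices to verify the vanishing on the frame of constant sections $\{\sigma_w : w \in W\}$. Write $A_i$ for the $\C$-linear operator $s \mapsto \sigma_i \star_\af s$, whose structure constants depend on $z$ via the substitutions (\ref{E:hyp}). Expanding via (\ref{E:goodsections}) and collecting terms, I obtain the operator identity
$$[\nabla^\hbar_{\partial/\partial z_i}, \nabla^\hbar_{\partial/\partial z_j}] = \frac{1}{\hbar}\Bigl(\frac{\partial A_j}{\partial z_i} - \frac{\partial A_i}{\partial z_j}\Bigr) + \frac{1}{\hbar^2}[A_i, A_j],$$
where $\partial A_j/\partial z_i$ denotes the operator obtained by differentiating the structure constants of $A_j$ with respect to $z_i$. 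Flatness is therefore equivalent to two identities: (a) $[A_i, A_j] = 0$ as $\C$-linear operators, and (b) $\partial A_j/\partial z_i = \partial A_i/\partial z_j$.

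Identity (a) is immediate from the commutativity and associativity of $\star_\af$ (Corollary \ref{cor:main intro}): for any section $s$,
$$A_i A_j(s) = \sigma_i \star_\af (\sigma_j \star_\af s) = (\sigma_i \star_\af \sigma_j) \star_\af s = (\sigma_j \star_\af \sigma_i) \star_\af s = A_j A_i(s).$$
For identity (b), I will evaluate both sides on $\sigma_w$ and invoke the affine quantum Chevalley formula (\ref{eq:Lambdafin}). Using $\langle \lambda_k, \alpha_\ell^\vee \rangle = \delta_{k\ell}$ together with the substitutions (\ref{E:hyp}), one computes for any affine coroot $\alpha^\vee$ that
$$\frac{\partial q^{\alpha^\vee}}{\partial z_i} = \langle \lambda_i - m_i \lambda_0, \alpha^\vee \rangle q^{\alpha^\vee},$$
so differentiating (\ref{eq:Lambdafin}) term by term yields
$$\frac{\partial (\sigma_j \star_\af \sigma_w)}{\partial z_i} = \sum_{\alpha} \langle \lambda_i - m_i \lambda_0, \alpha^\vee\rangle \langle \lambda_j - m_j \lambda_0, \alpha^\vee\rangle\, q^{\alpha^\vee}\, \pi(D_{s_\alpha})(\sigma_w),$$
an expression that is manifestly symmetric under $i \leftrightarrow j$. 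This yields (b) and completes the proof.

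No serious obstacle arises: the conceptual content is concentrated in the associativity and commutativity of $\star_\af$ (which we already have at our disposal) and in the very explicit behavior of $\partial q^{\alpha^\vee}/\partial z_i$ forced by the substitutions (\ref{E:hyp}). The only point requiring care is the derivation of the curvature formula above by careful application of (\ref{E:goodsections}); this is routine but must be done properly to ensure that the $\partial s$-terms cancel symmetrically, leaving only the two contributions handled by (a) and (b).
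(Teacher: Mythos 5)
Your proof is correct and follows essentially the same route as the paper: the paper also reduces to checking $\nabla^\hbar_{\partial/\partial z_i}\nabla^\hbar_{\partial/\partial z_j}(\sigma_w)=\nabla^\hbar_{\partial/\partial z_j}\nabla^\hbar_{\partial/\partial z_i}(\sigma_w)$, obtains via (\ref{E:goodsections}) and the Chevalley formula the same two terms — the manifestly symmetric sum $\sum_\alpha \langle \lambda_i - m_i\lambda_0,\alpha^\vee\rangle\langle\lambda_j-m_j\lambda_0,\alpha^\vee\rangle q^{\alpha^\vee}\pi(D_{s_\alpha})(\sigma_w)$ and the term $\frac{1}{\hbar^2}\sigma_i\star_\af(\sigma_j\star_\af\sigma_w)$ — and then invokes the commutativity of the operators $\bar\Lambda_i$ (Theorem \ref{thm:comm}(b)), which is the same input you package as commutativity plus associativity of $\star_\af$. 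Your organization of the computation as a curvature identity split into $[A_i,A_j]=0$ and $\partial A_j/\partial z_i=\partial A_i/\partial z_j$ is only a cosmetic repackaging of the paper's argument.
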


\begin{proof} Since the vector fields $\partial /\partial z_i$ and $\partial /\partial z_j$ on $TM$ commute with respect to the Lie bracket, it suffices to show that \[ \nabla_{\partial /\partial z_i} ^\hbar \nabla_{\partial /\partial z_j}^\hbar (\sigma_w) =  \nabla_{\partial /\partial z_j}^\hbar  \nabla_{\partial /\partial z_i}^\hbar (\sigma_w) \/.\]
A calculation based on the Chevalley formula from (\ref{eq:Lambdafin}) and the identity (\ref{E:goodsections}) above shows that \[ \nabla_{\partial /\partial z_i} ^\hbar \nabla_{\partial /\partial z_j}^\hbar (\sigma_w) = \frac{1}{\hbar} \sum_{\alpha} \langle \lambda_j - m_j \lambda_0, \alpha^\vee \rangle \cdot \langle  \lambda_i - m_i \lambda_0, \alpha^\vee \rangle q^{\alpha^\vee} \pi(D_{s_\alpha})(\sigma_w) + \frac{1}{\hbar^2} \sigma_i \star_\af (\sigma_j \star_\af \sigma_w) \/, \] where the sum is over $\alpha \in \tilde{\Pi}^{re,+}_\af$ such that $\ell(w s_\alpha) = \ell(w) + 1 - 2 \he(\alpha^\vee)$. The sum is symmetric in indices $i$ and $j$ and $\sigma_i \star_\af (\sigma_j \star_\af \sigma_w) = \sigma_j \star_\af (\sigma_i \star_\af \sigma_w)$ by Theorem \ref{thm:comm}(b) and the definition of $\star_\af$ from (\ref{def:staraf}). Therefore the roles of $i$ and $j$ can be interchanged, and this finishes the proof.\end{proof}

\section{Towards relations in $\quantum^*_\af(G/B)$: the Givental-Kim formalism}\label{s:givform} Our goal from now on is to show that the ideal of relations in the ring $\quantum^*_\af(G/B)$ is given by the non-constant integrals of motion for the periodic Toda lattice associated to the dual of the extended Dynkin diagram of $G$. In fact, a result of Guest and Otofuji \cite{go} for $G$ of type A, generalized by Mare to types A-C  in \cite{Ma3}, shows that if there exists a quantum cohomology ring for $\Fla$ satisfying certain natural properties, then the relations are obtained from the conserved quantities of the periodic Toda lattice. { The ring $\quantum^*_\af(G/B)$ satisfies the analogue of all these properties, thus one can obtain the ideal of relations at least in types A-C.}

%and the proofs from {\em loc.cit.} adapt with little changes and give the ideal of relations in this ring 

%However, we are pursuing here a different approach, which emphasizes the role of the quantum differential equations and of the Dubrovin - Givental formalism.~{\color{blue} This will lead to relations in all types.}~

{ We are pursuing here a slightly different approach, which emphasizes the role of the quantum differential equations and of the Dubrovin - Givental formalism.~This will lead to relations in all types.}~
We begin by adapting a method of Givental and Kim in \cite{givental:egw} and \cite{Kim} which produces relations in quantum cohomology; our approach is inspired by the presentation of this method given by Cox and Katz in \cite{cox.katz:mirror}. This method uses the {\em quantum differential equations} associated to a renormalization of the Dubrovin connection, called the {\em Givental connection}, and certain differential operators acting on solutions of these equations. A particular specialization of these operators leads to relations in the quantum cohomology ring. In the next section we will identify these operators with integrals of motion for the quantum, dual version of the periodic Toda lattice.

Consider the ring of operators $\C[[e^z]][\partial/\partial z][\hbar]$ where \begin{equation}\label{def:eu} z=(z_1, \ldots, z_n);  \quad \partial/\partial z = (\partial/\partial z_1, \ldots, \partial/\partial z_n); \quad e^z = (e^{z_0}, e^{z_1},\ldots,e^{z_n}) \/,\end{equation} and recall the convention $q_0=e^{z_0} = e^{-(m_1 z_1 + \cdots  + m_n z_n)}$ from (\ref{E:hyp}). These operators act on $\C[[e^z]][z][\hbar, \hbar^{-1}]$ as follows. First, $\hbar, \hbar^{-1}$ and $e^{z_i}$ act by multiplication. Now let $g(z; \hbar) \in \C[z; \hbar,\hbar^{-1}]$ and $d= (d_0, \ldots, d_n) \in \Z_{\ge 0}^{n+1}$; set $e^{zd} = e^{d_0 z_0 + \cdots  + d_n z_n}$. For $1 \le i \le n$ the operator $\partial/\partial z_i$ acts by \begin{equation}\label{E:partialzi} \frac{\partial}{\partial z_i}  \bigl(g(z;\hbar) e^{zd}\bigr) = \Bigl( \frac{\partial g}{\partial z_i}(z;\hbar) + (d _i - m_i d_0) g(z;\hbar)\Bigr) e^{zd} \/. \end{equation} (This is the action compatible with the Dubrovin connection; see equation (\ref{E:goodsections}) above.)

\begin{defn}\label{def:giventalcon} Let $\hbar \in \C^*$. The {\em Givental connection} $\nabla$ is defined to be  $\nabla := \hbar \nabla^{-\frac{1}{\hbar}}$, i.e. if $s= g(z) q^d \sigma_w \in \Gamma(E)'$ is as in (\ref{E:goodsections}) then\[ \nabla_{\partial/\partial z_i}(s) =  \hbar \Bigl(\frac{\partial g(z)}{\partial z_i} + (d_i - m_i d_0) g(z) \Bigr) q^d \sigma_w -  g(z)  q^d \sigma_i \star_{\af} \sigma_w \/.\] (Technically, $\frac{1}{\hbar} \nabla$ is a connection, but we follow tradition from \cite{givental:egw,Kim} to use this form.) We will also use the ``dual" Givental connection, defined by $\hat{\nabla} := \hbar \nabla^{\frac{1}{\hbar}}$. \end{defn}

The flatness of Dubrovin connection from Theorem \ref{thm:flat} implies that both $\nabla$ and $\hat{\nabla}$ are flat.

\begin{defn}\label{def:qdiffeqs}
The system of {\em quantum differential equations} is the system of PDE's given by \begin{equation}\label{E:qdiff} \hbar \partial/\partial z_i (s) = \sigma_i \star_{\af} s \Longleftrightarrow \nabla_{\partial/\partial z_i} (s) = 0;  \quad i \in \{ 1, \ldots, n \} \/.\end{equation} \end{defn} 

Since $\nabla$ is flat, the classical Frobenius theorem (see e.g.~\cite[\S 4]{sharpe} or \cite[p.~36]{Gu} for a context similar to ours) ensures that $C^\infty$ solutions of this system exist in some neighborhood of $z=0$. However, in what follows we require the existence of formal solutions $s \in \Gamma(E)'$. In the ordinary quantum cohomology a fundamental solution of this system was constructed by Givental \cite{givental:egw} (see also \cite[\S 10.2]{cox.katz:mirror}) using $2$-point Gromov-Witten invariants with a gravitational descendent. Since in our case we do not have a moduli space to help define the quantum multiplication, we will need to construct these solutions directly. For this we rely on results from the aforementioned paper \cite{Ma3} where the existence of such solutions is proved.

The following lemma is the analogue of Proposition 6.1 and Corollary 6.3 from \cite{Ma3}.

{ \begin{lemma}\label{lemma:solexist} There exists a solution $g_w \in \Gamma(E)' \otimes \C[\hbar^{-1}]$ of the system (\ref{E:qdiff}) of quantum differential equations of the form \[ g_w = \sigma_w + \sum_v g_{0,v}(z; \hbar^{-1}) \sigma_v + \sum_{v,d} g_{d,v}(z; \hbar^{-1}) q^d \sigma_v\] where $g_{d,v}(z; \hbar^{-1}) \in \C[z][\hbar^{-1}]$ are polynomials. Both sums are over $v \in W$, and the second sum is also over $d \in (\Z_{\ge 0})^{n+1}$ such that $d \neq 0$ and $d$ is not a multiple of $c=\alpha_0^\vee + \theta^\vee$; further, the constant term of the polynomial $g_{0,v}$ relative to the variables $z$ is equal to $0$.\end{lemma}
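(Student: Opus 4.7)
The plan is to construct $g_w$ as a formal iterative solution of the quantum differential equations (\ref{E:qdiff}), starting from the leading term $\sigma_w$. Write $g_w = \sigma_w + r_w$ where $r_w = \sum g_{d,v}(z;\hbar^{-1}) q^d \sigma_v$ is a correction summed over pairs $(d,v) \neq (0,w)$ with $d$ not a positive multiple of the imaginary coroot $c$. Substituting into $\hbar \,\partial/\partial z_i(g_w) = \sigma_i \star_\af g_w$ and expanding using the affine quantum Chevalley formula (\ref{eq:Lambdafin}), one obtains a recursive linear system for the unknown coefficients $g_{d,v}$. The operator $\sigma_i \star_\af$ is a degree-one Chevalley-type operator on $\coh^*(G/B)$ with a classical term $\sigma_i \cdot$ together with finitely many quantum corrections indexed by positive real coroots $\alpha^\vee < c$ (Proposition \ref{letalpha}), while $\partial/\partial z_i$ acts on $g(z;\hbar^{-1})q^d \sigma_v$ by differentiating $g$ and multiplying by the factor $d_i - m_i d_0$ as in (\ref{E:goodsections}).

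To organize the iteration, I would introduce a suitable weight filtration on $\Gamma(E)' \otimes \C[\hbar^{-1}]$ built from the powers of $\hbar^{-1}$ and $z$, the quantum height $\he(d)$, and the Schubert length $\ell(v)$, and show that the recursion can be solved layer by layer in this filtration. Consistency of the overdetermined system as $i$ ranges over $\{1, \ldots, n\}$ follows from the flatness of the Givental connection (Theorem \ref{thm:flat}, since $\nabla = \hbar \nabla^{-1/\hbar}$ inherits flatness from $\nabla^\hbar$), which is equivalent to the mutual commutativity of the operators $\hbar \partial/\partial z_i - \sigma_i \star_\af$. Polynomiality of each $g_{d,v}$ in $z$ and $\hbar^{-1}$ follows inductively because each weight slice is finite-dimensional, using the finiteness statement of Proposition \ref{letalpha}; each factor of $z$ or $\hbar^{-1}$ is introduced only through integration of the equation $\partial_{z_i} g_w = \hbar^{-1} \sigma_i \star_\af g_w$. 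The vanishing of the constant-in-$z$ part of $g_{0,v}$ for $v \neq w$ is imposed by the initial condition $g_w|_{z=0,\,q=0} = \sigma_w$.

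The exclusion of $d$ that are positive multiples of $c$ is a normalization permitted by Remark \ref{rmk:subs}: under the substitution $q_0 = e^{-(m_1 z_1 + \cdots + m_n z_n)}$, we have $q^c = 1$ in $\Gamma(E)'$, so any term $q^{kc}\sigma_v$ is indistinguishable from $\sigma_v$ as a section and can be absorbed into the $d=0$ component of the expansion. The main obstacle will be to show that this normalization remains consistent with the recursive procedure, i.e.\ that at each step the component of the right-hand side of (\ref{E:qdiff}) that a priori could lie in the $q^{kc}$-subspace can always be reabsorbed into previously constructed terms without producing an obstruction. This is the technical heart of \cite[Proposition 6.1 and Corollary 6.3]{Ma3} in the topological category; its adaptation here rests on the explicit Chevalley formula (\ref{eq:Lambdafin}), which shows that the quantum corrections only involve coroots $\alpha^\vee < c$, so the Chevalley operator never produces an independent contribution in a direction $q^{kc}$ that could not be rewritten inside the $d=0$ component of the ansatz.
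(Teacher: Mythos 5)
Your skeleton is right — formal power-series solution with leading term $\sigma_w$, consistency from flatness, multiples of $c$ absorbed into the $q$-independent part, and the hard analytic input deferred to \cite[Proposition 6.1]{Ma3} — but there is a genuine gap at the step you yourself flag as ``the main obstacle.'' Your recursion is set up directly in the restricted setting where $q^c = q_0q_1^{m_1}\cdots q_n^{m_n}=1$, and there the layer-by-layer decomposition by $q$-degree is not well-posed: the monomials $q^d$ and $q^{d+c}$ define the \emph{same} function of $z$, so a section of $\Gamma(E)'$ has no canonical expansion indexed by $d\in(\Z_{\ge 0})^{n+1}$, and ``reabsorbing'' a $q^{kc}$-contribution into the $d=0$ slot at each step is not a normalization you can simply impose — it changes the coefficients that feed into the next layer of the recursion, and you give no argument that the process closes up. Moreover, \cite[Proposition 6.1]{Ma3} as used in the paper is a statement about the \emph{unrestricted} system, where the $e^{zd}$ are independent; it does not directly apply to the quotient you are working in.

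The paper's resolution, which is the idea missing from your write-up, is to temporarily suspend the relation $q^c=1$ and treat $q_0=e^{z_0}$ as an independent variable (harmless, since the Chevalley matrices $A_i$ only involve $q^d$ with $\he(d)<1+m_1+\cdots+m_n$), to replace $\partial/\partial z_i$ by the twisted derivative (\ref{E:twistder}) which annihilates $e^{z_0}e^{m_1z_1}\cdots e^{m_nz_n}$ and hence \emph{commutes with the restriction} to $q^c=1$ (identity (\ref{E:restr})), to apply \cite[Proposition 6.1]{Ma3} verbatim upstairs — where the free initial data is exactly the constant-in-$z$ terms $G_d^0$ for $d$ a multiple of $c$, set to $e_w$ for $d=0$ and $0$ otherwise — and only then to restrict. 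This choice of initial data is also what forces the vanishing of the constant term of $g_{0,v}$, which in your version is ``imposed'' without checking that it is compatible with the recursion. You should either reproduce this lift-and-restrict mechanism or give an independent well-posedness argument for the recursion on the quotient; as written, the proposal does not constitute a proof.
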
}

%LM04072016: This lemma follows form results of Liviu, where the PDE depends on 1/hbar. Therefore the solutions will depend polynomially on 1/hbar. However, in the differential operators, we will evntually want to make hbar --->0, so we want them to depend polynomially on hbar (not its inverse). Another observation is that the operator hbar partial/partial z_i has degree 1, thus the system of quantum PDE's is homogeneous. The solutions should satisfy a homogeneity condition too, but we do not use that.

%LM04062016: Avem relatia $q^c=1$ deci termeni de forma $q^d$ unde $d$ e multiplu de $c$ nu  apar. Mai mult, in demonstratie %aratam ca termenii de grad $0$ in $z$ si $h$ sunt egali cu $0$.

%\begin{lemma}\label{lemma:solexist} There exists a solution $g_w \in \Gamma(E)' \otimes \C[\hbar^{-1}]$ of the system (\ref{E:qdiff}) of quantum differential equations of the form \[ g_w = \sigma_w + \sum_v g_{0,v}(z; \hbar^{-1}) \sigma_v + \sum_{v,d} g_{d,v}(z; \hbar^{-1}) q^d \sigma_v\] where $g_{d,v}(z; \hbar^{-1}) \in \C[z][\hbar^{-1}]$ are polynomials. Both sums are over $v \in W$, and the second sum is also over $d \in (\Z_{\ge 0})^{n+1}$ such that $d \neq 0$; further,
%for any nonzero degree $d$ which is a multiple of $c=\alpha_0^\vee + \theta^\vee$, the constant term of the polynomial $g_{d,v}$ relative to the variables $z$ is equal to $0$.\end{lemma}

\begin{proof} For each $1 \le i \le n$ we denote by $A_i$ the matrix of the ``quantum Chevalley" $\C[q]$-linear endomorphism $\overline{\Lambda}_i: \quantum^*_\af(G/B) \to \quantum^*_\af(G/B)$ sending $\sigma_w$ to $\sigma_i \star_\af \sigma_w$. Thus $A_i$ is a $|W| \times |W|$ matrix with coefficients in $\C[q]$. One can see from the proof of Theorem \ref{thm:flat} that the flatness of the Dubrovin connection is equivalent to the fact that the matrices $A_i$ mutually commute and that $\partial/\partial z_i (A_j) = \partial/\partial z_j (A_i)$ for any $i,j$. Further, up to some reordering of the Weyl group elements, and because of the affine quantum Chevalley formula, we can write $A_i$ as $A_i = A_i'(q^d) + A_i''$ where \begin{itemize} \item $A_i'(q^d)$ is strictly lower triangular, and its coefficients are linear combinations
of $q^d=e^{zd} := e^{d_0 z_0}\cdots e^{d_n z_n}$ where $d \ge 0$ and $\langle \lambda_i - m_i \lambda_0, d \rangle \neq 0$; \item $A_i''$ is strictly upper triangular and its coefficients do not depend on $q$ (in particular, the diagonal of $A_i''$ is identically zero). \end{itemize} 
We temporarily suspend the convention $q^c=q_0q_1^{m_1} \cdots q_n^{m_n} =1$ from (\ref{E:hyp}), thus $q_0=e^{z_0}$ is now independent from $q_1=e^{z_1}, \ldots ,q_n=e^{z_n}$. Since the Chevalley rule in $\quantum^*_\af(G/B)$ only involves powers $q^d$ where $|d| < 1 + m_1 + \cdots  + m_n$, this does not affect the coefficients of the matrices $A_i$. Further, consider the formal power series ring $\C[z_1, \ldots , z_n; \hbar, \hbar^{-1}][[e^{z_0}, \ldots , e^{z_n}]]$ and define a ``twisted derivative" operator $\partial /\partial {z_i}$ acting as in (\ref{E:partialzi}): for $g(z;\hbar) \in \C[z_1, \ldots , z_n; \hbar, \hbar^{-1}]$ set \begin{equation}\label{E:twistder} \frac{\partial}{\partial z_i}  \bigl(g(z;\hbar) e^{d_0 z_0}\cdots e^{d_n z_n} \bigr) = \Bigl( \frac{\partial g}{\partial z_i}(z;\hbar) + (d _i - m_i d_0) g(z;\hbar)\Bigr) e^{d_0 z_0}\cdots e^{d_n z_n} \/. \end{equation} The key fact is that this operator satisfies $\partial/ \partial z_i ( e^{z_0} e^{m_1 z_1} \cdots  e^{m_n z_n}) = 0$, therefore it also satisfies:
\begin{equation}\label{E:restr} \frac{\partial}{\partial z_i} \Bigl(g(z;\hbar)e^{zd}|_{e^{z_0} e^{m_1 z_1} \cdots e^{m_n z_n} =1}\Bigr) = \Bigl(\frac{\partial}{\partial z_i} (g(z;\hbar)e^{zd})\Bigr)|_{e^{z_0} e^{m_1 z_1} \cdots e^{m_n z_n} =1} \end{equation} as formal power series in $z_1, \ldots ,z_n$, where on the left hand side we considered the usual partial derivative with respect to $z_i$, and on the right the {twisted derivative}; the bars denote restrictions to the given relation. Consider now the $\C[z;\hbar^{-1}]$-module $(\mathbb{C}[z;\hbar^{-1}])^{|W|}$, and a function $G = \sum_{d \ge 0} G_d(z; \hbar^{-1}) e^{d_0 z_0}\cdots e^{d_n z_n} $, where $G_d(z; \hbar^{-1}) \in (\mathbb{C}[z;\hbar^{-1}])^{|W|}$ and $d=(d_0, \ldots, d_n) \in \Z^{n+1}_{\ge 0}$. Then $\partial / \partial z_i$ acts componentwise on such functions. Consider also the system of PDE's { \[ \frac{\partial}{\partial z_i }(G) = (\frac{1}{\hbar}A_i) G, \quad  1 \le i \le n \/. \]
%\[ \frac{\partial G}{\partial z_i } = (\frac{1}{\hbar}A_i) G, \quad  1 \le i \le n \/. \] 
(The convention $q^c =1$ is still suspended and $\frac{\partial}{\partial z_i}$ is the twisted derivative.)}
According to \cite[Proposition 6.1]{Ma3} this system has a solution $G = \sum_{d \ge 0} G_d(z; \hbar^{-1}) e^{zd}$ which is uniquely determined by the degree $0$ part $G_d^0= G_d^0(\hbar^{-1})$  in the variables $z$ of $G_d$ for those degrees $d$ satisfying $\langle \lambda_i - m_i \lambda_0, d \rangle =0$. Fix an identification of $\coh^*(G/B) \otimes \C[\hbar^{-1}]$ to $\C[\hbar^{-1}]^{|W|}$, as $\C[\hbar^{-1}]$-modules, and pick the vector $e_w \in \C[\hbar^{-1}]^{|W|}$ which corresponds to the Schubert class $\sigma_w$. Define $G_0^0 = e_w$ and $G_d^0 = 0$ for the degrees $d \neq 0$ mentioned before, and denote by $G_w$ the resulting solution of the PDE system. The identity (\ref{E:restr}) implies that the restriction of $G_w$ to $q^c =1$ satisfies the system of PDE's \[ \frac{\partial}{\partial z_i} (G_w | _{e^{z_0} e^{m_1 z_1} \cdots e^{m_n z_n} =1}) = (\frac{1}{\hbar} A_i) (G_w)|_{e^{z_0} e^{m_1 z_1} \cdots e^{m_n z_n} =1}; \quad 1 \le i \le n,  \/ \] where in the left hand side we have the usual partial derivative. Then the Lemma follows by taking $g_w$ to correspond to $G_w$ restricted to $q^c =1$, under the fixed identification. \end{proof}

Let now $P(e^z,\hbar \partial/\partial z, \hbar) \in \C[[e^z]][\hbar \partial/\partial z][\hbar]$ be a differential operator where \[ \hbar \partial/\partial z = (\hbar \partial/\partial z_1,\ldots, \hbar \partial/\partial z_n) \/.\] 
%These operators also act on sections $s \in \Gamma(E)'$ by substitutions (\ref{E:hyp}) and by replacing $\hbar \partial/\partial z_i$ by the operator of quantum multiplication $\sigma_i \star_{\af}$. 
We will always regard $e^z$ to the left of $\partial/\partial z$. We impose a grading on these operators with respect to: $$\deg q_j = 2, ~(0\le j \le n); \quad \deg \hbar = 1;\quad \deg \partial/\partial z_i = 0,~
(1\le i \le n)\/.$$ It will be convenient to define an operator denoted $P_{\hat{\nabla}}$ which is obtained from $P$ after making substitutions $e^{z_i} =q_i$ for $0 \le i \le n$, and $\hbar \partial/ \partial z_i= \hat{\nabla}_{\partial/\partial z_i}$, for $1 \le i \le n$, where $\hat{\nabla}$ is the dual Givental connection.

The following result, observed for the ordinary quantum cohomology ring in \cite[Theorem 10.3.1]{cox.katz:mirror} (where it is attributed to B. Kim) will be repeatedly used:

{ \begin{lemma}\label{lemma:Paction} Let $P = P(e^z,\hbar \partial/\partial z, \hbar) \in \C[[e^z]][\partial/\partial z][\hbar]$ be a differential operator. Then \begin{equation}\label{E:Paction} P(e^z,\hbar \partial/\partial z, \hbar) \langle g_w, 1 \rangle = \langle g_w, P_{\hat{\nabla}}(1) \rangle \end{equation} where $1 = \sigma_e \in \coh^0(G/B)$, $\langle \cdot , \cdot \rangle$ denotes the Poincar{\'e} pairing, and $g_w$ is the flat section defined in Lemma \ref{lemma:solexist}. \end{lemma}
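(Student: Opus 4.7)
The plan is to prove, by induction on the word-length of $P$ written as a monomial in the letters $e^{z_j}$, $\hbar \partial/\partial z_i$, and $\hbar$, the stronger identity
\[ P(e^z,\hbar \partial/\partial z, \hbar) \langle g_w, s \rangle = \langle g_w, P_{\hat{\nabla}}(s) \rangle \]
for every section $s \in \Gamma(E)' \otimes \C[\hbar, \hbar^{-1}]$. The lemma is the special case $s = 1$; the strengthening is what allows the induction to propagate through compositions. By $\C[\hbar, \hbar^{\pm 1}]$-linearity and the $\C[[e^z]]$-power-series expansion of the coefficients of $P$, it suffices to verify each monomial separately.

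For the base cases: when $P = e^{z_j}$, the identity is immediate from the $\C[[e^z]]$-bilinearity of the Poincar{\'e} pairing. When $P = \hbar \partial/\partial z_i$, I apply the Leibniz rule for the twisted derivation $\partial/\partial z_i$:
\[ \hbar \tfrac{\partial}{\partial z_i}\langle g_w, s \rangle = \bigl\langle \hbar \tfrac{\partial g_w}{\partial z_i}, s\bigr\rangle + \bigl\langle g_w, \hbar \tfrac{\partial s}{\partial z_i}\bigr\rangle. \]
The first term equals $\langle \sigma_i \star_{\af} g_w, s \rangle$ because $g_w$ solves the quantum differential equations (Lemma \ref{lemma:solexist}), and the Frobenius property (Theorem \ref{thm:frobenius}) rewrites this as $\langle g_w, \sigma_i \star_{\af} s \rangle$. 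The total is precisely $\langle g_w, \hat{\nabla}_{\partial/\partial z_i}(s)\rangle$, since $\hat{\nabla}_{\partial/\partial z_i}(s) = \hbar \partial s/\partial z_i + \sigma_i \star_{\af} s$.

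For the inductive step I write a monomial $P$ as $P = P' L$ with $L$ a single letter. Then
\[ P\langle g_w, s \rangle = P'\bigl(L\langle g_w, s \rangle\bigr) = P'\langle g_w, L_{\hat{\nabla}}(s) \rangle = \langle g_w, P'_{\hat{\nabla}} L_{\hat{\nabla}}(s) \rangle = \langle g_w, P_{\hat{\nabla}}(s)\rangle, \]
using the base case to move $L$ across the pairing and the induction hypothesis to move $P'$ across it. The last equality uses that $P \mapsto P_{\hat{\nabla}}$ is a ring homomorphism.

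The only mild technical bookkeeping, and hence the main obstacle, is verifying that $P \mapsto P_{\hat{\nabla}}$ is a well-defined algebra homomorphism from $\C[[e^z]][\hbar \partial/\partial z][\hbar]$ to $\mathrm{End}_{\C}(\Gamma(E)')$. Commutativity of the $\hat{\nabla}_{\partial/\partial z_i}$ among themselves is the flatness of the Givental connection, which follows from Theorem \ref{thm:flat}. The mixed relation $[\hat{\nabla}_{\partial/\partial z_i}, q_j] = \hbar\,\partial q_j/\partial z_i$ matches the defining commutator $[\hbar \partial/\partial z_i, e^{z_j}] = \hbar\,\partial e^{z_j}/\partial z_i$ once one takes the twisted-derivative convention (\ref{E:partialzi}) into account, which is forced by the substitution $q_0 = e^{-(m_1 z_1 + \cdots + m_n z_n)}$. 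With this in place, the proof is a clean propagation of the Leibniz rule, flatness, and the Frobenius property along the word $P$.
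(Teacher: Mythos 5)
Your proof is correct and follows essentially the same route as the paper: the paper derives the single identity $\hbar\,\partial/\partial z_i\,\langle F,G\rangle=\langle \nabla_{\partial/\partial z_i}F,G\rangle+\langle F,\hat\nabla_{\partial/\partial z_i}G\rangle$ from the Leibniz rule and the Frobenius property, sets $F=g_w$ so the first term vanishes, and leaves the iteration over a general monomial $P$ implicit. Your induction on word length, together with the remark that $P\mapsto P_{\hat\nabla}$ is a well-defined algebra homomorphism (via flatness of $\hat\nabla$ and the twisted-derivative convention), simply makes explicit the bookkeeping the paper omits.
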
}

%LM04062016: Am sters conditia despre deg P < deg q^c. Sunt de acord ca nu influenteaza demonstratia.

%\begin{lemma}\label{lemma:Paction} Let $P = P(e^z,\hbar \partial/\partial z, \hbar) \in \C[[e^z]][\partial/\partial z][\hbar]$ be a differential operator of degree $\deg P < \deg q_0 q^{\theta^\vee} = 2 (1 + m_1 + \cdots  + m_n)$. Then \begin{equation}\label{E:Paction} P(e^z,\hbar \partial/\partial z, \hbar) \langle g_w, 1 \rangle = \langle g_w, P_{\hat{\nabla}}(1) \rangle \end{equation} where $1 = \sigma_e \in \coh^0(G/B)$, $\langle \cdot , \cdot \rangle$ denotes the Poincar{\'e} pairing, and $g_w$ is the flat section defined in Lemma \ref{lemma:solexist}. \end{lemma}

%{\color{blue} Is the degree hypothesis necessary ? I think so, as the definiton of $P_{\hat{\nabla}}$ is unambigous if the degree condition is involved. Otherwise the relation $q_0 \cdot \ldots \cdot q_n =1$ will enter into the picture.}
%{\color{blue} Raspuns Liviu: Vrei sa spui ca  $P_\nabla$  e bine definit numai daca   $\deg P < \deg q_0 q^{\theta^v}$ ?
%Asta nu-i adevarat ... Cred in continuare ca in Lemma 10.4 nu avem nevoie sa
%impunem conditia legata de grad.}

\begin{proof} Let $F= \sum f_{d,w}(z; \hbar) q^d \sigma_w$ and $G = \sum g_{d,w} (z;\hbar) q^d \sigma_w$ where the sums are over $d= (d_0, \ldots, d_n) \in (\Z_{\ge 0})^{n+1}$ and $w \in W$, and where $f_{d,w}, g_{d,w} \in \C[z; \hbar, \hbar^{-1}]$. Then using the Frobenius property $\langle \sigma_u \star_{\af} \sigma_i, \sigma_v \rangle = \langle \sigma_u, \sigma_i \star_{\af} \sigma_v\rangle$ from Theorem \ref{thm:frobenius} we obtain \[ \hbar \partial/\partial z_i \langle F, G \rangle = \langle \nabla_{\partial/\partial z_i} F, G \rangle + \langle F, \hat{\nabla}_{\partial/\partial z_i} G \rangle \/. \] Take $F:=g_w$ and $G: =1$. The lemma follows because $\nabla_{\partial/\partial z_i} g_w = 0$ for all $i$.\end{proof}

The following analogue of Givental's result \cite[Corollary 6.4]{givental:egw} (see also \cite[Lemma 6.4]{Ma3}) shows how this formalism leads to relations in $\quantum^*_\af(G/B)$. 
%{\color{blue} For $d= (d_0, \ldots, d_n)$ and $g \in \C[z, \hbar, \hbar^{-1}, e^z]$ we will denote by $g_d$ the coefficient of $e^{zd}=q^d$.}  
\begin{prop}\label{qcohrels} Let $P=P(e^z,\hbar \partial/\partial z, \hbar) \in \C[[e^z]][\hbar \partial/\partial z][\hbar]$ be a differential operator such that $\deg P < \deg q_0 q^{\theta^\vee} = 2(1+ m_1 + \cdots  + m_n)$ and which satisfies $P\langle g_w, 1 \rangle_d = 0$ for all $w \in W$ and
all $d=(d_0, \ldots, d_n)$ with $d_0 + \cdots  + d_n \le m_1 + \cdots  +m_n$
(the subscript $d$ indicates the coefficient of $e^{zd}$). Then the specialization $P(q,\sigma_i \star_{\af}, 0) = 0$ in the quantum cohomology ring $\quantum^*_{\af}(G/B)$.\end{prop}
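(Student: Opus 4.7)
The plan is to follow the standard Givental-Kim strategy, adapted from \cite{cox.katz:mirror}: convert the hypothesis into a statement about the element $Q:=P_{\hat\nabla}(1)$, then invert the expansion of the flat sections $g_w$ to force $Q=0$, which will give the desired relation in $\quantum^*_\af(G/B)$.

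First, I would apply Lemma \ref{lemma:Paction} to rewrite the hypothesis $P\langle g_w,1\rangle_d=0$ as $\langle g_w,Q\rangle_d=0$ for all $w\in W$ and all $|d|\le m_1+\cdots+m_n$. Since $\deg P<\deg q^c=2(1+m_1+\cdots+m_n)$, the element $Q\in\coh^*(G/B)\otimes\C[q][\hbar]$ is homogeneous of degree $\deg P$, so its $q$-expansion $Q=\sum q^dQ_d(\hbar)$ is supported on $|d|\le m_1+\cdots+m_n$, and it follows from Definition \ref{def:giventalcon} that $Q|_{\hbar=0}=P(q,\sigma_i\star_\af,0)\cdot 1$. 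Hence the proposition reduces to showing $Q=0$.

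Next, I would expand $g_w$ via Lemma \ref{lemma:solexist}, pair with $Q$, take the coefficient of $q^{d_0}$, and evaluate at $z=0$. The middle sum in the expansion of $g_w$ drops out because $g_{0,v}(z;\hbar^{-1})$ has zero constant term in $z$, producing a linear equation of the form $\langle\sigma_w,Q_{d_0}\rangle+\sum g_{d',v}(0;\hbar^{-1})\langle\sigma_v,Q_{d}\rangle=0$, with the sum indexed by pairs $(d',d)$ satisfying $d'\ne 0$, $d'\ne kc$, $|d|\le m_1+\cdots+m_n$, and $d+d'\equiv d_0\pmod c$. Stacking over all $w\in W$ and all $|d_0|\le m_1+\cdots+m_n$ yields a single linear system $(I+M)Y=0$ on the vector $Y$ of pairings $\langle\sigma_w,Q_{d_0}\rangle\in\C[\hbar]$, with $M$ assembled from the scalars $g_{d',v}(0;\hbar^{-1})\in\C[\hbar^{-1}]$.

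The main step is then to verify that $M$ is ``$\hbar^{-1}$-nilpotent''. The iterative construction of $g_w$ in the proof of Lemma \ref{lemma:solexist} forces the leading value $g_w|_{z=0,\,\hbar^{-1}=0}=\sigma_w$: at $\hbar^{-1}=0$ the governing PDE system decouples and the initial data $G^0_{d,v}(0)=\delta_{d,0}\delta_{v,w}$ propagate without being disturbed. Hence every off-diagonal entry $g_{d',v}(0;\hbar^{-1})$ with $d'\ne 0$ lies in $\hbar^{-1}\C[\hbar^{-1}]$, so $I+M\in\Mat(\C[[\hbar^{-1}]])$ is invertible through its Neumann series. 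This forces $Y=0$ in $\C((\hbar^{-1}))$, and hence in $\C[\hbar]\subset\C((\hbar^{-1}))$; non-degeneracy of the Poincar{\'e} pairing on $\coh^*(G/B)$, extended $\C[\hbar]$-linearly, then yields $Q_{d_0}=0$ for each $|d_0|\le m_1+\cdots+m_n$, so $Q=0$ and $P(q,\sigma_i\star_\af,0)\cdot 1=0$. Since $1$ is the identity of $\star_\af$, this gives $P(q,\sigma_i\star_\af,0)=0$ in $\quantum^*_\af(G/B)$.

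The main obstacle I anticipate is the verification of the ``$\hbar^{-1}$-nilpotence''. In the reduced Givental setting (where $q^c=1$) the coefficient $g_{d',v}(0;\hbar^{-1})$ collects contributions from the entire $c$-equivalence class of $d'$ in the lifted initial data, so one must combine the explicit normalization from Lemma \ref{lemma:solexist} with a careful accounting of these wrap-around terms to confirm that they each vanish at $\hbar^{-1}=0$; the degree bound $\deg P<\deg q^c$ is used essentially here, to keep the support of $Q$ inside a single $c$-period so that the triangular structure of the linear system closes.
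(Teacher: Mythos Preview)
Your approach is correct and will work, but it takes a different route from the paper's. The paper does not evaluate at $z=0$ or invoke any $\hbar^{-1}$-nilpotence. Instead, after passing to $Q=P_{\hat\nabla}(1)$ via Lemma~\ref{lemma:Paction}, it keeps $z$ as a free variable and observes that the $q$-free part $g_w^0$ of $g_w$ has $z$-constant term equal to $\sigma_w$ (this is exactly the normalization in Lemma~\ref{lemma:solexist}); hence $\{g_w^0\}_{w\in W}$ is already a basis of $\coh^*(G/B;\C[z;\hbar^{-1}])$. The proof then runs a short induction on the total $q$-degree of $Q=P^{(0)}+\cdots+P^{(k)}$: once $P^{(0)},\ldots,P^{(i-1)}$ are known to vanish, the degree-$i$ part of $\langle g_w,Q\rangle$ reduces to $\langle g_w^0,P^{(i)}\rangle$, and the basis property forces $P^{(i)}=0$. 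Setting $\hbar=0$ in $P_{\hat\nabla}(1)=0$ finishes as you describe.

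What your route buys is an explicit treatment of the wrap-around from $q^c=1$: by collapsing $z$ and pushing all off-diagonal entries into $\hbar^{-1}\C[\hbar^{-1}]$ you get a single invertible system over $\C((\hbar^{-1}))$, and your claim that $g_w|_{\hbar^{-1}=0}=\sigma_w$ is indeed correct (at $\hbar^{-1}=0$ the lifted PDE forces each $G_d$ with $d$ not a multiple of $c$ to satisfy a nontrivial first-order ODE whose only polynomial solution is zero). What the paper's route buys is economy: the $z$-triangularity of $g_w^0$ is already packaged in Lemma~\ref{lemma:solexist}, so no further analysis of the flat sections is needed, and the step-by-step $q$-degree induction replaces your global matrix inversion. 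In particular, the obstacle you flag in your last paragraph --- tracking $\hbar^{-1}$-nilpotence across $c$-equivalence classes --- simply does not arise in the paper's argument, because the induction variable is the $q$-degree rather than $\hbar^{-1}$.
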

 
\begin{proof} We adapt the proof of \cite[Theorem 10.3.1]{cox.katz:mirror}, using ideas from \cite[Lemma 6.4]{Ma3}. By Lemma \ref{lemma:Paction} we obtain that
\[ 0 = P\langle g_w, 1 \rangle = \langle g_w, P_{\hat{\nabla}}(1) \rangle \/. \] Write $P_{\hat{\nabla}}(1) = P^{(0)} + \cdots  + P^{(k)}$ where $P^{(i)}$ contains the terms which are multiples of $q_0^{d_0} \cdots  q_n^{d_n}$ with $d_0 + \cdots  + d_n = i$ (for now we regard $\hbar$ and the Schubert classes as parameters). Notice that $k\le m_1+\cdots  +m_n$. Let $g_w^0 \in \coh^*(G/B) \otimes \C[z; \hbar^{-1}]$ be the component of $g_w$ which does not depend on $q$. Since the term independent of the variables $z$ of $g_w^0$ equals $\sigma_w$ (by Lemma \ref{lemma:solexist}), it follows that the elements $g_w^0$ are a basis of $\coh^*(G/B; \C[z; \hbar^{-1}])$. Notice now that { the part of degree $0$ with respect to $q$ of $\langle g_w, P_{\hat{\nabla}}(1) \rangle$ equals $\langle g_w^0, P^{(0)} \rangle $}. Since this is true for all $w \in W$, we deduce that $P^{(0)} = 0$. Using this, we obtain that the { part of total degree $1$ with respect to $q$} of $\langle g_w, P_{\hat{\nabla}}(1) \rangle$ equals $\langle g_w^0, P^{(1)} \rangle $, thus $P^{(1)} = 0$. Continuing this process we obtain that $P^{(i)} = 0$ for all $0 \le i \le k$, thus $P_{\hat{\nabla}}(1) = 0$. Now notice that \[ \hat{\nabla}_{\partial / \partial_{z_{i_1}}} \ldots \hat{\nabla}_{\partial / \partial_{z_{i_k}}} (1) = \sigma_{i_1} \star_{\af} \ldots \star_{\af} \sigma_{i_k} + \hbar P_1 \] where $P_1$ depends on $q$, Schubert classes $\sigma_w$ and $\hbar$. Then we can write \[ P_{\hat{\nabla}}(1) = P(q, \sigma_i \star_{\af}, 0 ) + \hbar P_2 (q, \sigma_i \star_\af, \hbar) \] where $P_2$ contains { nonnegative} 
%only positive 
powers of $\hbar$. This is an identity in $\hbar$, therefore we can make $\hbar = 0$ to deduce that $P(q, \sigma_i \star_{\af}, 0 ) = 0$ as claimed.\end{proof}

An operator $P=P(e^z,\hbar \partial/\partial z, \hbar)$ satisfying $P \langle g_w , 1 \rangle = 0$ for all $w \in W$ is called a {\em quantum differential operator}. When studying the ordinary quantum cohomology ring $\quantum^*(G/B)$, such operators annihilate the J-function of $G/B$. A first example of a quantum differential operator is the Hamiltonian of the dual version of the quantum periodic Toda lattice:
 \begin{equation}\label{def:qTodaH} \mathcal{H} := \sum_{i, j =1}^n (\alpha_i^\vee | \alpha_j^\vee) \frac{\hbar \partial}{\partial z_i}\frac{\hbar \partial}{ \partial z_j }- (\theta^\vee | \theta^\vee ) e^{z_0} - \sum_{i=1}^n  ( \alpha_i^\vee | \alpha_i^\vee ) e^{z_i} \/, \end{equation} where $( \cdot | \cdot )$ is the Killing form on $\h$ from \S \ref{prel} above. (We shall discuss more about this Hamiltonian in the next section.) { Indeed,} 
 %To prove it is indeed a quantum differential operator notice first that $\mathcal{H}$ is homogenous of degree $2$. Then 
 %LM042816: Am taiat o propozitie. ptr. ca e irelevant daca H are grad 2.
by Lemma \ref{lemma:Paction} $\mathcal{H}\langle g_w, 1 \rangle = \langle g_w, \mathcal{H}_{\hat{\nabla}}(1) \rangle$, so it suffices to show that $\mathcal{H}_{\hat{\nabla}}(1) = 0$. This is equivalent to showing that \[ \sum_{i, j =1}^n (\alpha_i^\vee | \alpha_j^\vee) \sigma_i \star_\af \sigma_j   = (\theta^\vee | \theta^\vee ) q_0 + \sum_{i=1}^n  ( \alpha_i^\vee | \alpha_i^\vee ) q_i \/. \] The last identity holds because $ \sum_{i, j =1}^n (\alpha_i^\vee | \alpha_j^\vee) \sigma_i \cdot \sigma_j   = 0$ (as this is a non-constant element of ${\rm Sym} (\h^*_\Q)^W$ in the Borel presentation of $\coh^*(G/B)$), and using the formula (\ref{eq:divmult}) for $\sigma_i \star_\af \sigma_j$. 
 
Next is the key technical result which will determine the ideal of relations of $\quantum^*_\af(G/B)$. For the ordinary quantum cohomology this was proved by B.~Kim \cite[Lemma 1]{Kim}, and in a more general context by the first author in \cite[Theorem 5.1]{Ma3}.

\begin{thm}\label{thm:higherrels} Let $\mathcal{D}= \mathcal{D}(e^z, \hbar \partial / \partial z, \hbar)$ be an operator such that:

\begin{enumerate} \item $\mathcal{D}_{\hat{\nabla}}(1) \equiv 0 $ modulo $q_0, q_1, \ldots, q_n$, i.e. $\mathcal{D}$ is a deformation of an ordinary Borel relation in ${\rm Sym}(\mathfrak{h}^*)^W$ for $\coh^*(G/B)$;
\item $\mathcal{D}$ has degree at most $\deg q^{\theta^\vee} = 2 (m_1 + \cdots  + m_n)$;
\item $\mathcal{D}$ commutes with the Hamiltonian $\mathcal{H}$ from (\ref{def:qTodaH}), i.e. $[\mathcal{H},\mathcal{D}]=0$.
\end{enumerate}
Then for all $w \in W$ and all $d= (d_0, \ldots,d_n) \in (\Z_{\ge 0})^{n+1}$ with $d_0 + \cdots  + d_n \le m_1 + \cdots  + m_n$,
the identity $\mathcal{D}\langle g_w, 1 \rangle_d = 0$ holds. In particular, $\mathcal{D}(q, \sigma_i \star_{\af}, 0) = 0$ in $\quantum^*_{\af}(G/B)$. 
\end{thm}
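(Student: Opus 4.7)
The ``in particular'' statement follows from Proposition \ref{qcohrels}, since condition (2) gives $\deg \mathcal{D} \le 2(m_1 + \cdots + m_n) < \deg(q_0 q^{\theta^\vee})$. So the core of the theorem is to show that $R^d_w := \mathcal{D}\langle g_w,1\rangle_d = 0$ for every $w \in W$ and every $d = (d_0,\ldots,d_n) \in (\Z_{\ge 0})^{n+1}$ with $|d| := d_0 + \cdots + d_n \le m_1 + \cdots + m_n$. I propose to argue this by induction on $|d|$. For the base case $|d| = 0$, Lemma \ref{lemma:Paction} gives $\mathcal{D}\langle g_w, 1\rangle = \langle g_w, \mathcal{D}_{\hat{\nabla}}(1)\rangle$; the $q$-free coefficient of the right-hand side equals the Poincar\'e pairing of the $q$-free parts, which vanishes by condition (1).

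For the inductive step I would first recall that $\mathcal{H}\langle g_w, 1\rangle = 0$ (established just before the theorem via the classical Borel relation $\sum_{i,j}(\alpha_i^\vee | \alpha_j^\vee) \sigma_i \cdot \sigma_j = 0$, the divisor product (\ref{eq:divmult}), and Lemma \ref{lemma:Paction}). Condition (3) then gives $\mathcal{H}(\mathcal{D}\langle g_w,1\rangle) = \mathcal{D}(\mathcal{H}\langle g_w,1\rangle) = 0$. Writing $\mathcal{D}\langle g_w, 1\rangle = \sum_{d'} R^{d'}_w(z,\hbar) e^{zd'}$ and extracting the $e^{zd}$-coefficient using the twisted action (\ref{E:partialzi}) yields the recursion
\begin{equation*}
L_d(R^d_w) = (\theta^\vee | \theta^\vee) R^{d - e_0}_w + \sum_{i=1}^n (\alpha_i^\vee | \alpha_i^\vee) R^{d - e_i}_w,
\end{equation*}
where $e_k$ is the $k$-th standard basis vector of $\Z^{n+1}$ and
\begin{equation*}
L_d := \sum_{i,j=1}^n (\alpha_i^\vee | \alpha_j^\vee) \hbar^2 \Big(\tfrac{\partial}{\partial z_i} + (d_i - m_i d_0)\Big)\Big(\tfrac{\partial}{\partial z_j} + (d_j - m_j d_0)\Big)
\end{equation*}
is the shifted kinetic part of $\mathcal{H}$. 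By the inductive hypothesis (with $R^{d'}_w := 0$ whenever $d' \notin (\Z_{\ge 0})^{n+1}$), the right-hand side vanishes, leaving $L_d(R^d_w) = 0$.

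The main obstacle is then to deduce $R^d_w = 0$ from $L_d(R^d_w) = 0$. By Lemma \ref{lemma:solexist}, $R^d_w$ is a polynomial in $z$ with coefficients in $\C[\hbar, \hbar^{-1}]$ of bounded total degree (bounded in view of condition (2)). Expanding $L_d$, the only term that preserves the polynomial $z$-degree is the scalar multiplication by $\hbar^2 (\tilde{d} | \tilde{d})$, where $\tilde{d} := \sum_{i=1}^n (d_i - m_i d_0) \alpha_i^\vee$; every other term strictly lowers the $z$-degree by one or two via a derivative. For $1 \le |d| \le m_1 + \cdots + m_n$ the vector $\tilde{d}$ is nonzero, for otherwise $d = d_0 c$ with $c = \alpha_0^\vee + \theta^\vee$ and any nonzero such multiple would have $|d| \ge 1 + m_1 + \cdots + m_n$. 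Positive-definiteness of the Killing form on the real span of the finite coroots (cf.\ Remark \ref{rth}) gives $(\tilde{d} | \tilde{d}) > 0$, so comparing the top-$z$-degree component of $L_d(R^d_w) = 0$ kills the top component of $R^d_w$; iterating downward on polynomial degree forces $R^d_w = 0$. This completes the induction, and Proposition \ref{qcohrels} then yields $\mathcal{D}(q, \sigma_i \star_\af, 0) = 0$ in $\quantum^*_\af(G/B)$.
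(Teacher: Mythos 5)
Your proposal is correct and follows the same strategy as the paper: use $[\mathcal{H},\mathcal{D}]=0$ together with $\mathcal{H}\langle g_w,1\rangle=0$ to conclude $\mathcal{H}\bigl(\mathcal{D}\langle g_w,1\rangle\bigr)=0$, use hypothesis (1) to kill the $q$-free coefficient, propagate the vanishing up to total degree $m_1+\cdots+m_n$, and finish with Proposition \ref{qcohrels}. The only difference is that the paper delegates the propagation step to Lemma \ref{lemma:deg}, imported from \cite[Lemma 6.5]{Ma3}, whereas you prove it from scratch: your induction on $|d|$, the recursion $L_d(R^d_w)=(\theta^\vee|\theta^\vee)R^{d-e_0}_w+\sum_i(\alpha_i^\vee|\alpha_i^\vee)R^{d-e_i}_w$ extracted from the twisted action (\ref{E:partialzi}), and the observation that the zeroth-order part of $L_d$ is multiplication by $\hbar^2(\tilde d\,|\,\tilde d)>0$ unless $d$ is a multiple of $c$ (impossible for $1\le|d|\le\sum m_i$) constitute exactly a proof of that lemma, and the descending induction on the polynomial $z$-degree correctly closes the argument. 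So your write-up is a self-contained version of the paper's proof; the paper's is shorter only because it cites the key vanishing statement.
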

The proof requires the following lemma proved in \cite[Lemma 6.5]{Ma3}:

\begin{lemma}\label{lemma:deg} Let $g = g_0 + \sum_{d > 0} g_d(z; \hbar,\hbar^{-1}) e^{zd} \in \C[z; \hbar^{-1},\hbar][[e^z]]$ be a formal power series which satisfies $g_0 = 0$ and $\mathcal{H}(g) = 0$. Then $g_d = 0$ for any degree $d= (d_0, \ldots,d_n) \in (\Z_{\ge 0})^{n+1}$ such that $d_0 + \cdots  + d_n \le m_1 + \cdots  + m_n$.\end{lemma}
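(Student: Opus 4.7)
The plan is to extract the coefficient of $e^{zd}$ from $\mathcal{H}(g) = 0$ for each $d \in (\Z_{\ge 0})^{n+1}$ and then solve the resulting equations inductively on $|d| := d_0 + \cdots + d_n$. Writing $g = \sum_d g_d(z;\hbar,\hbar^{-1}) e^{zd}$ and introducing the shifted derivations $L_i := \partial/\partial z_i + (d_i - m_i d_0)$, the twisted derivation rule (\ref{E:partialzi}) gives $(\hbar\,\partial/\partial z_i)(g_d e^{zd}) = \hbar L_i(g_d) e^{zd}$, while multiplications by $e^{z_0}$ and $e^{z_i}$ shift the multi-index by the standard basis vectors $e_0$ and $e_i$ of $\Z^{n+1}$. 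Collecting the coefficient of $e^{zd}$ in $\mathcal{H}(g) = 0$ then yields the recursion
\[
\hbar^2 K_d(g_d) \;=\; (\theta^\vee | \theta^\vee)\, g_{d-e_0} \;+\; \sum_{i=1}^n (\alpha_i^\vee | \alpha_i^\vee)\, g_{d-e_i},
\]
where $K_d := \sum_{i,j=1}^n (\alpha_i^\vee | \alpha_j^\vee) L_i L_j$ and $g_{d'} := 0$ whenever a component of $d'$ is negative.

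Next I would expand $K_d$ and isolate its scalar part. Since the $L_i$ commute and $(\alpha_i^\vee | \alpha_j^\vee)$ is symmetric in $i,j$, one obtains $K_d = \Delta + 2 \sum_{i=1}^n (\alpha_i^\vee | \gamma)\, \partial_i + c_d$ with $\Delta := \sum_{i,j} (\alpha_i^\vee | \alpha_j^\vee) \partial_i \partial_j$, $\gamma := \sum_{i=1}^n (d_i - m_i d_0) \alpha_i^\vee \in \h$, and $c_d := (\gamma | \gamma)$. By Remark \ref{rth}, the form $(\cdot | \cdot)$ is positive definite on the finite Cartan $\h$, so $c_d = 0$ iff $\gamma = 0$ iff $d_i = m_i d_0$ for all $i \ge 1$, i.e.\ iff $d$ is an integer multiple of the imaginary coroot $c = \alpha_0^\vee + \theta^\vee$. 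The crucial observation is that for $d \in (\Z_{\ge 0})^{n+1}$ with $d \neq 0$ and $|d| \le m_1 + \cdots + m_n = |c| - 1$, no such nonzero multiple is available and $d = 0$ is excluded by assumption, so $c_d > 0$ throughout the range covered by the lemma.

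With these ingredients in place the argument becomes a straightforward double induction. The base case $d = 0$ is the hypothesis $g_0 = 0$. For $d$ with $0 < |d| \le |\theta^\vee|$, assume $g_{d'} = 0$ for every $d'$ with $|d'| < |d|$; each $g_{d-e_j}$ on the right-hand side of the recursion either has $|d-e_j| = |d| - 1$ (hence is zero by induction) or has a negative component (hence is zero by convention), so $K_d(g_d) = 0$. To conclude $g_d = 0$, decompose $g_d = \sum_{m=0}^k g_d^{(m)}$ into its homogeneous components in the variables $z$ (with coefficients in $\C[\hbar,\hbar^{-1}]$). Because $\Delta$ lowers $z$-degree by $2$ and the first-order term of $K_d$ by $1$, the $z$-degree-$k$ part of $K_d(g_d)$ is precisely $c_d \cdot g_d^{(k)}$; since $c_d > 0$ this forces $g_d^{(k)} = 0$, and downward induction on $m$ then gives $g_d^{(m)} = 0$ for all $m$, closing the outer induction.

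The main obstacle is essentially bookkeeping: correctly tracking the multi-index shifts in the recursion and verifying that after fully expanding the commuting $L_i L_j$ one recovers the clean identification $c_d = (\gamma | \gamma)$. Once that is in hand the whole argument reduces to positive-definiteness of the Killing form on $\h$. The bound $|\theta^\vee|$ is sharp for this method: at $d = c$ one would have $\gamma = 0$ and $c_d = 0$, so the recursion no longer determines $g_d$ — consistent with the role of the identification $q^c = 1$ imposed in (\ref{E:hyp}).
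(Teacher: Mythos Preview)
Your argument is correct. The paper does not supply its own proof of this lemma but simply cites \cite[Lemma 6.5]{Ma3}; the approach you wrote out---extracting the coefficient of $e^{zd}$ to obtain the recursion $\hbar^2 K_d(g_d) = (\theta^\vee|\theta^\vee)g_{d-e_0} + \sum_i (\alpha_i^\vee|\alpha_i^\vee)g_{d-e_i}$, identifying the constant term of $K_d$ as $(\gamma|\gamma)$ with $\gamma = \sum_i (d_i - m_i d_0)\alpha_i^\vee$, and then running the double induction (on $|d|$, then downward on the $z$-degree)---is exactly the natural proof and matches the argument in the cited reference. Your observation that the bound $|d| \le |\theta^\vee|$ is sharp because $c_d$ vanishes precisely when $d$ is a multiple of $c$ is also the right explanation for why the lemma stops where it does.
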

%{\color{blue} We extended the coefficients by $\hbar, \hbar^{-1}$, so we need to check the proof of \cite{Ma3} extends with no changes.}

\begin{proof}[Proof of Theorem \ref{thm:higherrels}] By Lemma \ref{lemma:Paction},
\[ 0 = \mathcal{D} \langle g_w, \mathcal{H}_{\hat{\nabla}}(1) \rangle = \mathcal{D}\mathcal{H} \langle g_w, 1 \rangle = \mathcal{H}\mathcal{D} \langle g_w, 1 \rangle = \mathcal{H} \langle g_w, \mathcal{D}_{\hat{\nabla}}(1) \rangle \/. \] Let  $g:=\langle g_w, \mathcal{D}_{\hat{\nabla}}(1) \rangle \in \C[z; \hbar,\hbar^{-1}][[e^z]]$. 
The hypothesis (1) implies that $g_0 = 0$, and Lemma \ref{lemma:deg} that $g_d = 0$ for $d_0 + \cdots  + d_n \le m_1 + \cdots  + m_n$. Since $g= \mathcal{D} \langle g_w, 1 \rangle$, we can use Proposition \ref{qcohrels} and get to the desired conclusion.\end{proof}

\section{The periodic Toda lattice and relations in $\quantum^*_\af(G/B)$}\label{s:Toda} The strategy to obtain the ideal of relations for $\quantum^*_\af(G/B)$ is to produce some differential operators $\mathcal{H}_i$ which deform the Borel relations in $\coh^*(G/B)$ and commute with the Hamiltonian operator $\mathcal{H}$ from (\ref{def:qTodaH}). Then by Theorem \ref{thm:higherrels} each $\mathcal{H}_i$ gives a relation, and it is easy to show that these generate the ideal of relations. It turns out that the differential operators we need are the integrals of motion for the integrable system called the {\em quantum periodic Toda lattice} associated to the {\em dual} of an extended Dynkin diagram. Such operators were constructed by Etingof \cite{etingof} in the non-dual case and all Lie types, and in the dual case by Goodman and Wallach \cite{Go-Wa1} for $G$ of Lie types $A_n - D_n$ and $E_6$. { In \cite{M}, Mare used Dynkin automorphisms and the results from \cite{Go-Wa1} to construct such operators in the remaining Lie types $F_4$ and $G_2$.}
% and also recovered Goodman and Wallach's results in types $B_n$ and $C_n$.}
%In a companion to this paper \cite{M} the authors used Dynkin automorphisms and the results from \cite{Go-Wa1} to construct such operators %in the remaining Lie types $F_4$ and $G_2$, and also recovered Goodman and Wallach's results in types $B_n$ and $C_n$. 
This { is consistent to} 
%result allows us to inscribe our approach within 
the philosophy of B.~Kim \cite{Kim} that the relations in quantum cohomology of $G/B$ are given by integrals of motion for the Toda lattice of the Langlands {\em dual} root system.  In what follows we recall the relevant facts about the (quantum) periodic Toda lattice and its integrability.

%
%we show how to construct the operators $\mathcal{H}_i$, following Goodman and Wallach \cite{Go-Wa1}. We end the section by showing how this is used to deduce the ideal of relations in $\quantum^*_\af(G/B)$, for $G$ of Lie types $A_n - D_n$ and $E_6$ and recover part of the relations in the other types. {\color{blue} Rewrite.}

\subsection{The dual, quantum, periodic Toda lattice}\label{s:qToda}

  Recall that $\g$ denotes a complex simple Lie algebra, with the simple coroot system ${\Delta}^\vee \subset {\Pi}^\vee$. 
The {\it   periodic quantum Toda lattice} associated to $\Delta^{\vee}\cup\{-\theta^\vee\}$ is determined by the differential
operator ${\mathcal H}$ given by equation (\ref{def:qTodaH}). The integrals of motion are differential operators
${\mathcal D}\in \bC [e^z, \hbar\partial/\partial z, \hbar]$ that commute with ${\mathcal H}$, i.e.~$[{\mathcal H},{\mathcal D}]=0$.
In the case one can find $n = rank(\Delta)$ integrals that are independent and are in involution then the
system is completely integrable.
% (see Corollary \ref{cor:commutingqH} for the precise meaning of this notion).   

\begin{remark}\label{rem:two} In the literature there are two integrable systems associated to $\g$ with the name of (classical) periodic Toda lattice. One is associated to $\Delta \cup \{ - \theta \}$, the other is the ``dual" one  above, associated to $\Delta^\vee \cup \{ -\theta^\vee \}${, which corresponds to a twisted affine Lie algebra}. 
%Note that $\theta$ is the long dominant root in $\Delta$, while $\theta^\vee$ is the short dominant root in $\Delta^\vee$. 
The first system was classically studied by Kostant \cite{Kos}. { Adler and van Moerbeke \cite{Ad-vM,adler.vanmoerbeke:abelian} studied both systems in the context of geometry of Abelian varieties.}

% In the non-simply laced types, the {\em quantum} dual system has been less studied, and to our knowledge its integrability was proved by Goodman and Wallach \cite{Go-Wa1} (types B,C) and by Mare \cite{M} (types $F_4$ and $G_2$)} . 
%LM042816: am taiat ultima parte ptr. ca ne referim la aceleasi rezultate mai sus si in pargraful de dupa Thm. 11.2.

%The second system has been less studied, and to our knowledge its integrability (in the relevant Lie types) was proved by Goodman and Wallach \cite{Go-Wa1} and by {\color{blue} Mare \cite{M}}. 
%  (see Section \ref{sec:clToda} below
%for more on the {\it classical} Toda lattice).  The two systems coincide for simply laced $\g$. However, in general  
%there are significant differences between them. For example, the ``standard" periodic quantum Toda lattice is completely integrable in all Lie types 
%\cite{etingof};  the integrability of the ``dual" version we need here has been conjectured in \cite{Go-Wa1} and, to our knowledge,
%not proved until recently, in our recent work \cite{MM}. This key result allows us to inscribe our approach within the philosophy of B.~Kim \cite{Kim} that the relations in quantum cohomology of $G/B$ are given by integrals of motion for the Toda lattice of the Langlands dual root system.
\end{remark}

{ We recall the main ideas behind the construction of integrals of motion for the dual quantum periodic Toda lattice. We follow the method of Goodman and Wallach \cite{Go-Wa1}, based on the notion of the $(ax+b)$-(Lie) algebra associated to the 
 extended simple root system $\Delta^\vee \cup \{-\theta^\vee\}$. For finite Lie types, this is the Lie algebra $\mfb^\vee/ [\mfu^\vee , \mfu^\vee]$ where $\mfb$ is the Borel subalgebra for $\Delta$, and $\mfu$ the nilpotent algebra. It played a key role in Kim's paper \cite{Kim}.}
Consider a complex vector space $\mfu$ of dimension $n+1$ along with a basis $\{X_{\alpha} \mid 
\alpha \in \{-\theta, \alpha_1,\ldots, \alpha_n\}\}$. Put  $\mfb:=\hc^*\oplus \mfu$, and 
  define on this space the Lie bracket $[ \ , \ ]$ as follows:
  \begin{itemize}
  \item[(i)] $[ \ , \ ]$ is identically zero on both $\hc^*$ and $\mfu$
  \item[(ii)] for any $\lambda \in \hc^*$ and any  $\alpha\in
\{-\theta, \alpha_1,\ldots, \alpha_n\}$ one has
  $[\lambda, X_\alpha]=\langle \lambda, \alpha^\vee \rangle X_\alpha$.
  \end{itemize}
Let $\omega_1, \ldots, \omega_n\in \hc^*$ be  the fundamental weights, i.e.~$\langle \omega_i, \alpha_j^\vee\rangle = \delta_{ij}$ (the Kronecker symbol), for all $1\le i,j\le n$.
The corresponding PBW basis of the universal enveloping algebra 
$U(\mfb)$ consists of  $\omega_{i_1}^{a_{i_1}} \cdots \omega_{i_k}^{a_{i_k}} X_{\beta_{j_1}}^{b_{j_1}}\cdots X_{\beta_{j_r}}^{b_{j_r}}$, where $a_{i_k}, b_{j_\ell} \in \Z_{\ge 0}$ and $\beta_{j_\ell} \in \{ - \theta, \alpha_1, \ldots , \alpha_n \}$. 
A key role will be played by the subspace $U(\mfb)_{ev} \subset U(\mfb)$ which is spanned by the elements as before but where all powers $b_{j_{\ell}}$ are even. Consider the  canonical filtration  
$U_1(\mfb)\subset U_2(\mfb) \subset \ldots \subset U(\mfb)$.
We say that  $P\in U(\mfb)$ has degree $k$ if $P\in U_k(\mfb)\setminus U_{k-1}(\mfb)$. 
The {\em Laplacian} of $\mfb$ is the element \[ \Omega:= \sum_{i,j=1}^n (\alpha_i^\vee | \alpha_j^\vee ) 
{\omega}_i{\omega}_j + \sum_{\alpha} X_\alpha^2 \quad \in U(\mfb) \/. \] 
The projection $\mfb \to \h^*$ induces a degree preserving algebra homomorphism $\mu: U(\mfb) \to U(\h^*) = {\rm Sym}(\h^*)$ called the {\em symbol homomorphism}. Choose fundamental homogeneous generators $f_1, \ldots, f_n$ of
${\rm Sym}({\mathfrak{h}}^*)^{{W}}$; by convention  
 take $f_1:=\sum_{i,j=1}^n ( \alpha_i^\vee | \alpha_j^\vee ) {\omega}_i{\omega}_j$ and assume that $\deg f_i \le \deg f_j$ whenever
 $i<j$. The sequence of degrees of $f_i$, together with the quantity $m_1 + \ldots +m_n = \frac{1}{2} \deg q^{\theta^\vee}$ are recorded in the table below (cf.~\cite[\S 3.7]{Hu2}).

 \begin{tabular}{|c|c|c|}
\hline
Type & $\deg f_1, \ldots , \deg f_n$ & $\sum m_i$\\
\hline
$A_n$ & $2,3, \ldots, n+1$ & $n$\\
$B_n$ & $2,4,6, \ldots, 2n$ & $2n-2$\\
$C_n$ & $2,4,6,\ldots, 2n$ & $n$\\
$D_n$ & $2,4,6,\ldots,2n-2,n$ & $2n-3$\\
$E_6$ & $2,5,6,8,9,12$ & $11$\\
$E_7$ & $2,6,8,10,12,14,18$ & $17$\\
$E_8$ & $2,8,12,14,18,20,24,30$ & $29$\\
$F_4$ & $2,6,8,12$ & $8$\\
$G_2$ & $2,6$  & $3$\\
\hline
\end{tabular}

%The following result is was proved by Goodman and Wallach \cite{Go-Wa1} for types $A_n - D_n$ and $E_6$, and by the authors \cite{MM} in the types $B_n,C_n$ (with a different proof than in \cite{Go-Wa1}) and $F_4$ and $G_2$. Etingof \cite{etingof} proved earlier the integrability of the (non-dual) quantum periodic Toda lattice, for all Lie types, and this implies 
%
%follows from results of Etingof \cite{etingof} in the simply laced case, and by the authors \cite{MM} in the types $B_n,C_n$ (with a different proof than in \cite{Go-Wa1}) and $F_4$ and $G_2$. We refer to \cite{MM} for details.
%

%extended the approach from \cite{Go-Wa1} to prove the remaining integrability of the 
%
%For simply laced root systems, there is no difference between the periodic Toda lattices that arise from the actual and the dual root system,
%respectively. The latter Toda lattice is known to be completely integrable by a theorem of Etingof \cite{etingof}.    
%A  proof of the complete integrability of the periodic quantum dual 
%Toda lattice in all Lie types can be found in \cite{MM}. 

\begin{thm}\label{thm:Todaint}{\rm (\cite{Go-Wa1}, \cite{etingof}, \cite{M})} Fix $i\in \{1,\ldots, n \}$. There exists a unique 
$\Omega_i \in U(\mfb)_{ev}$ such that:

\begin{itemize} 
\item $[\Omega_i, \Omega]=0$;

\item $\Omega_i$ has degree equal to $\deg f_i$;

\item the symbol $\mu(\Omega_i) = f_i $ is the chosen  generator of ${\rm Sym}(\h^*)^W$.
\end{itemize} 

Further, for $\Omega_1:=\Omega$ and for any $1 \le i, j \le n$, $[\Omega_i, \Omega_j]=0$. 
\end{thm}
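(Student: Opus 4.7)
The plan is to follow the constructive approach of Goodman--Wallach \cite{Go-Wa1}, extended in the simply laced case by Etingof \cite{etingof} and in the remaining types $F_4,G_2$ by Mare \cite{M}. Each $\Omega_i$ is built by an inductive ``quantization'' of a classical integral of motion of the dual periodic Toda lattice whose leading symbol is $f_i$, and the pairwise commutativity $[\Omega_i,\Omega_j]=0$ is then extracted from the classical Poisson--commutativity via a uniqueness argument.

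The starting point is the classical integrability of the periodic Toda lattice associated with $\Delta^\vee\cup\{-\theta^\vee\}$ (cf.\ Remark \ref{rem:two}), which provides classical integrals $F_i\in{\rm Sym}(\mfb)_{ev}$ that Poisson--commute with $F_1:=f_1+\sum_\alpha X_\alpha^2$ (the associated-graded symbol of $\Omega$) and whose highest piece in $\hc^*$ is $f_i$. I would lift each $F_i$ to an element $\tilde\Omega_i\in U(\mfb)_{ev}$ by symmetrization of its $\mathrm{PBW}$ expression. Because $F_i$ Poisson--commutes with $F_1$, the associated-graded symbol of $[\tilde\Omega_i,\Omega]$ vanishes, so $[\tilde\Omega_i,\Omega]$ has $\mathrm{PBW}$--degree strictly less than $\deg f_i+\deg\Omega-1$. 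I then iteratively correct $\tilde\Omega_i$ by subtracting successive lower-degree elements $C^{(k)}\in U(\mfb)_{ev}$ chosen so as to cancel the leading piece of the residual commutator with $\Omega$. The corrections always stay in $U(\mfb)_{ev}$ because this subspace is closed under the commutator bracket: the $X_\alpha$'s commute pairwise in $\mfb$, and $[\omega,X_\alpha^2]=2\langle\omega,\alpha^\vee\rangle X_\alpha^2\in U(\mfb)_{ev}$.

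The main obstacle is showing, at each stage of the induction, that the leading part of the residual commutator lies in the image of the relevant correction map, i.e.\ that the cohomological obstruction to continuing the lift vanishes. This is the technical heart of the theorem and must be verified case-by-case: \cite{Go-Wa1} handles $A_n,B_n,C_n,D_n,E_6$; \cite{etingof} handles all simply laced cases (where the dual and non-dual systems agree); \cite{M} handles $F_4,G_2$ by a Dynkin--folding reduction to the simply laced types already taken care of. Granting this verification, the iterative construction terminates and yields $\Omega_i$ satisfying all three conditions.

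For uniqueness, if $\Omega_i'$ is a second such element, then $\Omega_i-\Omega_i'\in U(\mfb)_{ev}$ commutes with $\Omega$, has degree $<\deg f_i$, and satisfies $\mu(\Omega_i-\Omega_i')=0$. A descending induction on degree, using that in each degree below $\deg f_i$ the part of the centralizer of $\Omega$ in $U(\mfb)_{ev}$ that maps to zero under $\mu$ is exhausted by polynomial combinations of $\Omega_1,\ldots,\Omega_{i-1}$, forces $\Omega_i=\Omega_i'$. Finally, $[\Omega_i,\Omega_j]\in U(\mfb)_{ev}$, commutes with $\Omega$ by the Jacobi identity together with $[\Omega_i,\Omega]=[\Omega_j,\Omega]=0$, and its $\mathrm{PBW}$--symbol vanishes because $\{F_i,F_j\}=0$ at the classical level. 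Hence $[\Omega_i,\Omega_j]$ has degree strictly less than $\deg\Omega_i+\deg\Omega_j-1$, and the uniqueness statement (applied inductively on degree) forces $[\Omega_i,\Omega_j]=0$.
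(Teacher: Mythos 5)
First, note that the paper does not actually prove Theorem \ref{thm:Todaint}: it is imported as a black box from \cite{Go-Wa1}, \cite{etingof} and \cite{M}, with only the attribution paragraph following the statement. So there is no internal proof to compare against, and your outline of the Goodman--Wallach strategy (inductive quantization of classical integrals, with the obstruction-vanishing at each step deferred to the case-by-case analysis in the references) is a reasonable reconstruction of the existence part, at the same level of detail as the paper itself. One structural remark: you take classical integrability of the dual periodic Toda lattice as the starting input and deduce the quantum statement from it, whereas the paper (see \S\ref{sec:clToda}) runs the implication in the opposite direction, treating classical integrability as a consequence of Theorem \ref{thm:Todaint}; if you rely on the classical statement you should say where it comes from (Adler--van Moerbeke) for the \emph{dual} systems in all types.

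The genuine gap is in your uniqueness argument, and it propagates into your proof of $[\Omega_i,\Omega_j]=0$. You assert that ``the part of the centralizer of $\Omega$ in $U(\mfb)_{ev}$ that maps to zero under $\mu$ is exhausted by polynomial combinations of $\Omega_1,\ldots,\Omega_{i-1}$.'' This cannot be right as stated: since $\mu$ is an algebra homomorphism and the $f_j$ are algebraically independent, a nonzero polynomial combination $P(\Omega_1,\ldots,\Omega_{i-1})$ has $\mu(P(\Omega_1,\ldots,\Omega_{i-1}))=P(f_1,\ldots,f_{i-1})\neq 0$, so such elements are \emph{not} in $\ker\mu$. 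What is actually needed --- and what is the real content of the Goodman--Wallach structure theorem, not something one may invoke for free --- is that $\mu$ is \emph{injective} on the centralizer of $\Omega$ in $U(\mfb)_{ev}$ (equivalently, the centralizer maps isomorphically onto ${\rm Sym}(\h^*)^W$). Granting that, uniqueness is immediate from $\mu(\Omega_i-\Omega_i')=f_i-f_i=0$, with no induction on degree required; and the commutativity follows at once because $\mu([\Omega_i,\Omega_j])=[\mu(\Omega_i),\mu(\Omega_j)]=0$ automatically (the target is commutative), so no appeal to classical Poisson-commutativity of the $F_i$ is needed there either. As written, your descending induction both misdescribes $\ker\mu$ and quietly assumes the injectivity statement it is supposed to establish, so the uniqueness and pairwise-commutativity parts of the theorem are not actually proved by your argument.
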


This result was proved by Goodman and Wallach \cite{Go-Wa1} for types $A_n - D_n$ and $E_6$, and by { Mare \cite{M}} in the types 
%$B_n,C_n$ (with a different proof than in \cite{Go-Wa1}) and 
$F_4$ and $G_2$. Etingof \cite{etingof} proved earlier the integrability of the (non-dual) quantum periodic Toda lattice, for all Lie types, and this implies the above result for the simply laced Lie algebras. We refer to \cite{M} for details.

\subsection{Quantization of $U(\mfb)$ and commuting differential operators} 
We will quantize the operators $\Omega_i$ from Theorem \ref{thm:Todaint}, by adapting the procedure outlined e.g.~in \cite{Kim}. We recall from \S \ref{s:givform} above the ring $\C[[e^z]][\partial/\partial z][\hbar]$ consisting of operators acting on $\C[z][[e^z]][\hbar, \hbar^{-1}]$.
Define the linear map $\rho:\mfb \to \C[[e^z]][\partial/\partial z][\hbar, \hbar^{-1}]$ given by 
\[ \rho ({\omega}_i):=2\frac{\partial}{\partial z_i}, ~ \rho (X_{\alpha_i})=\frac{2
\sqrt{-1}}{\hbar}
 \sqrt{ (\alpha_i^{\vee} | \alpha _i^{\vee})}
e^{\frac{z_i}{2}}, (1 \le i \le n),~  \rho (X_{-\theta})=\frac{2
\sqrt{-1}}{\hbar}
 \sqrt{(\theta^{\vee} | \theta^{\vee})}
e^{\frac{z_0}{2}} \/. \]
Then one can check that $\rho$ preserves the Lie bracket, thus it is a morphism of Lie algebras. For example, if $g(z; \hbar) \in \C[z; \hbar, \hbar^{-1}]$ then \[ \begin{split} \rho(\omega_i) \rho(\omega_j) (g(z;\hbar) e^{zd}) =  2 e^{zd} \Bigl( \frac{\partial^2 g(z;\hbar)}{\partial z_i \partial z_j} +  (d_i - m_i d_0) \frac{\partial g(z;\hbar)}{\partial z_j} + & \\  (d_j - m_j d_0) \frac{\partial g(z;\hbar)}{\partial z_i} + (d_i - m_i d_0)(d_j - m_j d_0) g(z;\hbar)\Bigr) \/, \end{split} \] and this shows that $\rho([\omega_i, \omega_j]) = [\rho(\omega_i), \rho(\omega_j)]=0$. Therefore there exists a 
well-defined Lie algebra homomorphism $\rho: U(\mfb) \to \C[[e^z]][\partial/\partial z][\hbar, \hbar^{-1}]$. Define \begin{equation}\label{E:defhigherH} \mathcal{H}_i := \hbar^{\deg \Omega_i} \rho (\Omega_i), \quad 1 \le i \le n  \/. \end{equation}
In particular, $\mathcal{H}_1 = \hbar^2 \rho(\Omega) = 4 \mathcal{H}$ where $\mathcal{H}$ is the operator defined in (\ref{def:qTodaH}). 

\begin{cor}\label{cor:commutingqH} Let $1 \le i \le n$. Then the operator $\mathcal{H}_i$ has the following properties:
\begin{itemize} \item $\mathcal{H}_i$ belongs to $\C[e^z][\hbar \partial /\partial z][\hbar]$;

\item 
$[\mathcal{H}_i, \mathcal{H}]=0$;

\item $\mathcal{H}_i$ has degree $\deg \mathcal{H}_i \le \deg q^{\theta^\vee}$.

\end{itemize}

In particular, the specialization $\mathcal{H}_i(q; \sigma_k \star_\af; 0) = 0$ in $\quantum^*_\af(G/B)$.\end{cor}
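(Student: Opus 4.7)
The plan is to verify the three bullet points in turn and then invoke Theorem \ref{thm:higherrels}. For the first bullet, I use that $\Omega_i \in U(\mfb)_{ev}$, so every PBW monomial of $\Omega_i$ has the form $\omega_{i_1}\cdots\omega_{i_r} X_{\beta_1}^{2k_1}\cdots X_{\beta_s}^{2k_s}$ with $r + 2(k_1+\cdots+k_s) = \deg\Omega_i$. Under $\rho$, each $\omega$-factor contributes $2\partial/\partial z$, while each $X_{\beta_j}^{2k_j}$ contributes the integer power $e^{k_j z_{\beta_j}}$ multiplied by a scalar and by $\hbar^{-2k_j}$. Multiplication by $\hbar^{\deg\Omega_i} = \hbar^{r+2(k_1+\cdots+k_s)}$ then cancels every negative power of $\hbar$ and upgrades each $\partial/\partial z$ to a $\hbar \partial/\partial z$, proving $\mathcal{H}_i \in \C[e^z][\hbar\partial/\partial z][\hbar]$. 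The second bullet is immediate from Theorem \ref{thm:Todaint}: because $\rho$ extends from the Lie algebra $\mfb$ to an associative algebra homomorphism on $U(\mfb)$, the relation $[\Omega_1,\Omega_i]=0$ forces $[\rho(\Omega_1),\rho(\Omega_i)]=0$, and multiplication by $\hbar^{\deg\Omega_1+\deg\Omega_i}$ preserves the commutator; since $\mathcal{H}_1 = 4\mathcal{H}$, this gives $[\mathcal{H},\mathcal{H}_i]=0$. For the third bullet, note that with the chosen grading the total degree of each PBW monomial above is $r + 2(k_1+\cdots+k_s)$, so $\deg \mathcal{H}_i = \deg \Omega_i = \deg f_i$, and the bound reduces to verifying $\deg f_i \le 2(m_1+\cdots+m_n) = \deg q^{\theta^\vee}$; this is done case by case using the table in \S \ref{s:qToda}.

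To conclude, I apply Theorem \ref{thm:higherrels} with $\mathcal{D}=\mathcal{H}_i$: its hypotheses (2) and (3) are the third and second bullets above, so it suffices to verify hypothesis (1), namely $\mathcal{H}_{i,\hat\nabla}(1) \equiv 0$ modulo $q_0,\ldots,q_n$. Setting $e^{z_j}=q_j=0$ annihilates every term of $\rho(\Omega_i)$ containing at least one $X_\beta$-factor, so that $\mathcal{H}_i|_{q=0} = \hbar^{\deg f_i}\, \rho(f_i(\omega_1,\ldots,\omega_n)) = f_i(2\hbar\partial/\partial z_1,\ldots,2\hbar\partial/\partial z_n)$, by the symbol identity $\mu(\Omega_i)=f_i$ and the fact that the $\omega_j$ commute both in $U(\mfb)$ and after applying $\rho$. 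Applying the operator $(\cdot)_{\hat\nabla}$ to $1 \in \coh^0(G/B)$ and reducing modulo the quantum parameters (under which $\hat\nabla_{\partial/\partial z_j}$ reduces to ordinary multiplication by $\sigma_j$) yields $2^{\deg f_i}\, f_i(\sigma_1,\ldots,\sigma_n)$; this vanishes in $\coh^*(G/B)$ by the Borel presentation, because $f_i$ lies in the augmentation ideal of ${\rm Sym}(\h^*)^W$. Hence Theorem \ref{thm:higherrels} gives $\mathcal{H}_i(q,\sigma_k\star_\af,0)=0$ in $\quantum^*_\af(G/B)$.

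The hardest part is the degree bound in the third bullet. Unlike the other statements, which are formal consequences of PBW, the integrability result Theorem \ref{thm:Todaint}, and the Borel presentation of $\coh^*(G/B)$, the inequality $\deg f_i \le 2\sum_j m_j$ must be read off from tabulated data about the fundamental $W$-invariants and the marks of the extended Dynkin diagram; no clean uniform argument appears to be available. Once this case check is done, everything else threads together quickly.
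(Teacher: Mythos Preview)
Your proof is correct and follows essentially the same approach as the paper's: both derive the three bullets from $\Omega_i \in U(\mfb)_{ev}$, the fact that $\rho$ is an algebra homomorphism, and the table comparing $\deg f_i$ to $\sum m_j$, then invoke Theorem \ref{thm:higherrels}. You are more explicit than the paper in verifying hypothesis (1) of Theorem \ref{thm:higherrels} via the symbol identity $\mu(\Omega_i)=f_i$ and the Borel presentation; one minor imprecision is that the PBW monomials of $\Omega_i$ satisfy $r+2(k_1+\cdots+k_s)\le \deg\Omega_i$ rather than equality, but this only helps your $\hbar$-power count.
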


\begin{proof} The fact that the operators are in the subring $\C[e^z][\hbar \partial /\partial z][\hbar]$ follows because by construction the integrals of motion $\Omega_i$ are in the subspace $U(\mfb)_{ev}$.~Using the PBW basis for $U(\mfb)_{ev}$, notice that any element in $\rho(U(\mfb)_{ev})$ has degree $0$, where (recall) $\deg \hbar =1$, $\deg \partial/\partial z_i = 0$ and $\deg e^{z_i} = 2$. Thus $\deg \mathcal{H}_i = \deg \Omega_i = \deg f_i$. Now we use again the table with $\deg f_i$ from above to notice that $\max \{ \deg f_i \} \le \deg q^{\theta^\vee}$. The commutation is immediate from the fact that $\rho$ preserves Lie brackets. Finally, the relation in $\quantum^*_\af(G/B)$ is now immediate from Theorem \ref{thm:higherrels}.\end{proof}

\subsection{The dual, classical, periodic Toda lattice. Relations in $\quantum^*_\af(G/B)$} \label{sec:clToda}
Corollary \ref{cor:commutingqH} shows that the quantum dual periodic Toda lattice is 
integrable. In fact, Theorem \ref{thm:Todaint} can also be used to prove the
integrability at the classical level of that system. To describe this  Hamiltonian system, let $\h_\bR$ be the real span $\Span_{\bR} \Delta^\vee$  and $\h_\bR \times \h_\bR^*$ its cotangent 
bundle. 
The Hamiltonian function is $H:\hc_\bR \times\hc_\bR^*\to \bR$ defined by
 \begin{equation}\label{E:defTodaH} H(r,s)=| r |^2 +e^{-2\langle s, \theta^\vee\rangle }+\sum_{i=1}^ne^{2\langle s, \alpha_i^\vee\rangle},\quad r\in \hc_\bR, s\in \hc_\bR^*\/.\end{equation}
%{\color{blue} This does not correspond to the Hamiltonian defined in (10.7), which we said it's the Hamiltonian for the quantum periodic Toda. We should have something of the form
%
%\[  H(r,s)=| r |^2 - e^{-2\langle s, \theta^\vee\rangle }- \sum_{i=1}^ne^{2\langle s, \alpha_i^\vee\rangle},\quad r\in \hc_\bR, s\in \hc_\bR^*\/.\] The minus signs also appear in the work of Kim and Etingof. As far as I can see, there is no problem if we use the latter Hamiltonian, but then we need to change some signs in the changes of variables below.}

The norm involved in the equation above  is the one induced by the Killing form.
Conserved quantities of this system are real functions on $\hc_\bR \times \hc_\bR^*$ that
commute with $H$ with respect to the Poisson bracket. To construct such functions, 
consider again the PBW basis $\{ \omega_1^{a_1} \cdots  \omega_n^{a_n} X_{\alpha_1}^{2 b_1}\cdots  X_{\alpha_n}^{2 b_n} X_{-\theta}^{2 b_0} \}$ of $U(\mfb)_{ev}$, where $a_i, b_j \in \Z_{\ge 0}$. With respect to this basis we can write each $\Omega_i$ ($1 \le i \le n$) as \[ \Omega_i = H_i (\omega_1,\ldots, \omega_n, X_{\alpha_1}^2, \ldots, X_{\alpha_n}^2, X_{-\theta}^2) + H_i' (\omega_1,\ldots, \omega_n, X_{\alpha_1}^2, \ldots, X_{\alpha_n}^2, X_{-\theta}^2) \] where $H_i$ is homogeneous of degree $\deg \Omega_i$ and $\deg H_i' < \deg H_i$. 
After the change of variables 
\begin{equation}\label{changex}X_{\alpha_i} = e^{\langle s, \alpha_i^\vee \rangle}; \quad X_{-\theta} = e^{- \langle s, \theta^\vee \rangle }\end{equation} the homogeneous polynomials $H_i$ are the aforementioned conserved quantities. 

To obtain relations in $\QH^*_\af(G/B)$ we need to consider the substitutions \begin{equation}\label{E:cov} \omega_i = x_i; \quad X_{\alpha_i}^2 = - (\alpha_i^\vee | \alpha_i^\vee) q_i; \quad X_{-\theta}^2 = - (\theta^\vee | \theta^\vee) q_0 \/. \end{equation} We abuse notation and we keep the notation $H_i=H_i(q;x)$ for the resulting polynomial in $\C[q; x]$, where $q=(q_0, \ldots, q_n)$ and $x=(x_1, \ldots ,x_n)$. By Theorem \ref{thm:defafqcoh} there is a surjective $\C[q]$-algebra homomorphism $\Phi: \C[q;x] \to \quantum^*_\af(G/B)$ obtained by sending $x_i$ to $\sigma_i$.
The main result of this section identifies the kernel of this homomorphism. It is the affine analog of B.~Kim's theorem
\cite[Theorem~I]{Kim}. 

\begin{thm}\label{thm:relations} The kernel of $\Phi$ is the ideal generated by the conserved quantities $H_i(q;x)$ ($1 \le i \le n$) of the dual periodic Toda lattice. Equivalently, there is a $\C[q]$-algebra isomorphism  \[ \C[q;x]/\langle H_1(q;x), \ldots , H_n(q;x) \rangle \to \quantum^*_\af(G/B) \] sending $x_i$ to $\sigma_i$. 
\end{thm}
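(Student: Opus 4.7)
The plan has three movements: first show the inclusion $\langle H_1,\ldots,H_n\rangle\subset\ker\Phi$, then show the induced map $\bar\Phi:\C[q;x]/\langle H_1,\ldots,H_n\rangle\to\QH^*_\af(G/B)$ is surjective, and finally compare ranks over $\C[q]$ to conclude it is an isomorphism. The first two are direct, and the third is where the work lies.

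For the first step, recall that $\mathcal{H}_i=\hbar^{\deg\Omega_i}\rho(\Omega_i)$, where $\rho(\omega_i)=2\partial/\partial z_i$ and $\rho(X_\alpha)=(2\sqrt{-1}/\hbar)\sqrt{(\alpha^\vee|\alpha^\vee)}\,e^{z_\alpha/2}$. Expanding $\Omega_i$ in the PBW basis of $U(\mfb)_{ev}$ as in \S\ref{sec:clToda} and making the operator substitutions $\hbar\partial/\partial z_i\mapsto\sigma_i\star_\af$, $e^{z_j}\mapsto q_j$, $\hbar\mapsto 0$, the terms of $\mathcal{H}_i$ carrying strictly positive powers of $\hbar$ vanish, and the remaining top-$\hbar$ part matches $H_i(q;\sigma)$ up to a uniform power of $2$ coming from $\hbar\rho(\omega_i)=2\hbar\partial/\partial z_i$ and $\hbar^2\rho(X_\alpha)^2=-4(\alpha^\vee|\alpha^\vee)e^{z_\alpha}$, i.e.~exactly the substitutions \eqref{E:cov} up to overall scaling by $2^{\deg H_i}$. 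Corollary~\ref{cor:commutingqH} then yields $H_i(q;\sigma)=0$ in $\QH^*_\af(G/B)$. Surjectivity of $\bar\Phi$ follows from the identical Nakayama argument used in the proof of Theorem~\ref{thm:defafqcoh}: the Schubert divisors $\sigma_1,\ldots,\sigma_n$ generate $\coh^*(G/B;\Q)$ as a $\Q$-algebra, so by Lemma~\ref{lemma:Nak} applied to $R=\C[q]$, $I=\langle q_0,\ldots,q_n\rangle$ and $M=\QH^*_\af(G/B)$, the monomials in $\sigma_i$ generate $M$ over $\C[q]$, so $\Phi$ is surjective.

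The main obstacle is injectivity of $\bar\Phi$. The crucial observation is that the $q=0$ specialization $H_i(0;x)$ equals the symbol $f_i=\mu(\Omega_i)$: in the expansion of $\Omega_i$ in the PBW basis the $q$-independent monomials under \eqref{E:cov} are exactly those involving only the $\omega_j$'s, and their sum gives $f_i$ by definition of the symbol homomorphism. By the Chevalley–Shephard–Todd theorem, $f_1,\ldots,f_n$ form a regular sequence in $\C[x]$ with $\C[x]/\langle f_1,\ldots,f_n\rangle\cong\coh^*(G/B;\C)$ of $\C$-dimension $|W|$. Because $H_i$ is homogeneous of degree $\deg f_i$ and $H_i\equiv f_i\pmod{\langle q_0,\ldots,q_n\rangle}$, a standard deformation argument (equivalently, flatness of the homogeneous family over $\C[q]$) implies that $H_1,\ldots,H_n$ is again a regular sequence and that $A:=\C[q;x]/\langle H_1,\ldots,H_n\rangle$ is a free graded $\C[q]$-module of rank $|W|$, with a $\C[q]$-basis given by any lift of a monomial $\C$-basis of $\C[x]/\langle f_1,\ldots,f_n\rangle$. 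On the other hand $\QH^*_\af(G/B)$ is by construction a free graded $\C[q]$-module of rank $|W|$ with basis $\{\sigma_w\}_{w\in W}$. Thus $\bar\Phi$ is a graded $\C[q]$-linear surjection between two free $\C[q]$-modules of the same finite rank; modulo $\langle q_0,\ldots,q_n\rangle$ it reduces to the Borel isomorphism $\C[x]/\langle f_i\rangle\xrightarrow{\sim}\coh^*(G/B;\C)$. A final application of graded Nakayama (the kernel of $\bar\Phi$ is a graded $\C[q]$-submodule contained in $\langle q_0,\ldots,q_n\rangle\cdot A$, hence zero) upgrades this to an isomorphism, completing the proof.
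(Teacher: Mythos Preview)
Your proof is correct and follows essentially the same Siebert--Tian strategy as the paper: show $H_i(q;\sigma)=0$ via Corollary~\ref{cor:commutingqH}, observe that $H_i(0;x)=f_i$ generates the Borel ideal, and then use a graded Nakayama/deformation argument to promote the mod-$q$ isomorphism to an isomorphism over $\C[q]$. The paper simply cites \cite[Proposition~11]{fulton.pandharipande:notes} for this last step, whereas you have spelled out the regular-sequence and rank-comparison details explicitly.

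One small remark on your final sentence: the inclusion $K\subset\langle q\rangle A$ alone does not directly give $K=0$ by graded Nakayama (that would require $K\subset\langle q\rangle K$). The clean way to finish is to note that a surjective $\C[q]$-linear map between free $\C[q]$-modules of the same finite rank is automatically injective (localize, or take determinants), which is what you have set up.
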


\begin{proof} From the definitions of $\mathcal{H}_i$ and $H_i$ one can easily see that $\Phi(H_i(q;x))$ equals the specialization $\mathcal{H}_i(q; \sigma_k \star_\af; 0)$. This and Corollary \ref{cor:commutingqH} imply that each such $H_i$ is in the kernel of $\Phi$. 
The rest of the theorem follows from a standard Nakayama-type argument, going back to Siebert-Tian \cite{st}. We briefly remind its main idea. First, notice that the specializations $H_i(0; x)$ at $q_k=0$ equal the polynomials $f_i$ which are the homogeneous generators of the ideal of relations in the Borel presentation of the graded $\C$-algebra $\coh^*(G/B)= \quantum^*_\af(G/B)/ \langle q_0, \ldots, q_n \rangle$. Second, the images of $H_i(q;x)$ give (homogeneous) relations in $\quantum^*_\af(G/B)$. These two facts, together with the fact that $\quantum^*_\af(G/B)$ is graded, imply that the polynomials $H_i(q;x)$ generate the ideal of relations. We refer to \cite[Proposition 11]{fulton.pandharipande:notes} for full details about this argument.  
\end{proof}

%\begin{remark} The restriction that $i\le n_0$ means that with the methods employed here we obtained all except one relation if $G$ is of type $E_7, F_4$, or $G_2$ and all except four relations in type $E_8$. Since the polynomials $H_i(q;x)$ are constructed uniformly for all $i$, and they deform the ordinary relations in the Borel presentation, it is natural to expect that that they will provide the missing relations. We will consider this issue in a subsequent paper which discusses the integrability issues mentioned in Remark \ref{rem:complinte} above.\end{remark}

\section{Examples}

In this section we illustrate Theorem \ref{thm:relations} by two examples, and we also calculate the multiplication in $\quantum^*_\af(\SL_3(\C)/B)$. 

%We refer to \cite{MM} for the explicit formulas for the elements $\Omega_i$ and $\mathcal{H}_i$ from equation (\ref{E:defhigherH}), which specialize to the generators of the ideal of quantum relations. We will list here only the final expressions for these generators.

\begin{example}\label{forma}Let $G= \SL_n(\C)$, thus $G/B = \Fl(n)$, the variety which parametrizes complete flags $F_1 \subset \ldots \subset F_n:= \C^n$. In this case, $\hc_\bR$ { can be identified with the hyperplane in
 $\bR^n$ consisting of all $r=(r_1,\ldots, r_n)$ with $r_1+\cdots  +r_n=0$.  In this identification the coroots are given by} $$\alpha_i^\vee = e_i- e_{i+1}, 1 \le i \le n-1, \quad \theta^\vee = \alpha_1^\vee + \cdots  + \alpha_{n-1}^\vee=e_1-e_n,$$  
 where $e_1,\ldots, e_n$ is the canonical basis of $\bR^n$.  
 %{\color{blue} With respect to the usual inner product on $\bR^n$ the dual space $\h_\bR^*$ may be identified with $\mathrm{Span}_\bR \{ e_1 + \ldots + e_i: 1 \le i \le n-1 \}$.}
  Let  $s=(s_1,\ldots, s_n)$ be the coordinates on $\h^*_\bR$ induced from $\bR^n$. 
  %Finally, the Killing form on $\h_\bR$ is the restriction of the usual inner product on $\bR^n$.} 
 %Via this form $\hc_\bR^*$ is identified with a subspace of $\bR^n$ with induced coordinates $s=(s_1,\ldots, s_n)$.} 
%respectively
 %$s_1, \ldots, s_n$. Note that  $r_1,\ldots, r_n, s_1, \ldots, s_n$ is a symplectic coordinate system on
 %$ \bR^{2n}$, thus the standard symplectic form on
%$\hc_\bR \times \hc_\bR^*$ is equal to the restriction of  $\sum_{i=1}^n dr_i\wedge ds_i$. 
The  Hamiltonian function
of the periodic Toda lattice is in this case
 $$H(r,s) = \sum_{i=1}^nr_i^2 + \sum_{i=1}^n e^{2(s_i-s_{i+1})}, \ {\rm where} \  s_{n+1}:=s_1.$$ 
To describe the integrals of motion, we follow \cite[pp.~292-294]{Ad-vM} (see also \cite[pp.~81-82]{Au}).
Consider the matrix
$$A:= \left (
\begin{matrix}
-\sqrt{2}r_1 &  e^{2(s_1-s_2)}  &  0  & 0&  \ldots &0& \frac{1}{z} \\
1 &  -\sqrt{2}r_2 &  e^{2(s_2-s_3)}  & 0 & \ldots & 0&0 \\
0 & 1 & -\sqrt{2}r_3 & e^{2(s_3-s_4)} &  \ldots & 0 & 0\\
\vdots  &  \ddots  &  \ddots &\ddots  & \ddots &\ddots &\vdots \\
0  &  0  &  0  & 0  & \ldots &-\sqrt{2}r_{n-1} & e^{2(s_{n-1}-s_n)}  \\
e^{2(s_n-s_1)} z  &  0  &  0  & 0 &  \ldots &1 & -\sqrt{2}r_n \\
\end{matrix}
\right ),$$
where $z$ is a complex parameter.
Its characteristic polynomial is
$$\det (A + \lambda I_n) = \sum_{k=1}^{n-1}H_{k}\lambda^{n-k-1} + \lambda^n+ (-1)^{n-1}\left({z} +\frac{1}{z}\right),$$
where $H_k$ are independent of $z$ (note that $\lambda^{n-1}$ does not occur in the expansion above,
since its coefficient  is $-\sqrt{2}(r_1+ \ldots + r_n) =0$). 
For example,  the coefficient of $\lambda^{n-2}$ is
$$H_1= 2\sum_{1\le i < j \le n} r_i r_j -\sum_{i=1}^{n}e^{2(s_{i}-s_{i+1})}=
-\sum_{i=1}^{n}r_i^2 -\sum_{i=1}^n e^{2(s_{i}-s_{i+1})},$$
which equals $-H$. 
The functions $H_1, \ldots, H_{n-1}$ are the desired integrals of motion.
Note that each $H_k$ is a polynomial in $r_i$ and $e^{2(s_i-s_{i+1})}$, 
homogeneous of degree $2k$ relative to $\deg r_i=\deg e^{s_i-s_{i+1}} =1$. 
To obtain the relations in the quantum cohomology ring from Theorem \ref{thm:relations} we take into account the formal changes
of variables (\ref{changex}) and (\ref{E:cov}). In this case they give
$$x_i = \omega_i; \quad e^{2(s_i - s_{i+1})} = -2q_i; \quad e^{2(s_n-s_1)} = -2q_0.$$
{ With our identifications, $\langle \omega_i, r\rangle  = r_1 + \cdots + r_i$, thus the coordinates $r=(r_1,\ldots, r_n)\in \h_\bR$ are given by:}
 $$r_1=\langle \omega_1,r\rangle, r_2=\langle \omega_2-\omega_1,r\rangle, \ldots, r_{n-1}= \langle\omega_{n-1}-\omega_{n-2},r\rangle,
 r_{n}= -\langle \omega_{n-1},r\rangle \/. $$

By Theorem \ref{thm:relations}, $\quantum_\af^*({\rm Fl}(n))$ is isomorphic to
$\bC[q_0, \ldots, q_{n-1}; x_1, \ldots, x_{n-1}]$ modulo the ideal generated by $H_1(q;x), \ldots, H_{n-1}(q;x)$, which arise from the matrix
$$A(q;x):= \left (
\begin{matrix}
x_1 &  q_1  &  0  & 0&  \ldots &0& -\frac{1}{z} \\
-1 &  x_2-x_1 &  q_2  & 0 & \ldots & 0&0 \\
0 & -1 & x_3-x_2 & q_3 &  \ldots & 0 & 0\\
\vdots  &  \ddots  &  \ddots &\ddots  & \ddots &\ddots &\vdots \\
0  &  0  &  0  & 0  & \ldots &x_{n-1}-x_{n-2} & q_{n-1}  \\
q_0 z  &  0  &  0  & 0 &  \ldots &-1 & -x_{n-1} \\
\end{matrix}
\right )$$
via
$$\det \left(A(q;x) + \lambda I_n\right) = \sum_{k=1}^{n-1}H_{k}(q;x)\lambda^{n-k-1} + \lambda^n+  (-1)^{n-1}q_0 \cdots  q_n z -  \frac{1}{z} \/.$$
{ Geometrically, $x_i$ is the first Chern class of the dual of the $i$-th tautological line bundle on $\Fl(n)$ which has fibre over $F_1 \subset \ldots \subset F_n$ equal to $(F_i/ F_{i-1})^*$.} This result is in the same spirit as the main result of \cite{go}. %As an aside we note that in this case the elements $\Omega_1, \ldots, \Omega_{n-1}$ of $U(\mfb)$ are homogeneous, that is, we have $\Omega_i=H_i(\omega_1, \ldots, \omega_n, X_{\alpha_1}^2, \ldots, X_{\alpha_n}^2, X_{-\theta}^2)$, for all $1\le i \le n-1$ (strictly speaking, we have $\Omega_1 = -H_1$, see above).
%{\color{orange} LM: E demonstrata omogeneitatea undeva ?} 

%{\color{orange} LM04072816: Parerea mea e ca ar trebui sa lasam doar partea cu matricea $A(q;x)$ de mai sus, fara sa ma intram in detaliile cu schimbarea de variabile.}

\end{example}

{ \begin{example}\label{typb2} Consider now $G= \textrm{Spin}_5(\C)$ which is a group of type $B_2$. The flag variety parametrizes flags $F_1 \subset F_2 \subset \C^5$ which are isotropic with respect to a nondegenerate symmetric bilinear form. We identify both $\hc_\bR$ and $\hc_\bR^*$ with $\bR^2$. A simple root system along with the highest root is
$\alpha_1=e_1, \alpha_2=e_2-e_1, \theta=e_1+e_2$, and their duals are
$\alpha_1^\vee = 2e_1, \alpha_2^\vee = e_2-e_1, \theta^\vee = e_1+e_2=\alpha_1^\vee+\alpha_2^\vee$. The Weyl group invariants are $f_1 = y_1^2 + y_2^2$ and $f_2 = y_1^2 \cdot y_2^2$ where $y_1, y_2$ are the coordinates in $\h^*$. In terms of fundamental weights we have $y_1 = 2 \omega_1 - \omega_2$ and $y_2 = \omega_2$. 

The quantum Hamiltonian from (\ref{def:qTodaH}) is \[ \mathcal{H}_2 = 4 \frac{\hbar \partial}{\partial z_1} \frac{\hbar \partial}{\partial z_1} - 4 \frac{\hbar \partial}{\partial z_1} \frac{\hbar \partial}{\partial z_2} + 2 \frac{\hbar \partial}{\partial z_2} \frac{\hbar \partial}{\partial z_2} - 2 e^{z_0} - 4 e^{z_1} - 2 e^{z_2} \/ \] and it corresponds to the degree $2$ relation in $\quantum_\af^*(G/B)$ \[ R_1:= 4\sigma_1 \star_\af \sigma_1 - 4 \sigma_1 \star_\af \sigma_2 + 2 \sigma_2 \star_\af \sigma_2 - 2 q_0 - 4 q_1 -  2q_2 \/. \] Using the method of $ax+b$ algebras from \S \ref{s:qToda} one obtains the quantum Hamiltonian of degree $4$ \[\begin{split} \mathcal{H}_4= 4 \frac{\hbar^2 \partial^2}{\partial z_1^2} \frac{\hbar^2 \partial^2}{\partial z_2^2} - 4 \frac{\hbar \partial}{\partial z_1} \frac{\hbar^3 \partial^3}{\partial z_2^3} - 4 e^{z_0} \frac{\hbar \partial}{\partial z_1} \frac{\hbar \partial}{\partial z_2}+ 4 e^{z_2} \frac{\hbar \partial}{\partial z_1} \frac{\hbar \partial}{\partial z_2}+ \frac{\hbar^4 \partial^4}{\partial z_2^4} + 2 e^{z_0} \frac{\hbar^2 \partial^2}{\partial z_2^2} - 4 e^{z_1}  \frac{\hbar^2 \partial^2}{\partial z_2^2} \\ - 2 e^{z_2} \frac{\hbar^2 \partial^2}{\partial z_2^2}+  e^{2 z_0} - 2 e^{z_0 + z_2} + e ^{2 z_2} - 4 e^{z_0} \frac{\hbar \partial}{\partial z_1} - 4 e^{z_2} \frac{\hbar \partial}{\partial z_1}  + 4 e^{z_2} \frac{\hbar \partial}{\partial z_2} - 4 e^{z_0} - 4 e^{z_2} \/. \end{split} \] After taking the homogeneous part of degree $4$ and making substitutions $q_i = e^{z_i}$ and $\sigma_i = \frac{\hbar \partial}{\partial z_i}$ one obtains the second relation in the quantum cohomology ring \[ \begin{split} R_2= 4 \sigma_1^2 \star_\af \sigma_2^2 - 4 \sigma_1 \star_\af \sigma_2^3 - 4q_0 \sigma_1 \star_\af \sigma_2 + 4 q_2  \sigma_1 \star_\af \sigma_2 + \sigma_2^4 + 2 q_0 \sigma_2^2 - 4 q_1 \sigma_2^2 \\ - 2 q_2 \sigma_2^2 + q_0^2 - 2 q_0 q_2 + q_2^2 \/,  \end{split} \] where by $\sigma_i^k$ we mean multiplication with respect to $\star_\af$. In other words, $\quantum^*_\af(G/B)$ is the ring $\C[q_0,q_1, q_2; x_1,x_2]/\langle H_1, H_2 \rangle$ where \[ H_1 = 4x_1^2 -  4 x_1 x_2 + 2 x_2^2 -  2 q_0 -  4 q_1 -  2q_2 \]\[ H_2 = 4 x_1^2 x_2^2 -  4 x_1 x_2^3 -  4 q_0 x_1 x_2 + 4 q_2 x_1 x_2 + x_2^4 + 2 q_0 x_2^2 -  4 q_1 x_2^2 -  2 q_2 x_2^2 + q_0^2 - 2 q_0 q_2 + q_2^2 \/, \] and $x_i$ is sent to the Schubert class $\sigma_i$, for $i = 1,2$. 
\end{example}
}

\subsection{The multiplication table in $\quantum^*_\af(\Fl(3))$}\label{ss:table}
 Using the Chevalley formula (\ref{eq:Lambdafin}), and setting $w_0 = s_1 s_2 s_1$, one calculates that:
 
\begin{tabular}{|c|c|c|}
\hline
$\sigma_w$ & $\sigma_1 \star_\af \sigma_w$ & $\sigma_2 \star_\af \sigma_w$\\
\hline
$\sigma_1$ & $\sigma_{s_2 s_1} + (q_0 + q_1)$ & $\sigma_{s_1 s_2} + \sigma_{s_2 s_1} + q_0$\\
$\sigma_2$ &  $\sigma_{s_1 s_2} + \sigma_{s_2 s_1} + q_0$ & $\sigma_{s_1 s_2} + (q_0 + q_2)$\\
$\sigma_{s_1 s_2}$ & $\sigma_{w_0} + q_0 \sigma_2 - q_0 \sigma_1$ & $(q_2 - q_0) \sigma_1 + q_0 \sigma_2$\\
$\sigma_{s_2 s_1}$ & $q_0 \sigma_1 + (q_1 - q_0) \sigma_2$ & $\sigma_{w_0} + q_0\sigma_1 - q_0 \sigma_2$\\
$\sigma_{w_0}$ & $q_0 \sigma_{s_2 s_1} + (q_0+ q_1)\sigma_{s_1 s_2}+q_2(q_1 - q_0)$ & $q_0 \sigma_{s_1 s_2} + (q_0+ q_2)\sigma_{s_2 s_1}+q_1(q_2 - q_0)$\\
\hline
\end{tabular} 
 
Then one finds the ``affine quantum Schubert polynomials":
\[ \begin{split} \sigma_{s_1 s_2} = \sigma_2 \star_\af \sigma_2 - (q_0 + q_2);  \quad \sigma_{s_2 s_1} = \sigma_1  \star_\af & \sigma_1 - (q_0 + q_1); \\ \sigma_{w_0} =  \sigma_1 \star_\af \sigma_2 \star_\af \sigma_2 - q_2 \sigma_1 - q_0 \sigma_2. \end{split} \]
 
Note that there is a symmetry obtained by exchanging $1 \leftrightarrow 2$ and keeping $0$ fixed. (This follows from the corresponding symmetry of the affine Dynkin diagram of type $A_2^{(1)}$.) Therefore to finish the multiplication table it suffices to calculate $\sigma_{s_1 s_2} \star_\af \sigma_{s_1 s_2}, \sigma_{s_1 s_2} \star_\af \sigma_{s_2 s_1}, \sigma_{s_1 s_2} \star_\af \sigma_{w_0}$ and $\sigma_{w_0} \star_\af \sigma_{w_0}$. We obtain: \[ \sigma_{w_0} \star_\af \sigma_{w_0} = q_1 (q_2 - q_0) \sigma_{s_2 s_1} + q_2(q_1 - q_0) \sigma_{s_1 s_2} + 3 q_0 q_1 q_2 \]

\begin{tabular}{|c|c|}
\hline
$\sigma_w$ & $\sigma_{s_1 s_2} \star_\af \sigma_w$ \\
\hline
$\sigma_{s_1 s_2}$ & $(q_2   -   q_0) \sigma_{s_2 s_1} - q_0 \sigma_{s_1 s_2} + 2 q_0 q_2$ \\
$\sigma_{s_2 s_1}$ & $q_0 \sigma_{s_1 s_2} + q_0 \sigma_{s_2 s_1}  + q_1 q_2 +q_0 (q_1 + q_2)$\\
$\sigma_{w_0}$ & $2 q_0 q_2 \sigma_1 + (q_1 q_2 - q_0 q_1 - q_0 q_2) \sigma_2$\\
\hline
\end{tabular}

\end{document}